\numberwithin{equation}{section}
\newtheorem{thm}{Theorem}[section]
\newtheorem{lem}[thm]{Lemma}
\newtheorem{cor}[thm]{Corollary}
\newtheorem{mydef}{Definition}[section]
\newtheorem{rem}{Remark}[section]
\theoremstyle{remark}
\newcommand\bb[1]{\mathbf{#1}}
\newcommand\ddfrac[2]{\frac{\displaystyle #1}{\displaystyle #2}}
\newcommand\bint[1]{\displaystyle\int #1}
\begin{document}

\author{Sr\dj{}an Trifunovi\'c  \thanks{School of Mathematical Sciences,
Shanghai Jiao Tong University, Shanghai 200240, China,
 email: sergej1922@gmail.com, tarathis@sjtu.edu.cn} \and Ya-Guang Wang\thanks{School of Mathematical Sciences, Center of Applied Mathematics,
  MOE-LSC and SHL-MAC, Shanghai Jiao Tong University, Shanghai 200240,  China,
 email: ygwang@sjtu.edu.cn} }

\title{On the interaction problem between a compressible viscous fluid and a nonlinear thermoelastic plate}

\maketitle
\abstract{In this paper we study the interaction problem between a nonlinear thermoelastic plate and a compressible viscous fluid with the adiabatic constant $\gamma>12/7$. The existence of a weak solution for this problem is obtained by constructing a time-continuous operator splitting scheme that decouples the fluid and the structure. The fluid sub-problem is given on a fixed reference domain in the arbitrary Lagrangian-Eulerian (ALE) formulation, and the continuity equation is damped on this domain as well. This allows the majority of the analysis to be performed on the fixed reference domain, while the convergence of the approximate pressure is obtained on the physical domain.}
\\ \\
{\footnotesize \textbf{Keywords and phrases:} {fluid-structure interaction, compressible viscous fluid, nonlinear thermoelastic plate, three space variables, weak solution}
\\ \\
{\footnotesize \textbf{AMS Mathematical Subject classification (2020):} {35Q30, 35M13, 35D30, 74F05, 74F10}

\tableofcontents
\section{Introduction}
The area of fluid-structure interaction (FSI) spans over mathematics, physics, engineering, biomedicine etc. It considers interaction problems between various types of fluids and rigid bodies or elastic bodies/shells/plates. The mathematical theory of FSI has developed quite significantly over the recent years. Here we only mention the results closely related to the model we will study, in particular, on the interaction problems between fluids and elastic structures (plates or shells) located at the (part or whole) boundary of the fluid domain. 

Desjardins et al. \cite{time} obtained a first weak solution existence result for the interaction problem between an incompressible viscous fluid and a viscoelastic structure. Then, Grandmont improved this result by obtaining the weak solution when the structure is purely elastic \cite{grandmont3}. R\r{u}\v{z}i\v{c}ka and Lengeler \cite{ruzicka} obtained a weak solution for an incompressible viscous fluid and elastic shell interaction model, where the shell is a regular manifold that deforms in its normal direction. Muha and \v{C}ani\'c developed a time discretization via operator splitting decoupling numerical scheme for constructing the weak solutions for the incompressible viscous fluid and elastic plate/shell interaction model under various cases in \cite{BorSun,BorSunNonLinear,BorSunNavierSlip}. Later, the authors considered in \cite{trwa} a general semilinear plate model that generalizes the Kirchhoff, von Karman and Berger plates\footnote{In the present paper, we also consider this model (see the assumptions (A1) and (A2) in Section $\ref{qq22sec1.2}$).} and constructed a hybrid splitting scheme (stationary for the fluid and time-continuous in a finite base for the plate) in order to deal with the general form of the nonlinearity in the plate equation. Recently, we extended this result in \cite{trwa2} to the problem where a thermoelastic semilinear/quasilinear\footnote{The quasilinear plate model corresponds to a case where the nonlinearity is cubic. The same model is also considered in this paper (see remark $\ref{qq22quasil}$).} plate interacted with an incompressible viscous fluid. Muha and Schwarzacher \cite{muhasch} proved the existence of a weak solution for the interaction problem of a nonlinear (quasilinear) Koiter shell and incompressible viscous fluid. Here, the convergence of the approximate nonlinear elastic force in the structure equation was obtained by proving the (better than energy) regularity $L_t^2 H_x^{2+s}$, $s<1/2$, of the structure displacement, by utilizing the dissipation effects of the fluid onto the structure.   In \cite{3dmesh}, contrary to other literature, a shell with 3D displacement was considered in interaction with viscous incompressible fluid and a mesh of elastic curved rods modeling stents, thus constituting a 1D-2D-3D nonlinearly coupled fluid-structure interaction problem. It is important to note that the behavior of such a shell cannot be controlled properly by using only the energy estimates. Thus, the authors construct a weak solution based on the time semi-discretization and operator splitting approach for this problem, under certain assumptions for the approximate shell displacement which ensure that it is regular enough and doesn't self-intersect on some time interval. In \cite{contact}, global weak solutions for 2D interaction problem between an incompressible viscous fluid and an elastic beam with possible contact were constructed, as a limit of a sequence of strong global solutions constructed in \cite{grandmont1} as viscoelasticity coefficient goes to zero. However, the contact mechanism was not prescribed. Recently, Schwarzacher and She \cite{schshe} proposed a monolithic numerical scheme for the interaction problem of a compressible viscous fluid and an elastic plate and studied its stability and consistency. We also mention a weak-strong uniqueness result obtained in \cite{WSinc} for the interaction problem of an incompressible viscous fluid and an elastic structure. 

In the context of strong solutions for the problem of incompressible viscous fluid and a viscoelastic structures, the first such result was due to Beir\~{a}o da Veiga in \cite{strongzero}, where a local possibly non-unique strong solution was obtained in 2D case for small initial data. Later, Lequerre extended this result to a global strong solution for small initial data in \cite{strong} in 2D. In \cite{grandmont1}, Grandmont et al. obtained a global solution for a 2D model with viscoelastic structure by proving that no collision between the beam and the bottom of the cavity occurs. In \cite{grandmont2}, contrary to the above mentioned work, a local 2D strong solution was constructed for the problem with a purely elastic structure. Mitra considered a 2D model where a viscoelastic beam interacts with a viscous compressible fluid and obtained a regular solution in \cite{sourav}. We also mention very recent results for the interaction problem between the full Navier-Stokes-Fourier system and a viscoelastic plate in 3D (\cite{NSFFSI}), and the interaction problem between a compressible viscous fluid and a wave equation in 3D (\cite{comstrong}).

Finally, we state the work by Chueshov \cite{igor,igor2} where the stability for the interaction problem of a semilinear plate model and a linearized compressible/inviscid, respectively, fluid were considered, and the work by Avalos et al. \cite{linint1,linint2} where the stability of the linear interaction problem between an elastic plate and a linearized (around arbitrary stationary state) compressible fluid was studied.

In this paper, we aim to study the existence of a weak solution for the interaction problem between a compressible viscous flow and a thermoelastic plate, by constructing a novel decoupling scheme (first such in the compressible case) that splits the fluid and the structure. This scheme was inspired by schemes in \cite{BorSun,BorSunNonLinear,BorSunNavierSlip,trwa,trwa2}, which are used to construct weak solutions for the incompressible case. However, unlike in the incompressible case, here the both approximate sub-problems, corresponding to the fluid and the structure, are constructed to be continuous in time. In this way, the nature of the fluid and the structure sub-problems is preserved almost completely compared to the corresponding fluid and structure systems. The approximate fluid sub-problem is formulated on a fixed reference domain by means of arbitrary Lagrangian-Eulerian (ALE) mappings. We construct a special artificial density damping for the continuity equation which allows us to perform the majority of the analysis on this fixed reference domain. However, the convergence of the approximate pressure (which is the most difficult part of the convergence) is proved on the physical domain as it relies on the inverse divergence operator.

This paper is organized as follows. In section 2, we introduce our model, define the notion of weak solution both on physical and fixed reference domains, and state the main result. In section 3,  we introduce the approximate problems, and obtain the uniform energy estimates. In section 4,  we study the operator splitting time step and finite Galerkin bases limits, and prove the convergence of the approximate solutions in suitable spaces. In section 5, we study the vanishing artificial density viscosity limit, and finally in section 6, the vanishing pressure limit, fixed reference domain limit and the structure regularization limit are studied to obtain the existence of weak solutions to this interaction problem.

\section{Preliminaries and the main result}
In this section, we will first describe the model and derive the energy identity for the classical solutions, if they exist. After that, we derive the problem in the weak form, give the definition of weak solutions and state the main result. At the end of the section, we introduce the equivalent formulation of the same weak solutions on the fixed reference domain.
\subsection{The model description}
Here we study the compressible, viscous fluid interacting
with a thermoelastic plate. The vertical plate displacement is described by a scalar function $w: \Gamma\to \mathbb{R}$, where $\Gamma \subset \mathbb{R}$ is a connected bounded domain with a Lipschitz boundary. The temperature of the plate is denoted by $\theta: \Gamma\to \mathbb{R}$. The fluid fills the domain defined as (see Figure $\ref{qq22figure1}$)
\begin{eqnarray*}
\Omega^w(t) := \{ (X,z) : X \in \Gamma, -1< z < w(t,X)\}.
\end{eqnarray*}
Denote the graph of $w$ by $\Gamma^w (t) = \{ (X,z) : X \in \Gamma, z=w(t, X) \}$ and the side wall of the domain by $W = \{(X,z): X \in \partial \Gamma, -1<z<0\}$, where the plate boundary is assumed to be fixed at $z=0$ for all $x \in \partial \Gamma$. The entire rigid part of the boundary $\partial \Omega^w(t)$ will be denoted as $\Sigma = (\Gamma \times \{-1\}) \cup W $.\\

\begin{figure}[!h]
\begin{center}
\includegraphics[scale=0.7]{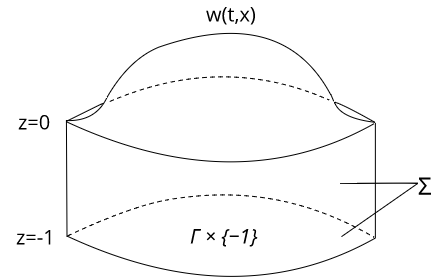}
\caption{The domain $\Omega^w(t)$ determined by the displacement $w(t,X)$ and the rigid part of the boundary $\Sigma$.}
\label{qq22figure1}
\end{center}

\end{figure}

The problem we will study reads:
\vskip 0.2in

\noindent
\framebox{
\parbox{6.0in}{\hskip 0.15in 

\noindent 
Find $( \rho, \bb{u},w, \theta)$ such that the following holds:\\

\noindent
\textbf{The thermoelastic structure equations} in $(0,T)\times \Gamma$:
\begin{equation}\label{qq22structureeqs}
\begin{array}{rcl}
\partial_t^2 w+\Delta^2 w +\Delta \theta +\mathcal{F}(w)&=&-S^w \bb{f}_{fl} \cdot \mathbf{e_3}\\
\theta_t - \Delta \theta - \Delta w_t &=& 0
\end{array}
\end{equation}
\textbf{The compressible viscous fluid equations} in $(0,T)\times \Omega^w(t)$:
\begin{equation}\label{qq22fluideqs}
\begin{array}{rcl}
\partial_t (\rho\mathbf{u}) + \nabla \cdot (\rho\mathbf{u}\otimes \mathbf{u})& =& -\nabla p(\rho) +\mu \Delta\bb{u}+(\mu+\lambda) \nabla (\nabla \cdot \bb{u})I\\
\partial_t \rho + \nabla \cdot (\rho \mathbf{u}) &=& 0 
\end{array}
\end{equation}
\textbf{The fluid-structure coupling (kinematic and dynamic, resp.)} on $(0,T) \times \Gamma$:
\begin{eqnarray}
\partial_t w(t,X) \mathbf{e_3}&=& \mathbf{u}(t,X,w(t,X)),\label{qq22coupling}\\
\bb{f}_{fl}(t,X):&=&\big[\big(-p(\rho)I+\mu\nabla \bb{u}+(\mu+\lambda)(\nabla\cdot \bb{u})I\big)\nu^w\big](t,X,w(t,X)).\quad \label{qq22coupling2}
\end{eqnarray}
\textbf{The boundary conditions:}
\begin{eqnarray}\label{qq22boundaryconditions}
\begin{aligned}
w(t, x)=\partial_\nu w(t, x)&=0, \text{ on } (0,T)\times \partial\Gamma,\\
\theta &=0, \text{ on } (0,T)\times \partial\Gamma, \\
\mathbf{u}&=0, ~~\text{on}~~ (0,T) \times \Sigma;
\end{aligned}
\end{eqnarray}
\vskip 0.1in
\noindent
\textbf{The initial data:}
\begin{eqnarray}\label{qq22initialdata}
\rho(0,\cdot) = \rho_0, ~(\rho\mathbf{u})(0, \cdot)=(\rho\bb{u})_0,~ w(0,\cdot) = w_0,~\partial_t w(0,\cdot) = v_0, ~ \theta(0,\cdot) = \theta_0.
\end{eqnarray}
}}
\vskip 0.1in
Here, $\mathcal{F}$ is a nonlinear function corresponding the nonlinear elastic force in various plate models (see assumptions (A1) and (A2) below), $S^w(t,X)$ is the Jacobian of the transformation from the Eulerian to the Lagrangian coordinates of the plate
\begin{eqnarray*}
S^w(t,X)=\sqrt{1+\partial_x w(t,X)^2+\partial_y
w(t,X)^2},
\end{eqnarray*}
$\nu^w$ is the unit normal vector on $\Gamma^w$, $p$ is the pressure given by the $\gamma$-law $p(\rho) = \rho^\gamma$ with $\gamma>12/7$, $\mu>0$ and\footnote{Here, we choose $\lambda + \frac{2}{3}\mu$ to be strictly positive as in \cite{compressible}, following the reasoning given in \cite[Remark 1.3]{compressible}.} $\lambda + \frac{2}{3}\mu> 0$, $\bb{e}_3 = (0,0,1)$ and $\nu$ is the normal vector on $\partial\Gamma$. The initial data given in $\eqref{qq22initialdata}$ is assumed to satisfy the following compatibility conditions:\

\begin{eqnarray}\label{qq22compatibilityconditions} 
\begin{aligned}
\rho_0>0,& \text{ in } \{(X,z)\in \Omega^{w_0}: (\rho\bb{u})_0(X,z) >0\},  \\
\frac{(\rho \bb{u})_0^2}{\rho_0} &\in L^1(\Omega^{w_0}),\\
\partial_\nu w_0=w_0 =0, &\text{ on } \partial \Gamma, \\
w_0>-1, &\text{ on } \Gamma.
\end{aligned}
\end{eqnarray}

\subsection{Formulation of the weak solution and the main result}\label{qq22sec1.2}
Denote by
\begin{eqnarray*}
&\Omega_\Gamma^w(t):=\Omega^w(t)\cup\Gamma^w(t), \quad Q_T^w:=[0,T]\times \Omega^w(t), \quad Q_{T,\Gamma}^w:=[0,T]\times \Omega_\Gamma^w(t),&
\end{eqnarray*}
and
\begin{eqnarray*}
& \Gamma_T^w: = [0,T]\times \Gamma^w(t),\quad \Gamma_T:= [0,T]\times \Gamma.&
\end{eqnarray*}
We start with introducing the following two assumptions on the nonlinear elastic force $\mathcal{F}(w)$ which appears in the structure equation $\eqref{qq22structureeqs}_1$:

\begin{enumerate}
\item[(A1)] The mapping $\mathcal{F}$ is locally Lipschitz from $H_0^{2-\epsilon}(\Gamma)$ into $H^{-2}(\Gamma)$ for some $\epsilon >0$, i.e.
\begin{eqnarray*}
||\mathcal{F}(w_1)-\mathcal{F}(w_2)||_{H^{-2}(\Gamma)} \leq
C_R || w_1 -w_2 ||_{H^{2-\epsilon}(\Gamma)},
\end{eqnarray*}
for a constant $C_R>0$, for any $||w_i||_{H^{2-\epsilon}(\Gamma)} \leq R$ ($i=1,2$). 
\item[(A2)] $\mathcal{F}(w)$ has a potential
in $H_0^2(\Gamma)$, i.e. there exists a Fr\'{e}chet
differentiable functional $\Pi(w)$ on $H_0^2(\Gamma)$ such that
$\Pi'(w)=\mathcal{F}(w)$ in $H^{-2}(\Gamma)$, and there are $0<\kappa <1/2$ and $C^* \geq 0$, such that the following inequality holds,
\begin{eqnarray*}
\kappa || \Delta
w||_{L^2(\Gamma)}^2+\Pi(w)+ C^*\geq 0, ~~ \text{for all } w \in H_0^2(\Gamma).
\end{eqnarray*}
Moreover, the potential $\Pi(w)$ is bounded on any bounded set of $H_0^2(\Gamma)$.
\end{enumerate}
These assumptions are satisfied by the Kirchhoff, von Karman and Berger plates. There is a vast literature dedicated to these plate models (see \cite{berger,karmanplates,platesproofs} and the references therein). We also mention a semilinear Koiter shell model studied in \cite{BorSunNonLinear} which also satisfies these assumptions\\

We proceed to derive the weak formulation of the problem $\eqref{qq22structureeqs}$-$\eqref{qq22compatibilityconditions}$ for smooth solutions. First, by multiplying the continuity equation $\eqref{qq22fluideqs}_2$ by a function $\varphi \in C^\infty([0,T]\times \overline{\Omega^w(t)})$ and integrating over $Q_T^w$, we obtain
\begin{eqnarray}\label{qq22lokvanj}
0=\int_{Q_T^w} \Big[\partial_t \rho \varphi + \nabla\cdot (\rho \bb{u}) \varphi \Big]&=&\int_{Q_T^w}\Big[ \frac{d}{dt}(\rho \varphi)-\rho \partial_t \varphi-\rho \bb{u} \cdot \nabla \varphi\Big]+\int_{\Gamma_T^w} \rho \bb{u} \cdot\nu^w \varphi, \quad \quad
\end{eqnarray}
by integration by parts. Now from the Raynolds transport theorem, it follows
\begin{eqnarray}\label{qq22RTT}
\frac{d}{dt} \int_{\Omega^w(t)} \rho \varphi = \int_{\Omega^w(t)} \partial_t (\rho \varphi)+ \int_{ \Gamma^w(t)} \rho\varphi \partial_t w^w \bb{e}_3 \cdot \nu^w, 
\end{eqnarray}
where $w^w(t,X,z) := w(t,X)$, so by using the coupling condition $\eqref{qq22coupling}$, from $\eqref{qq22lokvanj}$ we have
\begin{eqnarray*}
\int_{Q_T^w}\Big[ \rho \partial_t \varphi + \rho \bb{u} \cdot\nabla \varphi \Big] = \int_0^T \frac{d}{dt} \int_{\Omega^w(t)} \rho \varphi .
\end{eqnarray*}
Next, from the Raynolds transport theorem, for any $\bb{q}\in C_0^\infty (Q_{T,\Gamma})$, it follows
\begin{eqnarray}\label{qq22RTT2}
\frac{d}{dt} \int_{\Omega^w(t)} \rho \bb{u} \cdot\bb{q} = \int_{\Omega^w(t)} \partial_t (\rho \bb{u}\cdot\bb{q})+ \int_{ \Gamma^w(t)} \big(\rho \bb{u}\cdot \bb{q}\big) \big(\partial_t w^w \bb{q} \bb{e}_3 \nu^w\big),
\end{eqnarray}
so by multiplying the momentum equation $\eqref{qq22fluideqs}_2$ by $\bb{q}\in C_0^\infty (Q_{T,\Gamma})$ and integrating over $Q_T$
\begin{eqnarray}
0&=& \int_{Q_T}\big[ \partial_t (\rho\mathbf{u}) + \nabla \cdot (\rho\mathbf{u}\otimes \mathbf{u})+\nabla p(\rho) -\mu \Delta\bb{u}-(\mu+\lambda) \nabla \text{div } \bb{u}\big]\bb{q}\nonumber\\
&=&\int_0^T \frac{d}{dt} \int_{\Omega^w(t)}\rho\mathbf{u}\cdot\bb{q} - \int_{Q_T} \rho \bb{u} \cdot \partial_t \bb{q} \underbrace{- \int_{\Gamma_T^w} (\rho\bb{u}\cdot\bb{q}) (\partial_t w^w \bb{e}_3 \cdot \nu^w) + \int_{Q_T} (\rho\bb{u} \cdot \bb{q}) (\bb{u}\cdot \nu^w)}_{=0 \text{ by } \eqref{qq22coupling}} \nonumber\\
&&- \int_{Q_T^w}\big[ (\rho \bb{u} \otimes \bb{u}): \nabla \bb{q} - p(\rho) (\nabla \cdot \bb{q}) + 
\mu \nabla \bb{u}: \nabla \bb{q}
+ (\mu+\lambda)(\nabla \cdot \bb{u}) (\nabla \cdot \bb{q})\big]\nonumber \\
&&-\int_{\Gamma_T^w} \big[ -p(\rho)I+ \mu \nabla \bb{u} + (\mu + \lambda)(\nabla \cdot \bb{u})I \big] \nu^w\cdot \bb{q} .~~~~~~~~~~ \label{qq22idkidk1}
\end{eqnarray}
Next, we multiply the equation $\eqref{qq22structureeqs}_1$ by $\psi \in C_0^\infty(\Gamma_T)$ and integrate over $\Gamma_T$ to obtain
\begin{eqnarray}
0 &=&\int_{\Gamma_T} \Big[ \partial_t^2 w+\Delta^2 w +\mathcal{F}(w)+\Delta \theta + S^w \bb{f}_{fl}\cdot \bb{e}_3\Big]\psi \nonumber\\
&=& \int_0^T \frac{d}{dt} \int_\Gamma \partial_t w \psi + \int_{\Gamma_T} \Big[ - \partial_t w \partial_t \psi + \Delta w \Delta \psi + \mathcal{F}(w) \psi - \nabla \theta \cdot\nabla \psi \Big]\nonumber\\
&&+\int_{\Gamma_T^w} \big[ -p(\rho)I+ \mu \nabla \bb{u} + (\mu + \lambda)(\nabla \cdot \bb{u})I \big] \nu^w \cdot (\psi\mathbf{e}_3).~~~~~~~~ \label{qq22idkidk2}
\end{eqnarray}
from $\eqref{qq22coupling2}$. By summing up $\eqref{qq22idkidk1}$ and $\eqref{qq22idkidk2}$, using the boundary condition $\eqref{qq22coupling}$ and choosing $\bb{q} \in C_0^\infty(Q_{T,\Gamma})$ and $\psi \in C_0^\infty(\Gamma_T)$ such that $\bb{q}(t,X,w(t,X))= \psi(t,X)\bb{e}_3$ for all $(t,X)\in \Gamma_T$, we have
\begin{eqnarray*}
&&\int_{Q_T^w}\Big[ -\rho \bb{u}\cdot \partial_t \bb{q} - (\rho \bb{u} \otimes \bb{u}):\nabla \bb{q} - p(\rho) (\nabla \cdot \bb{q})+ \mu \nabla \bb{u} : \nabla \bb{q}+ (\mu+\lambda) (\nabla \cdot \bb{u})( \nabla \cdot \bb{q})\Big]\\
&&+\int_{\Gamma_T} \big[ - \partial_t w \partial_t \psi + \Delta w \Delta \psi + \mathcal{F}(w) \psi - \nabla \theta \cdot \nabla \psi \big]\\
&&=-\int_0^T \frac{d}{dt} \int_{\Omega^w(t)}\rho\mathbf{u}\cdot\bb{q} - \int_0^T \frac{d}{dt} \int_\Gamma \partial_t w \psi. 
\end{eqnarray*}
To introduce a reasonable solution space for the weak solution of $\eqref{qq22structureeqs}-\eqref{qq22initialdata}$, let us derive the energy for smooth solutions of the problem of $\eqref{qq22structureeqs}-\eqref{qq22initialdata}$ in the following way. Multiplying $\eqref{qq22structureeqs}_1$ and $\eqref{qq22structureeqs}_2$ by $\partial_t w$ and $\theta$, respectively, integrating over $\Gamma$, and multiplying the equation $\eqref{qq22fluideqs}_1$ and $\eqref{qq22fluideqs}_2$ by $\bb{u}$ and $\frac{\rho^{\gamma-1}}{\gamma-1}$, respectively, and integrating over $\Omega^w(t)$, then summing up these four identities, integrating over $(0,T)$, and using $\eqref{qq22RTT}$, $\eqref{qq22RTT2}$, the boundary conditions given in $\eqref{qq22boundaryconditions}$ and the identity $\frac{d}{dt} \Pi(w)= (\mathcal{F}(w),\partial_t w)$, we obtain:
\begin{eqnarray}\label{qq22eneq}
E(t) + \Pi(w(t))+ D(t)= E(0) + \Pi(w(0)),
\end{eqnarray}
where
\begin{eqnarray}\label{qq22energies}
\begin{aligned}
&E(t) := F(t)+D(t), ~~D(t) := DF(t) + DS(t), \\
&F(t):= \frac{1}{2}||(\rho|\bb{u}|^2) (t)||_{L^1(\Omega^w(t))}+ \frac{1}{\gamma-1}||\rho||_{L^\gamma(\Omega^w(t))}^\gamma, \\
&S(t):= \frac{1}{2} ||w_t(t)||_{ L^2(\Gamma))}^2 + \frac{1}{2}|| \Delta w(t)||_{L^2(\Gamma)}^2 +\frac{1}{2} ||\theta(t)||_{L^2(\Gamma)}^2, \\
&DF(t) := \int_0^t\Big[ \mu|| \nabla\bb{u}(\tau)||_{L^2(\Omega^w(\tau))}^2 +(\mu+\lambda)|| \nabla \cdot \bb{u}(\tau)||_{L^2(\Omega^w(\tau))}^2 \Big] d\tau , \\
&DS(t) := \int_0^t ||\nabla \theta(\tau)||_{L^2(\Gamma)}^2 d\tau .
\end{aligned} 
\end{eqnarray}
Noticing that there is a constant $c(\lambda,\mu)>0$ such that
\begin{eqnarray}\label{munu}
c(\lambda,\mu)||\nabla \bb{u}(t)||_{L^2(\Omega^w(t))}^2 \leq  \mu|| \nabla\bb{u}(t)||_{L^2(\Omega^w(\tau))}^2 +(\mu+\lambda)|| \nabla \cdot \bb{u}(t)||_{L^2(\Omega^w(\tau))}^2,
\end{eqnarray}
for any $t\in [0,T]$, from $\eqref{qq22eneq}$ and (A2), it inspires to define the spaces of weak solutions of $\eqref{qq22structureeqs}-\eqref{qq22initialdata}$ as follows: the structure temperature space
\begin{eqnarray*}
\mathcal{W}_H(0,T):= L^\infty(0,T; L^2(\Gamma))\cap L^2(0,T; H_0^1(\Gamma)),
\end{eqnarray*}
the space for the fluid density
\begin{eqnarray*}
\mathcal{W}_D(0,T):= C_w (0,T; L^\gamma(\Omega^w(t))),
\end{eqnarray*}
the structure displacement space
\begin{eqnarray*}
\mathcal{W}_S(0,T):=W^{1,\infty}(0,T;L^2(\Gamma)) \cap L^\infty(0,T;H_0^2(\Gamma)), 
\end{eqnarray*}
the fluid velocity space
\begin{eqnarray*}
\mathcal{W}_F(0,T):= L^\infty(0,T; L^2(\Omega^w(t)))\cap L^2(0,T; H^1(\Omega^w(t)),
\end{eqnarray*}
the coupled fluid-structure solution space
\begin{eqnarray*}
\mathcal{W}_{F S}(0,T)= \{ (\bb{u}, w) \in \mathcal{W}_F(0,T) \times \mathcal{W}_S (0,T): \gamma_{|\Gamma^w(t)}\bb{u} = \partial_t w \bb{e}_3 \text{ for a.e. }t\in(0,T) \}.
\end{eqnarray*}
Here, for a given $w\in \mathcal{W}_S(0,T)$, $\gamma_{|\Gamma^w(t)}$ is the ``Lagrangian'' trace operator on $\Gamma^{w}(t)$ defined as 
\begin{eqnarray*}
(\gamma_{|\Gamma^w(t)}f)(X) := f(X,w(t,X)), \quad \text{for } X\in \Gamma, t\in[0,T],
\end{eqnarray*}
for any $f\in C^1(\Omega^w(t))$, and then continuously extended to a linear operator from $H^1(\Omega^w(t))$ to $H^{s}(\Omega)$ for any $s<1/2$ (see \cite{Boris}). Now, we can define the weak formulation of the problem $\eqref{qq22structureeqs}-\eqref{qq22initialdata}$ as follows:
\begin{mydef}\label{qq22weaksolmoving}(\textbf{Weak solution on the physical domain})
Under the assumptions (A1) and (A2) of $\mathcal{F}$, we say that $ (\rho ,\bb{u}, w, \theta) \in \mathcal{W}_D(0,T) \times \mathcal{W}_{FS}(0,T) \times \mathcal{W}_H(0,T)$ is a weak solution of the problem $\eqref{qq22structureeqs}$-$\eqref{qq22initialdata}$, if the initial data $\rho_0, (\rho \bb{u})_0, w_0, v_0,\theta_0\in L^\gamma(\Omega^{w_0})\times L^{\frac{2\gamma}{\gamma+1}}(\Omega^{w_0})\times H_0^2(\Gamma)\times [L^2(\Gamma)]^2$ satisfy the compatibility conditions given in $\eqref{qq22compatibilityconditions}$ and:
\begin{enumerate}
\item The heat equation
\begin{align}\label{qq22thermalweak}
\int_{\Gamma_T}\theta \partial_t \widetilde{\psi} -\int_{\Gamma_T} \nabla \theta \cdot\nabla \widetilde{\psi}+\int_{\Gamma_T}\nabla w \cdot \nabla \partial_t \widetilde{\psi} =\int_0^T \frac{d}{dt}\int_{\Gamma}\theta\widetilde{\psi} +\int_0^T \frac{d}{dt} \int_{\Gamma}\nabla w \cdot\nabla \widetilde{\psi},\nonumber\\ 
\end{align}
holds for all $ \widetilde{\psi} \in C_0^\infty(\Gamma_T)$.
\item The continuity equation
\begin{eqnarray}\label{qq22weaksolcont}
 \int_{Q_T^w} \rho \partial_t \varphi + \int_{Q_T^w} \rho \bb{u}\cdot \nabla \varphi =\int_0^T \frac{d}{dt} \int_{\Omega^w(t)} \rho \varphi , 
\end{eqnarray}
holds for all $\varphi \in C^\infty([0,T]\times \overline{\Omega^w(t)})$.
\item The coupled momentum equation\footnote{For simplicity, from here onwards, we will write $\int_{\Gamma} \mathcal{F}(w) \psi$ instead of $\langle \mathcal{F}(w),\psi \rangle_{H^{-2}(\Gamma),H_0^2(\Gamma)}$.}
\begin{eqnarray}
&&\int_{Q_T^w} \rho \bb{u} \cdot\partial_t \bb{q} + \int_{Q_T^w}(\rho \bb{u} \otimes \bb{u}):\nabla\bb{q}+\int_{Q_T^w} \rho^\gamma (\nabla \cdot \bb{q}) - \mu\int_{Q_T^w} \nabla \bb{u}: \nabla \bb{q}\nonumber\\
&&- \int_{Q_T^w}(\mu+\lambda) (\nabla \cdot \bb{u})( \nabla \cdot \bb{q})  +\int_{\Gamma_T} \partial_t w \partial_t \psi - \int_{\Gamma_T}\Delta w \Delta \psi - \int_{\Gamma_T}\mathcal{F}(w) \psi \nonumber\\
&&+\int_{\Gamma_T} \nabla \theta \cdot \nabla \psi = \int_0^T \frac{d}{dt} \int_{\Omega^w(t)}\rho\mathbf{u}\cdot\bb{q} + \int_0^T \frac{d}{dt} \int_\Gamma \partial_t w \psi \quad \quad \quad \quad  \label{qq22weaksolmom}
\end{eqnarray}
holds for all $\bb{q} \in C_0^\infty(Q_{T,\Gamma}^w)$ and $\psi\in C_0^\infty(\Gamma_T)$ such that $\bb{q}_{|\Gamma^w(t)}= \psi \bb{e}_3$.
\end{enumerate}
\end{mydef}
\noindent
The main result of this paper can be stated as follows:
\begin{thm}(\textbf{Main result})\label{qq22mainth}
Let $\gamma>12/7$ and the initial data $\big(\rho_0,(\rho \bb{u})_0, w_0, v_0,\theta_0 \big) \\ \in L^\gamma(\Omega^{w_0})\times L^{\frac{2\gamma}{\gamma+1}}(\Omega^{w_0})\times H_0^2(\Gamma)\times [L^2(\Gamma)]^2$ satisfy the compatibility conditions given in $\eqref{qq22compatibilityconditions}$. Then, there exists a solution in the sense of Definition $\ref{qq22weaksolmoving}$ that satisfies the following energy inequality for all $t\in [0,T]$
\begin{eqnarray}\label{qq22energymain}
&&\frac{1}{2} \bint_{\Omega^w(t)} (\rho|\bb{u}|^2) (t)+\frac{1}{\gamma-1} \bint_{\Omega^w(t)} \rho^\gamma(t) + \int_0^t\bint_{\Omega^w(t)} \Big[ \mu |\nabla\bb{u}|^2+(\mu+\lambda)(\nabla \cdot \bb{u})^2 \Big] \nonumber \\
&&+ \frac{1}{2}\int_\Gamma |\partial_t w(t)|^2+ \frac{1}{2} \int_\Gamma |\Delta w(t)|^2 +\frac{1}{2}\int_\Gamma |\theta(t)|^2+\int_0^t \int_\Gamma |\nabla \theta|^2 \leq C (E_0,C^*,\kappa), \quad\quad
\end{eqnarray}
where $C^*$ and $\kappa$ are given in the assumption $(A2)$ and $E_0: = F(0)+S(0)$ is the initial energy, with $F(t),S(t)$ being given in $\eqref{qq22energies}$. Moreover, if the free boundary $\{ z = {w}(t,X)\}$ touches the bottom $\{ z= - 1\}$ at time $T^*$, then this solution can be defined on the time interval $(0,T)$, for any $T<T^*$. If no collision occurs, this solution can be defined on the time interval $(0,\infty)$.
\end{thm}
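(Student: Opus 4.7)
The plan is to construct a weak solution through a hierarchical regularization/approximation scheme, and then pass to the limit in the reverse order of the regularizations. The core design, foreshadowed in the abstract and Section 1, is to introduce (i) an ALE change of variables sending the physical domain $\Omega^w(t)$ to the fixed reference domain $\Omega := \Gamma \times (-1,0)$; (ii) an artificial density damping $\varepsilon \Delta \rho$ in the continuity equation on the reference domain (this is the ``special artificial density damping'' mentioned in the abstract); (iii) an artificial pressure perturbation $\delta \rho^\beta$ with $\beta$ large; (iv) a regularization of the structure displacement $w \mapsto w_\kappa$ used to define the ALE map so that the moving domain is smooth enough; and (v) a Galerkin truncation compatible with the kinematic coupling, combined with a time-step $\Delta t$ operator splitting that, on each micro-interval, alternates a time-continuous structure sub-problem and a time-continuous fluid sub-problem, so that each retains the nature of the underlying PDE.

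Within each splitting interval I would set up the decoupled approximate problems: the structure subsystem advanced by taking the fluid trace from the previous step as forcing, and the fluid subsystem on the ALE reference domain with the interface velocity frozen from the previous step. Existence on each micro-interval reduces to a Schauder fixed point on the Galerkin coefficients coupled with the parabolic theory for the damped continuity equation. Testing the approximate momentum system against $\mathbf{u}$, the plate equation against $\partial_t w$, and the heat equation against $\theta$, then summing, reproduces the formal identity $\eqref{qq22eneq}$ and yields a uniform-in-parameters version of $\eqref{qq22energymain}$, plus additional bounds on $\sqrt{\varepsilon}\,\nabla\rho$ and $\sqrt{\delta}\,\rho^{\beta/2}$ which are crucial later for the pressure limit. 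Assumption (A2) is exactly what is needed to absorb the potential $\Pi(w)$ on the left-hand side and close the energy bound.

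With these uniform estimates, the limits are taken in the order announced in Sections 4--6: first $\Delta t \to 0$ and the Galerkin dimension $N \to \infty$, then $\varepsilon \to 0$, then $\delta \to 0$, and finally the structure mollification $\kappa \to 0$ together with the inversion of the ALE map. At each stage the estimates give weak-$\ast$ convergence of $\mathbf{u}$ in $L^\infty_t L^2_x \cap L^2_t H^1_x$, of $\rho$ in $L^\infty_t L^\gamma_x$, of $\theta$ in $L^\infty_t L^2_x \cap L^2_t H^1_x$, and of $w$ in $W^{1,\infty}_t L^2_x \cap L^\infty_t H^2_x$. Aubin--Lions then delivers strong compactness of $w$ in $C_t H^{2-\epsilon}_x$, which combined with (A1) produces the limit in the nonlinear elastic force $\mathcal{F}(w)$; the parabolic structure of the heat equation gives strong compactness of $\theta$. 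The compactness of $\rho\mathbf{u}$ and of the density itself follows the Lions--Feireisl program: the div--curl lemma applied to the effective viscous flux $p(\rho) - (2\mu+\lambda)\nabla\cdot\mathbf{u}$, together with the renormalized continuity equation and Feireisl's oscillation defect measure. As announced in the abstract, this step is carried out on the physical domain, where the inverse divergence (Bogovski) operator is constructed in cut-off subdomains staying away from the free interface $\Gamma^w$.

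The principal obstacle is the pressure limit, in which three coupled difficulties come together: the moving boundary (which is itself part of the unknown and prevents a clean global Bogovski construction, so one must work with time-dependent test functions built from the structure); the low regularity of $w$, only in $H^2$, which is the reason for the structure mollification $\kappa$ and which makes the final limit $\kappa \to 0$ the most delicate, as it requires improved pressure integrability up to the interface; and the choice $\gamma > 12/7$, which is exactly the threshold ensuring that $\rho |\mathbf{u}|^2$ has the space-time integrability needed, via Sobolev embedding on a domain of only $H^2$ regularity, to test with Bogovski-type functions and to run Feireisl's oscillation defect estimate for the renormalized equation. The final assertions of Theorem $\ref{qq22mainth}$ (extension up to the collision time $T^*$ and to $(0,\infty)$ in the absence of collision) follow from a standard continuation argument: as long as the distance from $\Gamma^w(t)$ to the bottom $\Sigma$ stays strictly positive, the ALE map remains non-degenerate, the constants in $\eqref{qq22energymain}$ are uniformly controlled, and the construction can be iterated.
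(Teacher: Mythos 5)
Your proposal captures the overall architecture — ALE transfer to a fixed domain, density damping on the reference domain, artificial pressure, Galerkin plus time-continuous operator splitting, and a Lions--Feireisl pressure argument carried out on the physical domain — but three specifics diverge from what the paper does, and one of them is a genuine gap.

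The critical gap is in the pressure limit. You propose to use ``Bogovski-type functions in cut-off subdomains'' and to obtain ``improved pressure integrability up to the interface'' before removing the last regularization. The paper explicitly disavows the Bogovskii route: because $w$ is only in $H^2_0(\Gamma)\hookrightarrow C^{0,\alpha}$, the physical domain is H\"older but not Lipschitz, so no global Bogovskii operator exists, and one also cannot obtain improved integrability up to $\Gamma^w$. The paper's actual mechanism is two-pronged: (i) \emph{interior} higher integrability of the pressure via the localized inverse Laplacian $\nabla\Delta^{-1}_{\tilde B}$ on cubes $\tilde Q\Subset Q_T^w$ (Lemmas~\ref{qq22lemmaQproof2}, \ref{qq22lemmaQproof3}), and (ii) a \emph{no-concentration} lemma (Lemma~\ref{qq22lemkappa}, Appendix~B) showing the pressure mass cannot accumulate in a thin layer near the boundary, proved by testing with a carefully built vector field whose divergence is $\gtrsim K$ near $\partial\Omega^w$ and whose $W^{1,\infty^-}$ norm is controlled away from it. The constraint $\gamma>12/7$ enters precisely there, not in the oscillation defect estimate. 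Together these yield equi-integrability and hence weak $L^1$ convergence of the pressure without ever controlling it up to the interface. Your plan, as stated, would stall at exactly this step.

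Two further structural differences. First, the paper's structure regularization is not a mollification $w\mapsto w_\kappa$ entering the ALE map; it is the higher-order elastic term $\delta\int_\Gamma\nabla^3 w:\nabla^3\psi$ added to the plate equation itself. This term contributes $\delta\|\nabla^3 w\|_{L^2}^2$ to the energy, which is what produces the $H^3$ control of $w$ used (for instance) to estimate the $\varepsilon$-terms in the effective-viscous-flux identity and in Appendix~B; a mollified $w_\kappa$ breaks the kinematic coupling $\gamma_{|\Gamma^w}\bb{u}=\partial_t w\,\bb e_3$ and would not give these uniform bounds. Second, the fixed reference domain at the approximation stage is not $\Omega=\Gamma\times(-1,0)$, which has corners, but an enlarged $C^{2,\alpha}$ domain $\Omega_\delta\supset\Omega$; this regularity is what allows the damped continuity equation to be solved as a genuine parabolic problem with Neumann data and to inherit maximal-regularity estimates, and the collapse $\Omega_\delta\to\Omega$ is removed together with the artificial pressure at the $\delta\to 0$ level. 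Your four-parameter plus $\kappa$ scheme would therefore also need to be reorganized to the paper's $(\Delta t,k,\varepsilon,\delta)$ ordering, with all structure and domain regularizations bundled at the $\delta$ level.
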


\begin{rem}
(1) In the initial energy $E_0$, the initial kinetic energy of the fluid is understood as $\int_{\Omega^{w_0}}\frac{(\rho \bb{u})_0^2}{\rho_0}$.\\
(2) The weak solution we shall construct to prove this theorem also satisfies the renormalized continuity equation defined in Theorem $\ref{qq22RCEdelta}$.\\
(3) In standard theory for weak solutions for compressible viscous fluids, $\gamma>3/2$ suffices (see \cite{novotnystraskraba}). In this paper however, a stronger assumption $\gamma>12/7$ is required to obtain Lemma $\ref{qq22lemkappa}$, where we exclude the concentration of the mass of the approximate pressure near the boundary. This lemma, combined with Lemma $\ref{qq22lemmaQproof2}$ in which additional interior integrability of approximate density is shown, ensures the weak $L^1$-convergence of the approximate pressure. This is an alternative to the standard proof that relies on the usage of the Bogovskii operator, which fails in our framework because the elastic structure isn't regular enough to ensure the Lipschitz regularity of the fluid domain. This idea was developed by Kuku\v{c}ka \cite{kukucka} in the context of compressible viscous fluids in irregular domains, and later adapted to the context of fluid-structure interaction by Breit and Schwarzacher in \cite{compressible}.
\end{rem}

\begin{rem}\label{qq22quasil}
The same weak solution existence result holds for a special quasilinear thermoelastic plate equation case with the nonlinear elastic force in $\eqref{qq22structureeqs}_1$ being $\mathcal{F} = \Delta (\Delta w)^3$. This will be proved in Appendix A. In this case, the potential of $\mathcal{F}$ is $\Pi(w) = \frac{1}{4}||\Delta w||_{L^4(\Gamma)}^4$, so by the lower semicontinuity of norms, the potential is preserved in the energy inequality which then takes the form:
\begin{eqnarray*}
E(t) + D(t)+\Pi(w(t)) \leq C(E_0)+\Pi(w(0)).
\end{eqnarray*}
where $E(t)$ and $D(t)$ are given in $\eqref{qq22energies}$. Such a thermoelastic plate model was first studied in \cite{thefirstpaper} (see also \cite{lasiecka} and the references therein). This high order nonlinearity arises from a thermoelastic plate model where a nonlinear coupling is considered between the elastic, magnetic and thermoelastic fields.
\end{rem}

\subsection{The equivalent (ALE) formulation of the weak solution on the fixed reference domain} 

We first define the fixed reference domain
\begin{eqnarray*}
\Omega = \{ (X,z) : X \in \Gamma, -1< z < 0 \}
\end{eqnarray*}
and
\begin{eqnarray*}
Q_T: = [0,T]\times\Omega, \quad Q_{T,\Gamma}:= [0,T]\times \big(\Omega\cup(\Gamma\times\{0\}) \big).
\end{eqnarray*}
To formulate the problem on the fixed reference domain $\Omega$ (as it was done in the context of incompressible fluids in \cite{BorSun,BorSunNonLinear,BorSunNavierSlip,trwa,trwa2}), we introduce a family of the following arbitrary Lagrangian-Eulerian (ALE) transformations:
\begin{eqnarray*}
A_w(t) : &&\Omega \to \Omega^w(t),\\
&&(X,z) \mapsto (X,(z+1)w(t,X)+z).
\end{eqnarray*}
This mapping is a bijection and its Jacobian, defined by
\begin{eqnarray*}
J(t,X,z):=\text{det} \nabla A_w(t,X) = 1+w(t,X),
\end{eqnarray*}
is well-defined as long as $w(t,X)>-1$ for any $X \in \Gamma$. Define the ALE velocity as
\begin{eqnarray*}
&\mathbf{w}: = \ddfrac{d}{dt}A_w = (z+1)\partial_t w \bb{e}_3.& 
\end{eqnarray*}
To express the derivatives with respect to the coordinates on $\Omega^w(t)$ by those in the coordinates on the fixed domain $\Omega$, we first calculate 
\begin{eqnarray*}
&(\nabla A_w)^{-1} = [\bb{e}_1, \bb{e}_2, \overline{A}_w ]^T, ~~~~
\overline{A}_w = \ddfrac{1}{w+1}[-(z+1)\partial_x w, -(z+1)\partial_y w, 1]^T,
\end{eqnarray*}
and for an arbitrary (vector or scalar) function $\bb{f}$ defined on $\Omega^w (t)$, we introduce
\begin{enumerate}
\item The \textit{pullback} by $A_w$: ~~$\bb{f}^w(t,X,z):=\bb{f}(t, X, A_w(t,X))$, for $(X,z) \in \Omega$ ;
\item The \textit{push forward of the gradient} by $A_w$:~~ $\nabla^w \mathbf{f}^w:=(\nabla \mathbf{f})^w = \nabla \mathbf{f}^w (\nabla A_w^{-1})\circ A_w$;
\item The \textit{transformed divergence} of $\bb{f}$: $\nabla^w \cdot \bb{f} := \text{Tr}(\nabla^w \mathbf{f}^w)$.
\end{enumerate}
We want to define the weak solution on the fixed domain $\Omega$, by composing the functions $\rho$ and $\bb{u}$ with the mapping $A_w$. First, from the energy inequality $\eqref{qq22eneq}$, we only have the bound for $w$ in $H_0^2(\Gamma)$, and since $H_0^2(\Gamma)$ is embedded into the H\"{o}lder space $C^{0,\alpha}$ for $\alpha<1$, we cannot expect that $\Omega^w(t)$ has a Lipschitz boundary. This means that transformation $A_w$ is not necessarily Lipschitz, so the transformed velocity $\bb{u}^w$ may not be in $L^2(0,T; H^1(\Omega))$, but rather in the transformed velocity space defined as
\begin{eqnarray*}
\mathcal{W}_F^w := \{\bb{u}^w: \bb{u} \in \mathcal{W}_F \}
\end{eqnarray*}
for which we know that $\mathcal{W}_F^w \subset L^\infty(0,T; L^2(\Omega))\cap L^2(0,T; H^s(\Omega)) \cap L^2(0,T; W^{1,p}(\Omega))$, for any $s<1$ and $p<2$.

We define the coupled fluid-structure space for the fixed reference domain as
\begin{eqnarray*}
\mathcal{W}_{FS}^w(0,T) := \{(\bb{U},w) \in \mathcal{W}_F^w(0,T) \times \mathcal{W}_S(0,T):  \gamma_{|\Gamma\times \{0\}} \bb{U} = w\bb{e}_3 \},
\end{eqnarray*}
and the space for the density on the fixed reference domain
\begin{eqnarray*}
\mathcal{W}_D^w(0,T):= C_w(0,T ; L^\gamma(\Omega)).
\end{eqnarray*}
\begin{rem}(\textbf{A convention on the notation}) For $\rho$ and $\bb{u}$, the fluid density and velocity defined on the physical domain $\Omega^w(t)$, denote the corresponding pull-back density and velocity on the fixed reference domain by ${r}:= \rho \circ A_w$ and $\bb{U}:= \bb{u} \circ A_w$, respectively.
The gradient on both physical and fixed domains will be denoted by $``\nabla"$ without any confusion, since it will be clear either from the function that is applied onto, or from the domain of integration. 
\end{rem}
Now, to define the weak solution in the sense of Definition $\ref{qq22weaksolmoving}$ on the fixed reference domain $\Omega$, we express the functions $\rho,\bb{u}, \bb{q},\varphi$ defined on $\Omega^w(t)$ by the corresponding pull-backs by $A_w$ to obtain:
\begin{lem}\label{qq22weaksolution}(\textbf{Weak solution on the fixed reference domain}) The functions $(\rho,\bb{u},w,\theta)$ are weak solutions in the sense of Definition $\ref{qq22weaksolmoving}$ that satisfy the energy inequality $\eqref{qq22energymain}$ if and only if the following hold:
\begin{enumerate} 
\item[(1)] The initial data $\big(r_0, (r\bb{U})_0, w_0, v_0,\theta_0 \big)\in L^\gamma(\Omega)\times L^{\frac{2\gamma}{\gamma+1}}(\Omega)\times H_0^2(\Gamma)\times [L^2(\Gamma)]^2$ and the following compatibility conditions hold
\begin{eqnarray*} 
\begin{aligned}
r_0>0,& \text{ in } \{  (X,z)\in \Omega:  (r\bb{U})_0(X,z) >0\},\\
\frac{(r \bb{U})_0^2}{r_0}& \in L^1(\Omega),\\
\partial_\nu w_0(X)=w_0(X) =0, &\text{ on } \partial \Gamma, \\
w_0(X)>-1, &\text{ on } \Gamma.\\
\end{aligned}
\end{eqnarray*}

\item[(2)] The heat equation holds in the sense of
\begin{eqnarray*}
\int_{\Gamma_T}\theta \partial_t \widetilde{\psi} -\int_{\Gamma_T} \nabla \theta \cdot \nabla \widetilde{\psi}+\int_{\Gamma_T}\nabla w \cdot \nabla \partial_t \widetilde{\psi}= \int_0^T \frac{d}{dt}\int_{\Gamma}\theta\widetilde{\psi} +\int_0^T \frac{d}{dt} \int_{\Gamma}\nabla w \cdot \nabla \widetilde{\psi},
\end{eqnarray*}
for all $\widetilde{\psi} \in C^\infty(\Gamma_T)$.
\item[(3)] The continuity equation holds in the sense of
\begin{eqnarray}\label{qq22fixedcontdef}
\int_{Q_T }Jr \partial_t\varphi +\int_{Q_T } J(r \bb{U}- r \bb{w})\cdot \nabla^w \varphi =\int_0^T \frac{d}{dt} \int_{\Omega}J r \varphi,
\end{eqnarray}
for all $\phi \in C^\infty([0,T] \times \overline{\Omega})$.
\item[(4)] The coupled momentum equation holds in the sense of
\begin{eqnarray}
&&\bint_{Q_T} J\Big((r\bb{U} - r\bb{w})\cdot
\nabla^w) \mathbf{q}\cdot \bb{U}+ Jr\bb{U} \cdot
\partial_t \mathbf{q}\Big)
+\bint_{Q_T}Jr^\gamma (\nabla^w \cdot \bb{U}) \nonumber\\[2mm]
&&-\mu\bint_{Q_T} J\nabla^w \bb{U}:\nabla^w\mathbf{q}-(\mu+\lambda)\bint_{Q_T} J(\nabla^w \cdot \bb{U})(\nabla^w \cdot \bb{q})  \nonumber\\[2mm]
&&+ \bint_{\Gamma_T} \partial_t w \partial_t \psi -
\bint_{\Gamma_T}\mathcal{F} (w) \psi- \bint_{\Gamma_T} \Delta w \Delta \psi + \bint_{\Gamma_T}\nabla \theta \cdot \nabla \psi \nonumber\\[2mm]
&&= \bint_0^T\frac{d}{dt}\int_{\Omega} Jr \mathbf{U} \cdot \bb{q} +\bint_0^T\frac{d}{dt} \int_\Gamma v \psi, \label{qq22fixedmomdef}
\end{eqnarray}
for every $\mathbf{q} \in C_0^\infty(Q_{T,\Gamma})$ and $\psi \in C_0^\infty(\Gamma_T)$ such that $\bb{q}_{|\Gamma}= \psi \bb{e}_3$.
\item[(5)] Functions $(r,\bb{U},w,\theta) \in \mathcal{W}_D^w(0,T)\times \mathcal{W}_{FS}^w(0,T) \times \mathcal{W}_H(0,T)$ and the following energy inequality holds
\begin{eqnarray*}
E^w(t) + D^w(t) \leq C(E_0,C^*,\kappa),
\end{eqnarray*}
where
\begin{eqnarray}\label{qq22energies2}
\begin{aligned}
&E^w(t) := F^w(t)+S(t), ~~D^w(t) := DF^w(t) + DS(t), \\
&F^w(t):= \frac{1}{2}||(Jr|\bb{U}|^2) (t)||_{L^1(\Omega)}+ \frac{1}{\gamma-1}||Jr^\gamma||_{L^1(\Omega)}, \\
&DF^w(t) := \int_0^t \Big[ \mu|| \nabla^w\bb{U}(\tau)||_{L^2(\Omega^w(\tau))}^2 +(\mu+\lambda)|| \nabla^w \cdot \bb{U}(\tau)||_{L^2(\Omega^w(\tau))}^2 \Big]d \tau.
\end{aligned} 
\end{eqnarray}
\end{enumerate}
\end{lem}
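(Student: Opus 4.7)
The plan is to prove the two formulations equivalent by a direct change of variables via the ALE map $A_w$, exploiting that $J = 1+w$ and $J^{-1}$ are uniformly bounded on $[0,T]\times\overline{\Omega}$ (since $w>-1$ and $w\in L^\infty(0,T;C^{0,\alpha}(\Gamma))$). First I would establish the correspondence of function spaces. Boundedness of $J$ and $J^{-1}$ gives $r\in \mathcal{W}_D^w \iff \rho\in \mathcal{W}_D$ by direct change of variables in the $L^\gamma$-norm, and the definition $\mathcal{W}_F^w = \{\bb{u}^w : \bb{u}\in \mathcal{W}_F\}$ yields the fluid velocity space correspondence tautologically. Next, I would verify that the kinematic coupling transports correctly: because $A_w$ maps $\Gamma\times\{0\}$ onto $\Gamma^w(t)$ and $A_w|_{\Gamma\times\{0\}} = \mathrm{Id}_{X} \times w(t,X)$, the trace identity $\gamma_{|\Gamma^w(t)}\bb{u} = \partial_t w \bb{e}_3$ translates to $\gamma_{|\Gamma\times\{0\}}\bb{U} = \partial_t w \bb{e}_3$, hence $(\bb{u},w)\in \mathcal{W}_{FS} \iff (\bb{U},w)\in \mathcal{W}_{FS}^w$. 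The compatibility of initial data on $\Omega$ and $\Omega^{w_0}$ is likewise immediate.

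For the continuity equation, I would use the fundamental identity
\begin{equation*}
(\partial_t \varphi)\circ A_w \;=\; \partial_t \varphi^w - \bb{w}\cdot \nabla^w \varphi^w,
\end{equation*}
which follows by differentiating $\varphi^w(t,X,z)=\varphi(t,A_w(t,X,z))$ in $t$ and recognizing $\bb{w}=(z+1)\partial_t w\, \bb{e}_3$. Given $\varphi\in C^\infty([0,T]\times \overline{\Omega^w(t)})$, its pullback $\varphi^w$ belongs to the admissible class on $\Omega$ and conversely, any smooth $\Phi$ on $\overline{\Omega}$ corresponds to an admissible $\varphi = \Phi\circ A_w^{-1}$ (with merely Hölder regularity in space, which is nevertheless enough because the physical-domain formulation only pairs with $\rho$ and $\rho\bb{u}$ via Lebesgue integrals). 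Substituting this identity into \eqref{qq22weaksolcont}, applying change of variables with Jacobian $J$, and using $\nabla\varphi\circ A_w = \nabla^w \varphi^w$, one obtains precisely \eqref{qq22fixedcontdef}. The momentum equation \eqref{qq22weaksolmom}$\iff$\eqref{qq22fixedmomdef} is derived analogously term by term: the kinetic term uses $(\partial_t \bb{q})\circ A_w = \partial_t \bb{q}^w - \bb{w}\cdot \nabla^w \bb{q}^w$, the convective term becomes $J((r\bb{U}-r\bb{w})\cdot\nabla^w)\bb{q}\cdot \bb{U}$ after combining with the time-derivative identity, and the pressure and viscous terms transform by direct application of $\nabla\bb{f}\circ A_w = \nabla^w \bb{f}^w$ and $dx = J\,dX\,dz$.

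The heat equation and the potential term $\Pi(w)$ are defined entirely on the fixed plate $\Gamma$, so no transformation is needed; the structure part of both formulations is literally identical. Finally, the energy quantities transform as $\int_{\Omega^w(t)} \rho|\bb{u}|^2 = \int_\Omega Jr|\bb{U}|^2$ and $\int_{\Omega^w(t)}\rho^\gamma = \int_\Omega Jr^\gamma$, and for the dissipation $\int_{\Omega^w(t)} |\nabla \bb{u}|^2 = \int_\Omega J|\nabla^w \bb{U}|^2$, showing equivalence of \eqref{qq22energymain} with the bound for $E^w+D^w$ in \eqref{qq22energies2}, modulo absorbing the fixed positive constant $\|J\|_{L^\infty}$ into $C$.

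The main technical obstacle is the limited regularity of $A_w$: since $w\in H^2_0(\Gamma)\hookrightarrow C^{0,\alpha}(\Gamma)$ but not $W^{1,\infty}$, the map $A_w$ is only Hölder continuous in $(X,z)$, so pullbacks of smooth physical-domain test functions need not be smooth on $\Omega$ (and vice versa). I would resolve this via a density argument: the algebraic identities above continue to hold for test functions in $W^{1,p}$ with $p$ sufficiently large (which is enough to pair with $\rho,\rho\bb{u},\rho\bb{u}\otimes\bb{u},\rho^\gamma,\nabla\bb{u}$ given the energy bounds and the assumption $\gamma>12/7$), and smooth test functions on either side are dense in the corresponding $W^{1,p}$-class of admissible test functions via standard mollification, which is compatible with $A_w$ since $J$ and $J^{-1}$ are $L^\infty$. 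This is the only place where the non-Lipschitz geometry enters in a non-trivial way; all other steps reduce to Fubini and the chain rule.
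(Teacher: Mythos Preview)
Your proposal is correct and follows essentially the same route as the paper: both arguments hinge on the chain-rule identity $\partial_t(\bb{q}\circ A_w) = (\partial_t \bb{q})\circ A_w + \bb{w}\cdot\nabla^w(\bb{q}\circ A_w)$ together with change of variables, and both close the gap between smooth test functions on the two domains by a density argument whose validity rests on the integrability of the new convective $\bb{w}$-terms, which is precisely where $\gamma>12/7$ and the trace regularity $\partial_t w\in L^2(0,T;H^s(\Gamma))$, $s<1/2$, enter. The paper is somewhat terser and singles out the $\bb{w}$-terms explicitly as the only place requiring care, but your identification of the main obstacle and its resolution match.
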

\begin{proof}
First, the assertions in (1), (2) and (5) are straightforward. Now, by using the fact that 
\begin{eqnarray*}
\ddfrac{d}{dt} (\bb{q} \circ A_w)= (\partial_t \bb{q})\circ A_w + \bb{w} \cdot \nabla^w (\bb{q} \circ A_w),
\end{eqnarray*}
in the equations $\eqref{qq22weaksolcont}$ and $\eqref{qq22weaksolmom}$, one obtains that the equations $\eqref{qq22fixedcontdef}$ and $\eqref{qq22fixedmomdef}$ hold for the corresponding pull-backs of the smooth test functions, i.e. for all $\varphi \circ A_w$ and $(\bb{q} \circ A_w,\psi)$, respectively, such that $\varphi \in C^\infty ([0,T]\times \overline{\Omega^w(t)})$, $\bb{q} \in C_0^\infty(Q_{T,\Gamma}^w)$ and $\psi\in C_0^\infty(\Gamma_T)$ with $\bb{q}_{|\Gamma^w(t)}= \psi \bb{e}_3$. It remains to prove that the equations $\eqref{qq22fixedcontdef}$ and $\eqref{qq22fixedmomdef}$ hold for all $\phi \in C^\infty([0,T] \times \overline{\Omega})$ and $\mathbf{q} \in C_0^\infty(Q_{T,\Gamma})$ and $\psi \in C_0^\infty(\Gamma_T)$ such that $\bb{q}_{|\Gamma}= \psi \bb{e}_3$. This will follow by the density argument if we prove that the convective terms that include $\bb{w}$ (which are the only new terms that appear in this formulation) are integrable. This is indeed true because $\gamma>12/7$ and because of the trace regularity $\partial_t w \in L^2(0,T; H^s(\Gamma))$ for $0<s<1/2$ which then implies by the imbedding $(0,0,(z+1)\partial_t w) = \bb{w} \in L^2(0,T; L^p(\Gamma))$ for $p<4$.
\end{proof}

\begin{rem}
The condition $\gamma>12/7$ is crucial for this formulation to make sense. The domain transformation $A_w$ is chosen to be invertible since it Jacobian only depends on $w$ and not on its higher derivatives. However, the transformation itself has the same regularity as $w$ so the domain transformation velocity $\bb{w}$ is of the same regularity as $\partial_t w$. If the structure was viscoelastic, i.e. adding the term $-\partial_t \Delta w$ in the plate equation $\eqref{qq22structureeqs}_1$, then $\bb{w}$ would automatically be more regular and belong to the space $L^2(0,T; H^1(\Gamma))$), so $\gamma>3/2$ would suffice. 
\end{rem}

This lemma allows us to study the problem on the fixed reference domain $\Omega$ with smooth test functions, which will be very useful in the upcoming analysis. However, not all the analysis will be done on $\Omega$. In particular, the convergence of the approximate pressure constructed in the following sections will rely on the usage of the inverse divergence operator which doesn't make sense on the fixed reference domain, as an inverse transformed divergence operator depends on the displacement. This is mainly because, when we transform the problem onto the fixed reference domain, some of the natural properties are lost. For example, the problem isn't in the conservative form and the transformed divergence doesn't satisfy the divergence theorem. Thus, it will be convenient to jump from the fixed reference domain formulation to the physical domain formulation. Both formulations are useful for different parts of the analysis and their interplay is one of the important approaches in this paper, as this seems to be an effective way to study this problem.

\section{Approximate problems}
First, to get the existence of weak solutions in the sense of Definition $\ref{qq22weaksolmoving}$, we introduce the approximate problems. Then, we will solve them and obtain the uniform energy estimates of the approximate solutions.
\subsection{Formulation of approximate problems}
We will construct a 4-level approximation scheme on a fixed reference domain:
\begin{enumerate}
\item \textit{Artificial pressure, fixed reference domain regularization and structure regularization for a fixed $\delta>0$} ($\delta$-level):\\
The pressure $r^\gamma$ is replaced by $r^\gamma+\delta r^a$, for a large $a>0$, the fixed reference domain $\Omega$ to a more regular domain $\Omega_\delta$ and a regularizing term $\delta \nabla^3 w: \nabla^3\psi$ is added to the plate equation.
\item \textit{Artificial density damping on the fixed reference domain for a fixed $\varepsilon>0$} ($\varepsilon$-level):\\ 
The term $\varepsilon \big( \Delta r + \ddfrac{1}{J}\nabla J\cdot \nabla r\big)$ is added to the continuity equation defined on the fixed reference domain;
\item \textit{Finite Galerkin bases for a fixed $k \in \mathbb{N}$} ($k$-level):\\
The fluid velocity $\bb{U}$, the structure displacement $w$ and the structure temperature $\theta$ are projected onto the generated finite bases.
\item \textit{The operator splitting} ($\Delta t$-level):\\
For a fixed $T>0$ and $N\geq 1$, letting $\Delta t=\frac{T}{N} $, we split the time interval $[0,T]$ into $N$ equal sub-intervals and on each sub-interval we use the Lie operator splitting, and decouple the problem into two parts - the fluid and structure sub-problems.
\end{enumerate}
\begin{rem}
(1) \textbf{The $\Delta t$ level}. In the approximate problem, due to the operator splitting, the trace of the fluid velocity at the structure, denoted by $v$, and the structure velocity $\partial_t w$ are not necessarily equal, but their difference in $L^2(\Gamma_T)$ norm is smaller than $O(\sqrt{\Delta t})$. Also, $\Delta t$ is chosen to be smaller or equal to the maximal interval of the solution that we will obtain for the fluid sub-problem by the fixed-point argument, which is then prolonged $N-1$ times to be defined on $[0,T]$ by using the uniform estimates. 

The first ``time semi-discretization via operator splitting'' scheme in the context of the incompressible viscous fluids interacting with an elastic shells/plates was constructed by Muha and {\v C}ani{\'c} in \cite{BorSunNavierSlip, BorSun,BorSunNonLinear} where the corresponding fluid and structure sub-problems were both stationary. Then in \cite{trwa}, we studied the interaction between in which the nonlinear plate with the nonlinear elastic force $\mathcal{F}(w)$ satisfying the assumptions (A1) and (A2) given in section $\ref{qq22sec1.2}$ interacts with a viscous incompressible fluid and constructed a hybrid approximation scheme where the fluid sub-problem is stationary and the structure sub-problem is continuous in time and in a finite basis. We later extended this result in \cite{trwa2} by studying the interaction between an incompressible viscous fluid and a nonlinear thermoelastic plate, where we also included an additional quasilinear plate model with cubic nonlinear elastic force. Now, in this paper, we choose the fluid sub-problem to be continuous in time as well. This way, the nature of both sub-problems is preserved almost completely compared to the original problem. In particular, the energy inequality of the approximate solutions is very similar to the energy inequality $\eqref{qq22energymain}$ and the fluid sub-problem is solved in almost the same way as in the standard theory for compressible viscous fluids by means of the Schauder fixed-point theorem.\\

(2) \textbf{The $k$ level}. Spanning these functions is quite standard both for compressible fluids and elastic plates. Here we fix the same number of basis functions for all three functions $\bb{U}$, $w$ and $\theta$.\\

(3) \textbf{The $\varepsilon$ level}. It is standard in the study of weak solution theory for compressible viscous fluids to damp the continuity equation. Usually, it is done by adding the term $\varepsilon \Delta \rho$ to the continuity equation on the physical domain $\Omega^w(t)$. However, here we instead use the damping $\varepsilon(\Delta r+ \frac{1}{J} \nabla J \cdot \nabla r)$ for the following reasons. First, if one would use the standard damping, then the continuity equation on the fixed reference domain would have the push-forward of the Laplacian. This would result in a second order parabolic equation where both the coefficients of the second order derivatives of the transformed density and the boundary condition would depend on time, since the normal vector which is used for the Neumann boundary condition for the density would depend on the displacement. The second reason is that we would only obtain approximate weak solutions of this continuity equation where we could also have vacuum. The damping we construct allows us to solve the approximate continuity equation on the fixed reference domain as a linear second-order parabolic equation with Neumann boundary condition, its solutions are regular, they satisfy the maximal regularity estimates and the approximate density is bounded from below and above by positive constants, as in the standard weak solution theory for the compressible fluids. The later property ensures that the coupled momentum equation is non-degenerate. \\

(4) \textbf{The $\delta$ level}. The artificial pressure is used to ensure that the approximate density is integrable enough. This is essential in proving certain convergences throughout the sections $\ref{qq22section4},\ref{qq22section5}$ and $\ref{qq22section6}$. The reason we need a regular fixed reference domain $\Omega_\delta$ is to be able to solve the damped continuity equation and to obtain maximal regularity estimates. The regularizing term $\delta \nabla^3 w: \nabla^3\psi$ for the plate equation is added to keep the domain transformation mapping $A_w$ more regular. This additional regularity is used in bounding of the term $I_2$ in Lemma $\ref{qq22lemmaQproof2}$ and in Appendix B. Moreover, it also ensures that the functional spaces for the fluid density and velocity on the fixed and physical domains are the same. This will simplify the analysis and the notation.
\end{rem}

\subsubsection{The structure sub-problem (SSP)}
First, we want to span the plate temperature and displacement in finite bases. Let $\{ s_i \}_{i\in \mathbb{N}}$ and $\{ h_i \}_{i\in \mathbb{N}}$ be the sets of eigenfunctions  generated by the biharmonic eigenvalue problem with the clamped boundary condition,  and the harmonic eigenvalue problem with the Dirichlet boundary condition, respectively, with the corresponding eigenvalues $\{ \xi_i^s \}_{i\in \mathbb{N}}$ and $\{ \xi_i^h\}_{i\in \mathbb{N}}$. Denote by 
\begin{eqnarray*}
\mathcal{P}_{str}^k: = \text{span}\{s_i\}_{ 1\leq i \leq k}, \quad
\mathcal{P}_{heat}^k = \text{span}\{h_i\}_{ 1\leq i \leq k},
\end{eqnarray*}
and the corresponding projections $P_{str}^k: L^2(\Gamma) \to \mathcal{P}_{str}^k$ and $P_{heat}^k: L^2(\Gamma) \to \mathcal{P}_{heat}^k$. \\

The approximate initial data are chosen as $w^0(0,X) = w_{0,k}(X)\in P_{str}^k(w_0)$ such that $\min\limits_{X\in \Gamma}w_0(X) \leq w_{0,k}(X)\leq \max\limits_{X\in \Gamma}w_0(X)$ such that $w_{0,k} \to w_0$ in $H_0^2(\Gamma)$ as $k\to+\infty$ and 
\begin{eqnarray*}
\quad \partial_t w^0(0,X) = v_{0,k}(X):=P_{str}^k(v_0), \quad \theta_{\Delta t,k}^0(0,X) = \theta_{0,k}(X):= P_{heat}^k(\theta_0).
\end{eqnarray*}
We are ready to define:\\

\noindent
\underline{The structure sub-problem (SSP):} \\
By induction on $n$ for any $n\geq 0$, assuming that the approximate solution $v^n \in C^1([(n-1) \Delta t, n\Delta t]; \mathcal{P}_{str}^k)$ of (FSP) (which will be introduced in the next section) and $w^{n} \in C^2([(n-1) \Delta t, n\Delta t]; \mathcal{P}_{str}^k), \theta^{n} \in C^1([(n-1) \Delta t, n\Delta t]; \mathcal{P}_{heat}^k)$ are given already, determine $w^{n+1} \in C^2([n \Delta t, (n+1)\Delta t]; \mathcal{P}_{str}^k)$ and $ \theta^{n+1} \in C^1([n \Delta t, (n+1)\Delta t]; \mathcal{P}_{heat}^k)$ by solving the following problem:
\begin{eqnarray}\label{qq22SSP}
\begin{cases} 
\bint_{\Gamma} \partial_t \theta^{n+1} \widetilde{\psi} + \bint_{\Gamma}\nabla \theta^{n+1}\cdot \nabla \widetilde{\psi} +\bint_{\Gamma}\nabla \partial_t w^{n+1}\cdot \nabla \widetilde{\psi} = 0,\\[3.5mm]
\ddfrac{1}{2}\bint_{\Gamma}\partial_t^2 w^{n+1}(t) \psi+\ddfrac{1}{2}\bint_{\Gamma}\ddfrac{\partial_t w^{n+1}(t) - T_{\Delta t} v^{n+1}(t)}{\Delta t}\psi +\bint_{\Gamma}\Delta w^{n+1}(t) \Delta \psi\\[2.5mm]
\quad\quad  -\bint_{\Gamma}\nabla \theta^{n+1}(t) \cdot \nabla \psi + \bint_{\Gamma} \mathcal{F}(w^{n+1}(t))\psi+\delta \bint_{\Gamma}\nabla^3 w^{n+1}: \nabla^3 \psi = 0, \\[3.5mm] 
w^{n+1}(n \Delta t, X) =w^{n}(n \Delta t, X), ~~ \partial_t w^{n+1}(n \Delta t,X) = \partial_t w^{n}(n \Delta t,X) ,\\
\theta^{n+1}(n \Delta t,X) = \theta^{n}(n \Delta t,X) ,
\end{cases}
\end{eqnarray}
for all $t\in (n\Delta t, (n+1)\Delta t]$, $\psi \in \mathcal{P}_{str}^k$, $ \widetilde{\psi} \in\mathcal{P}_{heat}^k $, with $T_{\Delta t} f(t):=f(t-\Delta t)$ being the translation in time operator, while when $0\leq t\leq \Delta t$, we choose $T_{\Delta t} f(t)=f(0)$.

\subsubsection{The fluid sub-problem (FSP)}
Since we will need a domain smoother than $\Omega = \Gamma \times (-1,0)$ in order to solve the approximate continuity equation given below and to obtain certain maximal regularity estimates later on, we introduce the following extended domain:
\begin{mydef}
For a given $\delta \in (0,1)$ we define an open connected set $\Omega_\delta \supset \Omega$ (see Figure $\ref{qq22sections}$), such that it satisfies the following properties:
\begin{enumerate}
\item The boundary $\partial \Omega_\delta$ is of $C^{2,\alpha}$ regularity, for some $0<\alpha <1$, and uniformly Lipschitz with respect to $\delta$;
\item $\Omega_\delta \subset \mathbb{R}^2 \times (-1,0)$ and $\Gamma \times \{-1,0\} \subset \partial\Omega_\delta$;
\item $\forall x \in \partial \Omega_\delta$, $\text{dist}(x, \Omega) < \delta$;
\item $\Omega_{\delta'}\subset \Omega_{\delta''}$, for $\delta'<\delta''$.
\end{enumerate}
\end{mydef}
\begin{figure}[h]
\begin{center}
\includegraphics[scale = 0.3]{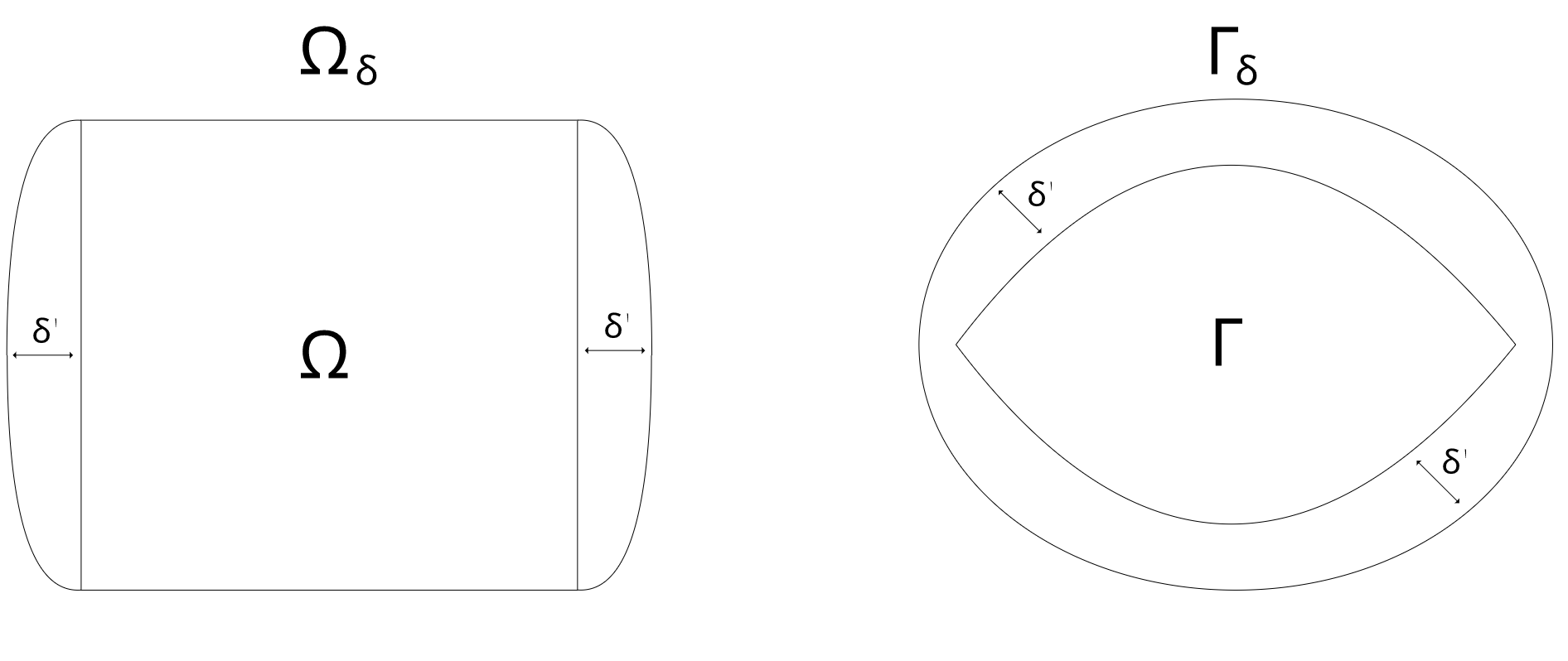}
\end{center}
\caption{The sets $\Omega_\delta$ and $\Gamma_\delta:= \{z=0\}\cap \partial \Omega_\delta$, visually represented by the vertical and the horizontal section, respectively, for some $\delta'<\delta$.}
\label{qq22sections}
\end{figure}
Obviously, as $\delta \to 0$, $\Omega_\delta \to \Omega $ and $\Gamma_\delta \to \Gamma$. The ALE mapping $A_w$ will be extended onto $\Omega_\delta \setminus \Gamma\times (-1,0)$ by $id$ and still denoted as $A_w$. \\

Denote by
\begin{eqnarray*}
&&\Omega_{\delta}^w := \Omega^w \cup (\Omega_{\delta}\setminus \Omega), \quad \Omega_{\delta,\Gamma}^w: = \Omega_\delta^w \cup \Gamma^w, \quad Q_{\delta,T}^w: = [0,T]\times \Omega_\delta^w, \quad \Omega_{\delta,T,\Gamma}^w:= [0,T]\times \Omega_{\delta,\Gamma}^w,\\
&&\Omega_{\delta,\Gamma}: = \Omega_\delta \cup (\Gamma\times \{0\}), \quad Q_{\delta,T}: = [0,T]\times \Omega_\delta, \quad \Omega_{\delta,T,\Gamma}:= [0,T]\times \Omega_{\delta,\Gamma}. 
\end{eqnarray*}
Let $\{ \bb{f}_i \}_{i\in \mathbb{N}}$ and $\{ \xi_i^f\}_{i\in \mathbb{N}}$ be the sets of eigenfunctions and eigenvalues determined by the following harmonic eigenvalue problem
\begin{eqnarray*}
\begin{cases}
- \Delta \bb{f} = \xi^f \bb{f}, ~&\text{in } \Omega_\delta, \\ 
\bb{f} =0, ~&\text{on } \partial \Omega_\delta .
\end{cases}
\end{eqnarray*}
For a given $s \in \mathcal{P}_{str}^k$ let $\text{Ext}[s] := r \bb{e}_3$, where $r$ is the  solution of the following problem
\begin{eqnarray*}
\Delta r &=& 0, \quad \text{in } \Omega_\delta, \\
r &=& s \bb{e}_3, \quad \text{on } \Gamma \times \{0 \}, \\
r &=& 0, \quad \text{on } \partial \Omega_\delta \setminus (\Gamma \times \{0 \}).
\end{eqnarray*}
We now intoduce the vector space $\mathcal{P}_{fl}^{k}: = \text{span}\{\bb{f}_i, \text{Ext}[s_i] \}_{ 1\leq i \leq k}$ with the corresponding projection denoted as $P_{fl}^k: L^2(\Omega_\delta) \to \mathcal{P}_{fl}^k$.
\begin{rem}\label{qq22isbasis}
To prove that the functions $\{\bb{f}_i, \text{Ext}[s_i] \}_{ 1\leq i \leq k}$ are linearly independent, for any $\bb{a} \in \mathbb{R}^{2k}$, it is easy to know that the linear combination $F=\sum_{i=1}^k a_i \bb{f}_i+ a_{i+k} \text{Ext}[s_i]$  satisfies the following problem
\begin{align}
\Delta F &= \sum_{i=1}^k a_i \xi_i^f \bb{f}_i, \quad \text{in } \Omega_\delta, \nonumber \\
F &= \sum_{i=1}^k a_{i+k} s_i, \quad \text{on } \Gamma \times \{ 0\}, \nonumber \\
F &= 0, \quad \text{on } \partial\Omega_\delta \setminus \Gamma \times \{ 0 \}. \nonumber
\end{align}
By using the uniqueness of the solution to this problem and the linear independency of the sets $\{\bb{f}_i \}_{ 1\leq i \leq k}$ and $\{s_i \}_{ 1\leq i \leq k}$, we have that $F = 0$ if and only if $\sum_{i=1}^k a_i \xi_i^f \bb{f}_i =0$ and $ \sum_{i=1}^k a_{i+k} s_i=0$, which is equivalent to $\bb{a} = 0$. \end{rem}

We define the solution spaces for the fluid sub-problem in the following way:
\begin{eqnarray*}
\mathcal{F}_k^{n+1}:=C^1([n \Delta t, (n+1)\Delta t]; \mathcal{P}_{fl}^k),
\end{eqnarray*}
for the approximate fluid velocity, and
\begin{eqnarray*}
& \mathcal{D}^{n+1} := H^1(n \Delta t, (n+1)\Delta t; L^2(\Omega_\delta)) \cap L^2(n \Delta t, (n+1)\Delta t; H^2(\Omega_\delta)),&
\end{eqnarray*}
for the approximate fluid density, with norm $||\cdot||_{\mathcal{D}^{n+1}}$ being naturally induced. 
${}$\\

\noindent
\underline{The fluid sub-problem (FSP):}\\
By induction on $n$, assuming that the approximate solution $w^{n+1} \in C^2([n\Delta t,(n+1)\Delta t];\mathcal{P}_{str})$ of (SSP), and $(r^n,\bb{U}^n)$ are given already, we determine $( r^{n+1},\bb{U}^{n+1}) \in \mathcal{D}^{n+1}\times \mathcal{F}_k^{n+1}$ from the following system:\footnote{The connection between (SSP), (FSP) and the original problem is explained in the section $\ref{qq22sec}$.}
\begin{eqnarray}\label{qq22FSP1}
\begin{cases}
& \partial_t r^{n+1} - \bb{w}^{n+1} \cdot \nabla^{w} r^{n+1} + \nabla^{w} \cdot (r^{n+1} \bb{U}^{n+1})\\
&\hspace{.45in}  = \varepsilon \Big( \Delta r^{n+1} + \ddfrac{1}{J^{n+1}} \nabla J^{n+1} \cdot \nabla r^{n+1}\Big), \quad \text{a.e. in } Q_{\delta,T} , \\[2mm]
&\partial_n r^{n+1} =0, \quad \text{on }\partial \Omega_\delta
\\[4mm]
&\ddfrac{1}{2}\bint_{\Omega_\delta} \partial_t J^{n+1} r^{n+1} \bb{U}^{n+1} \cdot \bb{q} + \bint_{\Omega_\delta}J^{n+1}r^{n+1} \partial_t \bb{U}^{n+1} \cdot \bb{q} + \frac{1}{2}\bint_{\Omega_\delta} J^{n+1} \partial_t r^{n+1} \bb{U}^{n+1} \cdot \bb{q} \\[2mm]
&\hspace{.45in}+\ddfrac{1}{2}\bint_{\Omega_\delta} J^{n+1}(r^{n+1} \bb{U}^{n+1}-r^{n+1} \bb{w}^{n+1}) \cdot \big( \mathbf{q} \cdot \nabla^{w}\bb{U}^{n+1} - \bb{U}^{n+1}\cdot \nabla^{w} \mathbf{q} \big) \\[2mm]
&\hspace{.45in}+\mu \bint_{\Omega_\delta} J^{n+1}\nabla^{w} \mathbf{u}^{n+1}:\nabla^{w}\mathbf{q} +(\mu +\lambda)\bint_{\Omega_\delta} J^{n+1} (\nabla^{w}\cdot \mathbf{u}^{n+1})(\nabla^{w}\cdot\mathbf{q})\\
&\hspace{.45in}
-\bint_{\Omega_\delta} (J^{n+1} ((r^{n+1})^\gamma + \delta (r^{n+1})^a) (\nabla^{w}\cdot\bb{q}) +\frac{1}{2} \bint_\Gamma\ddfrac{v^{n+1}-\partial_t w^{n+1}}{\Delta t} \psi =0, \label{qq22FSP2} \\[4mm]
& r^{n+1}(n\Delta t, \cdot) = r^{n} (n \Delta t, \cdot), \quad \bb{U}^{n+1}(n\Delta t, \cdot) = \bb{U}^{n} (n \Delta t, \cdot),
\end{cases} ~~~~
\end{eqnarray}
for all $\bb{q} \in \mathcal{P}_{fl}^{k} $, with $\psi = \bb{q}_{| \Gamma\times \{0\}}$ and $n \Delta t \leq t \leq (n+1)\Delta t$, where $v^{n+1} := \bb{U}^{n+1}_{| \Gamma\times \{0\}} \cdot \bb{e}_3$ and $\nabla^w$ denotes $\nabla^{w^{n+1}}$ for simplicity. When $n=1$, the initial data $r_{\Delta t,k}^0 (0,X)$ can be chosen as a strictly positive smooth (and extended to $\Omega_\delta$) approximation of $\rho_0 \circ A_{w_0}$, $\bb{U}^0(0,X) = \mathcal{P}_{fl}(\bb{U}_{0,k,\delta})$ with
\begin{eqnarray*}
\bb{U}_{0,k,\delta} := \begin{cases}
\frac{(r\bb{U})_0}{r_{\Delta t,k}^1(0) } \circ A_{w_{0}} - \text{Ext}[v_0 - v_{0,k}], \quad &(X,z) \in \Omega,\\
0, \quad &(X,z) \in \Omega_\delta \setminus\Omega.
\end{cases}
\end{eqnarray*}
\subsection{Energy estimates of the solutions to (SSP) and (FSP)}
From now on, we will use the following notation 
\begin{eqnarray*}
g (t) := g^{n+1}(t), ~~\text{for } t\in [n\Delta t, (n+1) \Delta t),~~ 0\leq n \leq
N-1,
\end{eqnarray*}
with $g$ being one of the functions $r, \bb{U} , w, \theta$, to omit the superscript. Now, for the functions $r, \bb{U} , w, \theta$ solving the problems (SSP) and (FSP) on the interval $n\Delta t \leq t \leq (n+1)\Delta t$, and the corresponding fluid density and velocity $ \rho = r \circ A_{w^{-1}}, \bb{u} =\bb{U} \circ A_{w^{-1}}$ on the physical domain, we introduce the following appropriate forms of energies for both fixed and physical domain coordinates:
\begin{eqnarray*}
F^{n+1}(t) &:=& \frac{1}{2}\int_{\Omega_\delta} (J r |\bb{U}|^2)+\int_{\Omega_\delta}J\Big(\frac{r^{\gamma}}{\gamma-1}+ \delta \frac{r^{a}}{a-1}\Big)\\
&=&\frac{1}{2}\int_{\Omega_\delta^{w}(t)} \rho |\bb{u}|^2 +\int_{\Omega_\delta^{w}(t)}\Big(\frac{\rho^{\gamma}}{\gamma-1}+ \delta \frac{\rho^{a}}{a-1}\Big), \\[2mm]
FD^{n+1}(t) &:=& \varepsilon \int_{n\Delta t}^{t} \int_{\Omega_\delta} J|\nabla r|^2(\gamma r^{\gamma-2}+\delta ar^{a-2})\\
&&+ \mu\int_{n \Delta t}^t \int_{\Omega_\delta} J |\nabla^{w}\bb{U}|^2+ (\mu +\lambda) \int_{n \Delta t}^t \int_{\Omega_\delta} J |\nabla^{w}\cdot\bb{U}|^2\\[2mm]
&=& \varepsilon \int_{n\Delta t}^{t} \int_{\Omega_\delta^{w}(t)} |\nabla^{w^{-1}} \rho|^2(\gamma \rho^{\gamma-2}+\delta a\rho^{a-2})\\
&&+\mu\int_{n \Delta t}^t \int_{\Omega_\delta^{w}(t)} |\nabla \bb{u}|^2+ (\mu +\lambda) \int_{n \Delta t}^t \int_{\Omega_\delta^{w}(t)} |\nabla\cdot\bb{u}|^2,\\[2mm]
S^{n+1}(t)&:=&\frac{1}{4}|| \partial_t w(t)||_{L^2(\Gamma)}^2+\frac{1}{2}|| \Delta w(t)||_{L^2(\Gamma)}^2 + \Pi(w(t)) \\[2mm]
&&+ \frac{1}{2}||\theta(t)||_{L^2(\Gamma)}^2+\frac{1}{2}\delta|| \nabla^3 w(t)||_{L^2(\Gamma)}^2 , \\[2mm]
SD^{n+1}(t) &:=& \int_{n\Delta t}^t \int_{\Gamma} | \nabla \theta|^2.
\end{eqnarray*}

\subsubsection{The energy and the solution of (SSP)}
\begin{lem}\label{qq22sspestimateeps}
For a given function $v \in C^1([(n-1)\Delta t, n\Delta t] ; \mathcal{P}_{str}^k)$, there exists a unique solution of (SSP), $w \in C^2([n\Delta t, (n+1)\Delta t] ; \mathcal{P}_{str}^k)$, $\theta \in C([n \Delta t, (n+1)\Delta t]; \mathcal{P}_{heat}^k)$ that satisfies the following identity for all $t\in[n\Delta t,(n+1)\Delta t]$,
\begin{eqnarray}
&&\frac{1}{4\Delta t}\int_{n \Delta t}^{t}\big(||\partial_t
w-T_{\Delta t}v||_{L^2(\Gamma)}^2 +||\partial_t
w||_{L^2(\Gamma)}^2\big)+S^{n+1}(t)+SD^{n+1}(t) \nonumber \\
&&= S^n(n \Delta t)+\frac{1}{4\Delta t}\int_{n \Delta t}^{t}||T_{\Delta t}v||_{L^2(\Gamma)}^2. \label{qq22lem11eps}
\end{eqnarray}

\end{lem}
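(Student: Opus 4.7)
The plan is to view (SSP) as a Galerkin system of ODEs for the time-dependent coefficients, derive local existence by a Cauchy-Lipschitz argument, obtain the desired energy identity by a formal testing procedure (which is fully rigorous in finite dimension), and then prolong the solution to the whole interval $[n\Delta t,(n+1)\Delta t]$ using this identity together with assumption (A2).

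First I would write $w(t)=\sum_{i=1}^k a_i(t) s_i$ and $\theta(t)=\sum_{j=1}^k b_j(t) h_j$, and take $\widetilde{\psi}=h_j$ and $\psi=s_l$ in $\eqref{qq22SSP}$. Since $\{s_i\}$ and $\{h_j\}$ are orthonormal eigenfunctions of the biharmonic and Dirichlet Laplacian respectively, the linear terms produce a decoupled diagonal part in the coefficient vectors, and the only coupling comes from the mixed gradient term $\int_\Gamma \nabla\partial_t w\cdot\nabla\widetilde{\psi}$ and the forcing $\int_\Gamma\mathcal{F}(w)\psi$. The resulting system is a first-order ODE system (after introducing $a_i'$ as an additional unknown) whose right-hand side depends Lipschitz-continuously on the coefficient vector on any bounded set, by (A1) together with the equivalence of norms on $\mathcal{P}_{str}^k$. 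Picard-Lindelof then provides a unique local-in-time $C^1$ solution, which automatically has the claimed regularity $w\in C^2$, $\theta\in C^1$ by reading off the equation.

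Next, to produce the energy identity, I would test the heat equation with $\widetilde{\psi}=\theta$ and the plate equation with $\psi=\partial_t w$. In the heat identity this yields $\tfrac{1}{2}\tfrac{d}{dt}\|\theta\|_{L^2}^2+\|\nabla\theta\|_{L^2}^2+\int_\Gamma \nabla\partial_t w\cdot\nabla\theta=0$. In the plate identity, the terms combine as $\tfrac{1}{4}\tfrac{d}{dt}\|\partial_t w\|_{L^2}^2+\tfrac{1}{2}\tfrac{d}{dt}\|\Delta w\|_{L^2}^2+\tfrac{\delta}{2}\tfrac{d}{dt}\|\nabla^3 w\|_{L^2}^2+\tfrac{d}{dt}\Pi(w)-\int_\Gamma\nabla\theta\cdot\nabla\partial_t w+\tfrac{1}{2\Delta t}\int_\Gamma(\partial_t w-T_{\Delta t}v)\partial_t w=0$, where I used $(\mathcal{F}(w),\partial_t w)=\tfrac{d}{dt}\Pi(w)$ from (A2) and the fact that $\partial_t w\in\mathcal{P}_{str}^k$ is an admissible test function. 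Summing the two identities eliminates the coupling term $\int_\Gamma\nabla\partial_t w\cdot\nabla\theta$. For the remaining operator-splitting term I would use the algebraic identity $2a(a-b)=(a-b)^2+a^2-b^2$ with $a=\partial_t w$, $b=T_{\Delta t}v$, giving
\begin{equation*}
\frac{1}{2\Delta t}\int_\Gamma(\partial_t w-T_{\Delta t}v)\partial_t w
=\frac{1}{4\Delta t}\bigl(\|\partial_t w-T_{\Delta t}v\|_{L^2}^2+\|\partial_t w\|_{L^2}^2-\|T_{\Delta t}v\|_{L^2}^2\bigr).
\end{equation*}
Integrating over $[n\Delta t,t]$ and using the initial conditions $w(n\Delta t)=w^n(n\Delta t)$, $\partial_t w(n\Delta t)=\partial_t w^n(n\Delta t)$, $\theta(n\Delta t)=\theta^n(n\Delta t)$, which imply $S^{n+1}(n\Delta t)=S^n(n\Delta t)$, yields exactly $\eqref{qq22lem11eps}$.

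Finally, to promote the local solution to a global one on $[n\Delta t,(n+1)\Delta t]$, I would use the energy identity together with assumption (A2): the lower bound $\kappa\|\Delta w\|_{L^2}^2+\Pi(w)+C^*\geq 0$ with $\kappa<1/2$ absorbs the (possibly negative) potential into the $\tfrac12\|\Delta w\|_{L^2}^2$ term, so $S^{n+1}(t)+SD^{n+1}(t)$ is bounded a priori on any finite interval by $S^n(n\Delta t)$, $C^*$ and $\tfrac{1}{4\Delta t}\int\|T_{\Delta t}v\|_{L^2}^2$. This controls $\|a(t)\|$ and $\|a'(t)\|$ in the Galerkin system, excluding blow-up and hence giving existence on the full interval. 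Uniqueness follows by taking the difference of two solutions and applying the same testing procedure, where the only nonlinear contribution $\int_\Gamma(\mathcal{F}(w_1)-\mathcal{F}(w_2))(\partial_t w_1-\partial_t w_2)$ is estimated by assumption (A1) and a standard Gronwall argument. The only mildly delicate point is verifying that the local-Lipschitz hypothesis from (A1) translates to an effective Lipschitz property on the Galerkin coefficients, but this is immediate since on $\mathcal{P}_{str}^k$ the norms $H^{2-\epsilon}$, $H^2$ and $H^3$ are all equivalent to the Euclidean norm of the coefficient vector.
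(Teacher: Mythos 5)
Your proposal is correct and follows essentially the same route as the paper: test the heat equation with $\widetilde{\psi}=\theta$ and the plate equation with $\psi=\partial_t w$, use $\frac{d}{dt}\Pi(w)=(\mathcal{F}(w),\partial_t w)$ and $2a(a-b)=(a-b)^2+a^2-b^2$ to get $\eqref{qq22lem11eps}$, then rewrite (SSP) as a first-order Galerkin ODE system and obtain local existence and uniqueness from Picard--Lindel\"of (using (A1) and the equivalence of norms on $\mathcal{P}_{str}^k$), prolonging to the full interval via the a priori bound furnished by $\eqref{qq22lem11eps}$ and (A2). The only cosmetic difference is ordering: the paper derives the a priori identity first so it may regard $\mathcal{F}$ as effectively uniformly Lipschitz before invoking ODE theory, whereas you invoke the Cauchy--Lipschitz theorem with a locally Lipschitz right-hand side and then use the identity to exclude blow-up; both are standard and correct.
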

\begin{proof}
To prove $\eqref{qq22lem11eps}$, choose $\widetilde{\psi}=\theta$ and $\psi = \partial_t w$ in the first and the second equations in (SSP), respectively, sum them, integrate over $(n\Delta t, (n+1)\Delta t)$ and use the identities $\frac{d}{dt} \Pi(w)=(\mathcal{F}(w),\partial_t w)$ and $2(a-b)a= (a-b)^2+a^2-b^2$. By $\eqref{qq22lem11eps}$ and the coercivity property of the potential $\Pi$ given in (A2), we have the upper bounds for all the necessary norms of $w$ and $\theta$, which implies that $\mathcal{F}$ is uniformly Lipschitz continuous (see assumption (A1)). Now, to solve (SSP), we write $\theta(t) = \sum_{i=1}^k \alpha_i(t) h_i$ and $w(t) = \sum_{i=1}^k \beta_i(t) s_i$, and by choosing $\widetilde{\psi} = h_1,...h_k$ and $\psi = s_1,...,s_k$ in (SSP), we obtain the following problem for $\alpha(t) = [\alpha_1(t),...,\alpha_k(t)]^T$ and $\beta(t) = [\beta_1(t),...,\beta_k(t)]^T$ in the form
\begin{eqnarray}\label{qq22thesystem}
\begin{cases}
\dot{\alpha}(t)+\text{diag}(\Xi_k^h)\alpha(t)+M_k^T\dot{\beta}(t) =0, \\[2mm]
\frac{1}{2} \ddot{\beta}(t) + \frac{1}{2} \frac{\dot{\beta}(t) - T_{\Delta t}V(t)}{\Delta t}+ \text{diag}(\Xi_k^s)\beta(t) - M_k \alpha(t)+F(\beta(t)) + \delta E_k \beta(t) = 0, \\[2mm]
\alpha_i(n\Delta t) = (\theta(n\Delta t),h_i), \quad \beta_i(n\Delta t) = (w(n\Delta t), s_i) \quad \dot{\beta}_i(n\Delta t) = (\partial_t w(n\Delta t),s_i),
\end{cases}
\end{eqnarray}
where
\begin{eqnarray*}
&&V(t) = [(v,s_1), ... , (v,s_k)]^T,\quad \Xi_k^s = [\xi_1^s,...,\xi_k^s]^T, \quad M_k = \langle (\nabla h_i, \nabla s_j) \rangle_{1\leq i,j \leq k}, \\
 &&E_k = \langle \nabla^3 s_i: \nabla^3 s_j \rangle_{1\leq i,j \leq k}, \quad F(\alpha(t)) =\Big [(\mathcal{F}(\sum_{i=1}^k \beta_i(t)), s_1),...,(\mathcal{F}(\sum_{i=1}^k \beta_i(t)), s_k)\Big]^T, \\
&& \Xi_k^h = [\xi_1^h,...,\xi_k^h]^T.
\end{eqnarray*}
Now, the system given in $\eqref{qq22thesystem}$ can be written as an first order ODE system for the unknown $(\alpha(t),\gamma(t),\beta(t))$ in the following form
\begin{eqnarray*}
\frac{d}{dt} \beta(t) &= &\gamma(t),\\
\frac{d}{dt} \gamma(t) &=& - \frac{\gamma(t) - T_{\Delta t}V(t)}{\Delta t}- 2\text{diag}(\Xi_k^s)\beta(t)+2M_k \alpha(t)-2F(\beta(t)) -2 \delta E_k \beta(t), \\
\frac{d}{dt} \alpha(t)&=&- \text{diag}(\Xi_k^h)\alpha(t)-M_k^T\gamma(t),
\end{eqnarray*}
with the obvious choice for initial data, so by taking into consideration that $F$ is now uniformly Lipschitz, the local solution follows by the standard ODE theory. Now, by estimate $\eqref{qq22lem11eps}$, we obtain the solution on the whole time interval $[n\Delta t, (n+1)\Delta t]$, so the proof is complete.
\end{proof}

\subsubsection{A priori estimates of the fluid-sub problem (FSP)}
\begin{lem}
Any solution of (FSP) on the time interval $[n\Delta t, (n+1)\Delta t]$ satisfies
\begin{align}
&F^{n+1}(t) + FD^{n+1}(t)+\int_{n \Delta t}^t\int_{\Gamma} S^w (r^\gamma+ \delta r^a )(v-\partial_t w)\nu^{w}\cdot\bb{e}_3  \nonumber \\
&+ \frac{1}{4\Delta t} \int_{n \Delta t}^t \Big(||v||_{L^2(\Gamma)}^2+ ||v - \partial_t w||_{L^2(\Omega)}^2\Big)= F^n(n \Delta t)+\frac{1}{4\Delta t} \int_{n \Delta t}^t||\partial_t w||_{L^2(\Omega)}^2. \label{qq22energyFSP1}
\end{align}
Moreover, if the density $r$ is bounded from above by $C_r$ and if $\Delta t$ is small enough with respect to $C_r$ and $\delta$ so that $\eqref{qq22aaaa}$ holds, we have the following
inequality
\begin{align}
&F^{n+1}(t) +FD^{n+1}(t) + \frac{1}{4\Delta t} \int_{n \Delta t}^t \Big(||v||_{L^2(\Gamma)}^2+ \frac{1}{2}||v - \partial_t w||_{L^2(\Gamma)}^2\Big) \nonumber \\
& \leq F(n \Delta t)+\frac{1}{4\Delta t} \int_{n \Delta t}^t||\partial_t w||_{L^2(\Omega)}^2 + (\Delta t)^{3/2}. \label{qq22energyFSP2} 
\end{align}
\end{lem}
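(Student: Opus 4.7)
The plan is to derive the identity $\eqref{qq22energyFSP1}$ by testing the momentum equation in (FSP) with the admissible pair $(\bb{q},\psi)=(\bb{U}^{n+1},v^{n+1})$ and combining the outcome with an identity produced by multiplying the damped continuity equation by $JP'(r)$, where $P(r):=\frac{r^\gamma}{\gamma-1}+\delta\frac{r^a}{a-1}$ is the pressure potential satisfying $rP'(r)-P(r)=r^\gamma+\delta r^a$. Setting $\bb{q}=\bb{U}$ makes the antisymmetric convective piece $\frac{1}{2}\int J(r\bb{U}-r\bb{w})\cdot(\bb{q}\cdot\nabla^w\bb{U}-\bb{U}\cdot\nabla^w\bb{q})$ vanish identically, and the three separate time-derivative terms telescope via the Leibniz rule into $\frac{1}{2}\frac{d}{dt}\int_{\Omega_\delta}Jr|\bb{U}|^2$. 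The viscous terms already appear as the viscous integrand of $FD^{n+1}(t)$, and the trace penalty $\frac{1}{2\Delta t}\int_\Gamma v(v-\partial_t w)$ is rewritten by the algebraic identity $2a(a-b)=a^2+(a-b)^2-b^2$ into the $\frac{1}{4\Delta t}(\|v\|^2+\|v-\partial_t w\|^2-\|\partial_t w\|^2)$ combination seen on the two sides of $\eqref{qq22energyFSP1}$.

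To rewrite the pressure term $-\int_{\Omega_\delta}J(r^\gamma+\delta r^a)\nabla^w\cdot\bb{U}$ as a time derivative plus boundary trace plus dissipation, I multiply the continuity equation by $JP'(r)$ and integrate over $\Omega_\delta$. The time piece $\int J\partial_t rP'(r)=\int J\partial_tP(r)$ becomes $\frac{d}{dt}\int JP(r)-\int(\partial_tJ)P(r)$, and the ALE identity $\partial_tJ=J\nabla^w\cdot\bb{w}$ (Jacobi's formula for $A_w$) rewrites the last piece as $-\int J(\nabla^w\cdot\bb{w})P(r)$. The convective term $\int JP'(r)\nabla^w\cdot(r\bb{U})$ splits as $\int JrP'(r)\nabla^w\cdot\bb{U}+\int J\bb{U}\cdot\nabla^wP(r)$; the latter is integrated by parts after pushing forward to $\Omega_\delta^w(t)$, where $\nabla^w$ is the ordinary gradient, and the only nonvanishing boundary contribution (from $\Gamma^w$, since $\bb{U}$ vanishes on the rest of $\partial\Omega_\delta^w$) yields $-\int JP(r)\nabla^w\cdot\bb{U}+\int_\Gamma S^wP(r)v\,\nu^w\cdot\bb{e}_3$. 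The analogous manipulation of $-\int JP'(r)\bb{w}\cdot\nabla^w r=-\int J\bb{w}\cdot\nabla^w P(r)$ produces $\int J(\nabla^w\cdot\bb{w})P(r)-\int_\Gamma S^wP(r)\partial_tw\,\nu^w\cdot\bb{e}_3$. The two interior $J(\nabla^w\cdot\bb{w})P(r)$ contributions cancel, leaving the clean boundary trace $\int_\Gamma S^wP(r)(v-\partial_tw)\nu^w\cdot\bb{e}_3$. Finally, the $\varepsilon$-damping $\varepsilon\int JP'(r)(\Delta r+\frac{1}{J}\nabla J\cdot\nabla r)=\varepsilon\int P'(r)\nabla\cdot(J\nabla r)$ is integrated by parts; the Neumann condition $\partial_nr=0$ on $\partial\Omega_\delta$ annihilates the boundary term and leaves $-\varepsilon\int J|\nabla r|^2P''(r)$, which is exactly the $\varepsilon$-integrand of $FD^{n+1}(t)$ since $P''(r)=\gamma r^{\gamma-2}+\delta ar^{a-2}$. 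Integrating in $t$ over $(n\Delta t,t)$ produces $\eqref{qq22energyFSP1}$.

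The inequality $\eqref{qq22energyFSP2}$ is obtained from $\eqref{qq22energyFSP1}$ by absorbing the boundary flux. Under the pointwise bound $r\le C_r$ and the finite-dimensional (hence $L^\infty$) regularity of $w\in\mathcal{P}_{str}^k$ from Lemma~$\ref{qq22sspestimateeps}$, the factor $S^wP(r)|\nu^w\cdot\bb{e}_3|$ is uniformly bounded by a constant $C(C_r,\delta)$, so Young's inequality with weight $\frac{1}{8\Delta t}$ gives $\int_\Gamma S^wP(r)|v-\partial_tw|\le 2\Delta t\,C(C_r,\delta)^2|\Gamma|+\frac{1}{8\Delta t}\|v-\partial_tw\|_{L^2(\Gamma)}^2$. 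The first term, integrated over a subinterval of length at most $\Delta t$, contributes $O(\Delta t^2)\le(\Delta t)^{3/2}$, and the second is absorbed into half of the $\frac{1}{4\Delta t}\|v-\partial_tw\|^2$ term on the left of $\eqref{qq22energyFSP1}$; making this absorption quantitative is exactly the role of the smallness condition $\eqref{qq22aaaa}$ on $\Delta t$ in terms of $C_r$ and $\delta$. The main technical obstacle is the careful ALE bookkeeping in the pressure-identity derivation, particularly the cancellation of the two $\int J(\nabla^w\cdot\bb{w})P(r)$ interior contributions and the tracking of surface Jacobians $S^w$ when pulling back boundary integrals from $\Gamma^w$ to $\Gamma$; it is precisely this cancellation that collapses the surface flux into the clean combination $v-\partial_tw$. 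A secondary subtlety is that the damping in (FSP) was built as $\varepsilon(\Delta r+\frac{1}{J}\nabla J\cdot\nabla r)$ rather than a bare Laplacian exactly so that $JP'(r)$ times this damping assembles into $P'(r)\nabla\cdot(J\nabla r)$, for which the Neumann boundary data eliminate every boundary contribution and leave a manifestly nonnegative bulk dissipation.
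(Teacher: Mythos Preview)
Your proof is correct and follows essentially the same route as the paper. The paper multiplies the continuity equation by $Jr^{\gamma-1}$ and $J\delta r^{a-1}$ separately, then scales by $\gamma/(\gamma-1)$ and $a/(a-1)$ and uses the material-derivative identity $(\partial_t r-\bb{w}\cdot\nabla^w r)=(\partial_t\rho)\circ A_w$ together with the Reynolds transport theorem on the physical domain; this is equivalent to your combined multiplier $JP'(r)$ together with the reference-domain Jacobi identity $\partial_tJ=J\nabla^w\cdot\bb{w}$ and the cancellation of the two $\int J(\nabla^w\cdot\bb{w})P(r)$ contributions. The treatment of the momentum equation, the $\varepsilon$-damping, and the absorption of the boundary flux via Young's inequality are identical.

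One small remark: your derivation correctly produces the boundary flux $\int_\Gamma S^wP(r)(v-\partial_tw)\,\nu^w\cdot\bb{e}_3$ with the \emph{potential} $P(r)=\frac{r^\gamma}{\gamma-1}+\frac{\delta r^a}{a-1}$, whereas the displayed identity in the lemma carries the \emph{pressure} $r^\gamma+\delta r^a$. This is a harmless misprint in the statement (the paper's own derivation, once the $\gamma/(\gamma-1)$ scaling is applied, also yields the $\frac{1}{\gamma-1}$ prefactor); it does not affect $\eqref{qq22energyFSP2}$ since either factor is bounded by a constant $C(C_r,\delta)$.
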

\begin{proof}
First, we multiply the first equation given in $\eqref{qq22FSP1}$ by $ Jr^{\gamma-1}$ and integrate over $\Omega_\delta$. The first two terms read:
\begin{eqnarray*}
&&\int_{\Omega_\delta} (\partial_t r -\bb{w} \cdot \nabla^{w} r) J r^{\gamma-1} = \frac{1}{\gamma}\int_{\Omega_\delta^w(t)} \frac{d}{dt} (\rho^{\gamma}) = \frac{1}{\gamma}\Big[ \frac{d}{dt}\int_{\Omega_\delta^{w}(t)} \rho^{\gamma} - \int_{\Gamma^{w}(t)} \rho^\gamma \partial_t w\nu^w\cdot \bb{e}_3 \Big] \\
&&= \frac{1}{\gamma}\Big[ \frac{d}{dt}\int_{\Omega_\delta} J r^{\gamma} - \int_{\Gamma} r^\gamma \partial_t w \underbrace{S^w\nu^w\cdot \bb{e}_3}_{=1} \Big] .
\end{eqnarray*}
By the divergence theorem,
\begin{align*}
\int_{\Gamma^w(t)} \rho^\gamma v\nu^{w}\cdot\bb{e}_3 = \int_{\Omega_\delta^w(t)} \nabla \cdot (\rho^\gamma \bb{u}) = \gamma \int_{\Omega_\delta^w(t)}\rho^{\gamma-1}\bb{u} \cdot \nabla \rho+ \int_{\Omega_\delta^w(t)} \rho^\gamma(\nabla \cdot \bb{u}),
\end{align*}
and by expressing
\begin{eqnarray*}
\rho^{\gamma-1} \bb{u} \cdot \nabla\rho = - \rho^\gamma(\nabla \cdot \bb{u}) +\nabla \cdot (\rho \bb{u}) \rho^{\gamma-1},
\end{eqnarray*}
we obtain
\begin{eqnarray*}
&&\gamma \int_{\Omega_\delta^{w}(t)} \nabla \cdot (\rho \bb{u})\rho^{\gamma-1} = \int_{\Gamma^w(t)} \rho^\gamma v \nu^{w}\cdot\bb{e}_3 + (\gamma-1) \int_{\Omega_\delta^w(t)} \rho^\gamma (\nabla \cdot \bb{u})\\
&&\hspace{1.3in}=\int_{\Gamma} r^\gamma v + (\gamma-1) \int_{\Omega_\delta} J r^\gamma(\nabla^w \cdot \bb{U}).
\end{eqnarray*}
The last term can be expressed as
\begin{eqnarray*}
-\int_{\Omega_\delta} J \Big( \Delta r + \frac{1}{J}\nabla J\cdot \nabla r\Big) r^{\gamma-1} = (\gamma-1) \int_{\Omega_\delta} J |\nabla r|^2 r^{\gamma -2} .
\end{eqnarray*}
By multiplying the equation $\eqref{qq22FSP1}$ by $\gamma/(\gamma-1)$ and using the obtained calculation
\begin{eqnarray*}
&&\frac{1}{\gamma-1} \int_{\Omega_\delta}(Jr^\gamma) (t) + \int_{n\Delta t}^t \int_{\Omega_\delta} J r^\gamma (\nabla^w \cdot \bb{U}) +\varepsilon\gamma \int_{n\Delta t}^{t} \int_{\Omega_\delta}J |\nabla r|^2r^{\gamma-2} \\
&&= \int_{n \Delta t}^t\int_{\Gamma} r^\gamma (v-\partial_t w)
= \frac{1}{\gamma-1} \int_{\Omega_\delta} (J \rho^\gamma)(n\Delta t) .
\end{eqnarray*}
Summing up this equality with $\eqref{qq22FSP1}$ multiplied by $ J \delta r^{a-1}$ and integrated on $[n\Delta t, (n+1)\Delta t] \times \Omega_\delta$, with the momentum equation for $\bb{q} = \bb{U}$ and integrated on $[n\Delta t, (n+1)\Delta t] \times \Omega_\delta$, we obtain the equality $\eqref{qq22energyFSP1}$\footnote{Notice that we don't multiply the approximate continuity equation $\eqref{qq22FSP1}_1$ by $\frac{1}{2}\bb{u}^2$ and add to the total energy.}. To obtain $\eqref{qq22energyFSP2}$ from $\eqref{qq22energyFSP1}$, it is enough to prove that the following term can be controlled:
\begin{align}
\int_{n \Delta t}^t\int_{\Gamma^{w}(t)} (r^\gamma+\delta r^a) (v-\partial_t w) &\leq \int_{n \Delta t}^{(n+1)\Delta t} C(C_{r}^{\gamma}+ \delta C_{r}^{a})||v-\partial_t w||_{L^2(\Gamma)} \nonumber \\
&\leq \int_{n \Delta t}^{(n+1)\Delta t} C(C_r,\delta) ||v-\partial_t w||_{L^2(\Gamma)} \nonumber\\
&\leq (\Delta t)^{3/2} + \frac{1}{8\Delta t} \int_{n \Delta t}^{(n+1)\Delta t}|| v - \partial_t w||_{L^2(\Gamma)}^2, \label{qq22denen1}
\end{align}
for $\Delta t$ small enough such that 
\begin{eqnarray}\label{qq22aaaa}
8C(C_r,\delta)^2\sqrt{\Delta t} \leq 1
\end{eqnarray}
which then gives us the estimate $\eqref{qq22energyFSP2}$.
\end{proof}

\subsubsection{A priori estimates of the whole system on the time interval $[0, n\Delta t]$}
We will use the following notations throughout the remainder of this paper:
\begin{mydef}
For a given $1<b \leq \infty$ and a domain $E$, denote by
\begin{eqnarray*}
L^{b^-}(E) &:= \cap_{s<b} L^s(E), \\
W^{{b^-},p}(E) &:= \cap_{s<b} W^{s,p}(E), \\
W^{a,b^-}(E) &:= \cap_{s<b} W^{a,s}(E).
\end{eqnarray*}
We say that a function $f$ converges weakly in $L^{b^-}(E)$ if it converges weakly in all $L^s(E)$, for $s<b$ (analogously for the weak convergence in $W^{b^-,p}(E)$ and $W^{a,b^-}(E)$). We will also write for any $1< b \leq \infty$,
\begin{eqnarray*}
|| f ||_{L^{b^-}(E)}\leq D,
\end{eqnarray*}
for a constant $D>0$, if for all $1\leq b' <b$, there exists a constant $C(b')$ such that
\begin{eqnarray}\label{qq22star}
|| f ||_{L^{b'}(E)}\leq C(b')D.
\end{eqnarray}
The same notation will be also used for the Sobolev spaces $W^{b^-,p}(E)$ and $W^{a,b^-}(E)$.
\end{mydef}
\begin{rem}
The constant $ C(b')$ appearing in $\eqref{qq22star}$ will usually blow up as $b'$ approaches $b$. However, this makes no essential difference in the calculation that follows, since at no point the limit $b' \to b$ occurs. Therefore, we will use this notation without additionally emphasizing this.
\end{rem}
We are ready to obtain the uniform bounds of the approximate solutions as follows.
\begin{lem}\label{qq22111}
For a given $\Delta t>0$, $T = N \Delta t$ and $n\leq N$, let $r(t),\mathbf{U}(t), v(t),w(t), \theta(t)
$ be the solutions of (SSP) and (FSP) obtained inductively on the time interval $[0,n\Delta t]$. If $\Delta t$ is small enough with respect to $k,\varepsilon,\delta$ and initial energy so that $\eqref{qq22andanotherone}$ holds, then one has for all $0\leq m \leq n$:
\begin{eqnarray}
&&S(m\Delta t)+ F(m\Delta t) + \sum_{i=1}^m \Big[ SD(i\Delta t)+ FD(i\Delta t) \Big]+ \ddfrac{1}{4\Delta t}\int_{(m-1)\Delta t}^{m\Delta t}||T_{\Delta t}v||_{L^2(\Gamma)}^2 \nonumber \\
&&+\ddfrac{1}{4\Delta t} \bint_{0}^{m\Delta t} \Big[||\partial_t
w-T_{\Delta t}v||_{L^2(\Gamma)}^2+\ddfrac{1}{2}||v_{\Delta
t,k}-\partial_t w||_{L^2(\Gamma)}^2\Big]\leq C(E_0)+ T(\Delta t)^{1/2}. \quad \quad\quad \label{qq22ineqind}
\end{eqnarray}
Consequently, we have the following boundedness: 
\begin{enumerate}
\item[(i)] $|| \partial_t w ||_{L^\infty(0,n\Delta t; L^2(\Gamma))}+ || w ||_{L^\infty(0,n\Delta t; H^2(\Gamma))}+ \delta || \nabla^3 w ||_{L^\infty(0,n\Delta t; L^2(\Gamma))}~~\\[2mm]
+|| \theta ||_{L^\infty(0,n\Delta t; L^2(\Gamma))}+|| \nabla \theta ||_{L^2(0,n\Delta t; L^2(\Gamma))} \leq C(E_0) ;$
\item[(ii)]$||r ||_{L^\infty(0,n\Delta t; L^\gamma(\Omega_\delta))}+|| r |\bb{U}|^2 ||_{L^\infty(0,n\Delta t; L^1(\Omega_\delta))}+ ||\nabla^w\bb{U} ||_{L^2(0,n\Delta t; L^2(\Omega_\delta))}\leq C(E_0)$, \\[2mm]
$|| r ||_{L^\infty(0,n\Delta t; L^a(\Omega_\delta))} \leq C(E_0,\delta)$,~~$|| r^{\gamma/2} ||_{L^2(0,n\Delta t; H^1(\Omega_\delta))} \leq C(E_0, \varepsilon)$, \\
$||r^{a/2} ||_{L^2(0,n\Delta t; H^1(\Omega_\delta))}\leq C(E_0,\delta,\varepsilon)$;
\item[(iii)] We can choose $T$ only depending on $E_0$ such that $0<c(E_0) \leq J = 1+w \leq C(E_0)$ for all $t \in [0,n\Delta t]$;
\item[(iv)]$||\partial_t w||_{L^2(0,n\Delta t; H^{\frac{1}{2}}(\Gamma) )} \leq C(E_0,\delta)$ and $||\partial_t w||_{L^2(0,n\Delta t; H^{(\frac{1}{2})^-}(\Gamma) )} \leq C(E_0)$; 
\item[(v)]$||\bb{w}||_{L^2(0,n\Delta t; L^{4}(\Gamma) )} \leq C(E_0,\delta)$ and $||\bb{w}||_{L^2(0,n\Delta t; L^{4^-}(\Gamma) )} \leq C(E_0)$; 
\item[(vi)] $\sqrt{\varepsilon}|| \nabla r ||_{L^2(0,n\Delta t; L^2(\Omega_\delta))}\leq C(E_0,\delta)$.
\end{enumerate}
\end{lem}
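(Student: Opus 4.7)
I would prove \eqref{qq22ineqind} by finite induction on $m$, summing on each sub-interval $[m\Delta t,(m{+}1)\Delta t]$ the structure energy identity \eqref{qq22lem11eps} of Lemma~\ref{qq22sspestimateeps} with the fluid energy inequality \eqref{qq22energyFSP2}. When the two are added, the term $\tfrac{1}{4\Delta t}\int||\partial_t w||^2$ appearing on the right of \eqref{qq22energyFSP2} is cancelled against the matching term on the left of \eqref{qq22lem11eps}; what survives on the left is $S+F$, both dissipations $SD+FD$, and the three operator-splitting jumps $||\partial_t w-T_{\Delta t}v||^2$, $\tfrac12||v-\partial_t w||^2$, $||v||^2$, while on the right only $S(0)+F(0)$, $\tfrac{1}{4\Delta t}\int||T_{\Delta t}v||^2$ and the slack $(\Delta t)^{3/2}$ remain. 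Using $T_{\Delta t}v(t)=v(t-\Delta t)$ (with $v\equiv v(0)$ for $t<0$), the $||v||^2$ and $||T_{\Delta t}v||^2$ contributions telescope across the $m$ sub-intervals, leaving only the last-step trace together with a constant depending on the initial data, while the per-step slacks accumulate to $N(\Delta t)^{3/2}=T(\Delta t)^{1/2}$.

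\medskip

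\textbf{Admissibility and the main obstacle.} To invoke \eqref{qq22energyFSP2} on the $m$-th step one needs the density bound $r\le C_r$ underlying \eqref{qq22aaaa}. This is supplied by the maximum principle and the maximal parabolic regularity for the linear damped equation $\eqref{qq22FSP1}_1$ on the $C^{2,\alpha}$-regular extended domain $\Omega_\delta$ with homogeneous Neumann data; the constant $C_r$ depends on $k,\varepsilon,\delta$ and on the data of the previous sub-interval. The only genuine obstacle is verifying that a single choice of $\Delta t$ consistent with \eqref{qq22andanotherone} forces \eqref{qq22aaaa} uniformly for all $m\le N$. This is admissible because the right-hand side of \eqref{qq22ineqind} grows only by $(\Delta t)^{3/2}$ per step, so $C_r$ propagates in a controlled manner and the smallness requirement on $\Delta t$ can be closed inductively once $T=T(E_0)$, as obtained in (iii) below, has been fixed.

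\medskip

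\textbf{Derivation of (i)--(vi).} Items (i) and (ii) are read off directly from the $S,F,SD,FD$ terms in \eqref{qq22ineqind}, using the coercivity $\kappa||\Delta w||_{L^2(\Gamma)}^2+\Pi(w)+C^\ast\ge 0$ of assumption (A2) and the identity $|\nabla r^{b/2}|^2=(b/2)^2 r^{b-2}|\nabla r|^2$ for $b\in\{\gamma,a\}$. Item (iii) follows from the embedding $H_0^2(\Gamma)\hookrightarrow C^0(\overline\Gamma)$ and the $L^\infty L^2$-control of $\partial_t w$ in (i), which together force $w>-1$ to persist on $[0,T]$ as soon as $T=T(E_0)$ is chosen small enough. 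For (iv), the trace regularity $v=\bb{U}_{|\Gamma\times\{0\}}\cdot\bb{e}_3\in L^2(0,n\Delta t;H^{1/2}(\Gamma))$ follows from $\bb{U}\in L^2(0,n\Delta t;H^1(\Omega_\delta))$ (item (ii)) and the uniform Lipschitz character of $\partial\Omega_\delta$; combining this with the $\delta$-controlled bound $\delta||\nabla^3 w||_{L^\infty L^2}^2\le C(E_0)$ and the $L^2L^2$-smallness of $v-\partial_t w$ supplied by \eqref{qq22ineqind}, one recovers the strong $L^2H^{1/2}$ bound with $\delta$-dependent constant, while interpolation without invoking the $\delta$-regularization gives the weaker $L^2H^{(1/2)^-}$ bound uniform in $\delta$. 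Item (v) is immediate from (iv), the expression $\bb{w}=(z{+}1)\partial_t w\bb{e}_3$, and the critical two-dimensional Sobolev embedding $H^{1/2}(\Gamma)\hookrightarrow L^{4^-}(\Gamma)$. Finally, (vi) is obtained by testing the damped continuity equation $\eqref{qq22FSP1}_1$ against $r$ itself and integrating by parts on $\Omega_\delta$; after absorbing the convective contributions via (i)--(iii), one obtains $\varepsilon\int_0^{n\Delta t}\!\!\int_{\Omega_\delta}|\nabla r|^2\le C(E_0,\delta)$.
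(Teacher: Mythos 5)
Your proposal is correct and follows essentially the same route as the paper: sum the per-step energy identities \eqref{qq22lem11eps} and \eqref{qq22energyFSP2}, cancel the $\frac{1}{4\Delta t}\int\|\partial_t w\|^2$ contributions, telescope the $\|v\|^2$ / $\|T_{\Delta t}v\|^2$ integrals across the $m$ sub-intervals (with the $n=0$ tail bounded by $\|v(0)\|^2$ and the slacks summing to $T(\Delta t)^{1/2}$), and then read (i)--(vi) off the resulting uniform bound. Two small refinements are worth noting. For (vi), the paper tests the damped continuity equation against $Jr$ rather than $r$: this lets the damping $\varepsilon\bigl(\Delta r+\tfrac{1}{J}\nabla J\cdot\nabla r\bigr)$ be rewritten as $\tfrac{\varepsilon}{J}\nabla\cdot(J\nabla r)$ and, after integration by parts, yield exactly $-\varepsilon\int J|\nabla r|^2$ with no leftover cross term; moreover, that step silently requires $a\geq 4$ to control $\int J r^2\,\nabla^w\!\cdot\bb{U}$ and needs the boundary term $\int_\Gamma r^2(v-\partial_t w)$ to be handled as in \eqref{qq22denen1}, neither of which your sketch mentions. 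For the $\delta$-uniform bound in (iv), ``interpolation'' is too vague to stand on its own: the paper does not interpolate but invokes the weaker trace theorem of \cite{Boris} for domains with merely H\"older boundary, applied on the physical domain $\Omega_\delta^{w}(t)$ whose $H^{1/2}$-trace regularity degrades to $H^{(1/2)^-}$ precisely because the $\delta$-regularization is dropped. Finally, your use of $\bb U\in L^2 H^1(\Omega_\delta)$ in (iv) is correct but should be flagged as following from item (ii) together with the Jacobian bound $\|\nabla A_w\|_{L^\infty}\leq C(E_0,\delta)$, which itself relies on the $\delta\|\nabla^3 w\|$ bound; the constant is therefore $\delta$-dependent, as required by the first half of (iv).
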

\begin{proof}
We sum up $\eqref{qq22lem11eps}$ and $\eqref{qq22energyFSP2}$ for $t=i\Delta t$ into one inequality, and then we sum up these inequalities over $i=1,...,m$, so by telescoping we obtain $\eqref{qq22ineqind}$. Next, from $\eqref{qq22ineqind}$ we have that $S(m\Delta t),F(m\Delta t) \leq C(E_0)$ for all $1\leq m \leq n$, which by $\eqref{qq22ineqind}$ and $\eqref{qq22energyFSP2}$ used at all times $t \in [0,m\Delta t]$, the coercivity estimate of the potential $\Pi$ given in (A2) and $\eqref{munu}$  imply the boundedness given in $(i)$ and $(ii)$. 

Now, from $(i)$, we have that $w$ is uniformly bounded in $C^{0,\alpha}(0,n\Delta t; C^{0,1-2\alpha}(\Gamma))$, and since $J(0) = w_{0,k}+1 \geq c > 0$, one obtains (for say $\alpha = \frac{1}{4}$) for any $t \in [0,n\Delta t]$
\begin{eqnarray*}
||J(t)||_{C(\Gamma)} \geq || J(0) ||_{C(\Gamma)}-||J(t) - J(0)||_{C^{0,\frac{1}{4}}(0,n\Delta t; C^{0,\frac{1}{2}}(\Gamma))} \geq c - C(E_0)T^{\frac{1}{4}} \geq \frac{c}{2} >0,
\end{eqnarray*}
as $T\leq \big(\frac{c}{2C(E_0)}\big)^4$, so $(iii)$ follows. Next, since $v$ is the ``Lagrangian'' trace of $\bb{u}$ on $\Gamma^{w}(t)$, we have 
\begin{eqnarray*}
||v||_{L^2(0,n\Delta t; H^{1/2}(\Gamma))}\leq C(E_0,\delta)||\bb{u}||_{L^2(0,n\Delta t; H^1(\Omega_\delta^w(t)))} \leq C(E_0,\delta),
\end{eqnarray*}
so
\begin{eqnarray*}
||\partial_t w||_{L^2(0,n\Delta t; H^{1/2}(\Gamma))}& \leq& || \partial_t w - v ||_{L^2(0,n\Delta t; H^{1/2}(\Gamma))} + || v ||_{L^2(0,n\Delta t; H^{1/2}(\Gamma))}\\
& \leq& \sqrt{\Delta t}C(k,E_0)+ C(E_0,\delta) \leq C(E_0,\delta),
\end{eqnarray*}
for
\begin{eqnarray}\label{qq22andanotherone}
\sqrt{\Delta t}C(k,E_0,\delta) \leq 1,
\end{eqnarray}
where we used the equivalence of spatial norms in a finite basis and $\eqref{qq22ineqind}$. The second bound in $(iv)$ can be obtained in the same way by using the weaker trace results for the domains with the H\"{o}lder regularity (see \cite{Boris}), since here we don't rely on the Lipschitz regularity of the domain $\Omega_\delta^w(t)$ that comes from the bound $\delta || \nabla^3 w||_{L^2(0,n\Delta t; L^{2}(\Gamma))}$. This boundedness in $(v)$ is just a consequence of the Sobolev imbedding theorem (see \cite{adams}).

Now, we multiply the continuity equation $\eqref{qq22FSP1}$ by $J{r}$ and integrate over $[0,n \Delta t] \times \Omega_\delta$, to obtain
\begin{eqnarray*}
&&\frac{1}{2} ||\sqrt{J}(n\Delta t){r} (n\Delta t)||_{L^2(\Omega_\delta)}^2 + \varepsilon \int_0^{n\Delta t} \int_{\Omega_\delta} J|\nabla {r}|^2\\
&&= \frac{1}{2} ||\sqrt{J}(0){r} (0)||_{L^2(\Omega_\delta)}^2 - \frac{1}{2} \int_0^{n\Delta t} \int_{\Omega_\delta}J {r}^2 (\nabla^w \cdot \bb{U}) - \int_0^{n\Delta t}\int_{\Gamma} {r}^2 (v - \partial_t w) .
\end{eqnarray*}
The second term on the right-hand side is majorized by \\ $C_J \sqrt{T} ||\rho||_{L^\infty(0,n\Delta t; L^4(\Omega_\delta))}^2||\nabla^w \cdot \bb{U}||_{L^2([0,n \Delta t] \times \Omega_\delta)}$, provided that $a\geq 4$, while the last term can be bounded in the same way as in $\eqref{qq22denen1}$, so $(vi)$ follows.

\end{proof}

\subsubsection{The solution of (FSP)}
Here we aim to solve the problem (FSP) by the Leray-Schauder fixed-point argument. This will be carried out in Lemma $\ref{qq22contraction3}$. We first solve the continuity equation for given fluid velocity:

\begin{lem}\label{qq22contraction1}
Let $\bb{U} \in \mathcal{F}_k^{n+1}$ with $|| \bb{U}||_{C([n\Delta t,(n+1)\Delta t]; L^2(\Omega_\delta))}^2\leq R$ and let $w \in C^2([n\Delta t,(n+1)\Delta t]; \mathcal{P}_{str}^k)$ be the solution of (SSP) on the time interval $[n\Delta t, (n+1)\Delta t]$. Then the equation
\begin{eqnarray}\label{qq22auxiliary1}
\partial_t r + \nabla^{w}\cdot (r \bb{U}) - \bb{w} \cdot \nabla^{w} r = \varepsilon \Big( \Delta r + \frac{1}{J}\nabla J\cdot \nabla r\Big) , \quad \text{a.e. in } Q_{\delta,T}, 
\end{eqnarray}
with $\partial_n r = 0$ on $\partial \Omega_\delta$ and $r(n\Delta t, \cdot)$ being given by the solution of (FSP) inductively obtained on the previous time interval $[(n-1)\Delta t, n\Delta t]$, has a unique solution $r \in \mathcal{D}^{n+1}$ such that
\begin{eqnarray}\label{qq22sobest}
||r ||_{\mathcal{D}^{n+1}}^2 \leq C(\varepsilon,E_0,k,R)||r(n\Delta t)||_{ H^{1}(\Omega_\delta)}^2,
\end{eqnarray}
and
\begin{align}\label{qq22lowup}
\min\limits_{X \in \Omega_\delta}r(n\Delta t,X) e^{ -\int_{n\Delta t}^{t}|| \nabla^w \cdot\bb{U} (\tau)||_{L^\infty(\Omega_\delta)} d \tau} \leq r(t, X) \leq \max\limits_{X \in \Omega_\delta}r(n\Delta t,X) e^{\int_{n \Delta t}^{t}|| \nabla^w \cdot \bb{U} (\tau) ||_{L^\infty(\Omega_\delta)} d \tau}, \quad \quad
\end{align}
for all $t \in [n\Delta t, (n+1)\Delta t]$.
\end{lem}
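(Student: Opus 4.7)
Expanding $\nabla^{w}\!\cdot(r\bb{U})=\bb{U}\cdot\nabla^{w}r + r(\nabla^{w}\!\cdot\bb{U})$, the equation (\ref{qq22auxiliary1}) rewrites as a classical linear parabolic PDE in non-divergence form
\begin{equation*}
\partial_t r - \varepsilon \Delta r + \bb{B}(t,X)\cdot\nabla r + c(t,X)\,r = 0 \quad\text{in } Q_{\delta,T},\qquad \partial_n r = 0 \text{ on } \partial\Omega_\delta,
\end{equation*}
where $\bb{B}$ collects the contributions of $\bb{U}-\bb{w}$, the push-forward matrix $(\nabla A_w)^{-1}$, and $-\tfrac{\varepsilon}{J}\nabla J$, while $c=\nabla^{w}\!\cdot\bb{U}$. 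Since $\bb{U}\in\mathcal{F}_k^{n+1}$ and $w\in\mathcal{P}_{str}^k$ lie in finite-dimensional bases of smooth functions and $J=1+w$ is uniformly bounded above and away from $0$ by Lemma \ref{qq22111}(iii), all spatial derivatives of $\bb{B}$ and $c$ are continuous in time and uniformly bounded in $X$, with $L^\infty$-norms controlled by $\varepsilon, E_0, k, R$. We therefore face a uniformly parabolic linear problem with smooth coefficients on the $C^{2,\alpha}$-domain $\Omega_\delta$ with homogeneous Neumann data.

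\textbf{Existence, uniqueness and Sobolev bound.} First I would construct the solution via a Galerkin approximation in the eigenbasis of the Neumann Laplacian on $\Omega_\delta$; global solvability on $[n\Delta t,(n+1)\Delta t]$ is immediate since the finite-dimensional ODE system is linear with continuous coefficients. To pass to the limit and secure (\ref{qq22sobest}), I would derive two a priori estimates: testing the Galerkin equation against $r$ yields (via Gronwall, absorbing $\bb{B}$ and $c$) an $L^\infty_t L^2\cap L^2_t H^1$ bound in terms of $\|r(n\Delta t)\|_{L^2}$, and then testing against $-\Delta r$ (the Neumann condition kills the boundary term, and Young's inequality absorbs the drift and zero-order contributions into $\varepsilon\|\Delta r\|_{L^2}^2$) upgrades this to an $L^\infty_t H^1\cap L^2_t H^2$ bound in terms of $\|r(n\Delta t)\|_{H^1}$. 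Reading off $\partial_t r$ directly from the PDE produces the $L^2_tL^2$ bound on $\partial_t r$, completing (\ref{qq22sobest}). Uniqueness follows by linearity from an $L^2$-energy argument on the difference of two solutions.

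\textbf{Pointwise bounds.} For (\ref{qq22lowup}) I would invoke the parabolic comparison principle. Setting $K(\tau):=\|\nabla^{w}\!\cdot\bb{U}(\tau)\|_{L^\infty(\Omega_\delta)}$ and $M_0:=\max_{\Omega_\delta}r(n\Delta t,\cdot)$, define the spatially constant
\begin{equation*}
\bar r(t) := M_0 \exp\!\left(\int_{n\Delta t}^{t} K(\tau)\,d\tau\right).
\end{equation*}
Plugging $\bar r$ into the reformulated PDE yields $\partial_t\bar r - \varepsilon\Delta\bar r + \bb{B}\cdot\nabla\bar r + c\,\bar r = (K(t)+c)\bar r\ge 0$, the Neumann condition is trivially satisfied, and $\bar r(n\Delta t)\ge r(n\Delta t)$ pointwise; testing the equation for $r-\bar r$ against $(r-\bar r)_+$ and applying Gronwall then yields $r\le\bar r$. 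The mirror construction $\underline r(t):=m_0\exp(-\int_{n\Delta t}^{t}K(\tau)\,d\tau)$ with $m_0:=\min_{\Omega_\delta}r(n\Delta t,\cdot)\ge 0$ gives a sub-solution because $c-K(t)\le 0$, and an identical Gronwall argument produces $r\ge\underline r$.

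\textbf{Expected difficulty.} Since the PDE is linear with smooth coefficients, no genuine nonlinearity or low-regularity obstacle arises; the real work is bookkeeping the dependence of the constant in (\ref{qq22sobest}) on $(\varepsilon,E_0,k,R)$. The only slightly delicate point is that $\bb{B}$ involves $\nabla J$ and $\nabla(\nabla A_w)^{-1}$, hence third-order spatial derivatives of $w$; these are uniformly controlled because $w\in \mathcal{P}_{str}^k$ is finite-dimensional (where all Sobolev norms are equivalent), which is precisely why the constant is allowed to depend on $k$ at this stage.
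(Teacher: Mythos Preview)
Your proposal is correct and follows essentially the same route as the paper: the paper also invokes classical linear parabolic theory for existence and uniqueness, derives (\ref{qq22sobest}) by testing against $r$, $\partial_t r$, and $-\Delta r$ and applying Gronwall (you read off the $\partial_t r$ bound from the equation instead, which is equivalent), and proves (\ref{qq22lowup}) by exactly the same super/sub-solution comparison with testing against the positive part. One harmless over-statement: the drift $\bb{B}$ only involves \emph{first} derivatives of $w$ (through $(\nabla A_w)^{-1}$ and $\nabla J$), not third, so the $k$-dependence enters already at that level.
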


\begin{proof}
The equation $\eqref{qq22auxiliary1}$ is linear parabolic, so it has a unique solution by the classical theory. To obtain the estimate $\eqref{qq22sobest}$, we first multiply the equation $\eqref{qq22auxiliary1}$ by $r$ and integrate over $\Omega_\delta$ to obtain:
\begin{eqnarray}
&&\frac{1}{2}\frac{d}{dt} ||r||_{L^2(\Omega_\delta)}^2 +\varepsilon ||\nabla r||_{L^2(\Omega_\delta)}^2 \nonumber\\
&&\leq \Big[|| \nabla^w \cdot r \bb{U} ||_{L^2(\Omega_\delta)} +|| r\nabla^w \cdot \bb{U} ||_{L^2(\Omega_\delta)}  \nonumber \\
&&\quad + || \bb{w}\cdot \nabla^w r||_{L^2(\Omega_\delta)} + || \frac{1}{J} \nabla J\cdot\nabla r||_{L^2(\Omega_\delta)} \Big] ||r||_{L^2(\Omega_\delta)} \nonumber \\
&&\leq \Big[ ||\nabla r||_{L^2(\Omega_\delta)} ||\bb{U}||_{L^\infty(\Omega_\delta)} || \nabla A_w^{-1} ||_{L^\infty(\Omega_\delta)} + || r||_{L^2(\Omega_\delta)} ||\nabla \bb{U}||_{L^\infty(\Omega_\delta)} || \nabla A_w^{-1} ||_{L^\infty(\Omega_\delta)} \nonumber \\
&&+ ||\bb{w}||_{L^\infty(\Omega_\delta)} || \nabla A_w^{-1} ||_{L^\infty(\Omega_\delta)} ||\nabla r||_{L^2(\Omega_\delta)} + || \frac{1}{J}||_{L^\infty(\Omega_\delta)} || \nabla J||_{L^\infty(\Omega_\delta)} ||\nabla r||_{L^2(\Omega_\delta)}
\Big] ||r||_{L^2(\Omega_\delta)} \nonumber \\
&&\leq C(R, E_0,k) \Big[||\nabla r||_{L^2(\Omega_\delta)}+ ||r||_{L^2(\Omega_\delta)} \Big]||r||_{L^2(\Omega_\delta)} \nonumber \\
&&\leq \frac{\varepsilon}{2} ||\nabla r||_{L^2(\Omega_\delta)}^2 + \frac{2}{\varepsilon C(R, E_0,k)^2}|| r||_{L^2(\Omega_\delta)}^2, \label{qq22sobestp1}
\end{eqnarray}
where we used the equivalence of spatial norms in a finite basis, estimate $\eqref{qq22lem11eps}$ combined with the uniform bounds on the interval $[0,n\Delta t]$ given in Lemma $\ref{qq22111}$. Similarly, by multiplying the equation $\eqref{qq22auxiliary1}$ by $\partial_t r$ and $-\Delta r$ respectively, and integrating over $\Omega_\delta$, we obtain
\begin{eqnarray} \label{qq22sobestp2}
||\partial_t r||_{L^2(\Omega_\delta)}^2 +\frac{\varepsilon}{2}\frac{d}{dt}||\nabla r||_{L^2(\Omega_\delta)}^2 &\leq& \frac{1}{2} || \partial_t r||_{L^2(\Omega_\delta)}^2 + \frac{2}{C(R, E_0,k)^2} ||\nabla r||_{L^2(\Omega_\delta)}^2, \quad
\end{eqnarray}
and
\begin{eqnarray} \label{qq22sobestp3}
\frac{1}{2}\frac{d}{dt} ||\nabla r||_{L^2(\Omega_\delta)}^2 +\varepsilon ||\Delta r||_{L^2(\Omega_\delta)}^2 &\leq& \frac{\varepsilon}{2}||\Delta r||_{L^2(\Omega_\delta)}^2 + \frac{2}{\varepsilon C(R, E_0,k)^2} ||\nabla r||_{L^2(\Omega_\delta)}^2 . \quad 
\end{eqnarray}
Combining $\eqref{qq22sobestp1}$, $\eqref{qq22sobestp2}$ and $\eqref{qq22sobestp3}$, we obtain
\begin{eqnarray*}
\frac{d}{dt} || r ||_{H^1(\Omega_\delta)}^2 + ||\partial_t r||_{L^2(\Omega_\delta)}^2 +\varepsilon ||\Delta r||_{L^2(\Omega_\delta)}^2 \leq C(R, E_0, k, \varepsilon) || r||_{H^1(\Omega_\delta)}^2 
\end{eqnarray*}
which by the Gronwall inequality implies
\begin{eqnarray*}
||\partial_t r||_{L^2(n\Delta t, (n+1)\Delta t; L^2(\Omega_\delta))}^2 + ||\Delta r||_{L^2(n\Delta t, (n+1)\Delta t; L^2(\Omega_\delta))}^2 \leq C(R, E_0, k, \varepsilon) || r(n\Delta t)||_{H^1(\Omega_\delta)}^2 ,
\end{eqnarray*}
so the estimate $\eqref{qq22sobest}$ follows. Next, to prove $\eqref{qq22lowup}$ we introduce the function 
\begin{eqnarray*}
d(t,X) := r(t,X) - \max\limits_{X \in \Omega_\delta}r(n\Delta t,X) e^{\int_{n \Delta t}^{t}|| \nabla^w \cdot \bb{U} (\tau) ||_{L^\infty(\Omega_\delta)} d \tau},
\end{eqnarray*}
which obviously satisfies the following differential inequality
\begin{eqnarray*}
\partial_t d + \nabla^{w}\cdot (d \bb{U}) - \bb{w}\cdot \nabla^{w} d - \varepsilon \Big( \Delta d + \frac{1}{J}\nabla J\cdot \nabla d\Big) \leq 0,
\end{eqnarray*}
with $d(n\Delta t) \leq 0$ and $\partial_n d = 0$ on $\partial \Omega_\delta$. Now, multiplying this inequality by $d^+ := \max\{d,0\}$ and integrating over $\Omega_\delta$, we have
\begin{eqnarray*}
\frac{d}{dt} ||d^+||_{L^2(\Omega_\delta)}^2 + \varepsilon ||\nabla d^+||_{L^2(\Omega_\delta)}^2 \leq C(\varepsilon, E_0,k,R) || d^+||_{L^2(\Omega_\delta)}^2 ,
\end{eqnarray*}
which by Gronwall's inequality gives that $||d^+||_{L^2(\Omega_\delta)}^2 \leq 0$ and consequently the right inequality of $\eqref{qq22lowup}$. The left inequality of $\eqref{qq22lowup}$ is obtained similarly, so the proof is finished.
\end{proof}

\begin{lem}\label{qq22contraction2}
Let $\widetilde{\bb{U}} \in \mathcal{F}_k^{n+1}$ with $|| \widetilde{\bb{U}}||_{C(n\Delta t,(n+1)\Delta t; L^2(\Omega_\delta))}^2\leq R$, let $w \in C^2([n\Delta t,(n+1)\Delta t]; \mathcal{P}_{str}^k)$ be the solution of (SSP) on the time interval $[n\Delta t, (n+1)\Delta t]$ and let $r= r(\widetilde{\bb{U}})$ be the corresponding solution of the equation $\eqref{qq22auxiliary1}$, obtained in Lemma $\ref{qq22contraction1}$. If $\Delta t$ is small enough with respect to $k,\varepsilon,\delta$ and initial energy so that $\eqref{qq22aaaa}$ and $\eqref{qq22aaaand2}$ hold, then the equation
\begin{align}
& \frac{1}{2}\int_{\Omega_\delta}\partial_t J r \bb{U} \cdot \bb{q} +\int_{\Omega_\delta} J r\partial_t \bb{U}\cdot \bb{q} + \frac{1}{2}\int_{\Omega_\delta} J \partial_t r \bb{U} \cdot \bb{q}\nonumber \\
& + \frac{1}{2}\int_{\Omega_\delta}J(r \widetilde{\bb{U}}-r \bb{w}) \cdot \big(\mathbf{q}\cdot 
\nabla^{w}\bb{U} - \bb{U}\cdot \nabla^w \mathbf{q} \big) +\mu \int_{\Omega_\delta} J\nabla^{w}\bb{U} :\nabla^{w}\mathbf{q}\nonumber \\
 &+(\mu +\lambda) \int_{\Omega_\delta} J(\nabla^{w}\cdot \bb{U} )(\nabla^{w}\cdot \mathbf{q}) -\bint_{\Omega_\delta} J(r^\gamma+\delta r^a)(\nabla^{w}\cdot \bb{q} )+\frac{1}{2} \bint_\Gamma\ddfrac{v-\partial_t w}{\Delta t} \psi =0, &\label{qq22auxiliary2}
\end{align}
for all $\bb{q} \in \mathcal{P}_{fl}^k$ with $\psi = \bb{q}_{| \Gamma\times \{0\}}$, has a unique solution $\bb{U} \in C^1([n\Delta t,(n+1)\Delta t]; \mathcal{P}_{fl}^k )$ that satisfies the inequality
\begin{align}
& \int_{n\Delta t}^{t} \Big[ \frac{c(\mu,\lambda)}{2} ||\nabla^w  \bb{U} ||_{L^2(\Omega_\delta)}^2+\frac{1}{2\Delta t} || v - \partial_t w||_{L^2(\Omega_\delta)}^2+ \frac{1}{2\Delta t} || v||_{L^2(\Omega_\delta)}^2\Big] \nonumber \\
&+\frac{1}{2} \int_{\Omega_\delta} (J r|\bb{U}|^2)(t)\leq C(E_0) +\frac{1}{2} \int_{\Omega_\delta}(J r|\bb{U}|^2)(n \Delta t)
+\frac{1}{\Delta t}\int_{n\Delta t}^{t}|| \partial_t w||_{L^2(\Omega_\delta)}^2, \label{qq22contraction2est}
\end{align}
\end{lem}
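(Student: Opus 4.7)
The plan is to produce $\bb{U}$ by a finite-dimensional Galerkin procedure on the basis that spans $\mathcal P_{fl}^k$ (the $2k$ functions $\{\bb f_i,\mathrm{Ext}[s_i]\}_{1\le i\le k}$ are linearly independent by Remark~\ref{qq22isbasis}), and then to close the \emph{a priori} bound \eqref{qq22contraction2est} by taking $\bb q=\bb U$ as a test function. Writing $\bb U(t)=\sum_{i=1}^{2k} c_i(t)\phi_i$ and plugging $\bb q=\phi_j$ into \eqref{qq22auxiliary2} produces an ODE system of the shape $M(t)\dot c(t)=A(t)c(t)+b(t)$. Here the mass matrix $M_{ij}(t)=\int_{\Omega_\delta} J(t)\,r(t)\,\phi_i\cdot\phi_j$ is symmetric positive definite, because Lemma~\ref{qq22contraction1} gives $r\ge r_{\min}>0$ on $[n\Delta t,(n+1)\Delta t]$ and Lemma~\ref{qq22111}(iii) gives $J\ge c(E_0)>0$. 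The matrix $A(t)$ collects the (linear in $\bb U$) viscous, convective, boundary-coupling and $\partial_t J\,r,\,J\partial_t r$ contributions, and $b(t)$ collects the pressure term $\int_{\Omega_\delta} J(r^\gamma+\delta r^a)\nabla^w\cdot\phi_j$ together with $\tfrac{1}{2\Delta t}\int_\Gamma \partial_t w\,\psi_j$. Because $r\in\mathcal D^{n+1}$, $J,\partial_t J\in C^\infty$, $w\in C^2$ and $\widetilde{\bb U}\in\mathcal F_k^{n+1}$, both $A$ and $b$ are continuous on the interval, and the linear system admits a unique global-in-time solution $c\in C^1([n\Delta t,(n+1)\Delta t];\mathbb R^{2k})$.

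To obtain \eqref{qq22contraction2est} I take $\bb q=\bb U$ in \eqref{qq22auxiliary2}. The three time-derivative contributions collapse into the total derivative
\begin{equation*}
\tfrac{1}{2}\partial_t J\,r\,|\bb U|^2+Jr\,\bb U\cdot\partial_t\bb U+\tfrac{1}{2}J\partial_t r\,|\bb U|^2=\tfrac{1}{2}\partial_t\bigl(Jr|\bb U|^2\bigr).
\end{equation*}
The convective term vanishes identically by antisymmetry, since $\bb q\cdot\nabla^w\bb U-\bb U\cdot\nabla^w\bb q$ equals $0$ when $\bb q=\bb U$. The viscous block is bounded below via \eqref{munu} by $c(\mu,\lambda)\int_{\Omega_\delta}J|\nabla^w\bb U|^2$, which after the uniform bound $J\ge c(E_0)$ yields the dissipation on the left of \eqref{qq22contraction2est}. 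Finally the boundary contribution is handled by the algebraic identity $2v(v-\partial_t w)=v^2+(v-\partial_t w)^2-(\partial_t w)^2$, which after dividing by $2\Delta t$ reproduces the three $\Delta t^{-1}$ terms in \eqref{qq22contraction2est}.

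The only non-trivial step left is the pressure contribution, which is why a smallness condition of the form \eqref{qq22aaaand2} is imposed. From Lemma~\ref{qq22contraction1} the density is bounded by $C_r=C_r(R,k,\varepsilon,\delta,E_0)e^{T\|\nabla^w\cdot\widetilde{\bb U}\|_{L^\infty}}$, so an H\"older--Young estimate gives
\begin{equation*}
\Bigl|\int_{n\Delta t}^{t}\!\!\int_{\Omega_\delta}J(r^\gamma+\delta r^a)\nabla^w\!\cdot\!\bb U\Bigr|\le\tfrac{c(\mu,\lambda)}{4}\int_{n\Delta t}^{t}\|\nabla^w\bb U\|_{L^2(\Omega_\delta)}^2+C(C_r,\delta,E_0)\,\Delta t,
\end{equation*}
and the second term is absorbed into the $C(E_0)$ on the right of \eqref{qq22contraction2est} provided $\Delta t$ is chosen small enough (this is precisely the content of \eqref{qq22aaaand2}). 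The main obstacle, therefore, is this quantitative choice of $\Delta t$: its smallness must be dictated by $k$, $\varepsilon$, $\delta$ and the initial energy through $C_r$, and it must simultaneously beat the $\Delta t^{-1}$ boundary factor, so the estimate has to be organized carefully to make the Gronwall-type closure \eqref{qq22contraction2est} genuinely independent of~$\bb U$ itself.
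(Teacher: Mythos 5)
Your proposal follows essentially the same route as the paper: you test the momentum identity with $\bb q=\bb U$ to obtain the energy inequality \eqref{qq22contraction2est} (collapsing the three time-derivative terms to $\tfrac12\partial_t(Jr|\bb U|^2)$, exploiting the built-in antisymmetry so the convective term drops out, lower-bounding the viscous block by \eqref{munu}, and absorbing the pressure via Young's inequality together with the smallness condition \eqref{qq22aaaand2}), and you establish existence by reducing \eqref{qq22auxiliary2} to a $2k$-dimensional ODE whose mass matrix $M(J,r)$ is positive definite because $J$ and $r$ are bounded below (Lemma~\ref{qq22111}(iii) and Lemma~\ref{qq22contraction1}). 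The one small streamlining you offer relative to the paper is observing that, since the resulting ODE is linear with coefficients that are integrable on the bounded interval, global-in-time existence and uniqueness are automatic, so the a priori estimate is not strictly needed to continue the local solution; the paper instead gets a local solution and then invokes \eqref{qq22contraction2est} to extend it, but both routes are valid and the difference is cosmetic.
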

\begin{proof}
To obtain the a priori estimate $\eqref{qq22contraction2est}$, we first choose $\bb{q}= \bb{U}$ in $\eqref{qq22auxiliary2}$ which by $\eqref{munu}$ gives us
\begin{align}
&\frac{1}{2} \int_{\Omega_\delta} (J r|\bb{U}|^2)(t)  +\int_{n\Delta t}^{t} \Big[ c(\mu,\lambda) ||\nabla^w \bb{U} ||_{L^2(\Omega_\delta)}^2 +\frac{1}{2\Delta t} || v - \partial_t w||_{L^2(\Omega_\delta)}^2+ \frac{1}{2\Delta t} || v||_{L^2(\Gamma)}^2\Big]\nonumber \\
&\leq \frac{1}{2} \int_{\Omega_\delta} (J r|\bb{U}|^2)(n \Delta t)+\int_{n\Delta t}^{(n+1)\Delta t} \bint_{\Omega_\delta} J \big[ (r^\gamma+\delta r^{a}) \nabla^{w}\cdot \bb{U} \big]
+\frac{1}{\Delta t}\int_{n\Delta t}^{t}|| \partial_t w||_{L^2(\Omega_\delta)}^2. \nonumber 
\end{align}
For the second term on the right hand side, one has
\begin{eqnarray*}
&&\int_{n\Delta t}^t \int_{\Omega_\delta} J (r^\gamma+ \delta r^a ) (\nabla^w \cdot \bb{U}) \\
&&\leq \int_{n\Delta t}^{(n+1)\Delta t} \int_{\Omega_\delta} \frac{c(\mu,\lambda)}{6} ||\nabla^w \cdot  \bb{U} ||_{L^2(\Omega_\delta)}^2 + \frac{6 \max \{ C_r^\gamma, \delta C_r^a \}C_J\mathcal{M}(\Omega_\delta) }{c(\mu,\lambda)}\Delta t \\
&&\leq \int_{n\Delta t}^{(n+1)\Delta t} \int_{\Omega_\delta}\frac{c(\mu,\lambda)}{2} ||\nabla^w  \bb{U} ||_{L^2(\Omega_\delta)}^2 + C(E_0), 
\end{eqnarray*}
for $\Delta t$ small enough such that\footnote{Notice that $C_r$ only depends on $k,\varepsilon,\delta$ and initial energy by $\eqref{qq22lowup}$.}
\begin{eqnarray}\label{qq22aaaand2}
 \frac{6 \max \{ C_r^\gamma, \delta C_r^a \}C_J\mathcal{M}(\Omega_\delta) }{c(\mu,\lambda)}\Delta t  \leq C(E_0),
\end{eqnarray}
where $\mathcal{M}(\Omega_\delta)$ is the measure of $\Omega_\delta$. Therefore, the inequality $\eqref{qq22contraction2est}$ follows.

Next, for simplicity we denote $\bb{g}_i:=\bb{f}_i$ for $1 \leq i \leq k$ and $\bb{g}_i = \text{Ext}[s_{i-k}]$ for $i\leq k+1 \leq 2k$. Now, by writing $\bb{U} = \sum_{i=1}^{2k} \alpha_i(t) \bb{g}_i$, and choosing $\bb{q} = \bb{g}_1, ... ,\bb{g}_{2k}$ in $\eqref{qq22auxiliary2}$, respectively, we obtain a system of $2k$ equations, or in other form, an ODE system for unknown $\bb{\alpha }= [\alpha_1, ...,\alpha_{2k}]^T$, that can be written as
\begin{eqnarray*}
M( J,r) \dot{\alpha } = N(J, r , \widetilde{\bb{U}}, w) \bb{\alpha },
\end{eqnarray*}
where
\begin{eqnarray*}
M( J,r) = \langle \int_{\Omega_\delta} Jr \bb{g}_i \cdot \bb{g}_j \rangle_{1 \leq i,j \leq k },
\end{eqnarray*}
and $N(J, r , \widetilde{\bb{U}}, w)$ can be directly expressed from the remaining terms in $\eqref{qq22auxiliary2}$. To obtain a local solution, it is enough to prove that $M(J,r)$ is positive definite, i.e. that for every non-zero vector $\bb{a} \in \mathbb{R}^k$, $\bb{a}^T M( J,r) \bb{a}>0$. Since we can write $M(J,r) = f(J,r) \widetilde{\otimes} f(J,r)$, where $f(J,r) := [\sqrt{Jr} \bb{g}_1, ... , \sqrt{Jr} \bb{g}_k]$ and $a\widetilde{\otimes} b := \langle \int_{\Omega_\delta} a_i \cdot b_j\rangle_{1\leq i,j \leq k}$ for $a,b \in \mathcal{P}_{fl}^k$, we directly have that 
\begin{eqnarray*}
\bb{v}^T M( J,r) \bb{v} = \bb{v}^T f(J,r) \widetilde{\otimes} f(J,r) \bb{v} = \int_{\Omega_\delta}\big( \sum_{i=1}^k \sqrt{Jr} v_i \bb{g}_i \big)^2 \geq c_J c_r \int_{\Omega_\delta}\big( \sum_{i=1}^k v_i \bb{g}_i \big)^2.
\end{eqnarray*}
where $c_J$ and $c_r$ are the lower bounds for $J$ and $r$. The last term is obviously positive since $\{\bb{g}_i\}_{1\leq i\leq 2k}=\{\bb{f}_i, \text{Ext}[s_{i+k}] \}_{ 1\leq i \leq k}$ are linearly independent (see Remark $\ref{qq22isbasis}$). Now, we use the estimate $\eqref{qq22contraction2est}$ to prove that the solution exists on the whole time interval $[n\Delta t, (n+1)\Delta t]$ and this finishes the proof.
\end{proof}

\begin{lem}\label{qq22contraction3}
Let $\Delta t$ be small enough with respect to the approximation parameters $k,\varepsilon,\delta$, initial energy and given constants in the system so that $\eqref{qq22aaaa}$, $\eqref{qq22aaaand2}$ and $\eqref{qq22aaand}$ hold. Then, the system (FSP) has a solution $(r^{n+1},\bb{U}^{n+1})\in \mathcal{D}^{n+1} \times \mathcal{F}^{n+1}$.
\end{lem}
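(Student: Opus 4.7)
The plan is to apply the Schauder fixed-point theorem to a map built by composing Lemmas \ref{qq22contraction1} and \ref{qq22contraction2}. Let $X = C([n\Delta t, (n+1)\Delta t]; \mathcal{P}_{fl}^k)$ and, for a radius $R > 0$ to be chosen, let $B_R = \{\widetilde{\bb{U}} \in X : \|\widetilde{\bb{U}}\|_{X}^2 \leq R\}$, a closed convex subset. Given $\widetilde{\bb{U}} \in B_R$, first apply Lemma \ref{qq22contraction1} to obtain $r = r(\widetilde{\bb{U}}) \in \mathcal{D}^{n+1}$ satisfying $\eqref{qq22auxiliary1}$ with two-sided pointwise bounds $0 < c_r \leq r \leq C_r$ depending only on $k,\varepsilon,\delta$, the initial energy and $R$; then apply Lemma \ref{qq22contraction2} with this $r$ to obtain $\bb{U} \in C^1([n\Delta t,(n+1)\Delta t]; \mathcal{P}_{fl}^k)$ solving $\eqref{qq22auxiliary2}$. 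Define $T\widetilde{\bb{U}} := \bb{U}$; any fixed point $\widetilde{\bb{U}} = T\widetilde{\bb{U}}$ is exactly a solution of (FSP).

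Next, I would show that $T$ maps $B_R$ into itself for suitably large $R$ and sufficiently small $\Delta t$. The energy estimate $\eqref{qq22contraction2est}$ combined with the lower bound $r \geq c_r > 0$ yields a bound on $\int_{\Omega_\delta} |\bb{U}(t)|^2$ in terms of the data at time $n\Delta t$, which by Lemma \ref{qq22111} is controlled by the initial energy alone; by the equivalence of norms on the finite-dimensional space $\mathcal{P}_{fl}^k$, this gives a uniform bound on $\|\bb{U}\|_X$ that is independent of $\widetilde{\bb{U}} \in B_R$. Choosing $R$ above this value forces $T(B_R) \subset B_R$, provided $\Delta t$ satisfies the additional smallness condition $\eqref{qq22aaand}$ needed to absorb the nonlinear pressure term as in the proof of $\eqref{qq22energyFSP2}$.

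Compactness of $T(B_R)$ in $X$ follows from the Arzel\`a--Ascoli theorem: since $\mathcal{P}_{fl}^k$ is finite dimensional, it suffices to verify equicontinuity in time, which is obtained from the ODE representation $M(J,r)\dot{\alpha} = N(J,r,\widetilde{\bb{U}},w)\alpha$ derived in the proof of Lemma \ref{qq22contraction2}. The uniform positive-definiteness of $M(J,r)$, which holds because $c_J$ and $c_r$ are bounded below independently of $\widetilde{\bb{U}} \in B_R$, together with a uniform bound on the entries of $N$ coming from Lemma \ref{qq22111} and the definition of $B_R$, yields a uniform $L^\infty_t$-bound on $\dot{\alpha}$, and hence the required equicontinuity.

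The main obstacle is the continuity of $T$ on $B_R$. Given $\widetilde{\bb{U}}_m \to \widetilde{\bb{U}}$ in $X$, the first step is to establish $r_m := r(\widetilde{\bb{U}}_m) \to r := r(\widetilde{\bb{U}})$ strongly in $\mathcal{D}^{n+1}$. Since $\eqref{qq22auxiliary1}$ is linear in $r$, the difference $r_m - r$ satisfies an equation of the same form with a source driven by $\widetilde{\bb{U}}_m - \widetilde{\bb{U}}$, and applying the estimate $\eqref{qq22sobest}$ to this difference gives convergence in $\mathcal{D}^{n+1}$, hence also in $C([n\Delta t,(n+1)\Delta t]; C(\overline{\Omega_\delta}))$ by Sobolev embedding. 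With this strong convergence of $r_m$, the coefficient matrices $M(J,r_m)$ and $N(J,r_m,\widetilde{\bb{U}}_m,w)$ converge uniformly in $t$ to their limits, so the continuous-dependence theorem for ODEs with continuous coefficients yields $\alpha_m \to \alpha$ in $C^1$, i.e.\ $T\widetilde{\bb{U}}_m \to T\widetilde{\bb{U}}$ in $X$ (in fact in $\mathcal{F}_k^{n+1}$). Schauder's theorem then produces the desired fixed point, completing the proof.
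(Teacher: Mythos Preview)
Your approach is essentially the same as the paper's: both construct the solution via a Schauder-type fixed-point argument on the closed convex set $B_R$ (the paper calls it $S_R$), obtain the self-mapping property from the energy estimate \eqref{qq22contraction2est} together with the two-sided density bounds \eqref{qq22lowup}, and deduce compactness from a uniform bound on $\partial_t \bb{U}$ (the paper tests \eqref{qq22auxiliary2} with $\bb{q}=\partial_t\bb{U}$, you equivalently bound $\dot\alpha$ via the ODE matrices). Two small caveats: (i) your attribution of the role of \eqref{qq22aaand} is off---in the paper that condition is precisely $\exp(\Delta t\,C(k,E_0,R))\le 2$, used to control the density bounds in \eqref{qq22lowup2}--\eqref{qq22lowup3}, not the pressure term; (ii) the space $\mathcal{D}^{n+1}$ does not embed into $C_t C_x$ in three dimensions, but this is harmless since convergence of $r_m\to r$ in $C_t L^2_x$ (which does follow from $\mathcal{D}^{n+1}$) already suffices for the convergence of the ODE coefficient matrices $M,N$.
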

\begin{proof}
The solution is obtained by the fixed-point argument. We first introduce the iteration set as
\begin{eqnarray*}
S_R : = \big\{\bb{U} \in C([n\Delta t, (n+1)\Delta t]; \mathcal{P}_{fl})&:& \bb{U}(n\Delta t) = \bb{U}^n(n\Delta t), \\ 
&&||\bb{U}||_{C([n\Delta t, (n+1)\Delta t] ; L^2(\Omega_\delta))} \leq R \big\},
\end{eqnarray*}
where $R=\frac{10C(E_0)}{c_J c_{r}}$, the constant $C(E_0)$ is given in Lemma $\ref{qq22111}$, $c_J$ is the lower bound of the Jacobian and $0<c_r \leq \text{min}_{X \in \Omega_\delta}r(n\Delta t,X)$. Define the operator as
\begin{eqnarray*}
\mathcal{A}:S_R &\to& S_R, \\
\widetilde{\bb{U}} &\mapsto &\bb{U} = \bb{U}(r(\widetilde{\bb{U}} )),
\end{eqnarray*}
where $r(\widetilde{\bb{U}} )$ is the solution of the equation $\eqref{qq22auxiliary1}$ for given $\widetilde{\bb{U}}$ and $\bb{U}(r(\widetilde{\bb{U}}))$ is the solution of the equation $\eqref{qq22auxiliary2}$ for given $r(\widetilde{\bb{U}})$, obtained in Lemmas $\ref{qq22contraction1}$ and $\ref{qq22contraction2}$, respectively.\\

\textit{Step 1: Boundedness}. Let $\widetilde{\bb{U}} \in S_R$. First, let us prove that $c_r \leq \text{min}_{X \in \Omega_\delta}~ r(n\Delta t)$ and $C_r \geq \text{max}_{X \in \Omega_\delta}~ r(n\Delta t)$ can be chosen uniformly with respect to $n$ and $\Delta t$. From $\eqref{qq22lowup}$, we inductively have that
\begin{eqnarray}
\min\limits_{X \in \Omega_\delta}r_{0,\delta}(X) e^{ -\int_{0}^{n\Delta t}|| \nabla^w \cdot\bb{U} (\tau)||_{L^\infty(\Omega_\delta)} d \tau} \leq r(n\Delta t, X) \leq \max\limits_{X \in \Omega_\delta}r_{0,\delta}(X) e^{\int_{0}^{n\Delta t}|| \nabla^w \cdot \bb{U} (\tau) ||_{L^\infty(\Omega_\delta)} d \tau}.\nonumber \\ \label{qq22uniformrhobound1}
\end{eqnarray}
From the equivalence of the spatial norms in a finite basis, we have 
\begin{eqnarray}\label{qq22uniformrhobound2}
\int_{0}^{n\Delta t}|| \nabla^w \cdot \bb{U} (\tau) ||_{L^\infty(\Omega_\delta)} d \tau \leq T^{1/2} C_k || \nabla^w \cdot \bb{U}||_{L^2(0, n\Delta t; L^2(\Omega_\delta))},
\end{eqnarray}
so by combining $\eqref{qq22uniformrhobound1}$, $\eqref{qq22uniformrhobound2}$ and $\eqref{qq22ineqind}$, we obtain
\begin{eqnarray*}
c_r : = \frac{1}{2C( k, E_0)}\min\limits_{X \in \Omega_\delta}r_{0,\delta}(X) \leq r(n\Delta t, X) \leq 2C(k, E_0)\max\limits_{X \in \Omega_\delta}r_{0,\delta}(X) =: C_r.
\end{eqnarray*}
With this in mind, the density $r(\widetilde{\bb{U}})$, from the inequality $\eqref{qq22lowup}$, satisfies
\begin{eqnarray}
r(\widetilde{\bb{U}}) &\leq C_r \text{exp}(\Delta t ||\nabla^w \cdot\widetilde{\bb{U}}||_{L^\infty([n\Delta t, (n+1)\Delta t] \times \Omega_\delta)}) &\leq C_r \text{exp}(\Delta t C(k,E_0,R)),\quad\label{qq22lowup2} \\
r(\widetilde{\bb{U}}) &\geq c_r \text{exp}(-\Delta t ||\nabla^w \cdot\widetilde{\bb{U}}||_{L^\infty([n\Delta t, (n+1)\Delta t] \times \Omega_\delta)}) &\geq c_r \text{exp}(-\Delta t C(k,E_0,R)),\quad\label{qq22lowup3}
\end{eqnarray}
where we bounded $||\nabla^w \cdot\widetilde{\bb{U}}||_{L^\infty([n\Delta t, (n+1)\Delta t] \times \Omega_\delta)}$ by using the equivalence of the spatial norms in a finite basis, the bound for $\widetilde{\bb{U}}$ from the iteration set $S_R$ and Lemma $\ref{qq22111}(ii)$ to bound the components of the transformed divergence $\nabla^w \cdot \bb{U}$ which depend on $\nabla A_w$. Now, choosing $\Delta t$ small enough such that, say
\begin{eqnarray}\label{qq22aaand}
\text{exp}(\Delta t C(k,E_0,R)) \leq 2
\end{eqnarray}
the estimates $\eqref{qq22lowup2}$ and $\eqref{qq22lowup3}$ imply
\begin{eqnarray}\label{qq22zvezda}
\frac{1}{2}c_r \leq r(\widetilde{\bb{U}}) \leq 2 C_r.
\end{eqnarray}
Now, we can bound the terms on the right-hand side given in the inequality $\eqref{qq22contraction2est}$ in Lemma $\ref{qq22contraction2}$ as
\begin{eqnarray*}
\frac{1}{2} \int_{\Omega_\delta} (J r|\bb{U}|^2)(n \Delta t) \leq 2C(E_0), \quad \quad 
\frac{1}{\Delta t}\int_{n\Delta t}^{t}|| \partial_t w||_{L^2(\Omega_\delta)}^2 \leq 2C(E_0)
\end{eqnarray*}
from $\eqref{qq22ineqind}$ (here w.l.o.g., we bound $T(\Delta t)^{1/2}\leq C(E_0)$ in the equality $\eqref{qq22ineqind}$), so by $\eqref{qq22zvezda}$ we conclude $\bb{U}(r(\widetilde{\bb{U}})) \in S_R$. \\ \\
\textit{Step 2: A compact subset}. 
Next, notice that we can bound
\begin{eqnarray*}
||\partial_t \bb{U} (t)||_{L^2(\Omega_\delta)} \leq C(k,\varepsilon, E_0)\Big(1+\frac{1}{\sqrt{\Delta t}}\Big),
\end{eqnarray*}
by taking $\bb{q} = \partial_t \bb{U}$ in $\eqref{qq22FSP2}$, integrating over $\Omega_\delta$ and using $\eqref{qq22contraction2est}$. Now, this implies 
\begin{eqnarray*}
\mathcal{A}: S_R &\to &\Big\{\bb{U} \in \mathcal{F}_{k}^{n+1}: \bb{U}(n\Delta t) = \bb{U}^n(n\Delta t), ~ 
||\bb{U}||_{C([n\Delta t, (n+1)\Delta t] ; L^2(\Omega_\delta))} \leq R ,\\
& &||\partial_t \bb{U} ||_{C([n\Delta t, (n+1)\Delta t]; L^2(\Omega_\delta))} \leq C(k,\varepsilon, E_0)\Big(1+\frac{1}{\sqrt{\Delta t}}\Big)\Big\} \Subset S_R,
\end{eqnarray*}
so by the Leray-Schauder fixed-point theorem, we obtain a solution $(r^{n+1},\bb{U}^{n+1})$ of the system (FSP), and the proof is complete.
\end{proof}
\subsubsection{Coupling back the decoupled system}\label{qq22sec}
Now we have solved the systems $\eqref{qq22SSP}$ and $\eqref{qq22FSP1}$ and inductively obtained the solutions $( r,\bb{U}, w, \theta)$ on the whole time interval $[0,T]$. These solutions also satisfy the energy estimates given in Lemma $\ref{qq22111}$ on $[0,T]$ and the following system:
\begin{eqnarray}\label{qq22thirdlevelsystem}
\begin{cases}
\bint_{\Gamma} \partial_t \theta \widetilde{\psi} + \bint_{\Gamma}\nabla \theta\cdot \nabla \widetilde{\psi} +\bint_{\Gamma}\nabla \partial_t w \cdot \nabla \widetilde{\psi} = 0, \\[3.5mm]
\partial_t r - \bb{w} \cdot \nabla^{w} r + \nabla^{w}\cdot (r \bb{U}) = \ddfrac{1}{J} \varepsilon \nabla \cdot ( \nabla r J ), \quad \quad \text{a.e. in } Q_{\delta,T} \\[3.5mm] 
\bint_{\Omega_\delta} \partial_t (Jr \bb{U}) \cdot \bb{q} + \bint_{\Omega_\delta} J\big[(r \bb{w}-r \bb{U}) \cdot \nabla^{w}\bb{q}\big]\cdot \mathbf{u} +\mu\bint_{\Omega_\delta} J\nabla^{w} \mathbf{u}:\nabla^{w}\bb{q} \\[2.5mm]
\hspace{.5in}+(\mu +\lambda) \bint_{\Omega_\delta} J (\nabla^{w}\cdot \mathbf{u})(\nabla^{w}\cdot \bb{q}) -\bint_{\Omega_\delta} (J (r^\gamma + \delta r^a) \nabla^{w}\cdot \bb{q})\\[2.5mm]
\hspace{.5in}+ \varepsilon\bint_{\Omega_\delta} J \nabla r \cdot ( \bb{q}\cdot \nabla \bb{U}+ \bb{U}\cdot\nabla \bb{q})
+ \ddfrac{1}{2}\bint_\Gamma\partial_t^2 w\bb{\psi} + \ddfrac{1}{2} \bint_\Gamma\ddfrac{v-T_{\Delta t} v}{\Delta t}\bb{\psi}\\[2.5mm] 
\hspace{.5in}
+\bint_\Gamma\Delta w \Delta \bb{\psi}+\bint_\Gamma \mathcal{F}(w) \psi -\bint_\Gamma \nabla \theta\cdot \nabla \psi+ \delta\bint_\Gamma \nabla^3 w: \nabla^3 \psi=0, 
\end{cases}
\end{eqnarray}
for $\widetilde{\psi}\in \mathcal{P}_{heat}^k$ and $(\bb{q},\psi) \in \mathcal{P}_{fl}^k\times \mathcal{P}_{str}^k$ satisfying $\bb{q}_{| \Gamma\times \{0\}} = \psi\bb{e}_3$, where the third equation was obtained by summing the momentum equation $\eqref{qq22FSP1}_2$ with test functions $(\bb{q},\psi)$, the structure equation $\eqref{qq22SSP}_1$ with test function $\psi$, and the continuity equation $\eqref{qq22FSP1}_1$ multiplied by $\frac{1}{2} J \bb{U}\cdot \bb{q}$ and integrated over $\Omega_\delta$. This system will eventually converge to the desired original weak form of the problem $\eqref{qq22structureeqs}-\eqref{qq22initialdata}$ in the sense of Definition $\ref{qq22weaksolmoving}$.

\section{The operator splitting time step and the number of Galerkin basis functions limits}\label{qq22section4}
\sectionmark{Time step and the number of Galerkin basis functions limits}
The approximate solutions contructed in the previous section by solving (FSP) and (SSP), inductively, on the whole time interval $[0,T]$ satisfying the system $\eqref{qq22thirdlevelsystem}$ will be denoted as $(r_{\Delta t,k},\bb{U}_{\Delta t,k}, w_{\Delta t,k}, \theta_{\Delta t,k})$. We first introduce the function spaces
\begin{eqnarray*}
\mathcal{W}_{FS,\varepsilon} (0,T)&: =& \big\{ (w,\bb{U}): w \in L^\infty(0,T; H_0^2(\Gamma)) \cap W^{1,\infty}(0,T; L^2(\Gamma)), \\
&&\nabla^w \bb{U} \in L^2(0,T; L^2(\Omega_\delta)),  \gamma_{|\Gamma\times \{0\}} \bb{U} = \partial_t w \bb{e}_3, \bb{U} = 0 \text{ on } \partial \Omega_\delta\setminus \Gamma\times\{0\} \big\}, \\
\mathcal{W}_{D,\varepsilon}(0,T) &: =& \big\{ {r} \in L^\infty(0,T; L^a(\Omega_\delta))\cap L^{\frac{4}{3}a}(Q_{\delta,T}): \partial_t r, \Delta r \in L^{\frac{6}{5}} (0,T; L^{\frac{36}{25}}(\Omega_\delta)),\\
&&\nabla r \in L^{2}(Q_{\delta,T}(\Omega_\delta)) \big\}.
\end{eqnarray*}
Since $||\nabla A_w||_{L^\infty(Q_{\delta,T})} \leq C(E_0,\delta)$, we know
\begin{eqnarray*}
\mathcal{W}_{FS,\varepsilon} (0,T)& =& \big\{ (w,\bb{U}): w \in L^\infty(0,T; H_0^2(\Gamma)) \cap W^{1,\infty}(0,T; L^2(\Gamma)),\\
&& \bb{U} \in L^2(0,T; H^{1}(\Omega_\delta)),  \gamma_{|\Gamma\times \{0\}} \bb{U} = \partial_t w \bb{e}_3, ~\bb{U} = 0 \text{ on } \partial \Omega_\delta\setminus \Gamma\times\{0\} \big\}.
\end{eqnarray*}

We now introduce the following weak solution, suitable for the limiting process of the functions that solve the system $\eqref{qq22thirdlevelsystem}$:
\begin{mydef}\label{qq22solweakartvis}
We say that $r\in \mathcal{W}_{D,\varepsilon}^w(0,T), ~(\bb{U},w) \in \mathcal{W}_{FS,\varepsilon}^w(0,T), ~\theta \in \mathcal{W}_H(0,T)$ is the weak solution to the coupled fluid-structure interaction problem with artificial density damping and artificial pressure on the fixed reference domain $\Omega_\delta$ if:
\begin{enumerate}
\item The following heat equation
\begin{align}
\int_{\Gamma_T}\theta \partial_t \widetilde{\psi} -\int_{\Gamma_T} \nabla \theta \cdot \nabla \widetilde{\psi}+\int_{\Gamma_T}\nabla w \cdot \nabla \partial_t \widetilde{\psi}= \int_0^T \frac{d}{dt}\int_{\Gamma}\theta\widetilde{\psi} +\int_0^T \frac{d}{dt} \int_{\Gamma}\nabla w \cdot \nabla \widetilde{\psi}, \label{qq22heat1}
\end{align}
holds for all $\widetilde{\psi} \in C^\infty(\Gamma_T)$.
\item The following damped continuity equation
\begin{eqnarray}\label{qq22dampcont1}
\partial_t {r} + \nabla^{w}\cdot ({r} \bb{U}) - \bb{w}\cdot \nabla^{w} {r} = \varepsilon \Big( \Delta {r} + \frac{1}{J} \nabla J\cdot \nabla {r}\Big) , 
\end{eqnarray}
holds a.e. in $Q_{\delta,T}$.
\item The following coupled momentum equation holds
\begin{eqnarray}
&& \int_{Q_{\delta,T}}J{r} \bb{U}\cdot \partial_t \bb{q} +\int_{Q_{\delta,T}}J\big[({r} \bb{U}-{r} \bb{w}) \cdot \nabla^{w}\bb{q}\big]\cdot \mathbf{u}-\mu\int_{Q_{\delta,T}} J\nabla^{w} \mathbf{u}:\nabla^{w}\bb{q} \nonumber \\
&&-(\mu +\lambda) \int_{Q_{\delta,T}}J (\nabla^{w}\cdot \mathbf{u})(\nabla^{w}\cdot \bb{q})
+\int_{Q_{\delta,T}} (J ({r}^\gamma + \delta {r}^a) \nabla^{w}\cdot \bb{q}) \nonumber \\
&&-\varepsilon \int_{Q_{\delta,T}}  J \nabla {r} \cdot ( \bb{q}\cdot\nabla \bb{U} + \bb{U} \cdot\nabla \bb{q})+\bint_{\Gamma_T} \partial_t w \partial_t \psi -\bint_{\Gamma_T}\Delta w \Delta \psi \nonumber\\
&&-\bint_{\Gamma_T}\mathcal{F}(w) \psi +\bint_{\Gamma_T}\nabla \theta \cdot \nabla \psi-\bint_{\Gamma_T}\delta \nabla^3 w : \nabla^3 \psi \nonumber\\
&&=\int_0^T \frac{d}{dt} \int_{\Omega_\delta}J {r} \bb{U}\cdot\bb{q} +\int_0^T \frac{d}{dt} \int_{\Gamma} \partial_t w \psi, \quad\quad\quad \label{qq22summedup1}
\end{eqnarray}
for all $\bb{q} \in C_0^\infty(Q_{\delta,T,\Gamma})$ and $\psi\in C_0^\infty( \Gamma_T )$, satisfying $\bb{q}_{|\Gamma\times \{0\}} = \psi\bb{e}_3$.
\end{enumerate}
\end{mydef}
The main result of this section is the following one:
\begin{thm}\label{qq22artdisth}
There exists a weak solution $({r},\bb{U},w,\theta)$ in the sense of Definition $\ref{qq22solweakartvis}$ that satisfies the following energy inequality for all $t \in [0,T]$,
\begin{eqnarray*}
E_\delta^w(t) + D_\varepsilon^w(t) \leq E^w(0),
\end{eqnarray*}
where 
\begin{eqnarray*}
E_\delta^w(t) &:=& E^w(t) + \frac{1}{2} \delta||\nabla^3 w(t)||_{L^2(\Gamma)}^2 + \delta \int_{\Omega_\delta} \frac{(Jr^a)(t)}{a-1}, \\
D_\varepsilon^w(t)& :=& D^w(t) +\varepsilon \int_0^t \int_{\Omega_\delta}J|\nabla r|^2(\gamma r^{\gamma-2} +\delta a r^{a-2})
\end{eqnarray*}
with $E^w(t)$ and $D^w(t)$ being defined in $\eqref{qq22energies2}$.
\end{thm}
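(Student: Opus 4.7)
The proof proceeds by passing to the double limit $\Delta t \to 0$ and $k \to \infty$ in the approximate solutions $(r_{\Delta t,k},\bb{U}_{\Delta t,k},w_{\Delta t,k},\theta_{\Delta t,k})$ of the system \eqref{qq22thirdlevelsystem}, using the fact that the estimates of Lemma \ref{qq22111} are uniform with respect to $\Delta t$ and $k$ (for $\varepsilon,\delta$ and the initial energy fixed). First I would extract weakly converging subsequences: $w_{\Delta t,k} \rightharpoonup^* w$ in $L^\infty(0,T;H_0^2)$ with $\partial_t w_{\Delta t,k} \rightharpoonup^* \partial_t w$ in $L^\infty(0,T;L^2)$, $\theta_{\Delta t,k} \rightharpoonup^* \theta$ in $L^\infty(0,T;L^2)\cap L^2(0,T;H_0^1)$, $\bb{U}_{\Delta t,k} \rightharpoonup \bb{U}$ in $L^2(0,T;H^1(\Omega_\delta))$, and $r_{\Delta t,k} \rightharpoonup^* r$ in $L^\infty(0,T;L^a(\Omega_\delta))$ together with $\nabla r_{\Delta t,k} \rightharpoonup \nabla r$ in $L^2(Q_{\delta,T})$ (using bound (vi)).

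Next I would upgrade to strong compactness via Aubin--Lions type arguments. From (i), $w_{\Delta t,k} \to w$ in $C([0,T];H^{2-\epsilon}(\Gamma))$, which in particular yields uniform convergence of $J_{\Delta t,k}=1+w_{\Delta t,k}$ and of $\nabla A_{w_{\Delta t,k}}^{-1}$ (the additional regularization $\delta\|\nabla^3 w\|^2$ is crucial here). For the density I would exploit that the damped continuity equation is linear parabolic on the fixed reference domain $\Omega_\delta$: multiplying \eqref{qq22thirdlevelsystem}$_2$ by $\partial_t r$ and by $-\Delta r$ and using the bounds on $\bb{U}$ and $\bb{w}$ yields uniform bounds $\partial_t r_{\Delta t,k}\in L^{6/5}(0,T; L^{36/25})$ and $\Delta r_{\Delta t,k}$ in the same space, so Aubin--Lions gives $r_{\Delta t,k}\to r$ strongly in $L^p(Q_{\delta,T})$ for some $p>1$, which in view of the uniform $L^\infty_t L^a_x$ bound can be interpolated to any subcritical exponent. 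This handles the convergence of $r^\gamma$ and $\delta r^a$ in the pressure term.

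For the velocity, I would use the momentum equation \eqref{qq22thirdlevelsystem}$_3$ tested against elements of $\mathcal{P}_{fl}^k$ to deduce a uniform bound on $\partial_t (J r \bb{U})_{\Delta t,k}$ in some negative-index dual space, and combine it with the spatial bound on $\nabla^w \bb{U}_{\Delta t,k}$ to obtain the strong convergence of $J_{\Delta t,k} r_{\Delta t,k} \bb{U}_{\Delta t,k}$ (and hence of $\sqrt{J_{\Delta t,k} r_{\Delta t,k}}\,\bb{U}_{\Delta t,k}$, given that $r_{\Delta t,k}$ is pointwise bounded from below and above by $\eqref{qq22lowup}$) in a space sufficient to handle the convective product $J(r\bb{U}-r\bb{w})\cdot\nabla^w \bb{q}\cdot \bb{U}$. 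The nonlinear elastic term $\mathcal{F}(w)$ is controlled by the local Lipschitz assumption (A1) combined with the strong convergence of $w$ in $L^\infty_t H_x^{2-\epsilon}$.

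The reconciliation of the operator splitting is the next step: from \eqref{qq22ineqind} one has $\frac{1}{\Delta t}\|v_{\Delta t,k} - \partial_t w_{\Delta t,k}\|_{L^2(\Gamma_T)}^2 \le C$, so $v_{\Delta t,k}-\partial_t w_{\Delta t,k} \to 0$ in $L^2(\Gamma_T)$, recovering the kinematic coupling $\gamma_{|\Gamma\times\{0\}}\bb{U}\cdot \bb{e}_3 = \partial_t w$ in the limit. Moreover, the finite-difference terms $\tfrac{v-\partial_t w}{\Delta t}$ and $\tfrac{v-T_{\Delta t}v}{\Delta t}$ appearing in \eqref{qq22thirdlevelsystem}$_3$ produce, after integration against a test function and integration by parts in time, the weak time derivative $\partial_t(\partial_t w)$ in the limit; the admissibility of test functions independent of $k$ is then obtained by density of $\bigcup_k \mathcal{P}_{fl}^k$ and $\bigcup_k \mathcal{P}_{str}^k$ in the respective test-function spaces (this is where the $k\to\infty$ limit enters). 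Finally, the energy inequality follows by lower semicontinuity of all quadratic terms (using strong convergence of $J$ and $r$ where needed) applied to the identity obtained by summing \eqref{qq22lem11eps} and \eqref{qq22energyFSP2} over $i=1,\dots,N$, after discarding the positive operator-splitting penalties.

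\textbf{Main obstacle.} The hardest part is obtaining strong convergence of $\bb{U}_{\Delta t,k}$ strong enough to identify the convective term. Unlike the standard compressible case, the non-conservative form on the fixed reference domain and the presence of the moving-mesh velocity $\bb{w}$ (which is only as regular as $\partial_t w \in L^2_t H_x^{1/2^-}$) complicate the Aubin--Lions argument. The presence of the $\varepsilon$-damping in the density equation and the finite-dimensional structure at the $k$-level both help, but the temporal regularity of $J r \bb{U}$ has to be extracted carefully from the momentum equation using a test function in $\mathcal{P}_{fl}^k$ followed by a duality argument that is uniform in $k$.
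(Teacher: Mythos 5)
There are genuine gaps in your argument, concentrated precisely at the points you flag as hardest.

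\textbf{The density compactness step is circular/uniform-in-$k$ fails.} You claim that multiplying $\eqref{qq22thirdlevelsystem}_2$ by $\partial_t r$ and $-\Delta r$ yields bounds on $\partial_t r_{\Delta t,k}$ and $\Delta r_{\Delta t,k}$ in $L^{6/5}(0,T;L^{36/25})$ \emph{uniformly} in $\Delta t$ and $k$. This is not the case: the parabolic estimates of Lemma~$\ref{qq22contraction1}$ (e.g.\ $\eqref{qq22sobest}$) are obtained via the equivalence of spatial norms on the finite basis $\mathcal{P}_{fl}^k$ and therefore the constants depend on $k$. Likewise the pointwise bounds $\eqref{qq22lowup}$ on $r_{\Delta t,k}$, which you invoke to deduce strong convergence of $\sqrt{J_{\Delta t,k}r_{\Delta t,k}}\,\bb{U}_{\Delta t,k}$, degenerate as $k\to\infty$ (see the constants $c_r,C_r\sim 1/C(k,E_0), C(k,E_0)$ in the proof of Lemma~$\ref{qq22contraction3}$). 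The actual route in the paper uses only the $k$-independent bound $\sqrt{\varepsilon}\|\nabla r_{\Delta t,k}\|_{L^2(Q_{\delta,T})}\leq C(E_0,\delta)$ from Lemma~$\ref{qq22111}(vi)$, the bound on $\nabla(J_{\Delta t,k}r_{\Delta t,k})$ in $L^2(Q_{\delta,T})$, and the bound on $\partial_t(J_{\Delta t,k}r_{\Delta t,k})$ in $L^2(0,T;[W_0^{1,p}(\Omega_\delta)]')$, from which Aubin--Lions gives strong $L^2$ convergence of $J_{\Delta t,k}r_{\Delta t,k}$ and, after interpolation, $r_{\Delta t,k}\to r$ in $L^{(\frac{4}{3}a)^-}(Q_{\delta,T})$. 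The maximal $L^p$-regularity estimate $\eqref{qq22maxreg}$ with $\partial_t r,\Delta r\in L^{6/5}(0,T;L^{36/25})$ is established \emph{for the limiting function} in Lemma~$\ref{qq22lemmanicela}$, after passing to the limit in the weak form and rewriting it in divergence form, not for the approximants.

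\textbf{Convergence of the convective term.} Because uniform pointwise bounds on $r_{\Delta t,k}$ are unavailable, passing to the limit in $J r\bb{U}\otimes\bb{U}$ cannot go through the route you propose. The paper's argument (Lemma~$\ref{qq22weakconvthirdlevel}(vii)$) is a localization: it splits $\bb{U}_{\Delta t,k}=\varphi_i\bb{U}_{\Delta t,k}+(1-\varphi_i)\bb{U}_{\Delta t,k}$ with cutoffs $\varphi_i\to 1$ in $L^p(\Omega_\delta)$, uses the compactness of $J_{\Delta t,k}r_{\Delta t,k}\bb{U}_{\Delta t,k}$ in $L^2(0,T;H^{-1})$ paired against $\varphi_i\bb{U}_{\Delta t,k}\rightharpoonup\varphi_i\bb{U}$ in $L^2(0,T;H_0^1)$, and sends $i\to\infty$ using the uniform bound $\eqref{qq22sec3ineq8}$ on $r\bb{U}\otimes\bb{U}$ in $L^2(0,T;L^{6a/(4a+3)})$.

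\textbf{Missing: strong convergence of $\nabla r_{\Delta t,k}$.} You do not address passing to the limit in $\varepsilon\int J\nabla r\cdot(\bb{q}\cdot\nabla\bb{U}+\bb{U}\cdot\nabla\bb{q})$. Weak convergence of $\nabla r_{\Delta t,k}$ in $L^2$ and weak convergence of $\bb{U}_{\Delta t,k}$ in $L^2_tH^1_x$ do not let you pass the limit in the product. The paper proves strong $L^2(Q_{\delta,T})$ convergence of $\nabla r_{\Delta t,k}$ in a separate lemma by multiplying the approximate and the limit continuity equations by $J_{\Delta t,k}r_{\Delta t,k}$ and $Jr$ respectively, integrating twice in time, and comparing the resulting energy identities; the boundary contribution $\int r_{\Delta t,k}^2(v_{\Delta t,k}-\partial_t w_{\Delta t,k})$ vanishes in the limit only because at this stage the continuity equation for the limit is already known to hold in strong form (hence Lemma~$\ref{qq22lemmanicela}$ must precede this step). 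This chain of arguments is absent from your proposal.
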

We will prove this result by passing the limit in $\Delta t$ and $k$ in the system $\eqref{qq22thirdlevelsystem}$. 

\subsection{Passing to the limit}
Throughout the remainder of this section, we assume that $\Delta t$ is small enough with respect to the approximation parameters $k,\varepsilon,\delta$, initial energy and given constants in the system, for which Lemmas $\ref{qq22111}$ and $\ref{qq22contraction3}$ hold. The goal is to prove that the equations $\eqref{qq22thirdlevelsystem}_1$, $\eqref{qq22thirdlevelsystem}_2$ and $\eqref{qq22thirdlevelsystem}_3$ converge to $\eqref{qq22heat1}$, $\eqref{qq22dampcont1}$ and $\eqref{qq22summedup1}$, respectively. We start with the following result:
\begin{lem}\label{qq22weakconvthirdlevel}
The following convergences hold as $\Delta t \to 0, k \to +\infty$:
\begin{enumerate}
\item[(i)] $\bb{U}_{k,\Delta t} \rightharpoonup \bb{U}, \text{ weakly in } L^2(0,T; H^{1}(\Omega_\delta))$;
\item[(ii)] ${r}_{k,\Delta t} \rightharpoonup {r}, \text{ weakly* in } L^\infty(0,T; L^a (\Omega_\delta))$, ~~ $\nabla {r}_{\Delta t,k} \rightharpoonup \nabla {r}, \text{ weakly in } L^2(Q_{\delta,T})$;
\item[(iii)] Independently of $\delta$, we have:
\begin{enumerate}
\item[(iiia)] $w_{\Delta t,k} \rightharpoonup w$, weakly* $L^\infty(0,T; H_0^2(\Gamma))$ and $W^{1,\infty}(0,T; L^2(\Gamma))$;
\item[(iiib)] $w_{\Delta t,k} \to w$, in $C^{0,\alpha}([0,T]; H^{2\alpha}(\Gamma))$, for $0<\alpha<1$ ;
\item[(iiic)] $J_{\Delta t,k} \to J$ and $1/J_{\Delta t,k} \to 1/J$, in $C^{0,\alpha}([0,T]; C^{0,1-2\alpha}(\Gamma))$ for $0<\alpha<1/2$;
\item[(iiid)] $\theta_{\Delta t,k} \to \theta$ weakly* in $L^\infty(0,T; L^2(\Gamma))$ and weakly in $L^2(0,T; H^1(\Gamma))$;
\item[(iiie)] $v_{\Delta t,k} \rightharpoonup \partial_t w$ weakly in $L^2(0,T; L^{2} (\Gamma))$;
\item[(iiif)] $\mathcal{F}(w_{\Delta t,k}) \to \mathcal{F}(w)$ in $C([0,T]; H^{-2}(\Gamma))$;
\item[(iiig)] $\int_0^T \int_\Gamma \ddfrac{v_{\Delta t,k}-T_{\Delta t} v_{\Delta t,k}}{\Delta t} P_{str}^k(\bb{\psi}) \to - \int_0^T \int_\Gamma \partial_t w \partial_t \psi + \int_0^T \frac{d}{dt} \int_\Gamma \partial_t w \psi$, \\for any $\psi \in C_0^{\infty}(\Gamma_T)$;
\end{enumerate}
\item[(iv)] $ \nabla ^3 w_{\Delta t,k} \rightharpoonup \nabla^3 w$, weakly* in $L^\infty(0,T; L^2(\Gamma))$;
\item[(v)] ${r}_{\Delta t,k} \to {r}$, in $L^{(\frac{4}{3} a)^-}(Q_{\delta,T})$;
\item[(vi)] ${r}_{\Delta t,k}\bb{U}_{\Delta t,k} \rightharpoonup {r}\bb{U}$, weakly in $L^2(0,T; L^{\frac{6a}{a+6}}(\Omega_\delta) )$ and weakly* in $L^\infty(0,T; L^{\frac{2a}{a+1}}(\Omega_\delta))$;
\item[(vii)] $J_{\Delta t,k}{r}_{\Delta t,k} \bb{U}_{\Delta t,k}\otimes \bb{U}_{\Delta t,k} \rightharpoonup J{r} \bb{U} \otimes \bb{U}$, in $L^1(Q_{\delta,T})$;
\item[(viii)] $J_{\Delta t,k}{r}_{\Delta t,k} \bb{U}_{\Delta t,k}\otimes \bb{w}_{\Delta t,k} \rightharpoonup J{r} \bb{U} \otimes \bb{w}$, in $L^1(Q_{\delta,T})$.
\end{enumerate}
\end{lem}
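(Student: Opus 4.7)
The plan is to organize the convergences into three groups: (a) weak/weak$^*$ convergences extracted directly from the uniform bounds in Lemma~$\ref{qq22111}$; (b) strong convergences of the structure quantities and the density, obtained via Aubin--Lions compactness; (c) convergences of products and the discrete time difference, which combine (a) and (b). Throughout I will extract subsequences without relabelling; uniqueness of the limits along the diagonal $\Delta t \to 0$, $k \to \infty$ is then standard.

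\textbf{Step 1: Weak/weak$^*$ limits.} Items (i), (ii), (iiia), (iiid) and (iv) are direct applications of Banach--Alaoglu to the bounds $(i), (ii), (iii), (vi)$ of Lemma~$\ref{qq22111}$. In particular, Lemma~$\ref{qq22111}$(iii) gives $c(E_0)\le J_{\Delta t,k}\le C(E_0)$, which converts $\|\nabla^w \bb{U}\|_{L^2(L^2)}\le C$ into a genuine $L^2(0,T;H^1(\Omega_\delta))$ bound for $\bb{U}_{\Delta t,k}$. The bound $\delta\|\nabla^3 w\|_{L^\infty(L^2)}\le C$ yields~(iv).

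\textbf{Step 2: Strong convergence of $w_{\Delta t,k}$, $J_{\Delta t,k}$, $\theta_{\Delta t,k}$.} From $w_{\Delta t,k}\in L^\infty(0,T;H_0^2)\cap W^{1,\infty}(0,T;L^2)$ and interpolation $H^{2\alpha}=[L^2,H^2]_\alpha$, I obtain $w_{\Delta t,k}\in C^{0,\alpha}([0,T];H^{2\alpha}(\Gamma))$ with a uniform bound, and by Arzelà--Ascoli one extracts a strongly convergent subsequence, giving (iiib). Since $J=1+w$ and $J\ge c>0$ uniformly by Lemma~$\ref{qq22111}$(iii), (iiic) follows from (iiib) and continuity of $x\mapsto 1/x$ on a positive interval. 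For (iiif), the locally Lipschitz property (A1) applied with the $C([0,T];H^{2-\epsilon})$-strong convergence of $w_{\Delta t,k}$ gives $\mathcal{F}(w_{\Delta t,k})\to\mathcal{F}(w)$ in $C([0,T];H^{-2})$. For (iiie), write $v_{\Delta t,k}=\partial_t w_{\Delta t,k}+(v_{\Delta t,k}-\partial_t w_{\Delta t,k})$; the first summand converges weakly by (iiia) and the second converges to $0$ strongly in $L^2(\Gamma_T)$ since Lemma~$\ref{qq22111}$ gives $\|v_{\Delta t,k}-\partial_t w_{\Delta t,k}\|_{L^2(\Gamma_T)}^2\le C\Delta t\to 0$. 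The discrete-in-time identity in (iiig) is handled by rewriting
\[
\int_0^T\!\!\int_\Gamma \frac{v-T_{\Delta t}v}{\Delta t}P_{str}^k\psi
=-\int_0^T\!\!\int_\Gamma v\,\frac{P_{str}^k\psi(t+\Delta t)-P_{str}^k\psi(t)}{\Delta t}+(\text{boundary terms}),
\]
then using $P_{str}^k\psi\to \psi$ in $C^1_t$ together with the $L^2$ convergence of $v_{\Delta t,k}\to\partial_t w$ from (iiie).

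\textbf{Step 3: Strong convergence of $r_{\Delta t,k}$ and of the momentum $Jr\bb{U}$.} This is the main technical step. For (v), the damped continuity equation $\eqref{qq22thirdlevelsystem}_2$ together with the uniform bounds
$\nabla r\in L^2(Q_{\delta,T})$, $r\in L^\infty(L^a)$, $\bb{U}\in L^2(H^1)$, $\bb{w}\in L^2(L^{4^-})$ and the fact that $J,1/J\in L^\infty$ are uniformly bounded, lets me bound $\partial_t r_{\Delta t,k}$ and $\Delta r_{\Delta t,k}$ in a suitable negative Sobolev (or $L^{6/5}(L^{36/25})$) space. Aubin--Lions between $H^1(\Omega_\delta)$ (from $\nabla r\in L^2$ combined with $r\in L^\infty(L^a)$) and the $\partial_t r$ bound yields strong convergence of $r_{\Delta t,k}$ in $L^2(Q_{\delta,T})$; interpolating with the $L^\infty(0,T;L^a)$ bound (for any $a'<a$) and with the integrability $L^{\frac{4}{3}a}$ (obtained from Lemma~$\ref{qq22111}$ via the parabolic estimate) upgrades this to $L^{(\frac{4}{3}a)^-}$. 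Combined with the weak $H^1$-convergence of $\bb{U}_{\Delta t,k}$, this gives (vi) directly (the product of strong and weak convergence is weak). For the tensor products (vii)--(viii), the key is to obtain \emph{strong} convergence of $J_{\Delta t,k} r_{\Delta t,k}\bb{U}_{\Delta t,k}$ in some $L^p(Q_{\delta,T})$: using the momentum equation in $\eqref{qq22thirdlevelsystem}_3$ to bound $\partial_t(Jr\bb{U})$ in a negative Sobolev-in-space norm, plus the spatial regularity of $Jr\bb{U}$ inherited from $r\bb{U}\in L^\infty(L^{\frac{2a}{a+1}})\cap L^2(L^{\frac{6a}{a+6}})$, Aubin--Lions on a finite-dimensional projected space (where the $k$-level makes $\partial_t\bb{U}$ an honest function; see Lemma~$\ref{qq22contraction3}$) gives strong convergence of $Jr\bb{U}$ in $L^2(0,T;H^{-s}(\Omega_\delta))$ for some $s<1$. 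Pairing with the weak $H^1$ convergence of $\bb{U}_{\Delta t,k}$ gives (vii), and pairing with the strong $L^2(L^{4^-}(\Gamma))$ convergence of $\bb{w}_{\Delta t,k}$ (which follows from (iiia) and trace/embedding) gives (viii).

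\textbf{Main obstacle.} The hardest part will be the strong compactness of the momentum $Jr\bb{U}$ needed for (vii)--(viii), because the standard time-derivative estimate via the momentum equation must be carried out at the discrete-in-time and finite-dimensional-in-space level, where the jump term $(v-T_{\Delta t}v)/\Delta t$ and the Galerkin projection must be controlled uniformly in $k$ and $\Delta t$. In particular, one must handle the passage to the limit of $\frac{1}{\Delta t}(v_{\Delta t,k}-T_{\Delta t}v_{\Delta t,k})$ against time-continuous test functions (step (iiig)) and verify that the uniform bounds on $\partial_t(J_{\Delta t,k}r_{\Delta t,k}\bb{U}_{\Delta t,k})$ in $(W^{1,\infty}\cap\mathcal{P}_{fl}^k)^*$ can be globalized to a fixed negative Sobolev space independent of $k$. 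Once these estimates are in place, Aubin--Lions closes the argument and the remaining convergences follow by the standard weak$\cdot$strong pairing.
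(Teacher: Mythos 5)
Your Steps 1--2 track the paper's argument closely: the weak/weak$^*$ limits come from Lemma~\ref{qq22111} and Banach--Alaoglu, the strong structure convergence (iiib) from $L^\infty H^2\cap W^{1,\infty}L^2\hookrightarrow C^{0,\alpha}H^{2\alpha}$ plus Arzel\`a--Ascoli, (iiie) by splitting off $v_{\Delta t,k}-\partial_t w_{\Delta t,k}$, (iiif) from (A1) and (iiib), and (iiig) by summation by parts in time. Your Step 3 for (v)--(vi) is also the paper's route: bound the gradient and time derivative of $J_{\Delta t,k}r_{\Delta t,k}$, apply Aubin--Lions to get strong $L^2(Q_{\delta,T})$ convergence, then interpolate with the $L^{\frac{3}{2}a}$ bound (from $r^{a/2}\in L^2H^1$) to reach $L^{(\frac{4}{3}a)^-}$.

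Where the proposal has a genuine gap is in (vii)--(viii). You say ``pairing with the weak $H^1$ convergence of $\bb{U}_{\Delta t,k}$ gives (vii)'' once $J_{\Delta t,k}r_{\Delta t,k}\bb{U}_{\Delta t,k}\to Jr\bb{U}$ strongly in $L^2(0,T;H^{-s})$, but this pairing is not a free duality: $H^{-s}(\Omega_\delta)$ is the dual of $H_0^s(\Omega_\delta)$, and $\bb{U}_{\Delta t,k}$ is \emph{not} in $H_0^1(\Omega_\delta)$ --- it carries the free trace $v_{\Delta t,k}$ on $\Gamma\times\{0\}$. The paper's fix, which you omit, is the interior cutoff argument: take $\varphi_i\in C_0^\infty(\Omega_\delta)$ with $\|1-\varphi_i\|_{L^p}\to 0$; then $\varphi_i\bb{U}_{\Delta t,k}\rightharpoonup\varphi_i\bb{U}$ weakly in $L^2(0,T;H_0^1)$, which paired against $J_{\Delta t,k}r_{\Delta t,k}\bb{U}_{\Delta t,k}\to Jr\bb{U}$ strongly in $L^2(0,T;H^{-1})$ yields weak $L^1$ convergence of the localized tensor $J_{\Delta t,k}r_{\Delta t,k}\bb{U}_{\Delta t,k}\otimes(\varphi_i\bb{U}_{\Delta t,k})$. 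The remainder $J_{\Delta t,k}r_{\Delta t,k}\bb{U}_{\Delta t,k}\otimes((1-\varphi_i)\bb{U}_{\Delta t,k})$ is controlled uniformly in $L^2(0,T;L^q)$ via the $L^2(0,T;L^{\frac{6a}{4a+3}})$ bound on $r\bb{U}\otimes\bb{U}$ and H\"older with $\|1-\varphi_i\|_{L^p}$, and then one lets $i\to\infty$. Without this step, (vii) and hence (viii) do not follow from what you have written. Finally, your worry about the $\mathcal{P}_{fl}^k$-projected time derivative is legitimate but is resolved in the paper in a pedestrian way: they accept a deliberately suboptimal estimate $\|P_{fl}^k(\partial_t(J_{\Delta t,k}r_{\Delta t,k}\bb{U}_{\Delta t,k}))\|_{L^2(0,T;H^{-3}(\Omega_\delta))}\le C(E_0)$, where the projections are uniformly bounded, and this is enough for Aubin--Lions in a fixed negative Sobolev space independent of $k$ and $\Delta t$.
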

\begin{proof}
The convergences $(i)$,$(ii)$ and $(iiia)-(iiie)$ and $(iv)$ follow from Lemma $\ref{qq22111}$. Now, $(iiif)$ follows by the assumption $(A1)$ for the nonlinear function $\mathcal{F}$ and $(iiib)$ for $2\alpha > 2-\epsilon$, and to prove $(iiig)$, we calculate
\begin{eqnarray*}
&&\int_0^T \int_\Gamma \ddfrac{v_{\Delta t,k}-T_{\Delta t} v_{\Delta t,k}}{\Delta t} P_{str}^k(\bb{\psi}) =\ddfrac{1}{\Delta t} \Big[ \int_{\Delta t}^T \int_\Gamma v_{\Delta t,k} P_{str}^k(\bb{\psi}) - \int_{0}^{T-\Delta t} \int_\Gamma v_{\Delta t,k} T_{-\Delta t}P_{str}^k(\bb{\psi}) \Big]\\
&&=- \int_{\Delta t}^{T-\Delta t} \int_\Gamma v_{\Delta t,k}\frac{ T_{-\Delta t}P_{str}^k(\bb{\psi}) - P_{str}^k(\bb{\psi})}{\Delta t} + \ddfrac{1}{\Delta t}\int_{T- \Delta t}^T \int_\Gamma v_{\Delta t,k} P_{str}^k(\bb{\psi}) \\
&& - \ddfrac{1}{\Delta t}\int_{0}^{\Delta t} \int_\Gamma v_{\Delta t,k} T_{-\Delta t}P_{str}^k(\bb{\psi}) \to - \int_0^T \int_\Gamma \partial_t w \partial_t \psi + \int_0^T \frac{d}{dt} \int_\Gamma \partial_t w \psi,
\end{eqnarray*}
as $\Delta t\to 0, k \to +\infty$. Next, from the equation $\eqref{qq22thirdlevelsystem}_2$ and the estimates given in Lemma $\ref{qq22111}$, we have
\begin{align*}
||\partial_t (J_{\Delta t,k} {r}_{\Delta t,k})||_{L^2(0,T; [W_0^{1, p}(\Omega_\delta)]')} &\leq C(E_0), \quad p>2a/(a+1),
\end{align*}
and by using the estimates for ${r}_{\Delta t,k}$ and $w_{\Delta t,k}$ in Lemma $\ref{qq22111}$, we can also bound
\begin{eqnarray*}
&&|| \nabla (J_{\Delta t,k} {r}_{\Delta t,k})||_{L^2(Q_{\delta,T})}\\
&& \leq ||\nabla {r}||_{L^2(Q_{\delta,T})} ||J||_{L^\infty(Q_{\delta,T})} + ||{r}||_{L^\infty(0,T; L^a(\Omega_\delta))} ||\nabla J||_{L^2(0,T; L^{\frac{2a}{a-2}}( \Omega_\delta))} \leq C(E_0, \varepsilon),
\end{eqnarray*}
which by interpolation and the Aubin-Lions lemma implies
\begin{eqnarray*}
J_{\Delta t,k} {r}_{\Delta t,k} \to J {r},~ \text{in } L^2(Q_{\delta,T}),
\end{eqnarray*}
so
\begin{eqnarray}\label{qq22rhoL2strong1}
{r}_{\Delta t,k} \to{r},~ \text{in } L^2(Q_{\delta,T}).
\end{eqnarray}
From Lemma $\ref{qq22111}$, we have that $||{r}_{\Delta t,k}^{a/2} ||_{L^2(0,n\Delta t; H^1(\Omega_\delta))}\leq C(E_0,\varepsilon, \delta)$ which by the Sobolev imbedding implies that $||{r}_{\Delta t,k}||_{L^{a/2}(0,n\Delta t; L^{3a}(\Omega_\delta))}\leq C(E_0, \varepsilon,\delta)$. Since $||{r}_{\Delta t,k}||_{L^\infty(0,n\Delta t; L^a(\Omega_\delta))}\\ \leq C(E_0, \delta)$, by the interpolation of the Lebesque spaces we obtain $||{r}_{\Delta t,k} ||_{L^{\frac{3}{2} a}(Q_{\delta,T})}\leq C(E_0,\varepsilon,\delta)$ which combined with $\eqref{qq22rhoL2strong1}$ implies $(v)$.

Next, from Lemma $\ref{qq22111}$ and the imbedding of $\bb{U} \in L^2((0,T); H^{1}(\Omega_\delta))$ into \\$L^2((0,T); L^{6}(\Omega_\delta))$, we can bound
\begin{eqnarray}
&&||{r}_{\Delta t,k} \bb{U}_{\Delta t,k} ||_{L^2((0,T); L^{\frac{6a}{a+6}}(\Omega_\delta))} \nonumber \\
&&\leq ||{r}_{\Delta t,k}||_{L^\infty(0,T); L^{a}(\Omega_\delta))} || \bb{U}_{\Delta t,k}||_{L^2(0,T; L^{6}(\Omega_\delta))} \leq C(E_0,\delta), \quad\quad\label{qq22sec3ineq5} 
\end{eqnarray}
and
\begin{eqnarray}
&&||{r}_{\Delta t,k} \bb{U}_{\Delta t,k} ||_{L^\infty(0,T; L^{\frac{2a}{a+1}}(\Omega_\delta))} \nonumber \\
&&\leq || {r}_{\Delta t,k} ||_{L^\infty(0,T; L^a(\Omega_\delta))}^{1/2} || {r}_{\Delta t,k} |\bb{U}_{\Delta t,k}|^2 ||_{L^\infty(0,T; L^1(\Omega_\delta))}^{1/2} \leq C(E_0,\delta), \quad\quad \quad \label{qq22sec3ineq1}
\end{eqnarray}
so $(vi)$ follows by $(i)$ and $(v)$.

Now, from $\eqref{qq22thirdlevelsystem}_3$, one can bound\footnote{This bound is certaintly not optimal, but it is sufficient.}
\begin{eqnarray*}
||P_{fl}^k(\partial_t (J{r} \bb{U}))||_{L^{2}(0,T; H^{-3}(\Omega_\delta))} \leq C(E_0),
\end{eqnarray*}
and since $L^{\frac{6a}{a+6}}(\Omega_\delta)$ is compactly imbedded into $H^{-s}(\Omega_\delta)$ for $a>3/2$ and some $0<s<1$, by $\eqref{qq22sec3ineq5}$ and the Aubin-Lions lemma, one gets
\begin{eqnarray}\label{qq22similarly}
J_{\Delta t,k}{r}_{\Delta t,k} \bb{U}_{\Delta t,k} \to J{r} \bb{U} ~\text{ in } L^{2}(0,T; H^{-1}(\Omega_\delta)).
\end{eqnarray}
Now, let $\varphi_i \in C_0^\infty(\Omega_\delta)$ for $i\in \mathbb{N}$ be a sequence of functions such that $||\varphi_i - 1||_{L^p(\Omega_\delta)} \to 0$ as $i \to \infty$, for some large $p$. By the weak convergence of $\bb{U}_{\Delta t,k}$ in $L^2(0,T; H^{1}(\Omega_\delta))$, one has the weak convergence of $\varphi_i \bb{U}_{\Delta t,k}$ in $L^2(0,T; H_0^{1}(\Omega_\delta))$, which by $\eqref{qq22similarly}$ implies
\begin{eqnarray}\label{qq22papa1}
J_{\Delta t,k}{r}_{\Delta t,k} \bb{U}_{\Delta t,k} \otimes \big[\varphi_i\bb{U}_{\Delta t,k} \big] \to J {r} \bb{U} \otimes \big[\varphi_i\bb{U}\big],
\end{eqnarray}
weakly in $L^1(Q_{\delta,T})$. Next, since
\begin{eqnarray}
&&||{r}_{\Delta t,k} \bb{U}_{\Delta t,k} \otimes \bb{U}_{\Delta t,k}||_{L^2(0,T; L^{\frac{6a}{4a+3}}(\Omega_\delta))}\nonumber \\
 &&\leq ||{r}_{\Delta t,k} \bb{U}_{\Delta t,k}||_{L^\infty(0,T; L^{\frac{2a}{a+1}}(\Omega_\delta))} ||\bb{U}_{\Delta t,k}||_{L^2(0,T; L^{6}(\Omega_\delta))} \leq C(E_0,\delta), \label{qq22sec3ineq8}
\end{eqnarray}
we obtain that
\begin{eqnarray*}
&&||J_{\Delta t,k}{r}_{\Delta t,k} \bb{U}_{\Delta t,k} \otimes \big[(1-\varphi_i) \bb{U}\big]||_{L^2(0,T; L^{q}(\Omega_\delta))} \\
&&\leq ||{r}_{\Delta t,k} \bb{U}_{\Delta t,k} \otimes \bb{U}_{\Delta t,k}||_{L^2(0,T; L^{\frac{6a}{4a+3}}(\Omega_\delta))} || (1-\varphi_i) ||_{L^p(\Omega_\delta)}\leq C(E_0,\delta)|| (1-\varphi_i) ||_{L^p(\Omega_\delta)}
\end{eqnarray*}
for a large $p$ and $q$ such that $\frac{1}{q}=\frac{1}{p}+ \frac{4a+3}{6a}$, which implies 
\begin{eqnarray*}
J_{\Delta t,k}{r}_{\Delta t,k} \bb{U}_{\Delta t,k} \otimes (1-\varphi_i) \bb{U} &\rightharpoonup& G_i, \quad \text{weakly in } L^2(0,T; L^{q}(\Omega_\delta)), 
\end{eqnarray*}
as $\Delta t\to 0, k\to +\infty$, and 
\begin{eqnarray*}
G_i &\rightharpoonup& 0,\quad \text{weakly in } L^2(0,T; L^{q}(\Omega_\delta)),
\end{eqnarray*}
as $i\to+\infty$. Thus, by $\eqref{qq22papa1}$, one obtains
\begin{eqnarray*}
&&J_{\Delta t,k}{r}_{\Delta t,k} \bb{U}_{\Delta t,k} \otimes \bb{U}_{\Delta t,k} \\
&&= J_{\Delta t,k}{r}_{\Delta t,k} \bb{U}_{\Delta t,k} \otimes \big[\varphi_i\bb{U}_{\Delta t,k}\big] + J_{\Delta t,k}{r}_{\Delta t,k} \bb{U}_{\Delta t,k} \otimes \big[(1-\varphi_i) \bb{U}_{\Delta t,k}\big] \\
&&\rightharpoonup J {r} \bb{U} \otimes \big[\varphi_i\bb{U}\big]+ G_i.
\end{eqnarray*}
weakly in $L^1(Q_{\delta,T})$, so by letting $i\to\infty$, the convergence in $(vii)$ follows. Now, by using the fact that $\gamma>12/7$, similarly as in $\eqref{qq22similarly}$, one has
\begin{eqnarray*}
J_{\Delta t,k}r_{\Delta t,k} \bb{U}_{\Delta t,k} \to J r \bb{U}, \quad \text{in } L^2(0,T ; H^{-s}(\Omega_\delta)),
\end{eqnarray*}
for some $0<s<1/2$, and since $\bb{w}_{\Delta t,k} \rightharpoonup \bb{w}$ weakly in $L^2(0,T ; H^{(\frac{1}{2})^-}(\Omega))$, the convergence given in $(viii)$ follows in a same way as the convergence in $(vii)$. Thus, the proof is finished.
\end{proof}
Now, one can conclude that the limiting functions $(w,\theta)$ satisfy the heat equation $\eqref{qq22heat1}$ and that $(r,\bb{U},w)$ satisfy the following damped continuity equation in the weak form 
\begin{eqnarray}
\int_0^T \frac{d}{dt}\int_{\Omega_\delta} J{r} \varphi + \int_0^T \int_{\Omega_\delta}\big[ -J{r}\partial_t \varphi + J({r}_\varepsilon \bb{w} - {r} \bb{U}_\varepsilon)\cdot \nabla^w \varphi\big] =\varepsilon \int_0^T\int_{\Omega_\delta} J \nabla {r} \cdot \nabla \varphi. \nonumber \\
\label{qq22someweakform}
\end{eqnarray}
We will prove that $(r,\bb{U},w)$ satisfy the damped continuity equation in the strong form $\eqref{qq22dampcont1}$:
\begin{lem}\label{qq22lemmanicela}
For $a \geq 9$ we have\footnote{Notice that we were not able to prove such a result in the previous section since the trace of the fluid velocity $v_{\Delta t,k}$ and $\partial_t w_{\Delta t,k}$ were not necessarily equal at this level of approximation, due to the operator splitting.}
\begin{eqnarray}
\varepsilon||\nabla r||_{L^{3}(0,T; L^{\frac{9}{4}}(\Omega_\delta))} &\leq & C(E_0,\delta)\label{qq22nicela2},\\
\varepsilon^{3/4} ||\nabla {r}||_{L^{\frac{12}{5}}(0,T; L^{\frac{36}{17}}(\Omega_\delta))} &\leq& C(E_0,\delta). \label{qq22nicela}
\end{eqnarray}
Moreover, the limiting functions $(r,\bb{U},w)$ satisfy the continuity equation $\eqref{qq22dampcont1}$ and the following estimate holds
\begin{eqnarray}\label{qq22maxreg}
|| \partial_t r ||_{L^{\frac{6}{5}}(0,T; L^{\frac{36}{25}}(\Omega_\delta))}+\varepsilon|| \Delta r||_{L^{\frac{6}{5}}(0,T; L^{\frac{36}{25}}(\Omega_\delta))} \leq C(E_0,\delta,\varepsilon).
\end{eqnarray}

\end{lem}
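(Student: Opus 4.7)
The plan is to establish the intermediate gradient estimates $\eqref{qq22nicela2}$ and $\eqref{qq22nicela}$ as uniform-in-$(\Delta t, k)$ bounds on $r_{\Delta t, k}$ via interpolation of the energy bounds of Lemma~$\ref{qq22111}$, then to deduce the maximal regularity estimate $\eqref{qq22maxreg}$ by parabolic $L^p$-$L^q$ theory for the Neumann heat problem on the $C^{2,\alpha}$ domain $\Omega_\delta$, and finally to upgrade the weak damped continuity equation (already available for the limit by Lemma~$\ref{qq22weakconvthirdlevel}$ and equation $\eqref{qq22someweakform}$) to the a.e.\ identity $\eqref{qq22dampcont1}$. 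The footnote indicates that an essential ingredient, unavailable at the $(\Delta t, k)$-level, is the exact kinematic coupling $\gamma_{|\Gamma\times\{0\}}\bb{U}=\partial_t w\,\bb{e}_3$, which now holds without the $O(\sqrt{\Delta t})$ error present during the splitting.

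For the intermediate gradient bounds, the inputs from Lemma~$\ref{qq22111}$(ii), (vi) are $\sqrt{\varepsilon}\|\nabla r_{\Delta t, k}\|_{L^2(L^2)}\leq C(E_0, \delta)$, $\|r_{\Delta t, k}\|_{L^\infty(L^a)}\leq C(E_0, \delta)$, and $\sqrt{\varepsilon\delta}\|\nabla r_{\Delta t, k}^{a/2}\|_{L^2(L^2)}\leq C(E_0)$. The 3D Sobolev embedding $H^1\hookrightarrow L^6$ applied to $r_{\Delta t, k}^{a/2}$, interpolated against the $L^\infty(L^2)$ bound via the parabolic identity $2/p+3/q=3/2$, promotes the integrability of $r_{\Delta t, k}$ to $L^{5a/3}(Q_{\delta, T})$, refining Lemma~$\ref{qq22weakconvthirdlevel}$(v). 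Combining this enhanced integrability with the pointwise identity $|\nabla r|=\tfrac{2}{a}r^{1-a/2}|\nabla r^{a/2}|$ (valid at the approximation level thanks to the strict positivity of $r_{\Delta t, k}$ from Lemma~$\ref{qq22contraction1}$) and Hölder's inequality, with the exponents balanced so that no negative power of $r$ survives in the $(\Delta t, k)$-independent estimate, yields $\eqref{qq22nicela2}$ and $\eqref{qq22nicela}$. The threshold $a\geq 9$ is precisely what makes the Hölder balance admissible. These estimates transfer to the limit by weak lower semicontinuity.

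With $\nabla r\in L^3(L^{9/4})$ available for the limit, I rewrite the damped continuity equation as
\[ \partial_t r-\varepsilon\Delta r=-\nabla^w\cdot(r\bb{U})+\bb{w}\cdot\nabla^w r+\varepsilon\tfrac{\nabla J}{J}\cdot\nabla r=:f, \]
and verify $f\in L^{6/5}(L^{36/25})$ by Hölder's inequality: the piece $r\nabla^w\cdot\bb{U}$ lies in $L^2(L^{2a/(a+2)})$ via $r\in L^\infty(L^a)$ and $\nabla\bb{U}\in L^2(L^2)$; $\bb{U}\cdot\nabla^w r$ lies in $L^{6/5}(L^{18/11})$ by pairing $\bb{U}\in L^2(L^6)$ with $\nabla r\in L^3(L^{9/4})$; $\bb{w}\cdot\nabla^w r$ lies in $L^{6/5}(L^{36/25})$ directly from $\bb{w}\in L^2(L^{4^-})$ and $\nabla r\in L^3(L^{9/4})$; and the last term is absorbed via $\nabla J/J\in L^\infty(L^\infty)$, which follows from the $\delta$-regularization $\delta\|\nabla^3 w\|_{L^\infty(L^2(\Gamma))}\leq C$ and Sobolev embedding on $\Gamma\subset\mathbb{R}^2$. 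Each contribution embeds into $L^{6/5}(L^{36/25})$ on the bounded cylinder $Q_{\delta, T}$. Parabolic $L^{6/5}$-$L^{36/25}$ maximum regularity for the Neumann problem on $\Omega_\delta$ then produces $\eqref{qq22maxreg}$, and the weak form upgrades to the a.e.\ strong form $\eqref{qq22dampcont1}$.

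The main obstacle is the first step, namely proving $\varepsilon\|\nabla r\|_{L^3(L^{9/4})}\leq C(E_0, \delta)$ uniformly. The direct interpolation via $\nabla r=\tfrac{2}{a}r^{1-a/2}\nabla r^{a/2}$ produces a negative power of $r$ whose control would require the approximation-level lower bound from Lemma~$\ref{qq22contraction1}$, but this lower bound degenerates as $k\to\infty$. The delicate technical point is therefore to split and recombine the Hölder factors so that the negative power is absorbed by the positive $L^{5a/3}$-integrability of $r$ rather than by a pointwise lower bound; the choice $a\geq 9$ is exactly what makes the resulting exponents admissible, and the exact trace identity $v=\partial_t w$ is what eliminates an otherwise uncontrolled boundary contribution at $\Gamma$ when integrating by parts against the weak form of the continuity equation.
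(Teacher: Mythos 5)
Your overall outline (gradient estimate, then maximal regularity, then upgrade weak to strong form) matches the paper's, and your interpolation for $\eqref{qq22nicela}$ from $\eqref{qq22nicela2}$ and $\sqrt{\varepsilon}\|\nabla r\|_{L^2(L^2)}$ at parameter $\theta=1/2$ is exactly what the paper does, as is your Hölder verification that the right-hand side of the parabolic equation lies in $L^{6/5}(L^{36/25})$. The gap is in the mechanism you propose for establishing $\eqref{qq22nicela2}$ itself.

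You write $\nabla r = \tfrac{2}{a}\,r^{1-a/2}\,\nabla r^{a/2}$ and hope to kill the negative power $r^{1-a/2}$ by Hölder against the enhanced integrability $r\in L^{5a/3}(Q_{\delta,T})$. This cannot work: positive $L^p$ integrability of $r$ gives no control whatsoever on any negative power of $r$ — if $r$ has a zero (and the lower bound from Lemma~$\ref{qq22contraction1}$ degenerates as $k\to\infty$, as you note), then $r^{1-a/2}$ is not even locally integrable, and no rearrangement of Hölder exponents can produce a uniform bound. You correctly identify this as "the main obstacle" and then wave it away by asserting the factors can be "split and recombined," which is not a valid step. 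The paper never invokes the chain rule in this form. Instead, it proves $\eqref{qq22nicela2}$ directly for the \emph{limit} functions by rewriting $\eqref{qq22someweakform}$ (now with exact coupling $v=\partial_t w$, hence no boundary error term) as a linear heat equation for $Jr$ with right-hand side in divergence form, $\partial_t(Jr)-\varepsilon\Delta(Jr)=-\nabla\cdot\bb{f}$ with $\bb{f}=Jr[(\nabla A_w^{-1})\circ A_w]^T(\bb{w}-\bb{U})+\varepsilon r\nabla J$, and then applies the $L^p(W^{1,q})$ parabolic estimate of \cite[Lemma 7.3]{novotnystraskraba}, which bounds $\varepsilon\|Jr\|_{L^p(W^{1,q})}$ by $\|\bb{f}\|_{L^p(L^q)}$. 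No negative power of $r$ ever appears; the constraint $a\geq 9$ arises only from Hölder bounding $\|r\bb{w}\|_{L^3(L^{9/4})}\leq\|r\|_{L^\infty(L^a)}\|\bb{w}\|_{L^3(L^3)}$. You should replace your chain-rule argument with this divergence-form parabolic estimate; the rest of your proposal then goes through.
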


\begin{proof}
The equation $\eqref{qq22someweakform} \pm \varepsilon\int_{Q_{\delta,T}}r\nabla J\cdot \nabla\varphi$ can be written in the following form
\begin{eqnarray}\label{qq22mepawn}
\int_0^T \frac{d}{dt}\int_{\Omega_\delta} (J{r}) \varphi - \int_0^T \int_{\Omega_\delta} (J{r}) \partial_t \varphi+\varepsilon \int_0^T\int_{\Omega_\delta} \nabla (J {r})\cdot \nabla \varphi= - \int_0^T \int_{\Omega_\delta} \bb{f} \cdot \nabla \varphi, \quad\quad
\end{eqnarray}
for all $\varphi \in C^\infty([0,T]\times \overline{\Omega_\delta})$, where
\begin{eqnarray*}
\bb{f}:=\Big[J{r} \big[(\nabla A_{w}^{-1})\circ A_w\big]^T( \bb{w} - \bb{U}) + \varepsilon \nabla J r\Big].
\end{eqnarray*}
Now, since the right-hand side of $\eqref{qq22mepawn}$ is in the divergence form, one has that the unique solution to this equation $(J{r})$ satisfies (see \cite[Lemma 7.3]{novotnystraskraba})
\begin{eqnarray}\label{qq22great}
&&\varepsilon^{1- \frac{1}{p}}|| J{r} ||_{L^\infty(0,T ; L^{q}(\Omega_\delta))}+\varepsilon || J r ||_{L^p(0,T ; W^{1,q}(\Omega_\delta))}\nonumber \\ &&\leq C(p,q,\delta)\Big[\varepsilon^{1- \frac{1}{p}}||r(0)||_{L^{q}(\Omega_\delta)} + ||\bb{f} ||_{L^p(0,T ; L^{q}(\Omega_\delta))}\Big],\quad\quad\quad\quad \label{qq22great}
\end{eqnarray}
for some $1<p,q<\infty$ such that the right-hand side is finite. By interpolation
\begin{eqnarray*}
||r\bb{U}||_{L^3(0,T ; L^{\frac{18a}{5a+15}}(\Omega_\delta))} \leq ||r\bb{U}||_{L^2(0,T ; L^{\frac{6a}{a+6}}(\Omega_\delta))}^{\frac{2}{3}} ||r\bb{U}||_{L^\infty(0,T ; L^{\frac{2a}{a+1}}(\Omega_\delta))}^{\frac{1}{3}}\leq C(E_0,\delta),
\end{eqnarray*}
and
\begin{eqnarray*}
||r \bb{w}||_{L^3(0,T ; L^{\frac{9}{4}}(\Omega_\delta))}& \leq& ||r ||_{L^\infty(0,T; L^a(\Omega_\delta))} ||\bb{w}||_{L^3(0,T ; L^{3}(\Omega_\delta))}\\
&\leq &||r ||_{L^\infty(0,T; L^a(\Omega_\delta))} ||\bb{w}||_{L^2(0,T ; L^{4}(\Omega_\delta))}^{\frac{2}{3}}||\bb{w}||_{L^\infty(0,T ; L^{2}(\Omega_\delta))}^{\frac{1}{3}}\leq C(E_0,\delta), \\
\end{eqnarray*}
for $a\geq 9$, so we obtain that the term on the right-hand side of $\eqref{qq22great}$ can be bounded for $p=3$ and $q = \frac{9}{4}$. Now, one easily obtains
\begin{eqnarray*}
\varepsilon||J \nabla r||_{L^3(0,T; L^{\frac{9}{4}}(\Omega_\delta))} \leq \varepsilon||\nabla (J r)||_{L^3(0,T; L^{\frac{9}{4}}(\Omega_\delta))} + \varepsilon||\nabla J r||_{L^3(0,T; L^{\frac{9}{4}}(\Omega_\delta))} \leq C(E_0,\delta),
\end{eqnarray*}
so $\eqref{qq22nicela2}$ follows by the uniform lower bound of the Jacobian $J$ given in Lemma $\ref{qq22111}(iii)$. Consequently, by Lemma $\ref{qq22111}(4c)$ and interpolation
\begin{eqnarray*}
\varepsilon^{3/4} ||\nabla {r}||_{L^{\frac{12}{5}}(0,T; L^{\frac{36}{17}}(\Omega_\delta))} \leq (\sqrt{\varepsilon}||\nabla {r}||_{L^{2}(0,T; L^{2}(\Omega_\delta))})^{1/2} ( \varepsilon ||\nabla {r}||_{L^{3}(0,T; L^{\frac{9}{4}}(\Omega_\delta))})^{1/2} \leq C(E_0,\delta), \quad \quad
\end{eqnarray*}
so $\eqref{qq22nicela}$ follows. Now, by Lemma $\ref{qq22111}$ and $\eqref{qq22nicela2}$, one can see that
\begin{eqnarray*}
\nabla\cdot \bb{f} \in L^{\frac{6}{5}}(0,T; L^{\frac{36}{25}}(\Omega_\delta))
\end{eqnarray*}
so the equation $\eqref{qq22mepawn}$, can be solved in the strong sense. Since the weak solution $(Jr)$ of $\eqref{qq22mepawn}$ is unique, one can conclude that $(Jr)$ coincides with this strong solution and thus satisfies $\eqref{qq22mepawn}$ in the strong sense. This gives us that $(r,\bb{U},w)$ satisfy $\eqref{qq22dampcont1}$, while the inequality $\eqref{qq22maxreg}$ follows by the maximal regularity estimates \cite[Chapter III]{amann} so the proof is finished.
\end{proof}
Now, by Lemma $\ref{qq22weakconvthirdlevel}$, one can obtain the convergence of all the necessary terms in $\eqref{qq22thirdlevelsystem}$ except the term $\varepsilon\int_{Q_{\delta,T}} J_{\Delta t,k} \nabla r_{\Delta t,k} \cdot ( \bb{q}\cdot \nabla \bb{U}_{\Delta t,k}+ \bb{U}_{\Delta t,k}\cdot\nabla \bb{q})$. This convergence is proved in the following lemma:
\begin{lem}
The following convergences hold as $\Delta t\to 0$ and $k\to +\infty$:
\begin{enumerate}
\item[(i)] $\nabla {r}_{\Delta t,k} \to \nabla {r}$, in $L^2(Q_{\delta,T})$;
\item[(ii)] $\nabla {r}_{\Delta t,k} \cdot \nabla \bb{U}_{\Delta t,k} \rightharpoonup \nabla {r} \cdot \nabla \bb{U}$ and $\nabla {r}_{\Delta t,k} \cdot \bb{U}_{\Delta t,k} \rightharpoonup \nabla {r} \cdot \bb{U}$, weakly in $L^{1}(Q_{\delta,T})$.
\end{enumerate}
\end{lem}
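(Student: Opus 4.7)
The nontrivial claim is (i); (ii) will follow from (i) by a strong--weak pairing. For (i) the plan is the classical energy/Minty identity method. I multiply the approximate damped continuity equation $\eqref{qq22thirdlevelsystem}_2$, which holds a.e.\ on $Q_{\delta,T}$, by $r_{\Delta t,k}$ itself, integrate over $\Omega_\delta$, and use the Neumann boundary condition $\partial_n r_{\Delta t,k}=0$ on $\partial\Omega_\delta$ to integrate by parts in the diffusive term, obtaining $\varepsilon\int r_{\Delta t,k}\Delta r_{\Delta t,k}=-\varepsilon\|\nabla r_{\Delta t,k}\|_{L^2(\Omega_\delta)}^2$. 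After also rewriting the two convective contributions $-\int r\,\bb{w}\cdot\nabla^w r$ and $\int r\,\nabla^w\!\cdot(r\bb{U})$ by integration by parts so that $\nabla r$ appears at most linearly, integrating in time I arrive at the identity
\begin{equation*}
\tfrac12\|r_{\Delta t,k}(t)\|_{L^2(\Omega_\delta)}^2+\varepsilon\!\int_0^t\!\|\nabla r_{\Delta t,k}\|_{L^2(\Omega_\delta)}^2\,d\tau=\tfrac12\|r_{0,\delta}\|_{L^2(\Omega_\delta)}^2+\mathcal{R}_{\Delta t,k}(t),
\end{equation*}
where $\mathcal{R}_{\Delta t,k}$ collects quadratic expressions in $r_{\Delta t,k}$ paired with $\nabla^w\!\cdot\bb{U}_{\Delta t,k}$, $\nabla^w\!\cdot\bb{w}_{\Delta t,k}$, and the Jacobian factors $J_{\Delta t,k},\,1/J_{\Delta t,k},\,\nabla J_{\Delta t,k}$, together with the mixed term $\varepsilon\int r_{\Delta t,k}J_{\Delta t,k}^{-1}\nabla J_{\Delta t,k}\cdot\nabla r_{\Delta t,k}$.

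Each contribution in $\mathcal{R}_{\Delta t,k}$ passes to its natural limit: by Lemma $\ref{qq22weakconvthirdlevel}$(v), $r_{\Delta t,k}\to r$ strongly in $L^{(4a/3)^-}(Q_{\delta,T})$, so $r_{\Delta t,k}^2$ is strongly convergent in $L^{(2a/3)^-}$; this combined with the weak $L^2$ convergences of $\nabla\bb{U}_{\Delta t,k}$ and $\bb{w}_{\Delta t,k}$ (Lemmas $\ref{qq22weakconvthirdlevel}$(i) and $\ref{qq22111}$(v)) and the uniform $C^{0,\alpha}$ convergence of $J_{\Delta t,k},\,1/J_{\Delta t,k}$ (Lemma $\ref{qq22weakconvthirdlevel}$(iiic)) closes all products via H\"older, since $a\geq 9$ provides ample integrability. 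The mixed term is handled by writing $r_{\Delta t,k}J_{\Delta t,k}^{-1}\nabla J_{\Delta t,k}$, which is strongly convergent in $L^2$, paired against $\nabla r_{\Delta t,k}$, which converges weakly in $L^2$ by Lemma $\ref{qq22weakconvthirdlevel}$(ii). Writing the analogous identity for the limit $r$---legitimate because Lemma $\ref{qq22lemmanicela}$ provides $\partial_t r,\,\varepsilon\Delta r\in L^{6/5}(0,T;L^{36/25}(\Omega_\delta))$ and $\nabla r\in L^2(Q_{\delta,T})$, so $r$ is an admissible test function in $\eqref{qq22dampcont1}$---and subtracting yields $\varepsilon\int_0^T\|\nabla r_{\Delta t,k}\|_{L^2}^2\,d\tau\to\varepsilon\int_0^T\|\nabla r\|_{L^2}^2\,d\tau$. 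Together with the weak $L^2(Q_{\delta,T})$ convergence of $\nabla r_{\Delta t,k}$ and the Radon--Riesz property of $L^2$, this upgrades to strong $L^2(Q_{\delta,T})$ convergence, proving (i).

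For (ii), granted (i) we have $\nabla r_{\Delta t,k}\to \nabla r$ strongly in $L^2(Q_{\delta,T})$, while $\nabla\bb{U}_{\Delta t,k}\rightharpoonup \nabla\bb{U}$ weakly in $L^2(Q_{\delta,T})$ and $\bb{U}_{\Delta t,k}\rightharpoonup \bb{U}$ weakly in $L^2(Q_{\delta,T})$ (the latter by the embedding $L^2(0,T;H^1(\Omega_\delta))\hookrightarrow L^2(Q_{\delta,T})$ applied to Lemma $\ref{qq22weakconvthirdlevel}$(i)). Testing either product against an arbitrary $\phi\in L^\infty(Q_{\delta,T})$, the strongly $L^2$-convergent factor $\phi\nabla r_{\Delta t,k}$ paired with the weakly $L^2$-convergent $\nabla\bb{U}_{\Delta t,k}$ (resp.\ $\bb{U}_{\Delta t,k}$) gives convergence of the duality, which is exactly weak $L^1$ convergence of the product. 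The main obstacle lies in the energy-identity step of (i): one must carefully perform the integration by parts in the transformed operators $\nabla^w$, keep track of the lower-order terms produced by $\partial_t J_{\Delta t,k}$ and $\nabla J_{\Delta t,k}$, and verify that the Neumann boundary condition is preserved through the parabolic approximations, but no a priori bound beyond those in Lemmas $\ref{qq22111}$, $\ref{qq22weakconvthirdlevel}$, and $\ref{qq22lemmanicela}$ is required.
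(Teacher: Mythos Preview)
Your strategy is the same as the paper's---multiply the damped continuity equation by the density, derive an $L^2$ energy identity whose dissipative part is $\varepsilon\|\nabla r\|_{L^2}^2$, compare with the identity for the limit, and conclude norm convergence---but two concrete points are missing from your write-up.

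First, because of the operator splitting, the trace $v_{\Delta t,k}=\gamma_{|\Gamma\times\{0\}}\bb{U}_{\Delta t,k}\cdot\bb{e}_3$ is \emph{not} equal to $\partial_t w_{\Delta t,k}$ at this level. When you integrate by parts in the convective terms a boundary integral $\int_\Gamma r_{\Delta t,k}^2(v_{\Delta t,k}-\partial_t w_{\Delta t,k})$ appears; it does not belong to your description of $\mathcal{R}_{\Delta t,k}$ as interior pairings. This term is harmless---it vanishes in the limit since $\|v_{\Delta t,k}-\partial_t w_{\Delta t,k}\|_{L^2(\Gamma_T)}\leq C\sqrt{\Delta t}$ by \eqref{qq22ineqind} and $r_{\Delta t,k}$ has a pointwise upper bound at this level---but it must be accounted for.

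Second, and more substantively, your subtraction step requires the convergence $\|r_{\Delta t,k}(T)\|_{L^2(\Omega_\delta)}^2\to\|r(T)\|_{L^2(\Omega_\delta)}^2$ at the \emph{fixed} endpoint $t=T$; the strong convergence of $r_{\Delta t,k}$ in $L^2(Q_{\delta,T})$ from Lemma~\ref{qq22weakconvthirdlevel}(v) only gives this for a.e.\ $t$. The paper sidesteps this cleanly by integrating the identity once more over $t\in(0,T)$, which turns the endpoint term into $\|\sqrt{J_{\Delta t,k}}\,r_{\Delta t,k}\|_{L^2(Q_{\delta,T})}^2$ (known to converge) and the dissipation into $\int_0^T(T-t)\int_{\Omega_\delta}J_{\Delta t,k}|\nabla r_{\Delta t,k}|^2$; the weighted norm convergence then yields (i) just as well. (The paper also multiplies by $J_{\Delta t,k}r_{\Delta t,k}$ rather than $r_{\Delta t,k}$, which makes the computation correspond to multiplying by $\rho$ on the physical domain and eliminates your mixed term $\varepsilon\int r J^{-1}\nabla J\cdot\nabla r$; this is a cosmetic simplification, not an essential difference.) Your treatment of (ii) is correct.
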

\begin{proof}
To prove $(i)$, we multiply the continuity equation by $J_{\Delta t,k}{r}_{\Delta t,k}$ and integrate on $Q_{\delta,T}$ to obtain 
\begin{eqnarray*}
&&\frac{1}{2}||(\sqrt{J_{\Delta t,k}}{r}_{\Delta t,k})(t) ||_{L^2(\Omega_\delta)}^2 + \int_0^t\int_{\Gamma}{r}_{\Delta t,k}^2(v_{\Delta t,k}- \partial_t w_{\Delta t,k})+ \varepsilon \int_0^t \int_{\Omega_\delta} J_{\Delta t,k}|\nabla {r}_{\Delta t,k}|^2 \\
&&= \frac{1}{2}||(\sqrt{J_{\Delta t,k}}{r}_{\Delta t,k})(0) ||_{L^2(\Omega_\delta)}^2- \frac{1}{2} \int_0^t \int_{\Omega_\delta} J_{\Delta t,k} {r}_{\Delta t,k}^2 \nabla^{w_{\Delta t,k}} \cdot \bb{U}_{\Delta t,k},
\end{eqnarray*}
and then again integrate over $(0,T)$ to obtain
\begin{align}
&\frac{1}{2}||\sqrt{J_{\Delta t,k}}{r}_{\Delta t,k}||_{L^2(Q_{T,\delta})}^2 + \int_0^T (T-t)\Big[ \int_{\Gamma}( {r}_{\Delta t,k}^2(v_{\Delta t,k} - \partial_t w_{\Delta t,k}))(t)\Big]dt\nonumber \\
&+ \varepsilon \int_0^T(T-t)\Big[\int_{\Omega_\delta} (J_{\Delta t,k} |\nabla {r}_{\Delta t,k}|^2)(t)\Big] dt \nonumber \\
&= \frac{T}{2}||(\sqrt{J_{\Delta t,k}}{r}_{\Delta t,k})(0) ||_{L^2(\Omega_\delta)}^2- \frac{1}{2} \int_0^T(T-t) \Big[\int_{\Omega_\delta}( J_{\Delta t,k}{r}_{\Delta t,k}^2 \nabla^{w_{\Delta t,k}} \cdot \bb{U}_{\Delta t,k})(t)\Big] dt. \label{qq22idk1}
\end{align}
On the other hand, since $\eqref{qq22dampcont1}$ holds for the limiting functions ${r}$ and $\bb{U}$, we multiply $\eqref{qq22dampcont1}$ by $r$, and again integrate over $(0,T)\times Q_{\delta,T}$ to obtain
\begin{eqnarray}
&&\frac{1}{2}||\sqrt{J}{r}||_{L^2(Q_{T,\delta})}^2 + \varepsilon \int_0^T(T-t)\Big[\int_{\Omega_\delta} (J |\nabla {r}|^2)(t)\Big] dt \nonumber \\
&&= \frac{T}{2}||(\sqrt{J}{r})(0) ||_{L^2(\Omega_\delta)}^2- \frac{1}{2} \int_0^T(T-t) \Big[\int_{\Omega_\delta}(J {r}^2 \nabla^w \cdot \bb{U})(t)\Big] dt.  \label{qq22idk2}
\end{eqnarray}
Now, by passing the limit in $\Delta t,k$ in $\eqref{qq22idk1}$ and comparing it with $\eqref{qq22idk2}$, one has
\begin{eqnarray*}
\lim_{\Delta t,k}~ \varepsilon \int_0^T(T-t)\Big[\int_{\Omega_\delta} (J_{\Delta t,k} |\nabla {r}_{\Delta t,k}|^2)(t)\Big] dt =\varepsilon \int_0^T(T-t)\Big[\int_{\Omega_\delta} J |\nabla {r}|^2)(t)\Big] dt.
\end{eqnarray*}
or equivalently
\begin{eqnarray*}
|| \sqrt{J_{\Delta t,k}(T-t)}\nabla {r}_{\Delta t,k}||_{L^2(Q_{T,\delta})} \to || \sqrt{J(T-t)}\nabla {r}||_{L^2(Q_{T,\delta})}, 
\end{eqnarray*}
which by the weak convergence of $\nabla {r}_{\Delta t,k}$ in $L^2(Q_{\delta,T})$ implies $(i)$, while the convergences given in $(ii)$ follow by Lemma $\ref{qq22weakconvthirdlevel}(i)$ and $(i)$.
\end{proof}

\subsection{Renormalized continuity inequality}
Here, it is more suitable to work on the physical domain $\Omega_\delta^w$. Recall that we denote the density and the velocity on $\Omega_\delta^w$ by $\rho= {r}\circ A_w^{-1}$ and $\bb{u} = \bb{U} \circ A_w^{-1}$, respectively, where $r$ and $\bb{U}$ are the limiting functions from the previous section that satisfy the damped continuity equation $\eqref{qq22dampcont1}$. In the physical domain $\Omega_\delta^w$, the continuity equation can be written as
\begin{align}\label{qq22movingdomainsystem}
\partial_t \rho+ \nabla \cdot (\rho \bb{u}) = \ddfrac{1}{J}\varepsilon \nabla^{w^{-1}} \cdot(\nabla^{w^{-1}} \rho J), 
\end{align}
where $\nabla^{w^{-1}}$ is the push forward of the gradient by $A_w^{-1}$. Notice that $J$ doesn't depend on the vertical coordinate (which is the only one that gets affected by the domain transformation) so we keep the same notation on the physical domain.

In order to introduce our renormalized continuity equation, we want to extend the velocity $\bb{u}$ to be defined in $\mathbb{R}^3$ in such a way that the extension preserves the Sobolev regularity of $\bb{u}$. Assuming that displacement $w$ is given, we first introduce the scaled-symmetric domain
\begin{eqnarray*}
\Omega_\delta^E:=\{(X,z): (X, -\frac{z}{L})\in\Omega_\delta \}.
\end{eqnarray*}
where\footnote{The choice of $L$ ensures that the elastic structure $\Gamma^w$ is uniformly distant from the plane $z = L$. This way, a function can be properly extended from $\Omega_\delta^w$ to 
$\Omega_\delta \cup \Omega_\delta^E$.} $L \geq ||w||_{C([0,T]\times \Omega)}+1$ can be chosen so it only depends on the initial energy $E(0)$. The extension will be first defined in the fixed reference domain coordinates as a scaled-symmetric mapping from $\Omega_\delta$ to $\Omega_\delta^E$ and then transformed back to the physical domain coordinates by means of an extended domain transformation, or precisely:
\begin{mydef}
Let function $f\in W^{1,p}(\Omega_\delta^{w}(t))$ for $p\in(1,\infty)$ be such that its trace $\gamma_{|\Sigma} f=0$ and let $\hat{f}: = f \circ A_w$. We define $\mathcal{E}^w: W^{1,p}(\Omega_\delta^{w}(t)) \to W_0^{1,p}(\mathbb{R}^3)$ as
\begin{eqnarray*}
\mathcal{E}^w[f] := \begin{cases}
\hat{f}^E \circ A_w^E, \text{ in } \Omega_\delta\cup\Omega_\delta^E,\\
0, \text{ elsewhere in } \mathbb{R}^3, 
\end{cases}
\end{eqnarray*}
where
\begin{eqnarray*}
\hat{f}^E : = \begin{cases}
\hat{f}(X,z), \text{ in } \Omega_\delta, \\
\hat{f}(X,-\frac{z}{L}), \text{ in } \Omega_\delta^E,
\end{cases}
\end{eqnarray*}
and 
\begin{eqnarray*}
A_w^E(t,X,z) := \begin{cases}
A_w(t,X,z), \text{ in } Q_T, \\
(X,w\frac{L-z}{L}+z), \text{ for } (t,X,z) \in (0,T)\times\Omega_\delta^E, \\
(X,z), \text{ elsewhere in } \mathbb{R}^3,
\end{cases} 
\end{eqnarray*}

\end{mydef}

For a function $f \in L_{loc}^1(\mathbb{R}^3)$ we define the convolution (with respect to the physical domain coordinates) in the following way:
\begin{eqnarray*}
f_\kappa(y):= \int_{\mathbb{R}^3} f(x) \omega_\kappa(y-x) dx,
\end{eqnarray*}
where $\omega$ is a non-negative smooth function on $\mathbb{R}^3$ such that $\text{supp}~\omega =B(0,1):= \{ x\in \mathbb{R}^3: |x| \leq 1\}$, $\int_{B(0,1)}\omega =1 $ and $\omega_\kappa(x) = \kappa^3 \omega(\frac{x}{\kappa})$. By the standard theory, without proof we state:
\begin{lem}\label{qq22lemmaconv}
The following hold:
\begin{enumerate}
\item[(i)] For $f \in L_{loc}^1(\mathbb{R}^3)$, then $f_\kappa \in C^{\infty}(\mathbb{R}^3)$;
\item[(ii)] If $f \in W^{s,p}(\mathbb{R}^3)$, $1\leq p \leq \infty$ and $s \geq 0$, then $f_\kappa \to f$ strongly in $W^{s,p}(\mathbb{R}^3)$ as $\kappa\to 0$;
\item[(iii)] If $f \in W^{1,p}(\mathbb{R}^3)$, then $f_\kappa \circ A_w^E=:\hat{f}_\kappa \in W^{1,p}(\mathbb{R}^3)$ and $\hat{f}_\kappa \to \hat{f}$ in $W^{1,p}(\mathbb{R}^3)$ as $\kappa\to 0$.
\end{enumerate}
\end{lem}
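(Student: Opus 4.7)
The plan is to handle the three claims in turn, the first two by appealing to the classical theory of mollifiers and the third by combining the first two with a change of variables under $A_w^E$. The overall structure is standard, so I would keep the exposition compact.

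For (i), I would differentiate under the integral sign. Because $\omega_\kappa \in C_c^\infty(\mathbb{R}^3)$, for any multi-index $\alpha$ the function $\partial_y^\alpha \omega_\kappa(y-\cdot)$ is bounded and compactly supported, so dominated convergence applied to difference quotients yields
\begin{equation*}
\partial^\alpha f_\kappa(y)=\int_{\mathbb{R}^3} f(x)\,\partial_y^\alpha \omega_\kappa(y-x)\,dx,
\end{equation*}
for every $y\in\mathbb{R}^3$ and every $\alpha$, using only $f\in L^1_{\mathrm{loc}}(\mathbb{R}^3)$. This immediately gives $f_\kappa\in C^\infty(\mathbb{R}^3)$.

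For (ii), I would first treat integer $s$: the identity $\partial^\alpha f_\kappa=(\partial^\alpha f)_\kappa$ (valid in the weak sense for $|\alpha|\le s$) reduces the claim to the classical fact that mollification converges in $L^p(\mathbb{R}^3)$ for $1\le p<\infty$; the $L^\infty$ case is handled via density or by noting the statement should be understood in the weak-* sense. For non-integer $s$ the same conclusion follows from the Bessel-potential characterization $W^{s,p}=(1-\Delta)^{-s/2}L^p$ and commutativity of convolution with Fourier multipliers, or alternatively from the Gagliardo seminorm using dominated convergence on difference quotients. Since the paper cites this as ``standard theory,'' I would simply invoke these facts.

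For (iii), the main point is that the extended ALE mapping $A_w^E$ is bi-Lipschitz on $\mathbb{R}^3$, uniformly in the relevant parameters. Indeed, at this stage of the approximation the $\delta$-regularization bounds $\nabla^3 w$ in $L^\infty(0,T;L^2(\Gamma))$, so by Sobolev embedding in two dimensions $w\in C^{1,\alpha}(\Gamma)$, and its Jacobian $J=1+w$ is bounded above and below by positive constants thanks to Lemma~\ref{qq22111}(iii). Consequently the change-of-variables formula applies to $A_w^E$, and one has
\begin{equation*}
\|\hat f_\kappa\|_{W^{1,p}(\mathbb{R}^3)}\le C\|f_\kappa\|_{W^{1,p}(\mathbb{R}^3)},\qquad \|\hat f_\kappa-\hat f\|_{W^{1,p}(\mathbb{R}^3)}\le C\|f_\kappa-f\|_{W^{1,p}(\mathbb{R}^3)},
\end{equation*}
with a constant $C$ depending only on the bi-Lipschitz norm of $A_w^E$. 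The first inequality combined with $f_\kappa\in W^{1,p}(\mathbb{R}^3)$ (from part~(ii) applied to $f$) yields $\hat f_\kappa\in W^{1,p}(\mathbb{R}^3)$, and the second together with part~(ii) gives the asserted strong convergence as $\kappa\to 0$.

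The only mildly delicate point is verifying that $A_w^E$ is genuinely bi-Lipschitz on all of $\mathbb{R}^3$ (not only inside $\Omega_\delta\cup\Omega_\delta^E$); this is immediate from the piecewise definition, since the mapping equals the identity outside $\Omega_\delta\cup\Omega_\delta^E$ and matches continuously across the interfaces by the clamped boundary condition on $w$. Beyond this small bookkeeping, the lemma is a routine consequence of the classical mollifier theory, which is why the authors state it without proof.
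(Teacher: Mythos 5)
The paper states this lemma without proof (``By the standard theory, without proof we state:''), so there is no reference argument to compare against; your proposal is a correct and reasonable expansion of the standard facts the authors are invoking. Parts (i) and (iii) are sound as you present them: part (iii) correctly reduces to bi-Lipschitz stability of Sobolev norms under composition with $A_w^E$, and the Lipschitz bound on $\nabla A_w^E$ is indeed available at this stage because the $\delta$-regularization gives $w\in L^\infty(0,T;H^3(\Gamma))\hookrightarrow L^\infty(0,T;C^{1,\alpha}(\Gamma))$ (so $\nabla w\in L^\infty$) and Lemma~\ref{qq22111}(iii) bounds $J=1+w$ away from zero and infinity; the continuity of the piecewise definition across $z=0$, $z=L$, $z=-1$ and $\partial\Gamma$ (using the clamped boundary condition and the extension of $A_w$ by the identity on $\Omega_\delta\setminus(\Gamma\times(-1,0))$) is exactly the bookkeeping you indicate.

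One caveat you already half-flag but do not fully resolve: part (ii) as literally stated, with the endpoint $p=\infty$ included, is false in general --- mollifications of a discontinuous $L^\infty$ function are continuous, so $\|f_\kappa-f\|_{L^\infty}$ stays bounded away from zero. Your ``density or weak-*'' aside is a sensible observation but does not rescue the statement as written. This is an imprecision in the paper's lemma rather than a gap in your reasoning: in the places the lemma is actually applied (e.g.\ the convergences $b''(r_\kappa)\to b''(r)$ and $b''(\rho_\kappa)\to b''(\rho)$ in the proof of Theorem~\ref{qq22weakomegaRCE}), the authors only assert convergence in $L^{\infty^-}$, i.e.\ in every finite $L^q$. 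For $p<\infty$ (or the weak-* interpretation at $p=\infty$), your argument --- commuting mollification with weak derivatives for integer $s$, Bessel potentials or the Gagliardo seminorm for fractional $s$ --- is standard and correct.
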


\noindent
Now, we are ready to prove:
\begin{thm}\label{qq22weakomegaRCE}
Any weak solution $(\rho,\bb{u},w,\theta)$ in the sense of Definition $\ref{qq22solweakartvis}$ satisfies the following renormalized continuity inequality
\begin{eqnarray}
&&\int_0^T \frac{d}{dt} \int_{\mathbb{R}^3} b(\rho) \varphi-\int_0^T \int_{\mathbb{R}^3} \big( b(\rho) \partial_t \varphi + b(\rho)\mathcal{E}^w[\bb{u}]\cdot \nabla \varphi \big) \nonumber \\
&&\geq-\int_0^T \int_{\mathbb{R}^3} \big( \rho b'(\rho) -b(\rho) \big)(\nabla \cdot \mathcal{E}^w[\bb{U}])\varphi \nonumber \\
&&\quad + \int_0^T \int_{\mathbb{R}^3}\varepsilon\chi_{\Omega^{w}(t)} \big(- \nabla^{w^{-1}} b(\rho)\cdot \nabla^{w^{-1}} \varphi +\frac{1}{J}\nabla^{w^{-1}} J\cdot \nabla^{w^{-1}} b(\rho) \varphi \big), \label{qq22RCEepsmov}
\end{eqnarray}
where $\rho$ is extended by $0$ to $\mathbb{R}^3$, for any non-negative $\varphi \in C^\infty([0,T] \times \mathbb{R}^3)$ and any convex $b \in C^2(\mathbb{R}^+ ; \mathbb{R}^+)$ such that $b(0) = 0$, $|b'(x)|\leq cx$ for large $x$ and $b''(x)\leq C$, for some positive constants $c,C$.
\end{thm}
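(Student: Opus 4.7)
My plan is to follow the DiPerna--Lions renormalization technique, adapted to the moving-domain setting. First I would extend $\rho$ by zero and $\bb{u}$ by $\mathcal{E}^w[\bb{u}]$ to the whole space, and verify that the damped continuity equation \eqref{qq22movingdomainsystem} holds distributionally on $(0,T)\times \mathbb{R}^3$. The kinematic coupling $\bb{u}\cdot \nu^w = \partial_t w\,\bb{e}_3\cdot\nu^w$ on $\Gamma^w(t)$, together with the no-slip condition $\bb{u}=0$ on the rigid part and the Neumann condition $\partial_n r = 0$ on $\partial\Omega_\delta$, ensures that the zero extension of $\rho$ produces no spurious boundary measures from the jump across $\partial\Omega_\delta^w(t)$; the diffusion side naturally picks up the characteristic function $\chi_{\Omega^w(t)}$.

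Next I would convolve with the mollifier $\omega_\kappa$ to obtain a smooth approximation $\rho_\kappa$ satisfying
\begin{equation*}
\partial_t \rho_\kappa + \nabla \cdot (\rho_\kappa \mathcal{E}^w[\bb{u}]) = R_\kappa + \varepsilon \big(\chi_{\Omega^w(t)}\Xi(\rho)\big)_\kappa,
\end{equation*}
where $R_\kappa := \nabla \cdot (\rho_\kappa \mathcal{E}^w[\bb{u}]) - (\nabla \cdot (\rho \mathcal{E}^w[\bb{u}]))_\kappa$ is the Friedrichs commutator and $\Xi(\rho)$ denotes the diffusion operator on the right-hand side of \eqref{qq22movingdomainsystem}. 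Multiplying by $b'(\rho_\kappa)$ and applying the chain rule produces
\begin{equation*}
\partial_t b(\rho_\kappa) + \nabla \cdot (b(\rho_\kappa)\mathcal{E}^w[\bb{u}]) + (\rho_\kappa b'(\rho_\kappa) - b(\rho_\kappa))\nabla\cdot \mathcal{E}^w[\bb{u}] = b'(\rho_\kappa) R_\kappa + \varepsilon b'(\rho_\kappa)\big(\chi_{\Omega^w(t)}\Xi(\rho)\big)_\kappa.
\end{equation*}
Pairing with a nonnegative $\varphi\in C^\infty([0,T]\times \mathbb{R}^3)$ and letting $\kappa\to 0$, the commutator term vanishes in $L^1_{loc}$ by the classical lemma, using $\mathcal{E}^w[\bb{u}]\in L^2(0,T;H^1)$, $\rho\in L^\infty L^a$, and the sublinear bound $|b'(x)|\leq cx$ at infinity. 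The inequality direction in the statement then emerges from the algebraic identity
\begin{equation*}
b'(\rho)\Big(\nabla^{w^{-1}}\cdot\nabla^{w^{-1}}\rho + \tfrac{1}{J}\nabla^{w^{-1}}J\cdot\nabla^{w^{-1}}\rho\Big) = \nabla^{w^{-1}}\cdot\nabla^{w^{-1}} b(\rho) - b''(\rho)|\nabla^{w^{-1}}\rho|^2 + \tfrac{1}{J}\nabla^{w^{-1}}J\cdot \nabla^{w^{-1}} b(\rho),
\end{equation*}
the convexity $b''\geq 0$ combined with $\varphi\geq 0$ allowing the nonpositive term to be dropped, after which an integration by parts against the transformed gradient produces exactly the displayed right-hand side.

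The main obstacle, I expect, is the low regularity of the ALE transformation and the velocity extension acting simultaneously: the coefficients of $\nabla^{w^{-1}}$ are only $L^\infty$ in space at the $\delta$-level (controlled by the $\delta\nabla^3 w$ regularization), $\mathcal{E}^w[\bb{u}]$ lies only in $L^2 H^1$ which is the borderline case for Friedrichs, and the zero extension of $\rho$ across the moving interface $\Gamma^w(t)$ must be rendered compatible with Reynolds' transport theorem so that the distributional equation on $\mathbb{R}^3$ actually matches the interior equation. The first two points can be handled by careful accounting of the spatial derivatives in the commutator together with the integrability bounds from Lemma \ref{qq22lemmanicela} (namely $\nabla \rho\in L^2$ and $\partial_t \rho, \Delta \rho \in L^{6/5}L^{36/25}$), while the third is resolved exactly by the kinematic coupling cited above. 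Once these technicalities are settled, all remaining terms pass to the limit by dominated convergence and the uniform bounds, and the stated renormalized inequality follows.
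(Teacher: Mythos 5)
Your proposal is correct and follows essentially the same route as the paper's proof: extend $\rho$ by zero and $\bb{u}$ by $\mathcal{E}^w[\bb{u}]$, mollify, invoke the Friedrichs commutator lemma for the transport part, multiply by $b'(\rho_\kappa)$ and use the chain-rule identity for the transformed Laplacian so that convexity ($b''\geq 0$, $\varphi\geq 0$) lets you drop the dissipative term, then pass $\kappa\to 0$ using the regularity from Lemma \ref{qq22lemmanicela}. The only point you treat implicitly that the paper handles explicitly is the second commutator $\bb{r}_\kappa'$ coming from the difference between $\big(\chi_{\Omega^w}\Xi(\rho)\big)_\kappa$ and $\chi_{\Omega^w}\Xi(\rho_\kappa)$ before you can apply your algebraic identity (which acts on $\Xi(\rho_\kappa)$, not on the mollified operator); you do flag this under ``technicalities'' and point to the right bounds ($\Delta\rho\in L^{6/5}L^{36/25}$), so the idea is there.
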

\begin{proof}
The term on the right-hand side of the equation $\eqref{qq22movingdomainsystem}$ can be written as
\begin{eqnarray*}
\varepsilon \big( \Delta^{w^{-1}}\rho+\frac{1}{J} \nabla^{w^{-1}} \rho \cdot \nabla^{w^{-1}} J \big),
\end{eqnarray*}
where the transformed Laplacian can be expressed as
\begin{eqnarray*}
\Delta^{w^{-1}} \rho := (\Delta r) \circ (A_{w})^{-1} = \sum_{i,j,k=1}^3 \partial_{x_k}A_i^{-1} \partial_{x_k} A_j^{-1} \partial_{x_i x_j}^2 \rho + \sum_{i,j=1}^3 \partial_{x_i x_i}^2 A_j^{-1} \partial_{x_j}\rho,
\end{eqnarray*}
with $A_i^{-1}:=A_{w}^{-1} \bb{e}_i$. 
To apply the convolution to the equation $\eqref{qq22movingdomainsystem}$, we want it's left-hand side to be defined on $\mathbb{R}^3$, so we extend $\rho$ by $0$ and $\bb{u}$ by $\mathcal{E}^w[\bb{u}]$, and then apply the convolution in the following way:
\begin{eqnarray}\label{qq22somestar}
\partial_t \rho_\kappa + \nabla \cdot (\rho_\kappa \mathcal{E}^w[\bb{U}])-\bb{r}_\kappa= \big(\chi_{\Omega^w}\ddfrac{1}{J}\varepsilon \nabla^{w^{-1}} \cdot(\nabla^{w^{-1}} \rho J) \big)_\kappa,
\end{eqnarray}
where $\bb{r}_\kappa = \nabla \cdot(\rho_\kappa \mathcal{E}^w[\bb{u}] ) - \nabla \cdot(\rho\mathcal{E}^w[\bb{u}] )_\kappa$. By the Friedrichs commutator lemma (see \cite[Lemma 3.1]{novotnystraskraba}), we have
\begin{eqnarray*}
||\bb{r}_\kappa ||_{L^q(\mathbb{R}^3)} \leq || \mathcal{E}^w[\bb{u}] ||_{W^{1,p}(\mathbb{R}^3)} ||\rho||_{L^r(\mathbb{R}^3)}, \quad 1/q = 1/p+1/r, ~~ p \leq 2,~~ r\leq a, 
\end{eqnarray*}
for a.e. $t \in [0,T]$, and
\begin{eqnarray*}
\bb{r}_\kappa \to 0, \quad \text{in } L^2(0,T; L^q(\mathbb{R}^3)), \quad \text{as } \kappa\to 0.
\end{eqnarray*}
Now, we multiply the equation $\eqref{qq22somestar}$ by $b'(\rho_\kappa)$, to obtain
\begin{eqnarray}
&&\partial_t b(\rho_\kappa)+\nabla \cdot(b(\rho_\kappa)\mathcal{E}^w[\bb{u}] )+ (\rho_\kappa b' (\rho_\kappa) - b(\rho_\kappa)) (\nabla \cdot \mathcal{E}^w[\bb{u}] )-\bb{r}_\kappa b'(\rho_\kappa) \nonumber\\
&&= \big(\chi_{\Omega^w}\ddfrac{1}{J}\varepsilon \nabla^{w^{-1}} \cdot(\nabla^{w^{-1}} \rho J) \big)_\kappa b'(\rho_\kappa)\nonumber\\[2mm]
&&= \underbrace{\Big[ \big(\chi_{\Omega^w} \frac{1}{J}\varepsilon \nabla^{w^{-1}} \cdot( \nabla^{w^{-1}} \rho J) \big)_\kappa- \chi_{\Omega^w} \frac{1}{J}\varepsilon \big( \nabla^{w^{-1}} \cdot( \nabla^{w^{-1}} \rho_\kappa J) \big)\Big] b'(\rho_\kappa)}_{:=\bb{r}_\kappa'} \nonumber \\
&&\quad +\chi_{\Omega^w}\ddfrac{1}{J}\varepsilon \big(\nabla^{w^{-1}} \cdot(\chi_{\Omega^w}\nabla^{w^{-1}} \rho_\kappa J) \big)b'(\rho_\kappa). \quad\quad \quad\quad \label{qq22almostocnteq}
\end{eqnarray}
Now, since $ J \in L^\infty(0,T; W^{1,\infty}(\Omega_\delta))$, $1/J \in L^\infty(Q_{\delta,T})$ and $\Delta r \in L^{\frac{6}{5}}(0,T; L^{\frac{36}{25}}(\Omega_\delta))$ by $\eqref{qq22maxreg}$ one obtains that $\bb{r}_\kappa' \in L^{\frac{6}{5}}(0,T; L^{\frac{36}{25}}(\Omega_\delta))$, so by Lemma $\ref{qq22lemmaconv}(iii)$ we obtain that $\bb{r}_\kappa' \to 0$ in $L^{\frac{6}{5}}(0,T; L^{\frac{36}{25}}(\Omega_\delta))$. Next, to deal with the last term in $\eqref{qq22almostocnteq}$, we express it on the fixed domain $\Omega_\delta$
\begin{eqnarray*}
\chi_\Omega \frac{1}{J} \varepsilon \big( \nabla \cdot \big[ \nabla {r}_\kappa J b'({r}_\kappa) \big]\big) - \varepsilon\chi_\Omega \big(|\nabla {r}_\kappa|^2 b''({r}_\kappa))\big),
\end{eqnarray*}
where by a slight abuse of notation we denoted $ {r}_\kappa= \rho_\kappa\circ A_{w}^E$. By using the growth conditions for function $b$, $|b(x)|\leq cx^2$ for large $x$, so by Lemma $\ref{qq22lemmaconv}$ and the Vitali convergence theorem (see \cite[Theorem II.6.15]{vitali}), one has
\begin{eqnarray*}
b(r_\kappa) &\to b(r),\quad b(\rho_\kappa) &\to b(\rho), \text{ in } L^{\infty^-}(0,T;L^{(\frac{a}{2})^-}(\mathbb{R}^3)), \\
b(r_\kappa) &\rightharpoonup b(r), \quad b(\rho_\kappa) &\rightharpoonup b(\rho), \text{ weakly in } L^{\infty}(0,T; L^{\frac{a}{2}}(\mathbb{R}^3)), \\[3mm]
b'(r_\kappa) &\to b'(r),\quad b'(\rho_\kappa) &\to b'(\rho), \text{ in } L^{\infty^-}(0,T;L^{a^-}(\mathbb{R}^3)), \\
b'(r_\kappa) &\rightharpoonup b'(r), \quad b'(\rho_\kappa) &\rightharpoonup b'(\rho), \text{ weakly in } L^{\infty}(0,T; L^{a}(\mathbb{R}^3)), \\[3mm]
b''(r_\kappa) &\to b(r),\quad b''(\rho_\kappa) &\to b''(\rho), \text{ in } L^{\infty^-}((0,T) \times \mathbb{R}^3), \\
b''(r_\kappa) &\rightharpoonup b''(r),\quad b''(\rho_\kappa) &\rightharpoonup b''(\rho), \text{ weakly}^* \text{ in } L^{\infty}((0,T) \times \mathbb{R}^3),
\end{eqnarray*}
which in particular gives us that
\begin{eqnarray*}
\chi_\Omega \big(|\nabla {r}_\kappa|^2 b''({r}_\kappa))\big) &\rightharpoonup& \chi_\Omega |\nabla r|^2 b''(r) \geq 0, \quad\quad \text{in } L^1((0,T)\times \mathbb{R}^3)
\end{eqnarray*}
by the convexity of $b$, and
\begin{eqnarray*}
\chi_\Omega \frac{1}{J} \big( \nabla \cdot \big[ \nabla {r}_\kappa J b'({r}_\kappa) \big]\big) &\rightharpoonup& \chi_\Omega \frac{1}{J} \big( \nabla \cdot \big[ (\underbrace{\nabla r J b'(r)}_{=J\nabla b(r) } \big]\big), \quad\quad \text{in } L^1((0,T)\times \mathbb{R}^3)
\end{eqnarray*}
by $\eqref{qq22maxreg}$ and the uniform estimates. We multiply $\eqref{qq22almostocnteq}$ by a non-negative function $\varphi \in C^\infty([0,T] \times \mathbb{R}^3)$, integrate over $[0,T] \times \mathbb{R}^3$, and by partial integration, the convexity of $b$ and the above convergences, we can pass to the limit $\kappa \to 0$ to obtain $\eqref{qq22RCEepsmov}$, so the proof is finished.
\end{proof}

\section{The vanishing artificial viscosity limit}\label{qq22section5}
Denote the solution obtained in Theorem $\ref{qq22artdisth}$ by $( r_\varepsilon, \bb{U}_\varepsilon,w_\varepsilon,\theta_\varepsilon)$. In this section we aim to pass the limit in $\varepsilon \to 0$. Introduce the function spaces
\begin{eqnarray*}
\mathcal{W}_{FS,\delta} (0,T): = &&\big\{ (\bb{U},w): w \in L^\infty(0,T; H_0^2(\Gamma)) \cap W^{1,\infty}(0,T; L^2(\Gamma)), \\
&&\bb{U} \in L^2(0,T; H^{1}(\Omega_\delta)), \gamma_{|\Gamma\times \{0\}}\bb{U}(t, X,0) = \partial_t w(t,X) \bb{e}_3, ~\bb{U} = 0 \text{ on } \Omega_\delta \big\},
\end{eqnarray*}
and the following weak solution, suitable for the limiting functions in this limit passage:
\begin{mydef}\label{qq22solweakartpr}
We say that $r\in C_w(0,T; L^a(\Omega_\delta)) ,~(\bb{U},w) \in \mathcal{W}_{FS,\delta}(0,T),\theta \in \mathcal{W}_H(0,T)$ are the weak solutions to the fluid-structure interaction problem with artificial pressure if
\begin{enumerate}

\item The following heat equation
\begin{align}
\int_{\Gamma_T}\theta \partial_t \widetilde{\psi} - \int_{\Gamma_T}\nabla \theta \cdot\nabla \widetilde{\psi}+\int_{\Gamma_T}\nabla w \cdot\nabla \partial_t \widetilde{\psi} =\int_0^T \frac{d}{dt}\int_{\Gamma}\theta\widetilde{\psi} +\int_0^T \frac{d}{dt} \int_{\Gamma}\nabla w \cdot \nabla \widetilde{\psi},\nonumber \\ \label{qq22heat3}
\end{align}
holds for all $\widetilde{\psi} \in C_0^\infty(\Gamma_T)$.
\item The following continuity equation holds
\begin{eqnarray}\label{qq22cont3}
\int_{Q_T} J\rho \partial_t \varphi+ \int_{Q_T}J(\rho \bb{U} - \rho \bb{w}) \cdot\nabla^w \varphi =\int_0^T \frac{d}{dt}\int_{\Omega_\delta} J\rho \varphi ,
\end{eqnarray}
for all $\varphi \in C^\infty([0,T] \times \overline{\Omega_\delta})$.
\item The following coupled momentum equation holds
\begin{eqnarray}
&&\int_{Q_{T}}J{r} \bb{U}\cdot \partial_t \bb{q} +\int_{Q_{T}}J\big[({r} \bb{U}-{r} \bb{w}) \cdot \nabla^{w}\bb{q}\big]\cdot \mathbf{u}-\mu\int_{Q_{T}} J\nabla^{w} \mathbf{u}:\nabla^{w}\bb{q}\nonumber\\
&&-(\mu +\lambda)\int_{Q_{T}} J (\nabla^{w}\cdot \mathbf{u})(\nabla^{w}\cdot \bb{q})+ \int_{Q_{T}}(J ({r}^\gamma + \delta {r}^a) \nabla^{w}\cdot \bb{q}) 
+\bint_{\Gamma_T} \partial_t w \partial_t \psi \nonumber \\
&&-\bint_{\Gamma_T}\Delta w \Delta \psi
-\bint_{\Gamma_T}\mathcal{F}(w) \psi +\bint_{\Gamma_T} \nabla \theta \cdot \nabla \psi -\delta \bint_{\Gamma_T} \nabla^3 w : \nabla^3 \psi\nonumber\\
&&=\int_0^T \frac{d}{dt} \int_{\Omega_\delta}J {r} \bb{U}\cdot\bb{q} +\int_0^T \frac{d}{dt} \int_{\Gamma} \partial_t w \psi, \label{qq22momentumeps}
\end{eqnarray} 
for all $\bb{q} \in C_0^\infty(Q_{\varepsilon,T,\Gamma})$ and $\psi\in C_0^\infty( \Gamma_T )$, such that $\bb{q}_{|\Gamma\times \{0\}} = \psi\bb{e}_3$.\\
\end{enumerate}

\end{mydef}

In this section, we will work both on the fixed and physical domain coordinates. The fluid density and velocity on the physical domain $\Omega_\delta^{w_\varepsilon}$ will be denoted by $\rho_\varepsilon = {r}_\varepsilon \circ A_{w_\varepsilon}^{-1}$ and $\bb{u}_\varepsilon = \bb{U}_\varepsilon \circ A_{w_\varepsilon}^{-1}$, and similarly for the limiting functions $\rho = {r} \circ A_{w}^{-1}$ and $\bb{u} = \bb{U} \circ A_{w}^{-1}$. The continuity equation and coupled momentum equation on the physical domain $\Omega_\delta^{w_\varepsilon}$ corresponding to $\eqref{qq22dampcont1}$ and $\eqref{qq22summedup1}$, respectively, read
\begin{align} \label{qq22movingdomainsystem1234}
\begin{cases} 
&\partial_t \rho_\varepsilon+ \nabla \cdot (\rho_\varepsilon \bb{u}_\varepsilon) = \ddfrac{1}{J_\varepsilon}\varepsilon \nabla^{w_\varepsilon^{-1}} \cdot(\nabla^{w_\varepsilon^{-1}} \rho_\varepsilon J_\varepsilon), \quad \text{a.e. in } Q_{\delta,T} \\[4.5mm]
&\bint_{Q_{\delta,T}^{w_\varepsilon}}\rho_\varepsilon \bb{u}_\varepsilon \cdot \partial_t \bb{q} +\bint_{Q_{\delta,T}^{w_\varepsilon}}\big(\rho_\varepsilon \bb{u}_\varepsilon \otimes \bb{u}_\varepsilon \big) :\nabla \bb{q}-\mu\bint_{Q_{\delta,T}^{w_\varepsilon}} \nabla\bb{u}_\varepsilon:\nabla\bb{q}  \\[3.5mm]
&\hspace{.5in}-(\mu +\lambda)\bint_{Q_{\delta,T}^{w_\varepsilon}}(\nabla\cdot \bb{u}_\varepsilon)(\nabla\cdot \bb{q} )+\bint_{Q_{\delta,T}^{w_\varepsilon}} (\rho_\varepsilon^{\gamma} + \delta \rho_\varepsilon^{a}) (\nabla \cdot \bb{q})\\[3.5mm]
&\hspace{.5in}-\varepsilon \bint_{Q_{\delta,T}^{w_\varepsilon}} \nabla^{w_\varepsilon} \rho_\varepsilon \cdot ( \bb{q}\cdot \nabla^{w_\varepsilon^{-1}} \bb{u}_\varepsilon+ \bb{u}_\varepsilon\cdot \nabla^{w_\varepsilon^{-1}} \bb{q}) +\bint_{\Gamma_T} \partial_t w_\varepsilon \partial_t \psi  \\[3.5mm]
&\hspace{.5in}-\bint_{\Gamma_T}\Delta w_\varepsilon \Delta \psi - \bint_{\Gamma_T}\mathcal{F}(w_\varepsilon) \psi+ \bint_{\Gamma_T}\nabla \theta\cdot \nabla \psi-\delta\bint_{\Gamma_T} \nabla^3 w_\varepsilon : \nabla^3 \psi\\[3.5mm]
&\hspace{.5in} =\bint_0^T \frac{d}{dt} \bint_{\Omega_\delta^{w_\varepsilon}(t)}\rho_\varepsilon \bb{u}_\varepsilon \cdot\bb{q} + \bint_0^T \frac{d}{dt} \bint_\Gamma \partial_t w \psi ,
\end{cases}
\end{align}
for all $\bb{q} \in C_0^\infty(Q_{\delta,T,\Gamma}^w)$ and $\psi \in C_0^\infty(\Gamma_T^w)$ such that $\bb{q}_{\Gamma^w} = \psi \bb{e}_3$. Next, the continuity equation and the coupled momentum equation with artificial pressure corresponding on the physical domain $\Omega_\delta^{w}$ corresponding to $\eqref{qq22cont3}$ and $\eqref{qq22momentumeps}$, respectively, read
\begin{align} \label{qq22movingdomainsystem123}
\begin{cases} 
&\bint_{Q_{\delta,T}^{w}}\rho\partial_t\varphi+\bint_{Q_{\delta,T}^{w}} \rho \bb{u}\cdot \nabla\varphi=\bint_{0}^T \frac{d}{dt} \bint_{\Omega_\delta^w}\rho \varphi, \\[4mm]
&\bint_{Q_{\delta,T}^{w}}\rho \bb{u} \cdot \partial_t \bb{q} +\bint_{Q_{\delta,T}^{w}}\big(\rho \bb{u} \otimes \bb{u} \big) :\nabla \bb{q}-\mu \bint_{Q_{\delta,T}^{w}} \nabla\bb{u}:\nabla\bb{q}-(\mu +\lambda)\bint_{Q_{\delta,T}^{w}}(\nabla\cdot \bb{u})(\nabla\cdot \bb{q} ) \\[3.5mm]
&\quad+ \bint_{Q_{\delta,T}^{w}} (\rho^{\gamma} + \delta \rho^{a}) (\nabla \cdot \bb{q})
+\bint_{\Gamma_T} \partial_t w \partial_t \psi -\bint_{\Gamma_T}\Delta w \Delta \psi - \bint_{\Gamma_T}\mathcal{F}(w) \psi+ \bint_{\Gamma_T}\nabla \theta\cdot \nabla \psi\\[3.5mm]
&\quad- \delta \bint_{\Gamma_T}\nabla^3 w : \nabla^3 \psi =\bint_0^T \frac{d}{dt} \bint_{\Omega_\delta^w(t)}\rho \bb{u} \cdot\bb{q}+ \bint_0^T \frac{d}{dt} \bint_\Gamma \partial_t w \psi ,
\end{cases}
\end{align}
for all $\bb{q} \in C_0^\infty(Q_{T,\delta,\Gamma}^w)$ and $\psi \in C_0^\infty(\Gamma_T^w)$ such that $\bb{q}_{\Gamma^w} = \psi \bb{e}_3$ and $\varphi \in C^\infty([0,T]\times \overline{\Omega_\delta^w(t)})$.
\begin{thm}\label{qq22artprth}
There exists a weak solution $({r},\bb{U},w,\theta)$ in the sense of Definition $\ref{qq22solweakartpr}$ that satisfies the following energy inequality for all $t \in [0,T]$
\begin{eqnarray*}
E_\delta^w(t) + D^w(t) \leq C(E_0)
\end{eqnarray*}
where $E_{\delta}^w$ is defined in Theorem $\ref{qq22artdisth}$ and $D^w$ is defined in $\eqref{qq22energies2}$. Moreover, the corresponding functions $(\rho,\bb{u})$ on physical domain $\Omega_\delta^{w}$ satisfy the system $\eqref{qq22movingdomainsystem123}$.
\end{thm}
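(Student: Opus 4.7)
Let $(r_\varepsilon,\bb{U}_\varepsilon,w_\varepsilon,\theta_\varepsilon)$ denote the sequence of weak solutions produced by Theorem \ref{qq22artdisth}. The energy inequality of that theorem gives bounds independent of $\varepsilon$ (but depending on $\delta$): $w_\varepsilon$ is bounded in $L^\infty(0,T;H_0^2(\Gamma))\cap W^{1,\infty}(0,T;L^2(\Gamma))$, with $\delta\|\nabla^3 w_\varepsilon\|_{L^\infty(0,T;L^2(\Gamma))}\le C$; $\theta_\varepsilon$ in $L^\infty(0,T;L^2(\Gamma))\cap L^2(0,T;H_0^1(\Gamma))$; $\bb{U}_\varepsilon$ in $L^2(0,T;H^1(\Omega_\delta))$; $r_\varepsilon$ in $L^\infty(0,T;L^a(\Omega_\delta))$; and the viscous dissipation term $\sqrt{\varepsilon}\|\nabla r_\varepsilon\|_{L^2(Q_{\delta,T})}+\sqrt{\delta\varepsilon}\|\nabla r_\varepsilon^{a/2}\|_{L^2(Q_{\delta,T})}\le C$. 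The first step is to extract weakly/weakly-$*$ convergent subsequences (not relabelled). Since $\delta\|\nabla^3 w_\varepsilon\|_{L^\infty_t L^2_x}$ is uniformly bounded, the displacement enjoys the compactness $w_\varepsilon\to w$ in $C([0,T];C^{1,\beta}(\overline\Gamma))$ for some $\beta>0$, which in particular upgrades $\Omega_\delta^{w_\varepsilon}(t)$ to have uniformly Lipschitz (in fact $C^{1,\beta}$) boundary. Consequently $J_\varepsilon\to J$, $1/J_\varepsilon\to 1/J$ and $\nabla A_{w_\varepsilon}^{-1}\to \nabla A_w^{-1}$ uniformly, and $\bb{w}_\varepsilon\to\bb{w}$ in the relevant spaces. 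The heat equation passes to the limit immediately.

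\textbf{Vanishing of the $\varepsilon$-terms and the convective term.} In weak form, the $\varepsilon$-dissipation in the continuity equation reduces to $\varepsilon\int_{Q_{\delta,T}}J_\varepsilon\nabla r_\varepsilon\cdot\nabla\varphi=O(\sqrt\varepsilon)$, and similarly the auxiliary terms $\varepsilon\int J_\varepsilon\nabla r_\varepsilon\cdot(\bb{q}\cdot\nabla\bb{U}_\varepsilon+\bb{U}_\varepsilon\cdot\nabla\bb{q})$ in the momentum equation are $O(\sqrt\varepsilon)$ by Cauchy--Schwarz. For the convective term, the continuity equation together with the uniform bounds on $r_\varepsilon\bb{U}_\varepsilon$ in $L^2(0,T;L^{6a/(a+6)}(\Omega_\delta))\cap L^\infty(0,T;L^{2a/(a+1)}(\Omega_\delta))$ (as in Lemma \ref{qq22weakconvthirdlevel}) yields a uniform bound on $\partial_t(J_\varepsilon r_\varepsilon)$ in a negative Sobolev space; combined with $J_\varepsilon r_\varepsilon\in L^\infty(0,T;L^a(\Omega_\delta))$ and Aubin--Lions, we obtain $J_\varepsilon r_\varepsilon\to Jr$ strongly in $C_w(0,T;L^a(\Omega_\delta))$ and in $L^q(0,T;W^{-1,q}(\Omega_\delta))$ for suitable $q$. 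This upgrades the weak convergence of $r_\varepsilon\bb{U}_\varepsilon$ and, via the arguments of Lemma \ref{qq22weakconvthirdlevel}(vii)--(viii), gives the convergence of $J_\varepsilon r_\varepsilon\bb{U}_\varepsilon\otimes(\bb{U}_\varepsilon-\bb{w}_\varepsilon)\rightharpoonup Jr\bb{U}\otimes(\bb{U}-\bb{w})$ in $L^1$. The structure terms converge by (A1) and compactness of $w_\varepsilon$.

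\textbf{The main obstacle: identifying the pressure.} The serious difficulty is to show that $r_\varepsilon^\gamma+\delta r_\varepsilon^a$ converges to $r^\gamma+\delta r^a$ (not merely to its weak limit $\overline{r^\gamma}+\delta\overline{r^a}$). The plan is to transport the problem to the physical domain $\Omega_\delta^{w_\varepsilon}(t)$, where the boundary is uniformly Lipschitz (by the $\delta\|\nabla^3 w_\varepsilon\|$ bound) so that the Bogovskii operator and the inverse divergence are available. Following the Lions--Feireisl scheme: (i) test the momentum equation on the physical domain with $\bb{q}=\varphi\,\nabla\Delta^{-1}[\chi_{\Omega_\delta^{w_\varepsilon}(t)}b(\rho_\varepsilon)]$ (suitably cut off near $\Gamma^{w_\varepsilon}(t)$) and with the analogous limit test function, exploiting the Lipschitz regularity to make the inverse Laplacian well-defined; (ii) subtract the two identities and pass to the limit in the commutators using the DiPerna--Lions theory and the compactness of $w_\varepsilon$ to obtain the effective viscous flux identity
\[
\overline{(p(\rho)+\delta \rho^{a})\,b(\rho)}-(2\mu+\lambda)\,\overline{(\nabla\cdot\bb{u})\,b(\rho)}=\overline{p(\rho)+\delta \rho^{a}}\cdot\overline{b(\rho)}-(2\mu+\lambda)\,(\nabla\cdot\bb{u})\,\overline{b(\rho)};
\]
(iii) use the renormalized continuity inequality established in Theorem \ref{qq22weakomegaRCE} (which survives the limit $\varepsilon\to 0$ in the form of a renormalized continuity equation for the limit, thanks to the DiPerna--Lions argument on the extended velocity $\mathcal{E}^w[\bb{u}]$) with $b(\rho)=\rho\log\rho$ to deduce $\overline{\rho\log\rho}=\rho\log\rho$ on $\Omega_\delta^w(t)$. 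By strict convexity this yields $\rho_\varepsilon\to\rho$ a.e., and hence strongly in $L^1_{\mathrm{loc}}$; combined with the uniform $L^{4a/3}(Q_{\delta,T})$-bound (cf.\ Lemma \ref{qq22weakconvthirdlevel}(v), inherited at this level), we conclude $r_\varepsilon^\gamma+\delta r_\varepsilon^a\to r^\gamma+\delta r^a$ in $L^1$. This is the step where the role of the artificial pressure $\delta r^a$ with $a$ large is essential.

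\textbf{Conclusion.} With all terms converging, the coupled momentum equation \eqref{qq22momentumeps} and the continuity equation \eqref{qq22cont3} (both on the fixed reference formulation and, after pushing forward by $A_w$, the physical formulation \eqref{qq22movingdomainsystem123}) hold for the limit $(r,\bb{U},w,\theta)$. The energy inequality $E_\delta^w(t)+D^w(t)\le C(E_0)$ follows from the energy equality at the $\varepsilon$-level together with weak lower semicontinuity of the relevant norms and the non-negativity of $\varepsilon\int J|\nabla r|^2(\gamma r^{\gamma-2}+\delta a r^{a-2})$, which we simply drop. The main technical challenge, and the step we expect to occupy the largest portion of the full proof, is the derivation of the effective viscous flux identity on the moving physical domain and the subsequent density compactness.
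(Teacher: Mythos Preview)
Your overall architecture is correct and matches the paper's: weak convergences on the fixed domain, vanishing of the $\varepsilon$-terms, effective viscous flux identity on the physical domain, and strong density convergence via $\rho\log\rho$. However there is a concrete gap in the pressure-identification step.

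You write that the strong convergence $r_\varepsilon^\gamma+\delta r_\varepsilon^a\to r^\gamma+\delta r^a$ in $L^1$ follows from a.e.\ convergence ``combined with the uniform $L^{4a/3}(Q_{\delta,T})$-bound (cf.\ Lemma~\ref{qq22weakconvthirdlevel}(v), inherited at this level)''. That bound is \emph{not} inherited: in Lemma~\ref{qq22weakconvthirdlevel}(v) the improved integrability of $r_{\Delta t,k}$ came from the estimate $\|r^{a/2}\|_{L^2_t H^1_x}\le C(E_0,\delta,\varepsilon)$, which in turn comes from the artificial-viscosity dissipation $\varepsilon\int J|\nabla r|^2 r^{a-2}$. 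This constant blows up as $\varepsilon\to 0$. Thus at the present level you have \emph{no} $\varepsilon$-uniform integrability of the pressure beyond $L^\infty_t L^1_x$, and without equi-integrability you cannot even assert that $\rho_\varepsilon^\gamma+\delta\rho_\varepsilon^a$ has a weak $L^1$ limit (it could concentrate), let alone identify it.

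The paper replaces this missing bound by two separate ingredients. First (Lemma~\ref{qq22lemmaQproof2}), an \emph{interior} estimate $\int_Q(\rho_\varepsilon^{\gamma+1}+\delta\rho_\varepsilon^{a+1})\le C(Q)$ for every $Q\Subset(0,T)\times\Omega^w(t)$, obtained by testing the momentum equation with $\varphi\nabla\Delta_{\tilde B}^{-1}[\rho_\varepsilon]$; here the $\varepsilon$-terms are controlled via the auxiliary gradient bound \eqref{qq22nicela2}, which \emph{is} uniform in $\varepsilon$. Second (Lemma~\ref{qq22lemkappa}, proved in Appendix~B), a non-concentration result: for every $\kappa>0$ there is $\mathcal{A}_\kappa\Subset(0,T)\times\Omega_\delta^w(t)$ with $\int_{\text{complement}}(\rho_\varepsilon^\gamma+\delta\rho_\varepsilon^a)\le\kappa$. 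These two together give equi-integrability, hence weak $L^1$ convergence of the pressure (Corollary~\ref{qq22somecor}), and only then does the effective viscous flux argument run. Your proposal mentions neither the interior higher-integrability step nor the boundary non-concentration lemma; the latter is in fact the technically heaviest part of the section and is the place where the specific geometry (corners of $\Omega_\delta^{w_\varepsilon}$) enters. Your appeal to the Bogovski\u\i{} operator on the moving Lipschitz domain is not the route the paper takes, precisely to set up machinery that will survive the $\delta\to 0$ limit where the Lipschitz regularity is lost.
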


\begin{thm}\label{qq22RCEepsconv}
The $(\rho,\bb{u},w)$ obtained in Theorem $\ref{qq22artprth}$ on the physical domain $\Omega_\delta^{w}$ satisfies the following renormalized continuity equation 
\begin{align}
\int_0^T \frac{d}{dt} \int_{\mathbb{R}^3} b(\rho) \varphi-\int_0^T \int_{\mathbb{R}^3} \big( b(\rho) \partial_t \varphi + b(\rho)\bb{u}\cdot \nabla \varphi \big) = -\int_0^T \int_{\mathbb{R}^3} \big( \rho b'(\rho) -b(\rho) \big)(\nabla\cdot \mathcal{E}^w[\bb{u}])\varphi, \quad\label{qq22RCEeps2}
\end{align}
for any $b \in C^1(\mathbb{R})$ such that $b'(x) = 0$, for all $x\geq M_b$, where $M_b$ is a constant.
\end{thm}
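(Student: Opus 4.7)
The strategy is to apply a DiPerna--Lions type renormalization directly to the limiting continuity equation obtained in Theorem \ref{qq22artprth}, rather than attempting to pass to the limit $\varepsilon\to 0$ inside the inequality of Theorem \ref{qq22weakomegaRCE}. This is natural because here $b'$ has compact support, so $b$ and $b'$ are both bounded; the Friedrichs commutator argument then yields an \emph{equality}, without the defect term $\varepsilon|\nabla\rho_\varepsilon|^2 b''(\rho_\varepsilon)$ that was responsible for both the inequality and the convexity restriction in Theorem \ref{qq22weakomegaRCE}.

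The first step is to lift the continuity equation from the physical domain to $\mathbb{R}^3$. Theorem \ref{qq22artprth} supplies $(\rho,\bb{u},w)$ satisfying $\eqref{qq22movingdomainsystem123}_1$ on $\Omega_\delta^w$, with $\rho\in L^\infty(0,T;L^a(\Omega_\delta^w))$ for $a\geq 9$ and $\bb{u}\in L^2(0,T;H^1(\Omega_\delta^w))$. Extending $\rho$ by zero outside $\Omega^w(t)$ and using $\mathcal{E}^w[\bb{u}]\in L^2(0,T;W_0^{1,2}(\mathbb{R}^3))$ for the velocity, the kinematic coupling $\bb{u}|_{\Gamma^w}=\partial_t w\,\bb{e}_3$ together with the Reynolds transport theorem ensures that the boundary flux through the moving interface $\Gamma^w$ cancels exactly, so that for every $\varphi\in C^\infty_c([0,T]\times\mathbb{R}^3)$,
\[
\int_0^T\int_{\mathbb{R}^3}\bigl(\rho\,\partial_t\varphi+\rho\,\mathcal{E}^w[\bb{u}]\cdot\nabla\varphi\bigr)=\int_0^T\frac{d}{dt}\int_{\mathbb{R}^3}\rho\,\varphi.
\]

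Next, following the scheme from the proof of Theorem \ref{qq22weakomegaRCE}, mollify spatially by $\rho_\kappa:=\rho*_x\omega_\kappa$ to obtain $\partial_t\rho_\kappa+\nabla\cdot(\rho_\kappa\mathcal{E}^w[\bb{u}])=\bb{r}_\kappa$, where the commutator remainder $\bb{r}_\kappa\to 0$ in $L^2(0,T;L^q(\mathbb{R}^3))$ for some $q>1$ by the Friedrichs lemma (\cite[Lemma~3.1]{novotnystraskraba}), thanks to the integrabilities of $\rho$ and $\mathcal{E}^w[\bb{u}]$. Multiplying by $b'(\rho_\kappa)$ and using the chain rule pointwise gives
\[
\partial_t b(\rho_\kappa)+\nabla\cdot(b(\rho_\kappa)\mathcal{E}^w[\bb{u}])+\bigl(\rho_\kappa b'(\rho_\kappa)-b(\rho_\kappa)\bigr)(\nabla\cdot\mathcal{E}^w[\bb{u}])=b'(\rho_\kappa)\bb{r}_\kappa.
\]
Testing against $\varphi\in C^\infty([0,T]\times\mathbb{R}^3)$, integrating, and sending $\kappa\to 0$, the boundedness of $b$ and of $s\mapsto sb'(s)-b(s)$ (both consequences of the compact support of $b'$), combined with the Vitali convergence theorem and Lemma \ref{qq22lemmaconv}, transfer all nonlinear terms to their limits, while $b'(\rho_\kappa)\bb{r}_\kappa\to 0$ in $L^2(0,T;L^q(\mathbb{R}^3))$ because $b'$ is bounded. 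This yields $\eqref{qq22RCEeps2}$.

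The principal obstacle is the lifting step: rigorously verifying that no distributional boundary defect supported on the free interface $\Gamma^w$ is produced when $\bb{u}$ is replaced by $\mathcal{E}^w[\bb{u}]$ and $\rho$ by its zero extension. This is ultimately a consequence of the kinematic coupling and the very design of $\mathcal{E}^w$: its trace on $\Gamma^w$ agrees with the speed at which $\Omega^w(t)$ sweeps, so that the weak formulation on $\Omega_\delta^w$ is equivalent to the weak formulation on $\mathbb{R}^3$ with the extended velocity. Once this is secured, the remaining DiPerna--Lions argument closely parallels the second half of the proof of Theorem \ref{qq22weakomegaRCE}, but now producing an equation rather than an inequality, since the only source of the slack term $\varepsilon|\nabla\rho_\varepsilon|^2 b''(\rho_\varepsilon)$ in that earlier argument was the $\varepsilon$-damping, which is absent at the present level of approximation.
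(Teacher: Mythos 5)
Your proposal is correct and follows essentially the same path as the paper. The paper's proof of this theorem is one sentence — ``we can prove Theorem 5.3 in the same way as Theorem 4.4 (without the $\varepsilon$ terms), relying on $\rho\in L^\infty(0,T;L^2)$, the only difference being that it is done in the space of distributions because we no longer control $\partial_t r$ and $\nabla r$'' — and what you have written is precisely an unpacking of that sentence: extend $\rho$ by zero and $\bb{u}$ by $\mathcal{E}^w[\bb{u}]$ so the continuity equation holds weakly on all of $\mathbb{R}^3$ (the kinematic coupling and the no-slip condition on the rigid part of $\partial\Omega_\delta^w$ ensure there is no boundary-layer defect), mollify in space, invoke the Friedrichs commutator lemma, multiply by $b'(\rho_\kappa)$, and pass to the limit $\kappa\to 0$ using that $b$, $b'$, and $s\mapsto sb'(s)-b(s)$ are all bounded for this class of $b$. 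You correctly identify both why the result is an equality here rather than an inequality (no $\varepsilon$-damping, hence no $\varepsilon|\nabla\rho_\kappa|^2 b''(\rho_\kappa)$ term and no convexity restriction) and why the lifting step is the main point needing care. The only remark worth making is that, since $\nabla\rho$ and $\partial_t\rho$ are no longer integrable at this level, the chain-rule step $\partial_t b(\rho_\kappa)=b'(\rho_\kappa)\partial_t\rho_\kappa$ should be read a.e.\ in time for each fixed $\kappa$ (with $\partial_t\rho_\kappa$ recovered from the mollified equation as an $L^1_t$ quantity), which is exactly the ``distributional'' caveat the paper flags; your write-up is compatible with this but could state it explicitly.
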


The proof of Theorem $\ref{qq22artprth}$ will be carried out through the remainder of this section, partially on the fixed reference domain $\Omega_\delta$ and partially on the physical domain $\Omega_\delta^{w_\varepsilon}$. The former one is less involved and can be obtained in a rather straightforward fashion. The later one deals with the convergence of the pressure and is the most involved part of the theory of the weak solutions for the compressible fluids. We wish to emphasize that the analysis on the fixed domain, even though more suitable for solving the approximate problems and obtaining certain convergences, proves to be rather ineffective since some quantities, like divergence, lose its meaning. Since the key part in proving the convergence is solving the equation $\nabla \cdot \bb{q} = \rho_\varepsilon$, this choice is more suitable. We follow the approach from \cite{compressible}, which combines the approach from \cite{feireisl2} that localizes the standard approach, and \cite{kukucka} in which it is proved that the mass of the pressure doesn't concentrate near the boundary. This is an alternative to the standard approach with the Bogovskii operator, which fails in this framework because the domain that we work on is not Lipschitz in general.
\subsection{Convergence on the fixed reference domain $\Omega_\delta$}

\begin{lem}\label{qq22weakconveps}
The following convergences hold for solutions $(r_\varepsilon, \bb{U}_\varepsilon,w_\varepsilon, \theta_\varepsilon)$ obtained in Theorem $\ref{qq22artdisth}$ as $\varepsilon \to 0$: 
\begin{enumerate}
\item[(i)] $\bb{U}_\varepsilon \rightharpoonup \bb{U}, \text{ weakly in } L^2(0,T; H^{1}(\Omega_\delta))$;
\item[(ii)] ${r}_\varepsilon \rightharpoonup {r}, \text{ weakly* in } L^\infty(0,T; L^a (\Omega_\delta))$, ~~ $\varepsilon\nabla {r}_{\varepsilon} \rightharpoonup 0, \text{ weakly in } L^2(Q_{\delta,T})$;
\item[(iii)] Independently of $\delta$, we have:
\begin{enumerate}
\item[(iiia)] $w_{\varepsilon} \rightharpoonup w$, weakly* $L^\infty(0,T; H_0^2(\Gamma))$ and $W^{1,\infty}(0,T; L^2(\Gamma))$;
\item[(iiib)] $w_{\varepsilon} \to w$, in $C^{0,\alpha}([0,T]; H^{2\alpha}(\Gamma))$, for $0<\alpha<1$;
\item[(iiic)] $J_{\varepsilon} \to J$ and $1/J_{\varepsilon} \to 1/J$, in $C^{0,\alpha}([0,T]; C^{0,1-2\alpha}(\Gamma))$ for $0<\alpha<1/2$;
\item[(iiid)] $\theta_{\varepsilon} \to \theta$ weakly* in $L^\infty(0,T; L^2(\Gamma))$ and weakly in $L^2(0,T; H_0^1(\Gamma))$;
\item[(iiie)] $\mathcal{F}(w_{\varepsilon}) \rightharpoonup \mathcal{F}(w)$ in $C([0,T]; H^{-2}(\Gamma))$;
\end{enumerate}
\item[(iv)] $ \nabla^3 w_{\varepsilon} \rightharpoonup \nabla^3 w$, weakly* in $L^\infty(0,T; L^2(\Gamma))$;
\item[(v)] $J_\varepsilon {r}_\varepsilon \to J{r}$, in $C_w(0,T; L^a(\Omega_\delta))$ and $L^2(0,T; H^{-1}(\Omega_\delta))$;
\item[(vi)] $J_\varepsilon {r}_\varepsilon \bb{U}_\varepsilon \rightharpoonup J{r} \bb{U}$, weakly in $L^2(0,T; L^{\frac{6a}{a+6}}(\Omega_\delta))$ and weakly* in $L^\infty(0,T; L^{\frac{2a}{a+1}}(\Omega_\delta))$; 
\item[(vii)] $J_\varepsilon {r}_\varepsilon \bb{U}_\varepsilon \otimes \bb{U}_\varepsilon \rightharpoonup J {r} \bb{U} \otimes \bb{U}$, in $L^1(Q_{\delta,T})$;
\item[(viii)] $J_\varepsilon {r}_\varepsilon \bb{U}_\varepsilon \otimes \bb{w}_\varepsilon \rightharpoonup J {r} \bb{U} \otimes \bb{w}$, in $L^1(Q_{\delta,T})$;
\item[(ix)] $\varepsilon\nabla {r}_\varepsilon \cdot \nabla \bb{U}_\varepsilon \to 0$ and $\varepsilon \nabla {r}_\varepsilon \cdot \bb{U}_\varepsilon \to 0$, in $L^{1}(Q_{\delta,T})$.
\end{enumerate}
\end{lem}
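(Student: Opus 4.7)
The plan is to follow essentially the same strategy as in Lemma $\ref{qq22weakconvthirdlevel}$, now using the $\varepsilon$-uniform bounds provided by the energy inequality in Theorem $\ref{qq22artdisth}$ (which are just Lemma $\ref{qq22111}$ read after the first passage to the limit). The assertions (i), (ii), (iiia), (iiid), (iv) follow directly from the weak/weak$^*$ compactness of bounded sequences in the corresponding reflexive / separable dual spaces. For (ii), the bound $\sqrt{\varepsilon}\Vert \nabla r_\varepsilon\Vert_{L^2(Q_{\delta,T})}\leq C(E_0,\delta)$ from Lemma $\ref{qq22111}$(vi) gives $\varepsilon\nabla r_\varepsilon=\sqrt{\varepsilon}\cdot \sqrt{\varepsilon}\nabla r_\varepsilon\to 0$ in $L^2(Q_{\delta,T})$, hence the second half of (ii). The strong convergence (iiib) will come from interpolation between $L^\infty(0,T;H_0^2(\Gamma))$ and $W^{1,\infty}(0,T;L^2(\Gamma))$ combined with the Aubin--Lions lemma; (iiic) is then immediate from $J_\varepsilon=1+w_\varepsilon$ and the uniform positive lower bound of $J$ (Lemma $\ref{qq22111}$(iii)); and (iiie) follows from (A1) combined with (iiib) taken for $2\alpha>2-\epsilon$.

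For (v), I would test the damped continuity equation $\eqref{qq22dampcont1}$ against an arbitrary $\phi\in W^{1,p}(\Omega_\delta)$ (with $p>2a/(a+1)$) to control $\partial_t(J_\varepsilon r_\varepsilon)$ in $L^2(0,T;[W^{1,p}(\Omega_\delta)]')$; the extra damping term produces $\varepsilon\int J_\varepsilon\nabla r_\varepsilon\cdot\nabla\phi\leq \sqrt{\varepsilon}\cdot C\,\Vert\nabla\phi\Vert_{L^2}$, which is even better-behaved than the convective term. The spatial bound $J_\varepsilon r_\varepsilon\in L^\infty(0,T;L^a(\Omega_\delta))$ together with the compact embedding $L^a(\Omega_\delta)\hookrightarrow\hookrightarrow H^{-1}(\Omega_\delta)$ then gives, via the Aubin--Lions lemma, convergence in $L^2(0,T;H^{-1}(\Omega_\delta))$ and, by the standard argument of Lions, also in $C_w(0,T;L^a(\Omega_\delta))$.

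Assertion (vi) is obtained by combining the strong convergence from (v) with the weak convergence from (i) (and the product-of-weak/strong lemma), together with the $\varepsilon$-uniform estimates $\Vert r_\varepsilon\bb{U}_\varepsilon\Vert_{L^2(0,T;L^{6a/(a+6)})}+\Vert r_\varepsilon\bb{U}_\varepsilon\Vert_{L^\infty(0,T;L^{2a/(a+1)})}\leq C(E_0,\delta)$ that one derives exactly as in $\eqref{qq22sec3ineq5}$--$\eqref{qq22sec3ineq1}$. The genuinely delicate items are (vii) and (viii): here $\bb{U}_\varepsilon$ only converges weakly in $L^2 H^1$, so one cannot pass to the limit in the quadratic tensor by a simple product argument. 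I will therefore reproduce the cut-off localization used in the proof of Lemma $\ref{qq22weakconvthirdlevel}$(vii): testing the momentum equation $\eqref{qq22thirdlevelsystem}_3$ against suitable test functions gives $\partial_t(J_\varepsilon r_\varepsilon \bb{U}_\varepsilon)$ bounded in a negative Sobolev space, so by Aubin--Lions $J_\varepsilon r_\varepsilon\bb{U}_\varepsilon\to Jr\bb{U}$ in $L^2(0,T;H^{-s}(\Omega_\delta))$ for some $s<1$. Multiplying by $\varphi_i\in C_0^\infty(\Omega_\delta)$ with $\varphi_i\to 1$ in $L^p$ gives $\varphi_i\bb{U}_\varepsilon\rightharpoonup\varphi_i\bb{U}$ in $L^2(0,T;H_0^1(\Omega_\delta))$, whence $J_\varepsilon r_\varepsilon\bb{U}_\varepsilon\otimes(\varphi_i\bb{U}_\varepsilon)\rightharpoonup Jr\bb{U}\otimes(\varphi_i\bb{U})$ weakly in $L^1$; the complementary piece $J_\varepsilon r_\varepsilon\bb{U}_\varepsilon\otimes((1-\varphi_i)\bb{U}_\varepsilon)$ is uniformly small in $L^2 L^q$ by the $L^2 L^{6a/(4a+3)}$ estimate and H\"older, so letting first $\varepsilon\to 0$ and then $i\to\infty$ yields (vii). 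Assertion (viii) follows by the same cut-off argument, using only that $\gamma>12/7$ ensures $Jr\bb{U}\in L^2(0,T;H^{-s}(\Omega_\delta))$ and that $\bb{w}_\varepsilon\rightharpoonup\bb{w}$ weakly in $L^2(0,T;L^{4^-}(\Gamma))$.

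Finally, (ix) is immediate from the gain in $\varepsilon$: using Cauchy--Schwarz,
\begin{equation*}
\Vert\varepsilon\nabla r_\varepsilon\cdot\nabla\bb{U}_\varepsilon\Vert_{L^1(Q_{\delta,T})}\leq \sqrt{\varepsilon}\cdot(\sqrt{\varepsilon}\Vert\nabla r_\varepsilon\Vert_{L^2})\cdot\Vert\nabla\bb{U}_\varepsilon\Vert_{L^2}\leq \sqrt{\varepsilon}\,C(E_0,\delta)\to 0,
\end{equation*}
and analogously for the second expression using $\bb{U}_\varepsilon\in L^2(0,T;L^6(\Omega_\delta))$. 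The main technical obstacle is the passage to the limit in the quadratic terms (vii)--(viii), where the lack of strong compactness of $\bb{U}_\varepsilon$ forces the interior cut-off trick and requires careful tracking of which $L^p$-spaces the different factors live in; the remaining assertions reduce to standard weak/weak$^*$ compactness arguments combined with Aubin--Lions.
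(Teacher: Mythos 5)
Your proposal is correct and follows essentially the same route as the paper: the authors likewise dispatch (i)--(iv), (vii), (viii) by invoking the arguments of Lemma~\ref{qq22weakconvthirdlevel}, derive (v) from the weak continuity equation~\eqref{qq22someweakform} (giving uniform continuity of $J_\varepsilon r_\varepsilon$ in $W^{-1,2a/(a+1)}(\Omega_\delta)$ plus the compact embedding $L^a\hookrightarrow\hookrightarrow H^{-1}$), obtain (vi) from (v), (i) and the bounds~\eqref{qq22sec3ineq1},~\eqref{qq22sec3ineq5}, and get (ix) from the $\sqrt{\varepsilon}$ gain in Lemma~\ref{qq22111}(vi) together with the $L^2$ velocity bound, exactly as you do.
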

\begin{proof}
To prove $(i) - (iv)$, $(vii)$ and $(viii)$, one can use the same arguments as in Lemma $\ref{qq22weakconvthirdlevel}$, which rely on the uniform estimates given in Lemma $\ref{qq22111}$. Next, to prove the statement $(v)$, one can infer from $\eqref{qq22someweakform}$ that $J_\varepsilon {r}_\varepsilon$ is uniformly continuous in $W^{-1,\frac{2a}{a+1}}(\Omega_\delta)$. Since ${r}_\varepsilon \in \mathcal{W}_{D,\varepsilon}$, we know that $J_\varepsilon{r}_\varepsilon$ is in $C_w(0,T; L^a(\Omega_\delta))$. Due to boundedness of $J_\varepsilon {r}_\varepsilon \in L^\infty(0,T; L^a(\Omega_\delta))$, and compact embedding of $L^a(\Omega_\delta)$ into $H^{-1}(\Omega_\delta)$ for $a>\frac{6}{5}$, the convergences $(v)$ follow (see \cite[Lemma 6.2]{novotnystraskraba}). Consequently by $(i)$ and the uniform bounds given in $\eqref{qq22sec3ineq1}$ and $\eqref{qq22sec3ineq5}$, the convergences in $(vi)$ follow as well. Finally, $(ix)$ follows by the uniform bounds from Lemma $\ref{qq22111}(ii)$ and $(vi)$, so the proof is complete.
\end{proof}

\subsection{Convergence of the pressure on the physical domain $\Omega_\delta^{w_\varepsilon}$}
Here we aim to prove that
\begin{eqnarray*}
\rho_\varepsilon^\gamma + \delta \rho_\varepsilon^a \rightharpoonup \rho^\gamma + \delta \rho^a\text{ in } L^1((0,T)\times \mathbb{R}^3), \quad \text{as }\varepsilon \to 0.
\end{eqnarray*}
The $L^1$ bound of the pressure is not enough to obtain weak limit in $L^1$ as the function can also converge to a measure, and since we lose additional spatial regularity in the damped continuity equation as $\varepsilon \to 0$, we also cannot obtain any spatial compactness for $\rho$. The alternative is the weak compactness method which consists of proving the weak convergence towards the limit that we later identify by proving the convergence of effective viscous flux and utilizing the renormalized continuity equation to obtain the strong convergence of density in $L^1((0,T)\times \mathbb{R}^3)$.
\subsubsection{The weak convergence of the pressure}
Here we aim to prove that there is a function $\overline{p}$ such that
\begin{eqnarray*}
\rho_\varepsilon^\gamma + \delta \rho_\varepsilon^a \rightharpoonup \overline{p}\text{ in } L^1((0,T)\times \mathbb{R}^3) \quad \text{as }\varepsilon \to 0.
\end{eqnarray*}
For a set $S$ with regular boundary and $p \in (1, \infty)$, we introduce the following inverse Laplace operator 
\begin{eqnarray*}
\Delta_S^{-1}:L^p(S) &\to& W^{2,p}(S)\cap W_0^{1,p^*}(S), \\
f &\mapsto& \Delta^{-1} f,
\end{eqnarray*}
with $p^*$ being the Sobolev conjugate index of $p$, which satisfies 
\begin{eqnarray}
||\nabla \Delta_{S}^{-1}[f] ||_{W^{1,a}(S)} &\leq& C(S)||f||_{L^a(S)},\label{qq22linfty22} \\
||\nabla \Delta_{S}^{-1}[f] ||_{L^\infty(S)} &\leq& C(S)||f||_{L^a(S)}, \label{qq22linfty}
\end{eqnarray}
where the second inequality holds for $a>3$.

\begin{lem}\label{qq22lemmaQproof2}
For any set $Q = I \times B \Subset (0,T) \times \Omega^w(t)$ where $B$ has a regular boundary, the following holds
\begin{eqnarray*}
\int_Q (\rho_\varepsilon^{\gamma+1} + \delta \rho_\varepsilon^{a+1}) \leq C(Q),
\end{eqnarray*}
where the constant $C(Q)$ is independent of $\varepsilon$.
\end{lem}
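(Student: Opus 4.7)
The plan is to localize the standard ``higher integrability of the pressure'' argument for compressible Navier--Stokes, testing the momentum equation in $\eqref{qq22movingdomainsystem1234}$ against a test function built from an inverse Laplacian of $\rho_\varepsilon$ on the fixed set $B$, and exploiting $Q \Subset (0,T)\times \Omega^w(t)$ to eliminate all boundary and structure contributions. Concretely, fix $\phi \in C_c^\infty(I)$ and cutoffs $\eta_1, \eta_2 \in C_c^\infty(B)$ with $\eta_2 \equiv 1$ on some $B' \Subset B$ and $\eta_1 \equiv 1$ on $\mathrm{supp}\, \eta_2$. Since $Q \Subset (0,T)\times \Omega^w(t)$ and $w_\varepsilon \to w$ uniformly by Lemma $\ref{qq22weakconveps}(iiib)$, for $\varepsilon$ small $\mathrm{supp}\,\eta_1 \Subset \Omega^{w_\varepsilon}(t)$ for every $t \in I$. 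I then take
\[
\bb{q}_\varepsilon(t,x) \;:=\; \phi(t)\, \eta_1(x)\, \nabla \Delta_B^{-1}\bigl[\eta_2 \rho_\varepsilon(t,\cdot)\bigr],
\]
extended by zero. This $\bb{q}_\varepsilon$ is compactly supported in $Q_{\delta,T}^{w_\varepsilon}$ and vanishes on $\Gamma^{w_\varepsilon}$, so the structure test function is $\psi \equiv 0$ and the entire plate contribution drops out. By the standard density/approximation procedure, $\bb{q}_\varepsilon$ is an admissible test function.

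The key algebraic gain is the pressure term: since $\nabla \cdot \bb{q}_\varepsilon = \phi \eta_1 \eta_2 \rho_\varepsilon + \phi (\nabla \eta_1) \cdot \nabla \Delta_B^{-1}[\eta_2 \rho_\varepsilon]$, and $\eta_1 \eta_2 = \eta_2$, we obtain
\[
\int_{Q_{\delta,T}^{w_\varepsilon}} (\rho_\varepsilon^\gamma + \delta \rho_\varepsilon^a)\,\phi\,\eta_2\,\rho_\varepsilon \;=\; (\text{all other terms from } \eqref{qq22movingdomainsystem1234}_2) \;-\; \int \phi(\rho_\varepsilon^\gamma + \delta\rho_\varepsilon^a)(\nabla \eta_1)\cdot \nabla \Delta_B^{-1}[\eta_2\rho_\varepsilon].
\]
Choosing $\phi \equiv 1$ on the time projection of $Q$ and $\eta_2 \equiv 1$ on the space projection of $Q$ (with the obvious refinement that $Q$ may be replaced by $I \times B'$) will produce $\int_Q (\rho_\varepsilon^{\gamma+1} + \delta \rho_\varepsilon^{a+1})$ on the left. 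The remaining term on the right is controlled by $\|\rho_\varepsilon\|_{L^\infty_t L^a_x}^2$ via $\eqref{qq22linfty}$, and is therefore uniform in $\varepsilon$ by Lemma $\ref{qq22111}(ii)$.

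Next I estimate every remaining term on the right of $\eqref{qq22movingdomainsystem1234}_2$. The viscous terms reduce to $\int \nabla \bb{u}_\varepsilon : \nabla \bb{q}_\varepsilon$ and $\int (\nabla\cdot\bb{u}_\varepsilon)(\nabla\cdot\bb{q}_\varepsilon)$, both of which are bounded by $\|\bb{u}_\varepsilon\|_{L^2_t H^1_x}\|\bb{q}_\varepsilon\|_{L^2_t H^1_x}$, and $\eqref{qq22linfty22}$ gives $\|\bb{q}_\varepsilon\|_{L^\infty_t W^{1,a}_x}\leq C\|\rho_\varepsilon\|_{L^\infty_t L^a_x}$. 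The convective term $\int \rho_\varepsilon \bb{u}_\varepsilon\otimes \bb{u}_\varepsilon : \nabla \bb{q}_\varepsilon$ is majorized by $\|\rho_\varepsilon|\bb{u}_\varepsilon|^2\|_{L^\infty_t L^1_x}\|\nabla \bb{q}_\varepsilon\|_{L^1_t L^\infty_x}$, which is finite uniformly in $\varepsilon$ using $\eqref{qq22linfty}$ and Lemma $\ref{qq22111}(ii)$. The $\varepsilon$-diffusion terms are $\varepsilon \int \nabla^{w_\varepsilon}\rho_\varepsilon\cdot(\bb{q}_\varepsilon\cdot \nabla^{w_\varepsilon^{-1}} \bb{u}_\varepsilon + \bb{u}_\varepsilon \cdot \nabla^{w_\varepsilon^{-1}}\bb{q}_\varepsilon)$, controlled by $\sqrt{\varepsilon}\,\|\sqrt{\varepsilon}\nabla \rho_\varepsilon\|_{L^2}\|\bb{u}_\varepsilon\|_{L^2 H^1}\|\bb{q}_\varepsilon\|_{L^\infty W^{1,\infty}}$, which vanishes as $\varepsilon \to 0$ by Lemma $\ref{qq22111}(vi)$.

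The main obstacle is the time-derivative term $\int \rho_\varepsilon \bb{u}_\varepsilon \cdot \partial_t \bb{q}_\varepsilon$, since $\partial_t \bb{q}_\varepsilon$ involves $\nabla \Delta_B^{-1}[\eta_2 \partial_t \rho_\varepsilon]$ and we only know $\partial_t \rho_\varepsilon$ in a weak sense. I will handle this by substituting the damped continuity equation $\eqref{qq22movingdomainsystem1234}_1$: localizing inside $B$ where $\eta_2$ is supported (away from $\Gamma^{w_\varepsilon}$),
\[
\eta_2 \partial_t \rho_\varepsilon \;=\; -\eta_2 \nabla\cdot(\rho_\varepsilon \bb{u}_\varepsilon) + \eta_2 \tfrac{1}{J_\varepsilon}\varepsilon \nabla^{w_\varepsilon^{-1}}\!\cdot(\nabla^{w_\varepsilon^{-1}}\!\rho_\varepsilon J_\varepsilon),
\]
and then $\nabla \Delta_B^{-1}$ of the right-hand side is bounded, via the continuity of $\nabla \Delta_B^{-1} \nabla \cdot$ on $L^p$, by $\|\rho_\varepsilon \bb{u}_\varepsilon\|_{L^p}$ plus an $\varepsilon$-small term handled as above. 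Combined with $\|\rho_\varepsilon \bb{u}_\varepsilon\|_{L^\infty_t L^{2a/(a+1)}_x}\leq C$ from $\eqref{qq22sec3ineq1}$, this closes the estimate. Putting all terms together yields the desired $\varepsilon$-uniform bound on $\int_Q(\rho_\varepsilon^{\gamma+1} + \delta \rho_\varepsilon^{a+1})$.
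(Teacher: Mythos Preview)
Your overall strategy is exactly the paper's: localize with a cutoff, test the momentum equation against $\varphi\nabla\Delta_{\tilde B}^{-1}[\rho_\varepsilon]$, and substitute the damped continuity equation for the time derivative of the test function. Two points, however, are glossed over and deserve correction.

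First, your claim that the convective and $\varepsilon$-diffusion terms are controlled via $\|\nabla \bb{q}_\varepsilon\|_{L^1_t L^\infty_x}$ or $\|\bb{q}_\varepsilon\|_{L^\infty_t W^{1,\infty}_x}$ is false: the Hessian piece $\eta_1\nabla^2\Delta_B^{-1}[\eta_2\rho_\varepsilon]$ is only in $L^\infty_t L^a_x$ by Calder\'on--Zygmund, not in $L^\infty_x$. This is easily repaired by pairing $\rho_\varepsilon\bb{u}_\varepsilon\otimes\bb{u}_\varepsilon\in L^2_t L^{\frac{6a}{4a+3}}_x$ against $\nabla^2\Delta_B^{-1}[\rho_\varepsilon]\in L^\infty_t L^a_x$ (and similarly for the direct $\varepsilon$-term), exactly as the paper does for its terms $I_3$ and $I_6$.

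Second, and more seriously, the phrase ``an $\varepsilon$-small term handled as above'' hides the main technical difficulty of the lemma. After substituting the continuity equation, the time-derivative contribution contains
\[
\varepsilon\int \phi\,\eta_1\,\rho_\varepsilon\bb{u}_\varepsilon\cdot\nabla\Delta_B^{-1}\Bigl[\eta_2\,\tfrac{1}{J_\varepsilon}\nabla^{w_\varepsilon^{-1}}\!\cdot\bigl(J_\varepsilon\nabla^{w_\varepsilon^{-1}}\rho_\varepsilon\bigr)\Bigr],
\]
and here there is a genuine mismatch of coordinates: the artificial damping is a second-order operator in \emph{fixed-domain} derivatives, while $\nabla\Delta_B^{-1}$ is in \emph{physical} coordinates. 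The transformed Laplacian $\Delta^{w_\varepsilon^{-1}}\rho_\varepsilon$ is not of the form $\nabla\cdot(\text{something in }L^p)$ to which your Riesz bound applies directly; it must first be decomposed as $\nabla\cdot\bb{K}_1 - K_2 + J_2$, where $\bb{K}_1\sim (\nabla A_{w_\varepsilon})^2\nabla\rho_\varepsilon$ is the divergence-form principal part and $K_2,J_2$ are lower-order commutators involving $\nabla^2 A_{w_\varepsilon}\cdot\nabla\rho_\varepsilon$. Only after this decomposition can one bound $\nabla\Delta_B^{-1}[\nabla\cdot\bb{K}_1]$ by $\|\bb{K}_1\|_{L^p}$ and the remainder by Sobolev gain. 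The paper carries this out explicitly (its estimate $\eqref{qq22eps000}$ and the discussion preceding it) and uses the $\delta$-level regularity $\nabla A_{w_\varepsilon}\in L^\infty$, $\nabla^2 A_{w_\varepsilon}\in L^\infty_t L^6_x$ coming from the plate regularization $\delta\|\nabla^3 w\|_{L^2}^2$; without that, the commutator terms would not close. Your sketch is silent on all of this, and ``handled as above'' does not cover it.
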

\begin{proof}
The proof is a localized version of the standard approach and it was first done in \cite{feireisl2} in the context of rigid bodies immersed in the compressible fluid (see also \cite[Lemma 6.3]{compressible}). First, we define a set $\tilde{Q} = \tilde{I} \times \tilde{B}$ such that $Q \Subset \tilde{Q} \Subset (0,T) \times \Omega^{w_\varepsilon}(t)$ and without loss of generality $|\tilde{I}| \leq 2|I|$ and $\mathcal{M}(\tilde{Q})\leq 2\mathcal{M}(Q)$. Notice that we can always define such a set for a small $\varepsilon$ due to the strong convergence of $w_\varepsilon$ in $C^{0,\alpha}([0,T]; C^{0,1-2\alpha}(\Gamma))$. Now, we choose $\bb{q} = \varphi\ \nabla \Delta_{\tilde{B}}^{-1}[\rho_\varepsilon]$ in $\eqref{qq22movingdomainsystem1234}_2$, where $\varphi \in C_0^\infty(\tilde{Q})$, $\varphi=1$ in $Q$ and $\varphi \geq 0$ in $\tilde{Q}$, to obtain:
\begin{align}
&\int_{\tilde{Q}} \varphi(\rho_\varepsilon^{\gamma+1} + \delta \rho_\varepsilon^{a+1}) \nonumber\\
&=-\int_{\tilde{Q}}\rho_\varepsilon \bb{u}_\varepsilon \cdot \big(\partial_t \varphi \nabla \Delta_{\tilde{B}}^{-1}[\rho_\varepsilon]+\varphi \rho_\varepsilon \bb{u}_\varepsilon\big)+\varepsilon \int_{\tilde{Q}} \varphi\rho_\varepsilon \bb{u}_\varepsilon \cdot \nabla \Delta_{\tilde{B}}^{-1} \Big[\frac{1}{J_\varepsilon} \nabla^{w^{-1}} \cdot(\nabla^{w^{-1}} \rho_\varepsilon J_\varepsilon)\Big] \nonumber \\
&\quad -\int_{\tilde{Q}}(\rho_\varepsilon \bb{u}_\varepsilon \otimes \bb{u}_\varepsilon ):\big( \nabla\varphi \otimes \nabla \Delta_{\tilde{B}}^{-1}[\rho_\varepsilon] +\varphi \nabla^2 \Delta_{\tilde{B}}^{-1}[\rho_\varepsilon] \big) \nonumber \\
 &\quad +\mu \int_{\tilde{Q}}\nabla \bb{u}_\varepsilon: \Big( \nabla \varphi\cdot\nabla \Delta_{\tilde{B}}^{-1}[\rho_\varepsilon] +\varphi \nabla^2 \Delta_{\tilde{B}}^{-1}[\rho_\varepsilon] \Big) \nonumber \\
&\quad +(\mu +\lambda)\int_{\tilde{Q}} (\nabla\cdot \bb{u}_\varepsilon) \big((\nabla\cdot\varphi)\nabla \Delta_{\tilde{B}}^{-1}[\rho_\varepsilon] +\varphi \rho_\varepsilon \big) \nonumber \\
&\quad+ \varepsilon \bint_{\tilde{Q}} \nabla^{w_\varepsilon^{-1}} \rho_\varepsilon \cdot \Big( \varphi\nabla \Delta_{\tilde{B}}^{-1}[\rho_\varepsilon] \cdot \nabla^{w_\varepsilon^{-1}} \bb{u}_\varepsilon+ \big(\nabla^{w_\varepsilon^{-1}}\varphi \cdot \nabla \Delta_{\tilde{B}}^{-1}[\rho_\varepsilon]+\nabla^{w_\varepsilon^{-1}}\nabla \Delta_{\tilde{B}}^{-1}[\rho_\varepsilon]\varphi \big)
\cdot \bb{u}_\varepsilon\Big] \nonumber \\
&:=\sum_{k=1}^6 I_k, \label{qq22thatsimilar}
\end{align}
where from the equation $\eqref{qq22movingdomainsystem}_2$ we expressed
\begin{eqnarray*}
\partial_t (\varphi\ \nabla \Delta_{\tilde{B}}^{-1}[\rho_\varepsilon])= \partial_t \varphi \nabla \Delta_{\tilde{B}}^{-1}[\rho_\varepsilon]-\varphi \rho_\varepsilon \bb{u}_\varepsilon+ \varphi\varepsilon \nabla \Delta_{\tilde{B}}^{-1} \Big[ \frac{1}{J_\varepsilon} \nabla^{w^{-1}} \cdot(\nabla^{w^{-1}} \rho_\varepsilon J_\varepsilon)\Big].
\end{eqnarray*}
First, we want to bound the $\varepsilon$ terms ($I_2$ and $I_6$) by $C(Q)$. The difficulty here is that the inverse divergence operator $ \nabla \Delta_{\tilde{B}}^{-1}$ defined with respect to the physical domain coordinates is acting onto the artificial density damping term, which depends on the second order derivatives with respect to the fixed domain coordinates. In other words, there exists a mismatch of coordinates which creates difficulties when one wants to obtain certain estimates. We start by studying the transformed Laplacian. First, we write
\begin{eqnarray*}
\Delta^{w_\varepsilon^{-1}} \rho_\varepsilon = \underbrace{\sum_{i,j,k=1}^3 \partial_{x_k}A_i^{-1} \partial_{x_k} A_j^{-1} \partial_{x_i x_j}^2 \rho_\varepsilon }_{:=J_1}+ \underbrace{\sum_{i,j=1}^3 \partial_{x_i x_i}^2 A_j^{-1} \partial_{x_j} \rho_\varepsilon}_{:=J_2}.
\end{eqnarray*}
Thus, the transformed Laplacian behaves as $(\nabla A_{w_\varepsilon})^2 \Delta \rho_\varepsilon + \Delta A_{w_\varepsilon} \nabla \rho_\varepsilon$, where we can also write $(\nabla A_{w_\varepsilon})^2 \Delta \rho_\varepsilon = \nabla \cdot ((\nabla A_{w_\varepsilon})^2 \nabla \rho_\varepsilon) - \nabla(\nabla A_{w_\varepsilon})^2 \nabla \rho_\varepsilon$. We know that we can control the inverse divergence of $\nabla \cdot ((\nabla A_{w_\varepsilon})^2 \nabla \rho_\varepsilon)$ suitably, while the remaining terms only have $\nabla\rho_\varepsilon$ so one can estimate them directly. More precisely, one has
\begin{eqnarray*}
J_1 = \underbrace{\sum_{i=1}^3 \partial_{x_i} \Big( \sum_{j,k=1}^3 \partial_{x_k}A_i^{-1} \partial_{x_k} A_j^{-1} \partial_{x_j} \rho_\varepsilon \Big)}_{:=K_1} - \underbrace{\sum_{i,j,k=1}^3 \Big( \partial_{x_i x_k}A_i^{-1} \partial_{x_k} A_j^{-1} + \partial_{ x_k}A_i^{-1} \partial_{x_i x_k} A_j^{-1} \Big) \partial_{x_j}\rho_\varepsilon}_{:=K_2}.
\end{eqnarray*}
with $A_i^{-1}:=A_{w_\varepsilon}^{-1} \bb{e}_i$. Notice that $K_1$ is a divergence of a vector-valued function, so we can write $K_1 = \nabla \cdot \bb{K}_1$. It is easy to have
\begin{eqnarray*}
(\Delta^{w_\varepsilon^{-1}} \rho_\varepsilon) J_\varepsilon = (\nabla \cdot \bb{K}_1)J_\varepsilon - K_2 J_\varepsilon - J_2 J_\varepsilon= \nabla \cdot (\bb{K}_1 J_\varepsilon ) - \bb{K}_1 \nabla J_\varepsilon - K_2 J_\varepsilon -J_2 J_\varepsilon,
\end{eqnarray*}

\noindent
so we deduce
\begin{eqnarray}
&&|| \nabla \Delta_{\tilde{B}}^{-1} \big[\Delta^{w_\varepsilon^{-1}} \rho_\varepsilon \big]||_{L^{3}(\tilde{I}; L^{3/2}(\tilde{B}))}\nonumber\\[2mm]
&& \leq ||\bb{K}_1 J_\varepsilon||_{L^{3}(\tilde{I}; L^{3/2}(\tilde{B}))} + C|| \bb{K}_1 \nabla J_\varepsilon+ K_2 J_\varepsilon+ J_2 J_\varepsilon||_{L^{3}(\tilde{I}; L^{3/2}(\tilde{B}))} \nonumber\\[2mm]
&&\leq C\Big[ || (\nabla A_{w_\varepsilon})^2||_{L^\infty(0,T;L^{\infty}(\Omega_\delta))}( || A_{w_\varepsilon}||_{L^\infty(0,T; L^{\infty}(\Omega_\delta))}+|| \nabla A_{w_\varepsilon}||_{L^\infty(0,T; L^{\infty}(\Omega_\delta))}) \nonumber\\
&&\quad +|| \nabla^2 A_{w_\varepsilon}||_{L^\infty(0,T; L^6(\Omega_\delta))}(|| \nabla A_{w_\varepsilon}||_{L^\infty(0,T; L^{\infty}(\Omega_\delta))} + 1)\Big] || \nabla \rho_\varepsilon||_{L^{3}(\tilde{I}; L^{2}(\tilde{B}))}\nonumber \\[2mm]
&&\leq C(E_0,\delta) || \nabla \rho_\varepsilon||_{L^{3}(\tilde{I}; L^{2}(\tilde{B}))}. \label{qq22eps000}
\end{eqnarray}
by using $\eqref{qq22linfty22}$ and $\eqref{qq22linfty}$. Now, from $\eqref{qq22eps000}$, we have
\begin{eqnarray}
&&\varepsilon \int_{\tilde{Q}}\varphi \rho_\varepsilon \bb{u}_\varepsilon\cdot \nabla \Delta_{\tilde{B}}^{-1} \Big[\Delta^{w_\varepsilon^{-1}} \rho_\varepsilon \Big] \nonumber \\[2mm]
&&\leq C(E_0) \varepsilon ||1||_{L^{6}(0;T; L^{ p}(\tilde{Q}) )} ||\rho_\varepsilon\bb{u}_\varepsilon||_{L^{2}(0;T; L^{ \frac{6a}{a+6}}(\tilde{Q}) )}|| \nabla \Delta_{\tilde{B}}^{-1} \big[\Delta^{w_\varepsilon^{-1}} \rho_\varepsilon \big]||_{L^{3}(0,T; L^{3/2}(\tilde{Q}))}\nonumber \\[2mm]
&&\leq 4C(E_0,\delta)|I|^{1/8} |\mathcal{M}(B)|^{1/p}\varepsilon||\nabla \rho_\varepsilon||_{L^{3}(\tilde{I}; L^{2}(\tilde{B}))}\nonumber \\[2mm]
&&\leq C(E_0,\delta)|I|^{1/8} |\mathcal{M}(B)|^{1/p} =C(Q), \label{qq22similar}
\end{eqnarray}
by $\eqref{qq22nicela}$, for $a>9$ and $p>\frac{6a}{a-6}$, and similarly
\begin{eqnarray*}
\varepsilon \int_{\tilde{Q}} \varphi \rho_\varepsilon \bb{u}_\varepsilon\cdot \nabla \Delta_{\tilde{B}}^{-1} \Big[\frac{1}{J_\varepsilon}\nabla^{w_\varepsilon^{-1}} \rho_\varepsilon \nabla^{w_\varepsilon^{-1}} J_\varepsilon\Big] \leq C(Q),
\end{eqnarray*}
since the integrand of this term has better integrability than the integrand of the integral studied in $\eqref{qq22similar}$. Combining previous two inequalities we obtain $I_2 \leq C(Q)$.

Next, to estimate the term $I_6$, by using the fact that
\begin{eqnarray*}
\nabla^{w_\varepsilon^{-1}}\nabla\Delta_{\tilde{B}}^{-1}[\rho_\varepsilon]=\nabla^2 \Delta_{\tilde{B}}^{-1}[\rho_\varepsilon] (\nabla A_{w_\varepsilon})\circ A_w^{-1},
\end{eqnarray*}
we have
\begin{eqnarray}\label{qq22somebound1}
||\nabla^{w_\varepsilon^{-1}} \nabla \Delta_{\tilde{B}}^{-1}[\rho_\varepsilon] ||_{L^{a}(B)}\leq C(E_0,\delta)|| \nabla^2 \Delta_{\tilde{B}}^{-1}[\rho_\varepsilon]||_{L^a(\tilde{B})} \leq C(E_0,\delta)|| \rho_\varepsilon ||_{L^a(\tilde{B})},
\end{eqnarray}
and similarly
\begin{eqnarray*}
||\nabla^{w_\varepsilon^{-1}} \Delta_{\tilde{B}}^{-1}[\rho_\varepsilon] ||_{L^{\infty}(\tilde{B})}\leq C(E_0,\delta)|| \nabla \Delta_{\tilde{B}}^{-1}[\rho_\varepsilon]||_{L^\infty(\tilde{B})} \leq C(E_0,\delta)|| \rho_\varepsilon ||_{L^\infty(\tilde{B})},
\end{eqnarray*}
provided $a>3$, so from $\eqref{qq22linfty}$ and $\eqref{qq22somebound1}$, one has
\begin{eqnarray*}
I_6 &\leq& C || 1 ||_{L^{6}(0,T; L^{18}(\Omega_\delta))} \varepsilon ||\nabla \rho_\varepsilon||_{L^{3}(0,T; L^{\frac{9}{4}}(\Omega_\delta))}\times \\
&&\Big[|| \nabla \bb{u}_\varepsilon||_{L^2(0,T; L^{2}(\Omega_\delta^{w_\varepsilon}))} ||\nabla \Delta_{\tilde{B}}^{-1}[\rho_\varepsilon] ||_{L^\infty(0,T; L^{\infty}(\tilde{B}))}\\
&&+ \big(||\nabla \Delta_{\tilde{B}}^{-1}[\rho_\varepsilon] ||_{L^\infty(0,T; L^{a}(\tilde{B}))} +||\nabla^2 \Delta_{\tilde{B}}^{-1}[\rho_\varepsilon] ||_{L^\infty(0,T; L^{a}(\tilde{B}))} \big)||\bb{u}_\varepsilon||_{L^2(0,T; L^{6}(\Omega_\delta^{w_\varepsilon}))}\Big] \\
&\leq& 4C(E_0,\delta)|I|^{\frac{1}{3}} |\mathcal{M}(B)|^{\frac{1}{18}} =C(Q),
\end{eqnarray*}
for $a\geq 9$. It remains to bound the terms $I_1,I_3,I_4,I_5$ by a constant $C(Q)$. We will study the "worst" term, the convective term $I_3$,
\begin{eqnarray*}
\hspace*{-0.8cm}&&I_3 \leq C ||1||_{L^2(\tilde{I}; L^{p}(\tilde{B}))} ||\rho_\varepsilon\bb{u}_\varepsilon\otimes \bb{u}_\varepsilon||_{L^2(\tilde{I}; L^{ \frac{6a}{4a+3}}(\tilde{B}) )}  \Big(||\nabla \Delta_{\tilde{B}}^{-1}[\rho_\varepsilon] ||_{L^\infty(\tilde{Q})}+ ||\nabla^2 \Delta_{\tilde{B}}^{-1}[\rho_\varepsilon] ||_{L^\infty(\tilde{I}; L^{a}(\tilde{B}))} \Big) \\
\hspace*{-0.8cm}&&~~~\leq 4 C |I|^{1/2} |\mathcal{M}(B)|^{1/p} C(E_0,\delta) =: C(Q),
\end{eqnarray*}
for $a > 9/2$ and $p=\frac{6a}{2a-9}$, where we used $\eqref{qq22sec3ineq8}$, $\eqref{qq22linfty}$, $\eqref{qq22somebound1}$ and H\"{o}lder's inequality. The remaining terms $I_1,I_4,I_5$ can be estimated in a similar fashion since they even have better regularity, so we finish the proof.
\end{proof}

The proof of the following result is given in Appendix B:
\begin{lem}\label{qq22lemkappa}
For any $\kappa>0$, there is a measurable set $\mathcal{A}_\kappa \Subset (0,T) \times (\Omega_\delta^w(t))$ such that
\begin{eqnarray*}
\int_{((0,T)\times (\Omega_\delta^w(t)) \setminus \mathcal{A}_\kappa} (\rho_\varepsilon^{\gamma} + \delta \rho_\varepsilon^{a}) \leq \kappa.
\end{eqnarray*}
\end{lem}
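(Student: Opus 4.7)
The plan is to adapt the boundary non-concentration argument of Kuku\v{c}ka \cite{kukucka}, as refined for fluid-structure interaction by Breit and Schwarzacher \cite{compressible}, showing that the mass of the approximate pressure cannot concentrate on thin tubular neighborhoods of $\partial\Omega_\delta^{w_\varepsilon}(t)$. For $\eta>0$ small I would define the tubular shell $B_\eta(t):=\{x\in\Omega_\delta^{w_\varepsilon}(t):\mathrm{dist}(x,\partial\Omega_\delta^{w_\varepsilon}(t))<\eta\}$ and aim for the uniform-in-$\varepsilon$ bound $\int_0^T\int_{B_\eta(t)}(\rho_\varepsilon^\gamma+\delta\rho_\varepsilon^a)\,dx\,dt\le C(E_0,\delta)\,\eta^\beta$ for some $\beta>0$; the candidate for $\mathcal{A}_\kappa$ is then the complement of $\bigcup_t\{t\}\times B_\eta(t)$ (intersected with a small truncation of the time interval near $0$ and $T$), whose compact containment in the space-time fluid region is guaranteed by the uniform strong convergence of $w_\varepsilon$ from Lemma~\ref{qq22weakconveps}(iiib).

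The central tool is the test function $\bb{q}_\eta:=\chi_\eta\,\nabla\Delta_{\Omega_\delta^E}^{-1}[\chi_{B_\eta(t)}\rho_\varepsilon]$ in the momentum equation $\eqref{qq22movingdomainsystem1234}_2$, where $\chi_\eta\in C_0^\infty$ equals $1$ on $B_\eta(t)$ and is supported in $B_{2\eta}(t)$, and $\Delta_{\Omega_\delta^E}^{-1}$ is the inverse Dirichlet Laplacian on the smooth enclosing domain $\Omega_\delta^E$ introduced earlier. This bypasses the Bogovskii operator, which is unavailable here because $\partial\Omega_\delta^{w_\varepsilon}(t)$ is only H\"older regular. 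Upon substitution, the pressure term produces (up to a cutoff error controlled by the uniform $L^{(4/3)a}$ bound on $\rho_\varepsilon$) the quantity $\int_0^T\int_{B_\eta(t)}(\rho_\varepsilon^\gamma+\delta\rho_\varepsilon^a)\rho_\varepsilon$, which dominates the integral we wish to bound.

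The remaining terms decompose essentially as in \eqref{qq22thatsimilar} and must be majorized by $C\,\mathcal{M}(B_{2\eta}(t))^{\alpha_k}$ with $\alpha_k>0$. The Sobolev bounds \eqref{qq22linfty22}--\eqref{qq22linfty} applied on $\Omega_\delta^E$, together with the uniform estimates of Lemma~\ref{qq22111}, handle the inertial, viscous, and time-derivative pieces, with the sharpness dictated by the convective integral $\int\rho_\varepsilon\bb{u}_\varepsilon\otimes\bb{u}_\varepsilon:\nabla\bb{q}_\eta$. A direct H\"older bookkeeping there against the energy-level spaces $\rho_\varepsilon\in L^\infty(L^\gamma)$, $\rho_\varepsilon\in L^{(4/3)\gamma}(Q)$ and $\bb{u}_\varepsilon\in L^2(H^1)\hookrightarrow L^2(L^6)$ produces, in the limit $a\to\gamma$, precisely the threshold $\gamma>12/7$ needed so that all exponents close up with a leftover positive power of $\mathcal{M}(B_{2\eta}(t))$.

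The principal obstacle is the control of the $\varepsilon$-dependent contributions, namely those arising from $\partial_t\bb{q}_\eta$ (which through $\eqref{qq22movingdomainsystem1234}_1$ introduces $\varepsilon\Delta^{w_\varepsilon^{-1}}\rho_\varepsilon$ inside $\nabla\Delta_{\Omega_\delta^E}^{-1}$) and from the explicit $\varepsilon$-terms in $\eqref{qq22movingdomainsystem1234}_2$. These are to be treated by the divergence-plus-remainder decomposition of $\Delta^{w_\varepsilon^{-1}}$ already employed for the term $I_2$ in Lemma~\ref{qq22lemmaQproof2}, combined with the refined maximal-regularity bound \eqref{qq22nicela} and the $L^p$-continuity of $\nabla\Delta_{\Omega_\delta^E}^{-1}\nabla\cdot$. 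The delicate point is that the localizing cutoff $\chi_\eta$ must be placed carefully so that every estimate genuinely picks up a factor $\mathcal{M}(B_{2\eta}(t))^{\alpha}$, rather than being overwhelmed by the inverse regularization of the artificial damping. Once the uniform bound $C(E_0,\delta)\eta^\beta$ is established, it suffices to choose $\eta=\eta(\kappa)$ small enough so that $C\eta^\beta\le\kappa$, and the lemma follows.
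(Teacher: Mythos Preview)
Your approach departs from the paper's and contains a genuine gap: the proposed test function $\bb{q}_\eta=\chi_\eta\,\nabla\Delta_{\Omega_\delta^E}^{-1}[\chi_{B_\eta(t)}\rho_\varepsilon]$ is \emph{not admissible} in the coupled momentum equation~\eqref{qq22movingdomainsystem1234}$_2$. The test space requires $\bb{q}\in C_0^\infty(Q_{\delta,T,\Gamma}^{w_\varepsilon})$ with $\bb{q}|_{\Gamma^{w_\varepsilon}}=\psi\,\bb{e}_3$, i.e.\ $\bb{q}$ must vanish on the rigid part of $\partial\Omega_\delta^{w_\varepsilon}$ and its trace on the elastic part must be \emph{purely vertical}. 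Your cutoff $\chi_\eta$ equals $1$ on the whole boundary, so $\bb{q}_\eta$ equals $\nabla\Delta_{\Omega_\delta^E}^{-1}[\chi_{B_\eta}\rho_\varepsilon]$ there, which neither vanishes on the rigid part nor has a trace of the form $\psi\,\bb{e}_3$ on $\Gamma^{w_\varepsilon}$. Correcting the trace would require subtracting a vector field with the same boundary values and controlled $W^{1,p}$ norm --- precisely the Bogovskii-type construction you acknowledge is unavailable for H\"older boundaries. Without this, the momentum identity cannot be used and the argument collapses before any exponent bookkeeping begins.

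The paper circumvents this entirely by constructing an explicit \emph{geometric} test function $\bb{q}_K$, independent of $\rho_\varepsilon$, built from the star-shaped structure of the rigid part (a radial vector field truncated in a $1/K$-shell) together with a vertical piece $-K(w_\varepsilon-z)\bb{e}_3$ near the elastic boundary. By construction $\bb{q}_K$ vanishes on \emph{all} of $\partial\Omega^{w_\varepsilon}$, so $(\bb{q},\psi)=(\bb{q}_K,0)$ is admissible. Its divergence satisfies $\nabla\cdot\bb{q}_K\ge K-c$ on the thin layer $A_{1/K}$ while $\|\bb{q}_K\|_{W^{1,\infty^-}}\le C$ on the complement; substituting into the momentum equation yields directly $\int_{A_{1/K}}(\rho_\varepsilon^\gamma+\delta\rho_\varepsilon^a)\le C/K^{1/q}$. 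The sharp constraint $\gamma>12/7$ arises from the convective term $I_4$ through $\rho_\varepsilon\bb{u}_\varepsilon\otimes\bb{u}_\varepsilon\in L^1(0,T;L^{3\gamma/(\gamma+3)})$ versus $\nabla\bb{q}_K\in L^\infty(0,T;L^{q})$ with a residual H\"older factor $\mathcal{M}(A_{1/K})^{1/p}\sim K^{-1/p}$. The mechanism is thus completely different from yours: rather than trying to extract higher integrability of $\rho_\varepsilon$ near the boundary, the paper squeezes the pressure against a test function whose divergence is artificially large there.
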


\noindent

Combining the previous two lemmas, we have:
\begin{cor}\label{qq22somecor}
There exists $\overline{p}$ such that as $\varepsilon\to 0$,
\begin{eqnarray*}
\rho_\varepsilon^\gamma+ \delta \rho_\varepsilon^a \rightharpoonup \overline{p}, \quad \text{in } L^1((0,T)\times \mathbb{R}^3).
\end{eqnarray*}
Moreover, for every $\kappa>0$, there is a set $A_\kappa\Subset Q_T^w$ such that 
\begin{eqnarray*}
\int_{Q_T^w \setminus A_\kappa} \overline{p} \leq \kappa,
\end{eqnarray*}
and $\overline{p}\rho_\varepsilon \in L^1(A_\kappa)$.
\end{cor}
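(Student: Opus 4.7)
\textbf{Proof plan for Corollary \ref{qq22somecor}.} The plan is to combine Lemmas \ref{qq22lemmaQproof2} and \ref{qq22lemkappa} via a Dunford–Pettis argument: the former supplies equi-integrability of the pressure on compact subsets, while the latter supplies tightness. Together they provide the weak $L^1$ compactness needed for the full sequence.

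First I would fix an exhaustion $\{\mathcal{A}_{\kappa_n}\}_{n \in \mathbb{N}}$ of compactly contained subsets of $(0,T) \times \Omega_\delta^w(t)$ obtained from Lemma \ref{qq22lemkappa} with $\kappa_n \to 0$. Each $\mathcal{A}_{\kappa_n}$ is of the form $I \times B \Subset (0,T) \times \Omega^w(t)$ (up to a finite union of such), so Lemma \ref{qq22lemmaQproof2} yields a bound $\int_{\mathcal{A}_{\kappa_n}} (\rho_\varepsilon^{\gamma+1} + \delta \rho_\varepsilon^{a+1}) \leq C(\mathcal{A}_{\kappa_n})$ uniform in $\varepsilon$. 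Since $a > \gamma > 1$, this gives a uniform bound of $\rho_\varepsilon^\gamma + \delta \rho_\varepsilon^a$ in $L^{(a+1)/a}(\mathcal{A}_{\kappa_n})$, from which de la Vallée–Poussin yields equi-integrability on each $\mathcal{A}_{\kappa_n}$. By Dunford–Pettis, a subsequence of $\rho_\varepsilon^\gamma + \delta \rho_\varepsilon^a$ converges weakly in $L^1(\mathcal{A}_{\kappa_n})$ to some $\overline{p}_n$. A standard diagonal extraction across $n$ gives a single subsequence converging weakly on every $\mathcal{A}_{\kappa_n}$ to a function $\overline{p}$ defined on $\bigcup_n \mathcal{A}_{\kappa_n} = Q_T^w$ (up to a null set), which we extend by zero to $(0,T) \times \mathbb{R}^3$.

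Next I would upgrade this to weak $L^1((0,T) \times \mathbb{R}^3)$ convergence using tightness. Given $\varphi \in L^\infty((0,T) \times \mathbb{R}^3)$ and any $\kappa > 0$, pick $n$ large so that $\kappa_n < \kappa$, split
\[
\int (\rho_\varepsilon^\gamma + \delta \rho_\varepsilon^a) \varphi = \int_{\mathcal{A}_{\kappa_n}} (\cdots) \varphi + \int_{((0,T)\times\Omega_\delta^w(t))\setminus \mathcal{A}_{\kappa_n}} (\cdots) \varphi,
\]
and control the second piece by $\kappa_n \|\varphi\|_{L^\infty}$ uniformly in $\varepsilon$; the first piece converges by weak $L^1(\mathcal{A}_{\kappa_n})$ convergence. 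Letting $\kappa_n \to 0$ one concludes. (Since $\varphi$ may only be in $L^\infty$, strictly speaking this produces weak-$*$ in the sense of the duality pairing with $L^\infty$, but this is exactly what is needed here and is equivalent to weak $L^1$ convergence for a sequence of nonnegative functions whose integrals stay bounded by the tightness statement.)

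For the \emph{moreover} part, I would simply take $A_\kappa := \mathcal{A}_\kappa$. The estimate $\int_{Q_T^w \setminus A_\kappa} \overline{p} \leq \kappa$ follows from weak lower semicontinuity applied to the nonnegative integrand on each compact sub-region of $Q_T^w \setminus A_\kappa$ together with Lemma \ref{qq22lemkappa}. For the integrability $\overline{p} \rho_\varepsilon \in L^1(A_\kappa)$, the uniform bound from Lemma \ref{qq22lemmaQproof2} gives $\overline{p} \in L^{(a+1)/a}(A_\kappa)$ by weak lower semicontinuity, while $\rho_\varepsilon \in L^{a+1}(A_\kappa)$ uniformly, so H\"older with conjugate exponents $(a+1)/a$ and $a+1$ closes the statement.

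The main obstacle I anticipate is ensuring that the exhausting sets $\mathcal{A}_{\kappa_n}$ from Lemma \ref{qq22lemkappa} can actually be assembled into finite unions of the product form $I \times B$ required by Lemma \ref{qq22lemmaQproof2}; a small covering argument with cylinders $I \times B \Subset (0,T) \times \Omega^w(t)$ (leveraging the H\"older continuity of $w_\varepsilon$ that makes the physical domain vary continuously) should suffice, and the rest is essentially bookkeeping.
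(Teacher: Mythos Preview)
Your proposal is correct and matches the paper's approach: the paper simply states that the corollary follows by ``combining the previous two lemmas'' (Lemmas \ref{qq22lemmaQproof2} and \ref{qq22lemkappa}) without spelling out the Dunford--Pettis/tightness argument, and your writeup is exactly the standard way to unpack that sentence. The covering issue you flag at the end is real but minor, and your suggested resolution via finitely many space-time cylinders (using the uniform H\"older continuity of $w_\varepsilon$) is the right fix.
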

\noindent

Now, by using this Corollary and Lemma $\ref{qq22weakconveps}$, we have that the limiting functions $(r,\bb{U},w,\theta)$ satisfy the heat equation $\eqref{qq22heat3}$, the continuity equation $\eqref{qq22cont3}$ (or equivalently $\eqref{qq22movingdomainsystem123}_1$) and the following coupled momentum equation:
\begin{align}
&\int_{Q_{\delta,T}^{w}}\rho \bb{u}\cdot \partial_t \bb{q} +\int_{Q_{\delta,T}^{w}}\big[\rho \bb{u} \otimes \bb{u}\big]:\nabla \bb{q}-\mu \int_{Q_{\delta,T}^{w}} \nabla \bb{u}:\nabla\bb{q} -(\mu +\lambda)\int_{Q_{\delta,T}^{w}}( \nabla\cdot \bb{u})(\nabla\cdot \bb{q}) \nonumber \\[2mm]
&+\int_{Q_{\delta,T}^{w}} \overline{p} (\nabla \cdot \bb{q})+\bint_{\Gamma_T} \Big[ +\partial_t w \partial_t \psi -\Delta w \Delta \psi
-\mathcal{F}(w) \psi + \nabla \theta\cdot \nabla \psi-\delta \nabla^3 w: \nabla^3 \psi\Big] \nonumber\\
&= \int_0^T \frac{d}{dt}\int_{\Omega_\delta} J \rho \bb{u} \cdot \bb{q} + \int_0^T \frac{d}{dt}\int_{\Gamma} \partial_t w \psi ,\label{qq22movingdomainsystem22}
\end{align}
for all $\bb{q} \in C_0^\infty(Q_{T,\delta,\Gamma}^w)$ and $\psi \in C_0^\infty(\Gamma_T^w)$ such that $\bb{q}_{\Gamma^w} = \psi \bb{e}_3$. We will now focus on identifying the limiting function $\overline{p}$.
\subsubsection{The convergence of the effective viscous flux.}\label{qq22secEVF}
Here we want to prove the following convergence of effective viscous flux
\begin{eqnarray}\label{qq22effectvisceps}
\int_{Q_{\delta,T}^{w_\varepsilon}}\varphi^2(\rho_\varepsilon^\gamma+ \delta \rho_\varepsilon^a- (\mu + 2\lambda)\nabla \cdot \bb{u}_\varepsilon)\rho_\varepsilon \to \int_{Q_{\delta,T}^{w}}\varphi^2(\overline{p} - (\mu + 2\lambda)\nabla \cdot \bb{u})\rho, \text{ in } \Omega_{\varepsilon_0,\delta}^w(t),~~~~
\end{eqnarray}
when $\varepsilon \to 0$, for any $\varphi \in C_0^\infty([0,T] \times \Omega_{\varepsilon_0,\delta}^w(t))$, where
\begin{eqnarray*}
\Omega_{\varepsilon_0,\delta}^w(t) := \cap_{\varepsilon \leq \varepsilon_0} \Omega_\delta^{w_\varepsilon}(t), \quad \varepsilon_0 \in (0,1).
\end{eqnarray*}
The proof of this convergence that follows is merely a localized version of the standard approach (see \cite{novotnystraskraba}) and it is given here for completeness.\\

We start by first choosing $(\bb{q},\psi)=(\varphi \nabla \Delta^{-1}[\varphi \rho_\varepsilon],0)$ in $\eqref{qq22movingdomainsystem1234}_2$ to obtain
\begin{align}
&I_0: = \int_0^T\int_{\mathbb{R}^3} \varphi^2(\rho_\varepsilon^{\gamma+1} + \delta \rho_\varepsilon^{a+1}) \nonumber\\
&=-\int_0^T\int_{\mathbb{R}^3}\rho_\varepsilon \bb{u}_\varepsilon \cdot \partial_t \varphi \nabla \Delta^{-1}[\varphi \rho_\varepsilon]-\int_0^T\int_{\mathbb{R}^3}\rho_\varepsilon \bb{u}_\varepsilon \cdot \varphi \nabla \Delta^{-1}[\partial_t \varphi \rho_\varepsilon]\nonumber \\
&\quad +\int_0^T\int_{\mathbb{R}^3}\rho_\varepsilon \bb{u}_\varepsilon \cdot \varphi \nabla \Delta^{-1}[\varphi \nabla \cdot (\rho_\varepsilon \bb{u}_\varepsilon)] -\int_0^T\int_{\mathbb{R}^3}(\rho_\varepsilon \bb{u}_\varepsilon \otimes \bb{u}_\varepsilon ):\big( \nabla\varphi \otimes \nabla \Delta^{-1}[\varphi\rho_\varepsilon] \big) \nonumber\\
&\quad - \int_0^T\int_{\mathbb{R}^3}(\rho_\varepsilon \bb{u}_\varepsilon \otimes \bb{u}_\varepsilon ):\big(\varphi \nabla^2 \Delta^{-1}[\varphi \rho_\varepsilon] \big)+\mu \int_0^T\int_{\mathbb{R}^3}\nabla \bb{u}_\varepsilon: \big( \nabla \varphi\cdot\nabla \Delta^{-1}[\varphi\rho_\varepsilon]\big)  \nonumber\\
&\quad+\mu \int_0^T\int_{\mathbb{R}^3}\nabla \bb{u}_\varepsilon: \big( \varphi \nabla^2 \Delta^{-1}[\varphi\rho_\varepsilon] \big)+(\mu +\lambda)\int_0^T\int_{\mathbb{R}^3} (\nabla\cdot \bb{u}_\varepsilon) (\nabla\cdot\varphi)\nabla \Delta^{-1}[\varphi \rho_\varepsilon] 
\nonumber \\
&\quad+(\mu +\lambda)\int_0^T\int_{\mathbb{R}^3} (\nabla\cdot \bb{u}_\varepsilon) \varphi^2 \rho_\varepsilon-\int_0^T\int_{\mathbb{R}^3} (\rho_\varepsilon^{\gamma} + \delta \rho_\varepsilon^{a}) \nabla \varphi \cdot \nabla \Delta^{-1}[\varphi \rho_\varepsilon] \nonumber \\
&\quad +\varepsilon \int_0^T\int_{\mathbb{R}^3} \varphi \rho_\varepsilon \bb{u}_\varepsilon \cdot \nabla \Delta^{-1} \Big[ \varphi \frac{1}{J_\varepsilon} \nabla^{w^{-1}} \cdot(\nabla^{w^{-1}} \rho_\varepsilon J_\varepsilon)\Big] \nonumber \\
&\quad+ \varepsilon \int_0^T\int_{\mathbb{R}^3} \nabla^{w_\varepsilon^{-1}} \rho_\varepsilon \cdot \Big[ \varphi\nabla \Delta^{-1}[\varphi \rho_\varepsilon] \cdot \nabla^{w_\varepsilon^{-1}} \bb{u}_\varepsilon\nonumber \\
&\quad \quad \quad \quad \quad \quad\quad \quad \quad \quad + \bb{u}_\varepsilon \cdot \big(\nabla^{w_\varepsilon^{-1}}\varphi  \nabla \Delta^{-1}[\varphi \rho_\varepsilon]+ \varphi \nabla^{w_\varepsilon^{-1}}\nabla \Delta^{-1}[\varphi \rho_\varepsilon]\big) \Big] \nonumber \\
&:=I_1 + ... +I_{10}+E_1+E_2, \label{qq22papiga1}
\end{align}
and then we choose $(\bb{q},\psi)=(\varphi \nabla \Delta^{-1}[\varphi \rho],0)$ in $\eqref{qq22movingdomainsystem22}$ to obtain
\begin{align}
J_0&: = \int_0^T\int_{\mathbb{R}^3} \varphi^2 \overline{p}\rho \nonumber\\
&=-\int_0^T\int_{\mathbb{R}^3}\rho \bb{u} \cdot \partial_t \varphi \nabla \Delta^{-1}[\varphi \rho]-\int_0^T\int_{\mathbb{R}^3}\rho \bb{u} \cdot \varphi \nabla \Delta^{-1}[\partial_t \varphi \rho]   \nonumber\\
&\quad+\int_0^T\int_{\mathbb{R}^3}\rho \bb{u} \cdot \varphi \nabla \Delta^{-1}[\varphi \nabla \cdot (\rho \bb{u})] -\int_0^T\int_{\mathbb{R}^3}(\rho \bb{u} \otimes \bb{u} ):\big( \nabla\varphi \otimes \nabla \Delta^{-1}[\varphi\rho] \big) \nonumber \\
&\quad - \int_0^T\int_{\mathbb{R}^3}(\rho \bb{u} \otimes \bb{u} ):\big(\varphi \nabla^2 \Delta^{-1}[\varphi \rho] \big)+\mu \int_0^T\int_{\mathbb{R}^3}\nabla \bb{u}: \big( \nabla \varphi\cdot\nabla \Delta^{-1}[\varphi\rho]\big)  \nonumber\\
&\quad+\mu \int_0^T\int_{\mathbb{R}^3}\nabla \bb{u}: \big( \varphi \nabla^2 \Delta^{-1}[\varphi\rho] \big)+(\mu +\lambda)\int_0^T\int_{\mathbb{R}^3} (\nabla\cdot \bb{u}) (\nabla\cdot\varphi)\nabla \Delta^{-1}[\varphi \rho] 
\nonumber \\
&\quad+(\mu +\lambda)\int_0^T\int_{\mathbb{R}^3} (\nabla\cdot \bb{u}) \varphi^2 \rho -\int_0^T\int_{\mathbb{R}^3} \overline{p} \nabla \varphi \cdot \nabla \Delta^{-1}[\varphi \rho]\nonumber \\
&:=J_1 + ... +J_{10}, \label{qq22papiga2}
\end{align}
Defining the operator $\mathcal{R}$ as $\mathcal{R}_{ij}:= \partial_i \Delta^{-1} \partial_j$, one can write $\eqref{qq22papiga1}$ as
\begin{eqnarray}
&&\int_0^T\int_{\mathbb{R}^3} \varphi^2(\rho_\varepsilon^{\gamma} + \delta \rho_\varepsilon^{a}- (\lambda + 2\mu) \nabla \cdot \bb{u}_\varepsilon)\rho_\varepsilon \nonumber\\
&&=I_1+I_2+I_3'+I_4+I_5+I_6+I_8+I_{10}+E_1+E_2 \nonumber\\
&&\quad + \sum_{i,j=1}^3 \int_0^T \int_{ \mathbb{R}^3} u_\varepsilon^i \big(\varphi \rho_\varepsilon \mathcal{R}_{ij}[\varphi \rho_\varepsilon u_\varepsilon^j] - \varphi \rho_\varepsilon u_\varepsilon^j \mathcal{R}_{ij}[\varphi \rho_\varepsilon] \big) \label{qq22papiga11}
\end{eqnarray}
where
\begin{eqnarray*}
I_3':= - \int_0^T \int_{\mathbb{R}^3} \varphi \rho_\varepsilon \bb{u}_\varepsilon \cdot \nabla \Delta^{-1}[\nabla \varphi \cdot \bb{u}_\varepsilon \rho_\varepsilon]
\end{eqnarray*}
and similarly for $\eqref{qq22papiga2}$
\begin{eqnarray}
&&\int_0^T\int_{\mathbb{R}^3} \varphi^2(\overline{p}- (\lambda + 2\mu) \nabla \cdot \bb{u})\rho\nonumber\\ &&=J_1+J_2+J_3'+J_4+J_5+J_6+J_8+J_{10} \nonumber\\
&&\quad + \sum_{i,j} \int_0^T \int_{ \mathbb{R}^3} u^i \big(\varphi \rho \mathcal{R}_{ij}[\varphi \rho u^j] - \varphi \rho u^j \mathcal{R}_{ij}[\varphi \rho] \big) \quad \quad \quad  \label{qq22papiga22}
\end{eqnarray}
where
\begin{eqnarray*}
J_3':= - \int_0^T \int_{\mathbb{R}^3} \varphi \rho \bb{u} \cdot \nabla \Delta^{-1}[\nabla \varphi \cdot \bb{u} \rho]
\end{eqnarray*}
Taking the difference of $\eqref{qq22papiga11}$ and $\eqref{qq22papiga22}$, one obtains
\begin{eqnarray*}
&&\int_0^T\int_{\mathbb{R}^3} \varphi^2(\rho_\varepsilon^{\gamma} + \delta \rho_\varepsilon^{a}- (\lambda + 2\mu) \nabla \cdot \bb{u}_\varepsilon)\rho_\varepsilon - \int_0^T\int_{\mathbb{R}^3} \varphi^2(\overline{p}- (\lambda + 2\mu) \nabla \cdot \bb{u}) \rho \\
&&= I_1-J_1+I_2-I_2+I_3'-I_3'+...+I_6-J_6+I_8-J_8+I_{10}-J_{10}+E_1+E_2 \\
&&\quad + \sum_{i,j=1}^3 \int_0^T \int_{ \mathbb{R}^3} u_\varepsilon^i \big(\varphi \rho_\varepsilon \mathcal{R}_{ij}[\varphi \rho_\varepsilon u_\varepsilon^j] - \varphi \rho_\varepsilon u_\varepsilon^j \mathcal{R}_{ij}[\varphi \rho_\varepsilon] \big) \\
&&\quad -\sum_{i,j=1}^3 \int_0^T \int_{ \mathbb{R}^3} u^i \big(\varphi \rho \mathcal{R}_{ij}[\varphi \rho u^j] - \varphi \rho u^j \mathcal{R}_{ij}[\varphi \rho] \big) .
\end{eqnarray*}
The goal is to prove that the right-hand side of the above identity converges to zero as $\varepsilon \to 0$. First, it is straightforward to see that the differences in the second line $I_1 - J_1$, ... $I_{10} - J_{10}$ converge to zero by Lemma $\ref{qq22weakconveps}$ and Corollary $\ref{qq22somecor}$. Next, by using $\eqref{qq22nicela}$ and the uniform bounds, estimating similarly as in $\eqref{qq22similar}$, one can easily obtain
\begin{eqnarray*}
|E_1|+|E_2| \leq \varepsilon^{1/4}C(E_0,\delta)
\end{eqnarray*}
so $E_1,E_2 \to 0$ as $\varepsilon \to 0$. It remains to prove that the last difference of the commutator terms converges to zero. Since
\begin{eqnarray*}
\rho_\varepsilon &\rightharpoonup & \rho \quad \text{in } L^a(\mathbb{R}^3) \quad \text{a.e. in } (0,T), \\
\rho_\varepsilon\bb{u}_\varepsilon &\rightharpoonup& \rho\bb{u} \quad \text{in } L^{\frac{2a}{a+1}}(\mathbb{R}^3) \quad \text{a.e. in } (0,T), 
\end{eqnarray*}
by \cite[Lemma 3.4]{global} (which is a direct consequence of div-curl lemma), one has
\begin{eqnarray*}
\sum_{i,j=1}^3 \big(\varphi \rho_\varepsilon \mathcal{R}_{ij}[\varphi \rho_\varepsilon u_\varepsilon^j] - \varphi \rho_\varepsilon u_\varepsilon^j \mathcal{R}_{ij}[\varphi \rho_\varepsilon] \big) \rightharpoonup \sum_{i,j} \big(\varphi \rho \mathcal{R}_{ij}[\varphi \rho u^j] - \varphi \rho u^j \mathcal{R}_{ij}[\varphi \rho] \big),
\end{eqnarray*}
in $L^r(\mathbb{R}^3)$ and a.e. in $(0,T)$, where
\begin{eqnarray*}
\frac{1}{r} = \frac{1}{a} + \frac{a+1}{2a} < \frac{5}{6}
\end{eqnarray*}
provided that $a>9/2$. Since $L^r(S)$ is compactly imbedded into $W^{-1,2}(S)$ for any compact set $S$ in $\mathbb{R}^3$, one also obtains
\begin{eqnarray*}
\sum_{i,j} \big(\varphi \rho_\varepsilon \mathcal{R}_{ij}[\varphi_\varepsilon u_\varepsilon^j] - \varphi \rho_\varepsilon u_\varepsilon^j \mathcal{R}_{ij}[\varphi \rho_\varepsilon] \big) \to \sum_{i,j} \big(\varphi \rho \mathcal{R}_{ij}[\varphi u^j] - \varphi \rho u^j \mathcal{R}_{ij}[\varphi \rho] \big),
\end{eqnarray*}
in $W^{-1,2}(\mathbb{R}^3)$ and a.e. in $(0,T)$. Now, by the uniform estimates given in Lemma $\ref{qq22111}$, one has that $\sum_{i,j} \big(\varphi \rho_\varepsilon \mathcal{R}_{ij}[\varphi_\varepsilon u_\varepsilon^j] - \varphi \rho_\varepsilon u_\varepsilon^j \mathcal{R}_{ij}[\varphi \rho_\varepsilon] \big) $ is uniformly bounded in $L^p(0,T; W^{-1,2}(\mathbb{R}^3))$ for some $p>2$, so the Lebesque dominated convergence theorem combined with the interpolation of Sobolev spaces gives us
\begin{eqnarray*}
\sum_{i,j} \big(\varphi \rho_\varepsilon \mathcal{R}_{ij}[\varphi_\varepsilon u_\varepsilon^j] - \varphi \rho_\varepsilon u_\varepsilon^j \mathcal{R}_{ij}[\varphi \rho_\varepsilon] \big) \to \sum_{i,j} \big(\varphi \rho \mathcal{R}_{ij}[\varphi u^j] - \varphi \rho u^j \mathcal{R}_{ij}[\varphi \rho] \big),
\end{eqnarray*}
in $L^2(0,T; W^{-1,2}(\mathbb{R}^3))$, which then by the weak convergence of $\varphi \bb{u}_\varepsilon$ in $L^2(0,T; H^1(\mathbb{R}^3))$ implies
\begin{eqnarray*}
\sum_{i,j} \int_0^T \int_{ \mathbb{R}^3} u_\varepsilon^i \big(\varphi \rho_\varepsilon \mathcal{R}_{ij}[\varphi_\varepsilon u_\varepsilon^j] - \varphi \rho_\varepsilon u_\varepsilon^j \mathcal{R}_{ij}[\varphi \rho_\varepsilon] \big) \to \sum_{i,j} \int_0^T \int_{ \mathbb{R}^3} u^i \big(\varphi \rho \mathcal{R}_{ij}[\varphi u^j] - \varphi \rho u^j \mathcal{R}_{ij}[\varphi \rho] \big) .
\end{eqnarray*}
Thus, we have concluded the convergence given in $\eqref{qq22effectvisceps}$.
\subsubsection{The strong convergence of density}
First, we can prove Theorem $\ref{qq22RCEepsconv}$ in the same way as in Theorem $\ref{qq22weakomegaRCE}$ (without the $\varepsilon$ terms) by relying on the fact that $\rho \in L^\infty(0,T;L^2(\Omega_\delta^{w_\varepsilon}(t)))$, since we have obtained that the equation $\eqref{qq22movingdomainsystem123}_1$ holds. The only difference in the proof is that it is done in the space of distributions, because we do not have the information about the integrability of $\partial_t r$ and $\nabla r$.

To obtain that the limiting pressure $\overline{p}$ is indeed equal to $\rho^\gamma + \delta \rho^a$ in $ L^1(Q_{\delta,T}^w)$, it is enough to prove the strong convergence of the density. For a non-negative $\varphi \in C_0^\infty(Q_{\delta,T}^w)$, by using the convergence of the effective viscous flux $\eqref{qq22effectvisceps}$ and the monotonicity of the function $x \mapsto x^\gamma + \delta x^a$, we obtain
\begin{eqnarray*}
&&\lim\limits_{\varepsilon\to 0} \int_{Q_{\delta,T}^w}(\mu + 2\lambda) \varphi (\nabla \cdot \bb{u}_\varepsilon \rho_\varepsilon - \nabla \cdot \bb{u} \rho) \\
&&= \lim\limits_{\varepsilon\to 0} \int_{Q_{\delta,T}^w} \varphi \Big[ \big((\mu + 2\lambda) \nabla \cdot \bb{u}_\varepsilon - \rho_\varepsilon^\gamma- \delta \rho_\varepsilon^a\big)\rho_\varepsilon+ \big(\overline{p} - (\mu + 2\lambda)\nabla \cdot \bb{u}\big)\rho \Big] \\
&&\quad +\lim\limits_{\varepsilon\to 0} \int_{Q_{\delta,T}^w} \varphi \big[ \rho_\varepsilon^{\gamma+1}+\delta \rho_\varepsilon^{a+1} - \overline{p} \rho \big] \\
&&=\lim\limits_{\varepsilon\to 0} \int_{Q_{\delta,T}^w} \varphi \big[( \rho_\varepsilon^{\gamma}+\delta \rho_\varepsilon^{a} - \overline{p})(\rho_\varepsilon - \rho) \big] \geq 0,
\end{eqnarray*}
and since $\varphi $ was arbitrary, we have
\begin{eqnarray}\label{qq22c1}
\overline{\nabla \cdot \bb{u} \rho} \geq \nabla \cdot \bb{u} \rho, \quad \text{a.e. in }Q_{\delta, T}^w,
\end{eqnarray}
where $\overline{\nabla \cdot \bb{u} \rho}$ is the weak limit of $\nabla \cdot \bb{u}_\varepsilon \rho_\varepsilon$. Now, for any function $b$ satisfying the assumptions from Lemma $\ref{qq22weakomegaRCE}$, the following holds
\begin{eqnarray*}
&&\int_{0}^t \int_{\Omega_\delta^{w_\varepsilon}}\frac{1}{J_\varepsilon}\nabla^{w_\varepsilon^{-1}} J_\varepsilon\cdot \nabla^{w^{-1}} b(\rho_\varepsilon) = \int_{0}^t \int_{\Omega_\delta} \frac{1}{J_\varepsilon^2} \nabla J_\varepsilon\cdot \nabla b(r_\varepsilon) \\
&&= \underbrace{\int_{0}^t \int_{ \partial \Omega_\delta} \frac{1}{J_\varepsilon^2}\nabla J_\varepsilon\cdot \nu b(r_\varepsilon)}_{=0} - \int_{0}^t \int_{\Omega_\delta} \nabla \cdot \Big(\frac{\nabla J_\varepsilon}{J_\varepsilon^2}\Big) b(r_\varepsilon)=- \int_{0}^t \int_{\Omega_\delta} \Big[\frac{\Delta J_\varepsilon}{J_\varepsilon^2}-2\frac{|\nabla J_\varepsilon|^2}{J_\varepsilon^3} \Big] b(r_\varepsilon),
\end{eqnarray*}
so by formally choosing\footnote{Here, one should choose $b(x) = x ln(x+h)$ for $h>0$ and then pass to the limit $h \to 0^+$.} $b(x) =x \text{ln} x$ and $\varphi= \chi_{[0,t]}$ in the renormalized continuity inequality $\eqref{qq22RCEepsmov}$, we obtain
\begin{eqnarray}
&&-\varepsilon \int_{0}^t \int_{\Omega_\delta} \Big[\frac{\Delta J_\varepsilon}{J_\varepsilon^2}-2\frac{|\nabla J_\varepsilon|^2}{J_\varepsilon^3} \Big] r_\varepsilon \ln (r_\varepsilon)+ \int_0^t \int_{\Omega_\delta^{w_\varepsilon}} (\nabla \cdot \bb{u}_\varepsilon) \rho_\varepsilon \nonumber\\
&&\leq \int_{\Omega_\delta^w} \rho_\varepsilon(0) \text{ln}(\rho_\varepsilon(0)) - \int_{\Omega_\delta^w} \rho_\varepsilon(t) \text{ln}(\rho_\varepsilon(t)).\label{qq22c2}
\end{eqnarray}
On the other hand, from $\eqref{qq22RCEeps2}$ we can similarly have
\begin{eqnarray}\label{qq22c3}
\int_0^t \int_{\Omega_\delta^w} (\nabla \cdot \bb{u}) \rho = \int_{\Omega_\delta^w} \rho(0) \text{ln}(\rho(0)) - \int_{\Omega_\delta^w} \rho(t) \text{ln}(\rho(t)).
\end{eqnarray}
Now, since $x \ln x \leq x^2$, by the uniform estimates given in Lemma $\ref{qq22111}$, one has
\begin{eqnarray*}
\varepsilon \int_{0}^t \int_{\Omega_\delta} \Big[\frac{\Delta J_\varepsilon}{J_\varepsilon^2}-2\frac{|\nabla J_\varepsilon|^2}{J_\varepsilon^3} \Big] r_\varepsilon \ln (r_\varepsilon) \to 0, \quad \text{as } \varepsilon \to 0,
\end{eqnarray*}
which by $\eqref{qq22c1}$, $\eqref{qq22c2}$ and $\eqref{qq22c3}$ implies
\begin{eqnarray*}
\limsup\limits_{\varepsilon \to 0} \int_{\Omega_\delta^{w_\varepsilon}}\rho_\varepsilon(t) \text{ln}(\rho_\varepsilon(t)) \leq \int_{\Omega_\delta^w} \rho(t) \text{ln}(\rho(t)),
\end{eqnarray*}
so by the convexity of the function $f(x)= x ln x$, we obtain that $\rho_\varepsilon \to \rho$ in $L^1((0,T)\times \mathbb{R}^3)$ (see \cite[Corollary 3.33]{novotnystraskraba}). By Corollary $\ref{qq22somecor}$, we obtain that $\overline{p} = \rho^\gamma + \delta \rho^a$ a.e. in $(0,T)\times \mathbb{R}^3$, so the proof of Theorem $\ref{qq22artprth}$ is finished.

\sectionmark{Artificial pressure, fixed domain and structure regularization limit} 
\section{The vanishing artificial pressure, fixed reference domain collapse and the structure regularization limits}\label{qq22section6}
\sectionmark{Artificial pressure, fixed domain and structure regularization limit} 
In this section, we will prove the third main result given in Theorem $\ref{qq22mainth}$. The desired solutions will be obtained as a limit of the solutions $(r_\delta, \bb{U}_\delta, w_\delta, \theta_\delta)$ constructed in Theorem $\ref{qq22artprth}$ by letting $\delta \to 0$.
\begin{thm}\label{qq22RCEdelta}
The solution constructed in Theorem $\ref{qq22mainth}$ satisfies the following renormalized continuity equation
\begin{align}
\int_0^T \frac{d}{dt} \int_{\mathbb{R}^3} b(\rho) \varphi-\int_0^T \int_{\mathbb{R}^3} \big( b(\rho) \partial_t \varphi + b(\rho)\bb{u}\cdot \nabla \varphi \big) = -\int_0^T \int_{\mathbb{R}^3} \big( \rho b'(\rho) -b(\rho) \big)(\nabla \cdot \mathcal{E}^w[\bb{u}])\varphi, \nonumber\\ \label{qq22RCEdeltaeq}
\end{align}
for any $b \in C^1(\mathbb{R})$ such that $b'(x)=0$, for all $x\geq M_b$, where $M_b$ is a constant.
\end{thm}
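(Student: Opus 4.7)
The plan is to pass the limit $\delta \to 0$ in the renormalized continuity equation $\eqref{qq22RCEeps2}$ from Theorem $\ref{qq22RCEepsconv}$, which each $\delta$-level approximate solution $(\rho_\delta, \bb{u}_\delta, w_\delta)$ from Theorem $\ref{qq22artprth}$ satisfies, where $\rho_\delta$ is extended by zero outside $\Omega_\delta^{w_\delta}(t)$. As $\delta \to 0$, the domains $\Omega_\delta^{w_\delta}(t)$ collapse to $\Omega^w(t)$, where $(\rho, \bb{u}, w)$ is the final weak solution from Theorem $\ref{qq22mainth}$.

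The crucial ingredient is the strong convergence $\rho_\delta \to \rho$ in $L^1_{\mathrm{loc}}((0,T) \times \mathbb{R}^3)$. This is established during the proof of Theorem $\ref{qq22mainth}$ by adapting the analysis of Section $\ref{qq22section5}$ to the present limit: one derives an effective viscous flux identity analogous to $\eqref{qq22effectvisceps}$ on sets $Q \Subset Q_T^w$, combines it with the monotonicity of the limiting pressure $x \mapsto x^\gamma$ to obtain $\overline{\nabla \cdot \bb{u}\,\rho} \geq \nabla \cdot \bb{u}\,\rho$, and then applies $\eqref{qq22RCEeps2}$ for the $\delta$-approximate solutions with $b(x) = x \ln(x+h)$ (letting $h \to 0^+$), exploiting the convexity of $x \mapsto x \ln x$ to upgrade the weak convergence of the density to strong convergence, exactly as in the argument following $\eqref{qq22effectvisceps}$.

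Granted the strong convergence of $\rho_\delta$, the limit passage in the identity is straightforward. Since $b' \equiv 0$ on $[M_b, +\infty)$, both $b$ and $x \mapsto xb'(x) - b(x)$ are bounded continuous functions, so by the Lebesgue dominated convergence theorem $b(\rho_\delta) \to b(\rho)$ and $\rho_\delta b'(\rho_\delta) - b(\rho_\delta) \to \rho b'(\rho) - b(\rho)$ strongly in $L^p((0,T) \times \mathbb{R}^3)$ for every $p \in [1, \infty)$. Combined with the weak convergence $\bb{u}_\delta \rightharpoonup \bb{u}$ in $L^2(0,T; H^1)$ from the uniform energy estimates of Theorem $\ref{qq22artprth}$, this yields $b(\rho_\delta) \bb{u}_\delta \rightharpoonup b(\rho) \bb{u}$ weakly in $L^2$, handling the left-hand side. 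For the right-hand side, the strong convergence of $w_\delta$ in $C^{0,\alpha}([0,T]; H^{2\alpha}(\Gamma))$ (via the uniform $W^{1,\infty}(0,T; L^2) \cap L^\infty(0,T; H_0^2)$ bound and the Aubin-Lions lemma) yields convergence of the ALE mapping and of its extension, so $\mathcal{E}^{w_\delta}[\bb{u}_\delta] \rightharpoonup \mathcal{E}^w[\bb{u}]$ weakly in $L^2(0,T; H^1(\mathbb{R}^3))$, and hence $\nabla \cdot \mathcal{E}^{w_\delta}[\bb{u}_\delta] \rightharpoonup \nabla \cdot \mathcal{E}^w[\bb{u}]$ weakly in $L^2$. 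The product of the strongly convergent bounded factor $\rho_\delta b'(\rho_\delta) - b(\rho_\delta)$ with this weakly convergent factor then converges to the target in the sense of distributions.

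The main obstacle is the strong convergence of $\rho_\delta$ itself. In this limit both the structure regularization $\delta \|\nabla^3 w\|_{L^2}^2$ and the artificial pressure $\delta \rho^a$ vanish, so the deformed interface $\Gamma^w(t)$ retains only H\"older regularity, and the classical Bogovski\u\i-based argument for the effective viscous flux is no longer available. This forces the localized approach inspired by \cite{feireisl2, kukucka, compressible}: the effective viscous flux identity must be established on compact subsets $Q \Subset Q_T^w$ via the inverse Laplacian on regular sub-domains, supplemented by a no-concentration estimate for the approximate pressure at the boundary in the spirit of Lemma $\ref{qq22lemkappa}$. This is precisely where the hypothesis $\gamma > 12/7$ plays its role.
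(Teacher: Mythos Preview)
Your overall scheme --- first establish the strong convergence $\rho_\delta \to \rho$ in $L^1$ and then pass to the limit termwise in \eqref{qq22RCEeps2} --- is sound, and the final paragraph correctly identifies the need for a localized effective viscous flux identity and a no-concentration estimate near the boundary. However, the specific mechanism you propose for the strong convergence (a direct adaptation of the argument following \eqref{qq22effectvisceps}) does not go through, and this is where the paper does something genuinely different.

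There are two obstructions. First, the effective viscous flux identity \eqref{qq22effectvisceps} uses $\rho_\varepsilon$ itself as a multiplier, which relies on the local integrability of $\rho_\varepsilon^{a+1}$ established in Lemma~\ref{qq22lemmaQproof2}. At the $\delta$-level the artificial pressure is gone and Lemma~\ref{qq22lemmaQproof3} only gives $\rho_\delta^{\gamma+\theta}\in L^1_{loc}$ for $\theta<\frac{2}{3}\gamma-1$; since $\gamma>12/7$ allows $\gamma\leq 3$, one cannot take $\theta=1$, so the analogue of \eqref{qq22effectvisceps} with multiplier $\rho_\delta$ is unavailable. Second, the comparison argument following \eqref{qq22c1} uses \eqref{qq22c3}, i.e.\ the renormalized equation with $b(x)=x\ln x$ \emph{for the limit} $(\rho,\bb{u})$. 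At the $\varepsilon\to 0$ stage this is legitimate because the limit still has $\rho\in L^a$ with $a$ large; here the limit is only in $L^\gamma$, possibly with $\gamma<2$, so the DiPerna--Lions regularization is not directly applicable and you would be assuming precisely the statement you wish to prove.

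The paper circumvents both issues via Feireisl's truncation technique \cite{feireisl1}: one introduces bounded cut-offs $T_k(\rho_\delta)$, proves the effective viscous flux identity with multiplier $T_k(\rho_\delta)$ (all products now integrable), passes to the limit $\delta\to 0$ in \eqref{qq22RCEeps2} with $b=T_k$ to obtain a transport equation for the weak limit $T^{1,k}$ with defect $T^{2,k}$, renormalizes \emph{that} equation (which works because $T^{1,k}\in L^\infty$), and controls the oscillation defect $\limsup_{\delta\to 0}\int |T_k(\rho_\delta)-T_k(\rho)|^{\gamma+1}$ uniformly in $k$. Letting $k\to\infty$ then simultaneously yields the renormalized equation \eqref{qq22RCEdeltaeq} for the limit and the strong convergence of the density via the function $L_k$ approximating $x\ln x$. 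Your proposal is missing this layer of truncation, which is the essential new ingredient over Section~\ref{qq22section5}.
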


\subsection{Convergence on the fixed reference domain $\Omega$}
For the approximate solutions $(r_\delta,\bb{U}_\delta, w_\delta, \theta_\delta)$ constructed in Theorem $\ref{qq22artprth}$, one has:
\begin{lem}\label{qq22weakconvdel}
The following convergences hold as $\delta \to 0$,
\begin{enumerate}
\item[(i)] $\bb{U}_{\delta} \rightharpoonup \bb{U}, \text{ weakly in } L^2(0,T; W^{1, 2^-}(\Omega))$;
\item[(ii)] ${r}_{\delta} \rightharpoonup {r}, \text{ weakly* in } L^\infty(0,T; L^{\gamma} (\Omega))$;
\item[(iiia)] $w_{\delta} \rightharpoonup w$, weakly* $L^\infty(0,T; H_0^2(\Gamma))$ and $W^{1,\infty}(0,T; L^2(\Gamma))$;
\item[(iiib)] $w_{\delta} \to w$, in $C^{0,\alpha}([0,T]; H^{2\alpha}(\Gamma))$, for $0<\alpha<1$;
\item[(iiic)] $J_{\delta} \to J$ and $1/J_{\delta} \to 1/J$, in $C^{0,\alpha}([0,T]; C^{0,1-2\alpha}(\Gamma))$ for $0<\alpha<1/2$;
\item[(iiid)] $\theta_{\delta} \to \theta$ weakly* in $L^\infty(0,T; L^2(\Gamma))$ and weakly in $L^2(0,T; H^1(\Gamma))$; 
\item[(iiie)] $\mathcal{F}(w_{\delta}) \to \mathcal{F}(w_{\delta})$ in $C([0,T]; H^{-2}(\Gamma))$;
\item[(iiif)] $\delta \nabla^3 w_{\delta} \rightharpoonup 0$, weakly in $L^\infty(0,T; L^2(\Gamma))$;
\item[(iv)] $J_\delta {r}_\delta \to J{r}$, in $C_w(0,T; L^a(\Omega))$;
\item[(v)] $J_\delta {r}_\delta \bb{U}_\delta \rightharpoonup J{r} \bb{U}$, weakly in $L^2(0,T; L^{(\frac{6a}{a+6})^-}(\Omega))$ and weakly* in $L^\infty(0,T; L^{\frac{2a}{a+1}}(\Omega))$; 
\item[(vi)] $J_\delta {r}_\delta \bb{U}_\delta \otimes \bb{U}_\delta \rightharpoonup J {r} \bb{U} \otimes \bb{U}$, in $L^1(Q_{T})$;
\item[(vii)] $J_\delta {r}_\delta \bb{U}_\delta \otimes \bb{w}_\delta \rightharpoonup J {r} \bb{U} \otimes \bb{w}$, in $L^1(Q_{T})$.
\end{enumerate}
\end{lem}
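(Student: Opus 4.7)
The plan is to mimic the structure of Lemmas $\ref{qq22weakconvthirdlevel}$ and $\ref{qq22weakconveps}$, exploiting the fact that the energy inequality in Theorem $\ref{qq22artprth}$ provides $\delta$-uniform bounds on all the non-regularizing quantities. From $E_{\delta}^w(t)+D^w(t)\leq C(E_0)$ one reads off
\begin{eqnarray*}
\|w_\delta\|_{L^\infty(0,T;H_0^2(\Gamma))}+\|\partial_t w_\delta\|_{L^\infty(0,T;L^2(\Gamma))}+\|\theta_\delta\|_{L^\infty_t L^2_x}+\|\nabla\theta_\delta\|_{L^2_t L^2_x}\leq C(E_0),
\end{eqnarray*}
\begin{eqnarray*}
\|r_\delta\|_{L^\infty(0,T;L^\gamma(\Omega))}+\|r_\delta|\bb{U}_\delta|^2\|_{L^\infty(0,T;L^1(\Omega))}+\|\nabla^{w_\delta}\bb{U}_\delta\|_{L^2(0,T;L^2(\Omega))}\leq C(E_0),
\end{eqnarray*}
while the regularizing terms only yield $\delta^{1/2}\|\nabla^3 w_\delta\|_{L^\infty_t L^2_x}+\delta^{1/a}\|r_\delta\|_{L^\infty_t L^a_x}\leq C(E_0)$, which is exactly what produces (iiif) (together with the strong convergence $\delta r_\delta^a\to 0$ in $L^1$ that will later be needed for the momentum limit). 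Banach--Alaoglu then gives (i), (ii), (iiia), (iiid) directly, using that $\|\nabla A_{w_\delta}\|_{L^\infty_t L^{p}_x}\leq C(E_0)$ only for $p<\infty$ on $H^2(\Gamma)$-level domains, which is why (i) is stated in $L^2(0,T;W^{1,2^-}(\Omega))$ rather than $H^1$.

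For the strong statements on the structure, the Aubin--Lions lemma applied with the compact embedding $H_0^2(\Gamma)\hookrightarrow\hookrightarrow H^{2\alpha}(\Gamma)$ and the uniform Lipschitz-in-time bound from $\partial_t w_\delta\in L^\infty(0,T;L^2(\Gamma))$ yields (iiib); (iiic) is then immediate from $J_\delta=1+w_\delta$ and the Morrey-type embedding $H^{2\alpha}(\Gamma)\hookrightarrow C^{0,1-2\alpha}$ for $0<\alpha<1/2$; and (iiie) follows from (iiib) with $2\alpha>2-\epsilon$ and assumption (A1). For (iv), I would read off from equation $\eqref{qq22cont3}$ that $\partial_t(J_\delta r_\delta)$ is uniformly bounded in $L^{\infty^-}(0,T;W^{-1,p}(\Omega))$ for $p>2\gamma/(\gamma+1)$, so that the combination of the compact embedding $L^\gamma(\Omega)\hookrightarrow\hookrightarrow W^{-1,2}(\Omega)$ (valid since $\gamma>12/7>6/5$) with Lemma 6.2 of Novotn\'y--Stra\v{s}kraba produces convergence in $C_w(0,T;L^\gamma(\Omega))$. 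Statement (v) then follows by combining (iv) with (i), after noting that the kinetic energy bound and Sobolev embedding give $r_\delta\bb{U}_\delta$ uniformly bounded in $L^\infty(0,T;L^{2\gamma/(\gamma+1)}(\Omega))\cap L^2(0,T;L^{(6\gamma/(\gamma+6))^-}(\Omega))$.

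The main obstacle is establishing (vi) and (vii), since $\gamma>12/7$ is precisely the borderline assumption making the convective terms integrable. For (vi) I would follow the cutoff argument of Lemma $\ref{qq22weakconvthirdlevel}$(vii): first, by using the continuity equation and the weaker spatial integrability available here one obtains $J_\delta r_\delta\bb{U}_\delta\to Jr\bb{U}$ strongly in $L^2(0,T;H^{-s}(\Omega))$ for some $s\in(0,1)$; then, for a family $\varphi_i\in C_0^\infty(\Omega)$ with $\varphi_i\to 1$ in $L^p$ for large $p$, passing to the limit in $J_\delta r_\delta\bb{U}_\delta\otimes(\varphi_i\bb{U}_\delta)$ in the sense of $L^1$-weak convergence on a test function against compactly supported test functions; and finally removing the cutoff using the uniform control of $r_\delta\bb{U}_\delta\otimes\bb{U}_\delta$ in $L^2(0,T;L^{(6\gamma/(4\gamma+3))^-}(\Omega))$. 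For (vii), the same scheme applies, now combined with the bound $\bb{w}_\delta\in L^2(0,T;L^{4^-}(\Gamma))$ from Lemma $\ref{qq22111}$(v); here $\gamma>12/7$ is exactly what guarantees, via the improved strong convergence $J_\delta r_\delta\bb{U}_\delta\to Jr\bb{U}$ in $L^2(0,T;H^{-s'}(\Omega))$ with some $s'<1/2$, that the pairing against the weakly convergent $\bb{w}_\delta\in L^2_t H^{(1/2)^-}_x$ is stable. The rest of the proof is then routine bookkeeping.
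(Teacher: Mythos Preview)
Your approach is essentially identical to the paper's: the paper simply states that the proof is carried out in the same way as Lemma~\ref{qq22weakconveps}, noting only that because the $\delta$-regularization is being removed, $A_{w_\delta}$ is merely in $C^{0,\alpha}(0,T;C^{0,1-2\alpha}(\Omega))$, which accounts for the slightly weaker exponents in (i) and (v). Your write-up spells out exactly this reduction, including the correct reason for the $2^-$ in (i), and reproduces the cutoff argument for (vi)--(vii) from Lemma~\ref{qq22weakconvthirdlevel}; one minor remark is that in (iv)--(v) you (correctly) obtain $\gamma$-based spaces rather than the $a$-based spaces appearing in the statement, since the $L^a$ density bound is not $\delta$-uniform.
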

\begin{proof}
The proof can be carried out in the same way as in Lemma $\ref{qq22weakconveps}$. The only difference is that the domain transformation $A_w$ is now only in $C^{0,\alpha}(0,T; C^{0,1-2\alpha}(\Omega))$ for any $0<\alpha<1/2$, so some of the convergences (in particular in $(i)$ and $(v)$) are slightly weaker than those given in Lemma $\ref{qq22weakconveps}$.
\end{proof}

\noindent
\subsection{Convergence of the pressure on the physical domain $\Omega_\delta^{w_\delta}$}
The proof of the convergence of the pressure can, once again, be divided into the following steps: 
\begin{enumerate}
\item Weak convergence of pressure;
\item Convergence of effective viscous flux;
\item Strong convergence of density.
\end{enumerate}

\subsubsection{Weak convergence of pressure}
We use the same idea as in the previous section. First, we have:
\begin{lem}\label{qq22lemmaQproof3}
Let $\gamma>\frac{3}{2}$ and $0<\theta < \frac{2}{3} \gamma-1$. Then, for any parabolic cube $Q = I \times B \Subset (0,T) \times \Omega^w(t)$  where $B$ has a regular boundary, the following holds
\begin{eqnarray*}
\int_Q (\rho_\delta^{\gamma+\theta} + \delta \rho_\delta^{a+\theta}) \leq C(Q),
\end{eqnarray*}
where the constant $C(Q)$ is independent of $\delta$.
\end{lem}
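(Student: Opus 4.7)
The strategy follows the same template as Lemma \ref{qq22lemmaQproof2}, but with the inverse-Laplacian acting on $\rho_\delta^\theta$ rather than on $\rho_\delta$ itself. I would first pick a slightly enlarged parabolic cube $\tilde Q = \tilde I\times \tilde B$ with $Q\Subset \tilde Q\Subset (0,T)\times\Omega^w(t)$ and a cutoff $\varphi\in C_0^\infty(\tilde Q)$ with $\varphi\equiv 1$ on $Q$ and $\varphi\geq 0$. Then I would test the coupled momentum equation $\eqref{qq22movingdomainsystem123}_2$ with
\begin{equation*}
\bb{q} = \varphi\,\nabla\Delta_{\tilde B}^{-1}[\varphi\rho_\delta^\theta],\qquad \psi=0.
\end{equation*}
The divergence of $\bb{q}$ equals $\varphi^2\rho_\delta^\theta + \nabla\varphi\cdot\nabla\Delta_{\tilde B}^{-1}[\varphi\rho_\delta^\theta]$, so the pressure term isolates on the left precisely
\begin{equation*}
\int_{\tilde Q}\varphi^2\,(\rho_\delta^\gamma+\delta\rho_\delta^a)\,\rho_\delta^\theta \;\geq\; \int_{Q}(\rho_\delta^{\gamma+\theta}+\delta\rho_\delta^{a+\theta}),
\end{equation*}
and the task reduces to bounding every other term on the right-hand side by a constant depending only on $Q$ and the initial energy.

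The time-derivative of $\bb{q}$ produces $\partial_t\rho_\delta^\theta$, which I would handle using the renormalized continuity equation from Theorem \ref{qq22RCEepsconv}: approximating $x\mapsto x^\theta$ by $C^1$ truncations $b_k$ with $b_k'(x)=0$ for $x\geq k$ and passing $k\to\infty$ (justified by $\rho_\delta\in L^\infty(0,T;L^\gamma)$ and $\theta<\gamma$) yields
\begin{equation*}
\partial_t\rho_\delta^\theta + \nabla\cdot(\rho_\delta^\theta\bb{u}_\delta) + (\theta-1)\rho_\delta^\theta\,\nabla\cdot\bb{u}_\delta = 0
\end{equation*}
in the sense of distributions. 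Substituting this into the $\partial_t\bb{q}$ contribution converts that term into several pieces of the same structural form as the terms $I_1,I_3',I_5,I_6,I_8,I_{10}$ appearing in \eqref{qq22papiga1}–\eqref{qq22papiga11}, and one estimates them exactly as in Section \ref{qq22secEVF} via the Calder\'on–Zygmund bounds
\begin{equation*}
\|\nabla^2\Delta_{\tilde B}^{-1}[\varphi\rho_\delta^\theta]\|_{L^p(\tilde B)}\leq C\,\|\rho_\delta\|_{L^{p\theta}(\tilde B)}^\theta,\qquad \|\nabla\Delta_{\tilde B}^{-1}[\varphi\rho_\delta^\theta]\|_{L^\infty(\tilde B)}\leq C\,\|\rho_\delta\|_{L^{q\theta}(\tilde B)}^\theta \quad(q>3),
\end{equation*}
together with the a priori bounds $\rho_\delta\in L^\infty(L^\gamma)$ and $\bb{u}_\delta\in L^2(H^1)\hookrightarrow L^2(L^6)$ from Lemma \ref{qq22111}; there are no $\varepsilon$-damping terms to worry about at this level.

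The main obstacle, and the sole source of the upper bound $\theta<\tfrac{2}{3}\gamma-1$, is the convective term
\begin{equation*}
\int_{\tilde Q}(\rho_\delta\bb{u}_\delta\otimes\bb{u}_\delta):\bigl(\varphi\nabla^2\Delta_{\tilde B}^{-1}[\varphi\rho_\delta^\theta]+\nabla\varphi\otimes\nabla\Delta_{\tilde B}^{-1}[\varphi\rho_\delta^\theta]\bigr).
\end{equation*}
Using H\"older in space with exponents $\tfrac{1}{\gamma}+\tfrac{2}{6}+\tfrac{1}{p}=1$, i.e.\ $p=\tfrac{3\gamma}{2\gamma-3}$, followed by $\nabla^2\Delta_{\tilde B}^{-1}[\varphi\rho_\delta^\theta]\in L^\infty(0,T;L^p(\tilde B))$ (valid only when $p\theta\leq\gamma$) gives
\begin{equation*}
\Bigl|\int \rho_\delta|\bb{u}_\delta|^2\,\nabla^2\Delta_{\tilde B}^{-1}[\varphi\rho_\delta^\theta]\Bigr| \leq C\,\|\rho_\delta\|_{L^\infty(L^\gamma)}^{1+\theta}\,\|\bb{u}_\delta\|_{L^2(L^6)}^2\,|\tilde I|,
\end{equation*}
and the condition $p\theta\leq\gamma$ rearranges exactly to $\theta\leq\tfrac{2\gamma-3}{3}=\tfrac{2}{3}\gamma-1$, which is positive precisely when $\gamma>\tfrac{3}{2}$. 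The remaining lower-order pieces (the viscous terms involving $\nabla\bb{u}_\delta\in L^2(L^2)$ tested against $\nabla^2\Delta_{\tilde B}^{-1}[\varphi\rho_\delta^\theta]\in L^2(L^2)$, the pressure boundary piece $\int(\rho_\delta^\gamma+\delta\rho_\delta^a)\nabla\varphi\cdot\nabla\Delta_{\tilde B}^{-1}[\varphi\rho_\delta^\theta]$ bounded via Morrey embedding since $\theta<\gamma/3$ follows from the main constraint when $\gamma\leq 3$, and the structure and temperature contributions which vanish because $\psi=0$) are all strictly better-behaved and are majorized by $C(Q)$ without further restriction on $\theta$.
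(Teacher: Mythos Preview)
Your proposal is correct and follows essentially the same route as the paper: test the coupled momentum equation with $\varphi\nabla\Delta_{\tilde B}^{-1}[\varphi\rho_\delta^\theta]$, handle $\partial_t\rho_\delta^\theta$ via the renormalized continuity equation with truncations $b_k$ of $x\mapsto x^\theta$ (the paper makes exactly this point in a footnote), and identify the convective term as the worst one, yielding the constraint $\theta<\tfrac{2}{3}\gamma-1$. One minor caveat: since the Lipschitz constant of $\Omega_\delta^{w_\delta}$ is not uniform in $\delta$, the paper uses $\bb{u}_\delta\in L^2(0,T;L^{6^-})$ rather than $L^2(0,T;L^6)$ in the H\"older estimate, which is why the strict inequality $\theta<\tfrac{2}{3}\gamma-1$ (rather than $\leq$) is needed; your write-up with equality in the exponent relation should be adjusted accordingly, but this does not affect the argument.
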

\begin{proof}
The proof is quite similar to the one given in Lemma $\ref{qq22lemmaQproof2}$. Formally\footnote{Here, when we choose this test function, the term $\psi\partial_t \nabla \Delta^{-1}[\rho_\delta^\theta]$ is not necessarily integrable. Therefore, this Lemma should rigorously be proved by using the renormalized continuity equation $\eqref{qq22RCEeps2}$ with functions $b_k$ which are cut-off functions of $b(x) = x^\theta$. Then, we pass to the limit $k\to+\infty$ (see \cite[Section 7.95]{novotnystraskraba} for more details).}, by testing the equation $\eqref{qq22movingdomainsystem}$ by $(\psi\nabla \Delta^{-1}[\rho_\delta^\theta],0)$, we get the identity similar to the one given in $\eqref{qq22thatsimilar}$. To obtain the bound, we study the "worst" term, which is the convective term
\begin{eqnarray*}
&&|\int_{Q}\big(\rho_\delta \bb{u}_\delta \cdot \big[\psi \nabla^2\Delta^{-1}[\rho_\delta^\theta] \big)\cdot \bb{u}_\delta | \\
&&\leq C||\rho_\delta ||_{L^\infty(0,T; L^\gamma(\Omega_\delta^{w_\delta}(t)))} ||\bb{u}_\delta||_{L^2(0,T; L^{6^-}(\Omega_\delta^{w_\delta}(t)))}^2||\rho^\theta||_{L^\infty (0,T; L^{ \frac{\gamma}{\theta}}(\Omega_\delta^{w_\delta}(t)))}||1||_{L^2(I; L^q(B))} \\
&&\leq C ||\rho_\delta ||_{L^\infty(0,T; L^\gamma(\Omega_\delta^{w_\delta}(t)))}^2||\nabla\bb{u}_\delta||_{L^2(0,T; L^{2^-}(\Omega_\delta^{w_\delta}(t)))}^2 |I|^{1/2}\mathcal{M}(B)^{1/q}= C(Q),
\end{eqnarray*}
where $q\in (1,\infty)$ is such that $\frac{1}{\gamma}+\frac{2}{6}+ \frac{\theta}{\gamma}+\frac{1}{q}<1$. Other terms can be estimated in a similar way, since they have better regularity, so the proof is complete.
\end{proof}
\begin{lem}
Let $\gamma>\frac{12}{7}$. Then, for any $\kappa>0$ there is a set $A_\kappa \Subset (0,T) \times \Omega^w$ such that
\begin{eqnarray*}
\int_{(0,T)\times \Omega^w \setminus A_\kappa} (\rho_\delta^{\gamma} + \delta \rho_\delta^{a}) \leq \kappa.
\end{eqnarray*}
\end{lem}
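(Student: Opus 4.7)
The plan is to follow the strategy used in the proof of Lemma \ref{qq22lemkappa} in Appendix B, which establishes the analogous non-concentration property for the approximate pressures $\rho_\varepsilon$ of Section \ref{qq22section5}. The underlying idea, originating from Kuku\v{c}ka \cite{kukucka} and adapted to the fluid-structure setting in \cite{compressible}, is to rule out concentration of $\rho_\delta^\gamma + \delta \rho_\delta^a$ at $\partial \Omega^{w_\delta}(t)$ by testing the momentum equation in \eqref{qq22movingdomainsystem123} with suitable boundary-localized test functions whose divergence mimics the characteristic function of a thin neighbourhood of the boundary.

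First, I would decompose $\partial \Omega^{w_\delta}(t) = \Gamma^{w_\delta}(t) \cup (\Gamma \times \{-1\}) \cup W$ and, for each component, construct a family $\bb{q}_h \in C_0^\infty(Q_{T,\delta,\Gamma}^{w_\delta})$ with $\bb{q}_h|_{\Gamma^{w_\delta}} = 0$ (so that choosing the coupled test pair with $\psi = 0$ is admissible), supported in a tubular neighbourhood of width $h$ of that component, and with $\nabla \cdot \bb{q}_h$ of order $O(1)$ in this neighbourhood. For the fixed Lipschitz parts $\Gamma \times \{-1\}$ and $W$, this is standard via cutoffs of the signed distance function. For the moving upper boundary $\Gamma^{w_\delta}$, where the domain is only H\"older regular, I would use a structure-adapted choice such as
\begin{equation*}
\bb{q}_h(t,X,z) = \varphi(t)\,\Phi_h\bigl(w_\delta(t,X) - z\bigr)\,\bb{e}_3,
\end{equation*}
with $\varphi \in C_0^\infty((0,T))$, $\Phi_h(s) = s\,\eta(s/h)$, and $\eta \in C^\infty(\mathbb{R})$ a cutoff with $\eta(0)=1$ and $\eta\equiv 0$ on $[1,\infty)$. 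By construction $\bb{q}_h$ vanishes on $\Gamma^{w_\delta}$ and is supported in the strip $\{0<w_\delta - z < h\}$.

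Testing \eqref{qq22movingdomainsystem123}$_2$ with $(\bb{q}_h,0)$ yields an identity of the schematic form
\begin{equation*}
\int_{Q_{\delta,T}^{w_\delta}} (\rho_\delta^\gamma + \delta \rho_\delta^a)(\nabla \cdot \bb{q}_h) = \mathcal{I}_{\text{time}} + \mathcal{I}_{\text{conv}} + \mathcal{I}_{\text{visc}},
\end{equation*}
in which the left-hand side controls the pressure mass on the $h$-strip near the chosen boundary component. The three terms on the right-hand side must be bounded uniformly in $\delta$ by quantities vanishing as $h \to 0$. The time-derivative term is handled by the uniform bound on $\rho_\delta \bb{u}_\delta$ in $L^\infty(0,T; L^{2\gamma/(\gamma+1)})$ coming from Lemma \ref{qq22111}, together with the trace regularity $\partial_t w_\delta \in L^2(0,T; H^{s}(\Gamma))$ for $s < 1/2$; the viscous term by the uniform bound on $\nabla \bb{u}_\delta$ in $L^2$; both absorb the scaling in $h$ explicitly through $\Phi_h$ and its derivatives.

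The main obstacle, as usual in this circle of arguments, is the convective term
\begin{equation*}
\mathcal{I}_{\text{conv}} = \int_{Q_{\delta,T}^{w_\delta}} (\rho_\delta \bb{u}_\delta \otimes \bb{u}_\delta) : \nabla \bb{q}_h,
\end{equation*}
since near the moving boundary $\nabla \bb{q}_h$ contains a factor of $\nabla w_\delta$, which is only in $L^\infty(0,T; H^{1}(\Gamma))$. Interpolating the bound $\rho_\delta|\bb{u}_\delta|^2 \in L^\infty(0,T;L^1)$ with $\bb{u}_\delta \in L^2(0,T; L^{6})$ and $\rho_\delta \in L^\infty(0,T; L^\gamma)$, one finds that $\rho_\delta\bb{u}_\delta \otimes \bb{u}_\delta \in L^{p}_{t,x}$ for some $p>1$ precisely when $\gamma > 12/7$. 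Pairing this integrability against $\nabla \bb{q}_h$ on the $h$-strip by H\"older's inequality produces a bound $|\mathcal{I}_{\text{conv}}| \leq C\, h^{\alpha}$ for some $\alpha > 0$, uniform in $\delta$. This is precisely where the hypothesis $\gamma > 12/7$ enters. The rigid parts of the boundary are treated analogously with a Lipschitz distance-function construction and are significantly easier, since no factor of $\nabla w_\delta$ appears.

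Combining the resulting boundary non-concentration estimate with the interior higher integrability of Lemma \ref{qq22lemmaQproof3} (which by Vitali's theorem yields equi-integrability on compact subsets of $(0,T) \times \Omega^{w_\delta}(t)$), and using that $\Omega^{w_\delta}(t) \to \Omega^w(t)$ uniformly by Lemma \ref{qq22weakconvdel}\,(iiib), the conclusion then follows by selecting $h = h(\kappa)$ small enough and taking $A_\kappa$ to be the subset of $(0,T) \times \Omega^w$ at spacetime distance at least $h(\kappa)$ from the boundary.
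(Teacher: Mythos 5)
The overall plan you describe --- test the momentum equation with boundary-localized functions, bound the error terms using the uniform estimates and Lemma~$\ref{qq22lemmaQproof3}$, and combine with interior equi-integrability --- does match the paper's strategy, which is simply a re-run of the Appendix~B argument (with only the rigid-boundary part $\eqref{qq22set1}$ needed, since $\Omega_\delta^{w_\delta}$ collapses to $\Omega^w$). However, two of your concrete steps are wrong.

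\textbf{The test function near $\Gamma^{w_\delta}$ has the wrong support and the wrong sign.} You take $\Phi_h(s)=s\,\eta(s/h)$ with $\eta(0)=1$ and $\eta\equiv 0$ on $[1,\infty)$, so $\Phi_h(0)=\Phi_h(h)=0$ and $\bb{q}_h$ is compactly supported \emph{inside} the strip $\{0<w_\delta-z<h\}$. But then $\nabla\cdot\bb{q}_h=-\varphi(t)\,\Phi_h'(w_\delta-z)$ satisfies
$\int_0^h\Phi_h'(s)\,ds=\Phi_h(h)-\Phi_h(0)=0$,
so $\Phi_h'$ necessarily changes sign on the strip and $\int p\,(\nabla\cdot\bb{q}_h)$ cannot dominate $\int_{\rm strip}p$. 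The paper's choice is qualitatively different: $\bb{q}_K^{w_\varepsilon}=-K(w_\varepsilon-z)\bb{e}_3$ on the strip and $\equiv -\bb{e}_3$ outside it, so the function vanishes on $\Gamma^{w_\varepsilon}$ and tends to a nonzero \emph{constant} across the strip, yielding a sign-definite divergence ($\nabla\cdot\bb{q}_K^{w_\varepsilon}=K$ on the strip, $0$ outside); the resulting non-vanishing behaviour away from the strip is then killed by the cutoff $\varphi_2$ and paired with the companion function $\bb{q}_K^1$ near the rigid part. Any $\Phi_h$ with $\Phi_h(0)=0$ and $\mathop{\rm supp}\Phi_h\subset[0,h]$ is doomed here; what is needed is $\Phi_h$ ramping from $0$ at $s=0$ to a nonzero constant for $s\geq h$ (as with $\min\{s,h\}$, up to normalisation).

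\textbf{You misplace where $\gamma>12/7$ is actually used.} You claim it enters via the convective term, asserting that $\rho_\delta\bb{u}_\delta\otimes\bb{u}_\delta\in L^p_{t,x}$ with $p>1$ ``precisely when $\gamma>12/7$''. That is not correct: $\rho\bb{u}\otimes\bb{u}\in L^1_tL^{3\gamma/(\gamma+3)}_x\cap L^\infty_tL^1_x$ already gives $L^p_{t,x}$ with $p=\tfrac{5\gamma-3}{3\gamma}>1$ for any $\gamma>3/2$, and the estimate of the convective term $I_4$ in Appendix~B only uses $\tfrac{\gamma+3}{3\gamma}<1$, i.e.\ $\gamma>3/2$. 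The binding constraint is in fact the time-derivative term $I_3=-\int\rho\bb{u}\cdot\partial_t\bb{q}_K$ in $\eqref{qq22ajaja}$, as the paper's proof states explicitly. Since $\partial_t\bb{q}_K$ carries the factor $\partial_t w_\delta$, which by Lemma~$\ref{qq22111}$(v) lies only in $L^2_tL^{4^-}_x$, while $\rho\bb{u}\in L^2_tL^{6\gamma/(\gamma+6)}_x$, the H\"older exponent condition
$\tfrac{\gamma+6}{6\gamma}+\tfrac14<1$
is exactly $\gamma>12/7$. Your sketch treats $\mathcal{I}_{\rm time}$ as if it were the easy one, which hides the step that actually forces the hypothesis.
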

\begin{proof}
This proof is the same as the proof of Lemma $\ref{qq22lemkappa}$ given in Appendix B, where the condition $\gamma>12/7$ is used to bound the term $I_3$ in $\eqref{qq22ajaja}$. Notice that here the domain $\Omega_\delta^{w_\delta}$ collapses to $\Omega^w$, so we only need to prove the inequality $\eqref{qq22set1}$ with $\rho_\varepsilon$ being replaced by $\rho_\delta$, but we do not need to do the same for the inequality $\eqref{qq22set2}$.
\end{proof}
Combining the previous two lemmas, we obtain:
\begin{cor}\label{qq22somecor2}
We have as $\delta \to 0$
\begin{eqnarray*}
\rho_\delta^{\gamma} + \delta \rho_\delta^{a} &\rightharpoonup& \overline{p}, \quad \text{in } L^1((0,T)\times \Omega^w), \\
\delta \rho_\delta^{a} &\rightharpoonup& 0,  \quad  \text{in } L^1((0,T)\times \Omega^w).
\end{eqnarray*}
Moreover, for every $\kappa>0$ there is a set $A_\kappa\Subset (0,T)\times \Omega^w(t)$ such that 
\begin{eqnarray*}
\int_{(0,T)\times \Omega^w(t) \setminus A_\kappa} \overline{p} \leq \kappa,
\end{eqnarray*}
and $\overline{p}\rho_\varepsilon^\theta \in L^1(A_\kappa)$, for any $0<\theta < \frac{2}{3} \gamma-1$.
\end{cor}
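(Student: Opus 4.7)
The plan is to combine Lemma~\ref{qq22lemmaQproof3} (interior higher integrability) with the boundary non-concentration lemma immediately preceding the corollary in exactly the same way as Corollary~\ref{qq22somecor} was obtained from Lemmas~\ref{qq22lemmaQproof2} and \ref{qq22lemkappa}. The four assertions are in fact straightforward consequences of an equi-integrability argument, so the work is mostly organisational rather than analytic.

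First I would prove equi-integrability of the family $\{\rho_\delta^{\gamma}+\delta\rho_\delta^{a}\}_{\delta>0}$ in $L^{1}((0,T)\times\Omega^{w}(t))$. Fix $\kappa>0$ and take the set $\mathcal{A}_\kappa \Subset (0,T)\times\Omega^{w}(t)$ provided by the boundary non-concentration lemma, so that the mass of the pressure outside $\mathcal{A}_\kappa$ is bounded by $\kappa$ uniformly in $\delta$. Cover the compact set $\mathcal{A}_\kappa$ by finitely many parabolic cylinders $Q_{1},\dots,Q_{N}\Subset(0,T)\times\Omega^{w}(t)$ with regular spatial bases; Lemma~\ref{qq22lemmaQproof3}, applied with a fixed $0<\theta<\tfrac{2}{3}\gamma-1$, gives
\begin{equation*}
\int_{\mathcal{A}_\kappa}\big(\rho_\delta^{\gamma+\theta}+\delta\rho_\delta^{a+\theta}\big)\leq \sum_{i=1}^{N}C(Q_i)=:C(\mathcal{A}_\kappa),
\end{equation*}
uniformly in $\delta$. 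By the de la Vall\'ee Poussin criterion this yields uniform integrability on $\mathcal{A}_\kappa$; combined with the uniform $\kappa$-smallness outside, the full family is equi-integrable on $(0,T)\times\Omega^{w}(t)$. Dunford--Pettis then gives a subsequence with $\rho_\delta^{\gamma}+\delta\rho_\delta^{a}\rightharpoonup \overline{p}$ in $L^{1}((0,T)\times\Omega^{w}(t))$, proving the first claim, and passing to the weak limit in the non-concentration inequality using the weak lower semicontinuity of the $L^{1}$-norm on open sets produces the $\kappa$-smallness of $\overline{p}$ outside $\mathcal{A}_\kappa$.

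For the vanishing of the artificial pressure I would use an elementary interpolation. From the energy bound $\delta\int \rho_\delta^{a}\leq C(E_{0})$ and Lemma~\ref{qq22lemmaQproof3} (applied on a covering of $\mathcal{A}_\kappa$ as above), H\"older's inequality gives
\begin{equation*}
\int_{\mathcal{A}_\kappa}\delta\rho_\delta^{a}\leq \delta\Big(\int_{\mathcal{A}_\kappa}\rho_\delta^{a+\theta}\Big)^{\frac{a}{a+\theta}}|\mathcal{A}_\kappa|^{\frac{\theta}{a+\theta}}\leq \delta^{\frac{\theta}{a+\theta}}\,C(\mathcal{A}_\kappa,\theta)\longrightarrow 0,
\end{equation*}
while outside $\mathcal{A}_\kappa$ the mass is $\leq\kappa$ uniformly in $\delta$. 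Sending $\delta\to 0$ and then $\kappa\to 0$ gives $\delta\rho_\delta^{a}\to 0$ strongly (hence weakly) in $L^{1}((0,T)\times\Omega^{w}(t))$.

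Finally, the statement $\overline{p}\,\rho^{\theta}\in L^{1}(\mathcal{A}_\kappa)$ follows from the uniform bound $\int_{\mathcal{A}_\kappa}\rho_\delta^{\gamma+\theta}\leq C(\mathcal{A}_\kappa)$. By weak lower semicontinuity and the identification $\rho_\delta^{\gamma}\rightharpoonup\overline{p}$ (since $\delta\rho_\delta^{a}\to 0$), one obtains $\overline{p}\in L^{\frac{\gamma+\theta}{\gamma}}(\mathcal{A}_\kappa)$, and combining with $\rho\in L^{\infty}(0,T;L^{\gamma}(\Omega^{w}(t)))$ via H\"older's inequality with conjugate exponents $\tfrac{\gamma+\theta}{\gamma}$ and $\tfrac{\gamma+\theta}{\theta}$ yields $\overline{p}\,\rho^{\theta}\in L^{1}(\mathcal{A}_\kappa)$. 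The main obstacle, if any, is purely bookkeeping: one must ensure a single $\theta$ works simultaneously in all four items and that the compact set $\mathcal{A}_\kappa$ can indeed be covered by finitely many parabolic cubes in which Lemma~\ref{qq22lemmaQproof3} applies; both are immediate from the definition of $\mathcal{A}_\kappa\Subset(0,T)\times\Omega^{w}(t)$.
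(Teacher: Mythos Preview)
Your approach is correct and matches the paper's intention: the corollary is stated as an immediate consequence of the two preceding lemmas, and your equi-integrability argument via Dunford--Pettis is precisely the natural way to make this explicit. There is one small slip in the last paragraph. To apply H\"older with exponents $\tfrac{\gamma+\theta}{\gamma}$ and $\tfrac{\gamma+\theta}{\theta}$ you need $\rho^{\theta}\in L^{(\gamma+\theta)/\theta}(A_\kappa)$, i.e.\ $\rho\in L^{\gamma+\theta}(A_\kappa)$; the energy bound $\rho\in L^{\infty}(0,T;L^{\gamma})$ does not give this. The missing ingredient is already in your hands: the uniform estimate $\int_{A_\kappa}\rho_\delta^{\gamma+\theta}\leq C(A_\kappa)$ from Lemma~\ref{qq22lemmaQproof3}, together with the weak convergence $\rho_\delta\rightharpoonup\rho$, yields $\rho\in L^{\gamma+\theta}(A_\kappa)$ by weak lower semicontinuity of the norm. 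With that correction the argument is complete.
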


\noindent

Without proof, we state a simple result which we need for convergence of integral terms when the fixed reference domain $\Omega_\delta$ collapses to $\Omega$:
\begin{lem}\label{qq22lemsetconv}
If $\{f_\delta \}_{\delta>0}$ is a bounded family of functions in $L^p(\Omega_\delta)$ for some $p>1$,  then there is subsequence, which we still denote it as $\{f_\delta \}_{\delta>0}$, such that $f_\delta \rightharpoonup f$, weakly in $L^p(\Omega)$, as $\delta \to 0$.
\end{lem}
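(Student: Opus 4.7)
The plan is to reduce the statement to the classical weak compactness in reflexive Banach spaces by exploiting the nesting $\Omega \subset \Omega_\delta$, which holds for every $\delta > 0$ by construction of $\Omega_\delta$. Because of this inclusion, one may simply restrict each $f_\delta$ to $\Omega$ and work with the restrictions $g_\delta := (f_\delta)_{|\Omega}$, which are well-defined elements of $L^p(\Omega)$.

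First I would observe the uniform bound
\begin{equation*}
\|g_\delta\|_{L^p(\Omega)}^p = \int_\Omega |f_\delta|^p \leq \int_{\Omega_\delta} |f_\delta|^p = \|f_\delta\|_{L^p(\Omega_\delta)}^p \leq C,
\end{equation*}
so the sequence $\{g_\delta\}_{\delta>0}$ lies in a bounded set of $L^p(\Omega)$.

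Then, since $p>1$, the space $L^p(\Omega)$ is reflexive, so by the Banach--Alaoglu theorem (or equivalently the Eberlein--\v{S}mulian theorem), every bounded sequence admits a weakly convergent subsequence. Extracting such a subsequence (which we again index by $\delta$ to avoid cluttering the notation) yields $g_\delta \rightharpoonup f$ weakly in $L^p(\Omega)$ for some $f \in L^p(\Omega)$. By the convention in the statement, this is the claimed convergence of $f_\delta$.

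There is essentially no obstacle here: the only subtle point is that each $f_\delta$ lives on a different domain $\Omega_\delta$, but the monotonicity property $\Omega \subset \Omega_\delta$ (guaranteed by the definition of $\Omega_\delta$) lets us pull back to the common target domain $\Omega$ without any extension or cut-off procedure, after which the standard reflexivity argument finishes the job. The restriction to $p>1$ is essential precisely because $L^1(\Omega)$ is not reflexive and weak compactness would require additional equi-integrability, which is not assumed.
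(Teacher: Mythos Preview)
Your argument is correct and is precisely the standard one: restrict each $f_\delta$ to the common subdomain $\Omega\subset\Omega_\delta$, observe the uniform $L^p(\Omega)$ bound, and invoke reflexivity for $p>1$. The paper itself states this lemma without proof, calling it ``a simple result,'' so there is no alternative argument to compare against; your write-up supplies exactly the routine verification the authors omitted.
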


Now, by using Lemma $\ref{qq22weakconvdel}$, Corollary $\ref{qq22somecor2}$ and Lemma $\ref{qq22lemsetconv}$, we obtain that the limiting functions $(\rho,\bb{u},w,\theta)$ and $\overline{p}$ obtained in Lemma $\ref{qq22weakconvdel}$ and Corollary $\ref{qq22somecor2}$, respectively, satisfy the heat and continuity equations in the sense of Definition $\ref{qq22weaksolution}$ and the following coupled momentum equation:
\begin{align*}
&\int_{Q_T^{w}}\rho \bb{u}\cdot \partial_t \bb{q} +\int_{Q_T^{w}}(\rho \bb{u} \otimes \bb{u}):\nabla \bb{q}-\mu \int_{\Omega^{w}} \nabla \bb{u}:\nabla\bb{q} -(\mu +\lambda)\int_{\Omega^{w}} (\nabla\cdot \bb{u})(\nabla\cdot \bb{q} ) \nonumber \\[2mm]
&+\int_{Q_T^{w}} \overline{p} (\nabla \cdot \bb{q}) +\bint_{\Gamma_T} \Big[ \partial_t w \partial_t \psi -\Delta w \Delta \psi
-\mathcal{F}(w) \psi+\nabla \theta \cdot \nabla \psi\Big] \\[2mm]
&=\int_0^T \frac{d}{dt}\int_{\Omega} J \rho \bb{u} \cdot \bb{q}+ \int_0^T \frac{d}{dt}\int_{\Gamma} \partial_t w \psi.
\end{align*}
It remains to identify the limit $\overline{p}$.

\subsubsection{Strong convergence of density}
The following proofs are merely a localized versions of the standard approach given by Feireisl \cite{feireisl1} (see also \cite{compressible} for more details), so we only present the main steps here:
\begin{enumerate}
\item Introduce a $L^\infty$ truncation function
\begin{eqnarray*}
T_k(x) = k T\big(\frac{x}{k}\big), \quad x\in \mathbb{R}, ~k \in \mathbb{N},
\end{eqnarray*}
where $T(x)$ is a smooth concave scalar function such that $T(x) = x$ for $x\leq 1$ and $T(x) = 2$ for $x\geq 3$, and similarly as in the previous section, we have the convergence
\begin{eqnarray*}
\int_{(0,T)\times \Omega^{w_\delta}} (\rho_\delta^\gamma + \delta \rho_\delta^a - (\lambda + 2\mu)\nabla\cdot \bb{u}_\delta) T_k(\rho_\delta) \to \int_{(0,T)\times \Omega^{w}} (\overline{p}- (\lambda + 2\mu)\nabla\cdot \bb{u}) T^{1,k}.
\end{eqnarray*}
where $T^{1,k}$ is the weak limit of $T_k(\rho_\delta)$ as $\delta \to 0$.
\item In the renormalized continuity equation $\eqref{qq22RCEeps2}$, we choose $b = T_k$ and pass to the limit $\delta \to 0$. Denoting $T^{2,k}$ as the weak limit of $(T_k'(\rho_\delta)\rho_\delta-T_k(\rho_\delta))\nabla \cdot \bb{u}_\delta$, we can obtain the following identity
\begin{eqnarray*}
\partial_t T^{1,k}+ \nabla \cdot (T^{1,k} \bb{u}) + T^{2,k} = 0
\end{eqnarray*}
and then by a standard smoothing procedure obtain that
\begin{eqnarray*}
\partial_t b(T^{1,k})+ \nabla\cdot (b(T^{1,k}) \bb{u})+(b'(T^{1,k})T^{1,k}- b(T^{1,k})) \nabla \cdot \bb{u}+ b'(T^{1,k})T^{2,k} = 0,
\end{eqnarray*}
holds for $b$ such that $b'(x) = 0$ for $x\geq M_b$ where $M_b$ is a positive constant. Then, we obtain Theorem $\ref{qq22RCEdelta}$, by proving that $T^{1,k} \to \rho$ and $T^{2,k}\to 0$, in $L^{\gamma^-}((0,T)\times \mathbb{R}^3)$ and $L^1((0,T)\times \mathbb{R}^3)$, respectively, as $k \to +\infty$. Here the key point is to control the amplitude of oscillations $\lim\limits_{\delta\to 0}\sup\int_{(0,T)\times \mathbb{R}^3}|T_k(\rho_\delta) - T_k(\rho)|^{\gamma+1}$ by a constant independent of $k$.
\item Finally, we define the function
\begin{eqnarray*}
L_k(x) = \begin{cases}
x \ln x, & 0\leq x<k, \\
x \ln k + x \bint_k^x T_k(s)s^{-2} ds, &z \geq k,
\end{cases}
\end{eqnarray*}
which is a suitable function for the renormalized continuity equation and also approximates $x\ln x$. Then, we take the difference of the renormalized continuity equations $\eqref{qq22RCEdeltaeq}$ satisfied by $(\rho_\delta,\bb{u}_\delta)$ and $\eqref{qq22RCEeps2}$ satisfied by $(\rho,\bb{u})$, we choose $b = L_k$ and pass to the limit $\delta \to 0$ and $k \to +\infty$ to obtain
\begin{eqnarray*}
\lim\limits_{\delta \to 0} \int_{(0,T)\times \mathbb{R}^3} \rho_\delta \ln \rho_\delta \leq \int_{(0,T)\times \mathbb{R}^3} \rho \ln \rho,
\end{eqnarray*}
so by the convexity of the function $x \mapsto x\ln x$, it follows that $\rho_\delta \to \rho$ in $L^1((0,T)\times \mathbb{R}^3)$. Thus, by Corollary $\ref{qq22somecor2}$, we obtain $\overline{p} = \rho^\gamma$ a.e. in $Q_T^w$.
\end{enumerate}

\subsubsection{The lifespan of the solution}
It is already known that the energy inequality $\eqref{qq22energymain}$ is satisfied by the limiting functions $(\rho,\bb{u},w,\theta)$ due to Lemma $\ref{qq22111}$. Thus, to finish the proof of Theorem $\ref{qq22mainth}$, it remains to prove that the time interval of the solution can be prolonged either to $+\infty$ or to any time $T<T^*$, where $T^*$ is the moment when the colision of the elastic structure $\Gamma^w$ and the bottom of the cavity $\Gamma \times \{-1\}$ occurs. We follow the approach given in \cite[pp. 397-398]{time} (see also \cite[Theorem 7.1]{BorSun}) to study this issue. 

Let $[0, T_1]$ be the time interval of the solution we have constructed in the previous section. First, from Lemma $\ref{qq22111}(iii)$, we know that $c_1: =1+\min\limits_{X\in\Gamma} w(T_1,X) \geq \frac{c_0}{2} \geq 0$, where $c_0= \min\limits_{x\in\Gamma} w_0(X) + 1$. We can now again construct a solution on the time interval $[T_1,T_2]$ such that $T_2-T_1 = \big(\frac{c_1}{2C(E(0))}\big)^4$, which ensures
\begin{eqnarray*}
&& c_2:=\min\limits_{t \in [0,T_2], X\in\Gamma}w(t,X)+1 \geq\min\limits_{t \in [0,T_1], X\in\Gamma} w(t,X) +1-C(E(0))(T_n-T_{n-1})^{\frac{1}{4}} \\
&&\geq c_1 -C(E(0))(T_n-T_{n-1})^{\frac{1}{4}}\geq \frac{c_1}{2}.
\end{eqnarray*}
Repeating this process any number of times $n\in\mathbb{N}$ while always choosing $T_{n}-T_{n-1} =  \big(\frac{c_{n-1}}{2C(E_0)}\big)^4$, one obtains the solution on the time interval $[0,T_n]$. Denote by 
\begin{eqnarray*}
c^*: = \lim\limits_{n\to\infty} \min\limits_{t\in[0,T_n], x\in\Gamma} w(t,X) + 1
\end{eqnarray*}
and $T^* := \lim\limits_{n\to\infty}T_n$. If $c^* = 0$, then $T^*$ is the moment when the structure reaches the bottom of the cavity $\{z=-1\}$ so the proof is finished. Otherwise if $c^*>0$, then by construction
\begin{eqnarray*}
T_{n}-T_{n-1} =  \Big(\frac{c_n}{2C(E_0)}\Big)^4 \geq \Big(\frac{c^*}{2C(E_0)}\Big)^4>0,
\end{eqnarray*}
so $T^*=\infty$. Thus, the proof is finished.

\subsection{Conclusions and discussions}\label{qq22concl}
In this paper, we proved the existence of a weak solution for an interaction problem between a compressible viscous fluid and a nonlinear thermoelastic plate by constructing a novel decoupling approximation scheme. This way, we have filled a gap in theory, as to our knowledge, no any result was available for such a problem in which the structure is governed by \textbf{nonlinear equation(s)} and in which a \textbf{decoupling scheme} was constructed. It is easy to see that the same result holds for the corresponding two-dimensional problem with $\gamma>1$, and when the structure nonlinearity $\mathcal{F}$ satisfies the assumptions (A1) and (A2) given in section $\ref{qq22sec1.2}$, then the same result holds if the structure is described by an elasticity equation (without the heat conduction). However, when the structure nonlinearity is of the form $\mathcal{F} = \Delta (\Delta w)^3$, then the heat equation for the structure is used in Lemma $\ref{qq226.7}$ to obtain the strong convergence of $\partial_t w_\delta$ in $L^2(\Gamma_T)$ (see Appendix A), so the same result cannot be attained in the same way if the structure doesn't conduct heat.

As one can notice, the scheme constructed in this paper is not fully discrete both in time and space, so it cannot be directly used for numerical purposes. It is known that numerical schemes for compressible viscous fluids converge, at least rigorously, under assumption $\gamma>3$ (see \cite{pokorny}), so one could not expect anything better for $\gamma$ in the context of fluid-structure interaction problems. The scheme constructed in this paper covers a wider range for $\gamma$ (greater than $12/7$) and thus more physically relevant cases. Decreasing this lower bound for $\gamma$ from $12/7$ to $3/2$, as it is in the standard theory for compressible viscous fluids, and constructing a decoupling numerical scheme for this interaction problem are interesting problems. We refer to a recent result \cite{schshe} for a monolithic (non-decoupling) numerical scheme. 

In contrast with the problem studied in \cite{compressible}, where the elastic boundary of the fluid domain is a surface deforming in its normal direction, the geometry of the model studied in this paper seems to be more restrictive. However, the proofs presented here should work equally well with the other geometry, with only essential difference due to geometry being the proof that the mass of the approximate pressure doesn't concentrate near the boundary, see the details given in Appendix B and \cite[Lemma 6.4]{compressible}, respectively. Moreover, compared to \cite{compressible}, the proof given in Appendix B seems to be more difficult as the domain has corners. We believe that the approach presented in this proof could be generalized to a larger class of domains\footnote{Such domains would of course require that locally in time the elastic structure can deform without intersecting the rigid part of the boundary.} for which the rigid part of the boundary is of $W^{1,p}$-regularity in the sense of \cite{kukucka}, for $p>\max \{2, 3\gamma/(2\gamma-3)\}$, by using the same ideas from \cite{kukucka} to construct the test function $\bb{q}_K^1$ given in Appendix B.

\setcounter{equation}{0}

\section*{Appendix A: Compressible fluid interacting with a quasilinear thermoelastic plate}\label{secasdasd}
\addcontentsline{toc}{section}{\numberline{}{Appendix A: Compressible fluid interacting with a quasilinear thermoelastic plate}}
  
\sectionmark{Compressible fluid interacting with a quasilinear thermoelastic plate}
\setcounter{equation}{0}
In this appendix, we sketch the main idea to obtain the same result as in Theorem $\ref{qq22mainth}$ for the case when $\mathcal{F}(w) = \Delta(\Delta w)^3$, with the potential being $\Pi(w) = \frac{1}{4}|| \Delta w ||_{L^4(\Gamma)}^4$. First, it is easy to have that 
\begin{eqnarray*}
&&\big|\int_\Gamma( (\Delta w_1)^3 - (\Delta w_2)^3) \Delta \psi \big| = \big| \int_\Gamma ((\Delta w_1)^2 +\Delta w_1 \Delta w_2+ (\Delta w_2)^2)(\Delta w_1 - \Delta w_2)\Delta \psi \big| \\
&&\leq \frac{3}{2}(||\Delta w_1||_{L^{2,4}(\Gamma)}^2+||\Delta w_2||_{L^{4}(\Gamma)}^2 ) || \Delta w_1 - \Delta w_2 ||_{L^4(\Gamma)} || \Delta \psi||_{L^{4}(\Gamma)},
\end{eqnarray*}
for any $w_1, w_2,\psi \in W^{2,4}(\Gamma)$, so $\mathcal{F}$ is locally Lipschitz continuous from $W^{2,4}(\Gamma)$ to $[W^{2,4}(\Gamma)]'$. This means that one can solve the (SSP) in the same way as in Lemma $\ref{qq22sspestimateeps}$. The proof of the convergence for $\Delta t\to 0$, $k\to \infty$ and $\varepsilon\to 0$ can be carried out in the same way as in the sections 4.3, 4.4 and 4.5, since we have enough spatial and time regularity to pass the convergence in the term $\mathcal{F}$ by the Aubin-Lions lemma due to the bound that comes from the regularizing term $\delta||\nabla^3 w||_{L^\infty(0,T; L^2(\Gamma))}$. Unlike the semilinear case which was studied in Theorem $\ref{qq22mainth}$, passing the limit in this term when $\delta \to 0$ requires more effort. Let $G$ denote the weak limit of $\mathcal{F}(w_\delta)$ in $[W_0^{2,4}(\Gamma)]'$. First, in the same way as in Theorem $\ref{qq22mainth}$, one can conclude that the following equation is satisfied by the limiting functions
\begin{align}
& \int_{Q_{T}}\Big[J{r} \bb{U} \cdot \partial_t \bb{q} +J\big[({r} \bb{U}-{r} \bb{w}) \cdot \nabla^{w}\bb{q}\big]\cdot \mathbf{U}-\mu J\nabla^{w} \mathbf{U}:\nabla^{w}\bb{q} \nonumber \\
&-(\mu +\lambda) J (\nabla^{w}\cdot \mathbf{U})(\nabla^{w}\cdot \bb{q})
+ J {r}^\gamma (\nabla^{w}\cdot \bb{q})\Big] \nonumber \\
&+\bint_{\Gamma_T} \Big[ \partial_t w \partial_t \psi-\Delta w \Delta \psi
-G\psi + \nabla \theta \cdot \nabla \psi\Big] =\int_0^T \frac{d}{dt} \int_{\Omega}J {r} \bb{U}\cdot\bb{q} +\int_0^T \frac{d}{dt} \int_{\Gamma} \partial_t w \psi, \quad\quad\quad\label{qq22almostthere}
\end{align}
for all $\bb{q} \in C_0^\infty(Q_{\Gamma,T})$ and $\psi \in \Gamma_T$ such that $\bb{q}_{\Gamma\times\{0\}} = \psi\bb{e}_3$.

\begin{lem}\label{qq226.7}
The following hold:
\begin{enumerate}
\item[(i)] $\partial_t w_\delta \to \partial_t w$ in $L^2(\Gamma_T)$, as $\delta \to 0$;
\item[(ii)] $G = \Delta(\Delta w)^3$ in $[W^{2,4}(\Gamma)]'$ for almost all $t\in[0,T]$.
\end{enumerate}
\end{lem}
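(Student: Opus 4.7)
The two assertions will be established jointly via a Minty monotonicity argument, with the heat equation providing the crucial compactness for $\partial_t w_\delta$. Recall that $f\mapsto f^3$ is monotone, so
\begin{equation*}
\langle \mathcal{F}(w_1)-\mathcal{F}(w_2),w_1-w_2\rangle = \int_\Gamma\bigl((\Delta w_1)^3-(\Delta w_2)^3\bigr)(\Delta w_1-\Delta w_2)\geq 0.
\end{equation*}
From $\theta_\delta \in L^2(0,T;H_0^1(\Gamma))\cap L^\infty(0,T;L^2(\Gamma))$ and the heat equation $\eqref{qq22heat3}$, one reads $\theta_{\delta,t}$ bounded in $L^2(0,T;H^{-2}(\Gamma))$, and Aubin--Lions yields $\theta_\delta\to\theta$ strongly in $L^2(\Gamma_T)$ (and in $L^2(0,T;L^q(\Gamma))$ for every $q<\infty$ by Sobolev embedding on the two-dimensional plate $\Gamma$). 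Letting $A=-\Delta$ with zero Dirichlet boundary condition, the vanishing of $\theta_\delta$ and $w_\delta$ on $\partial\Gamma$ allows rewriting the heat equation as
\begin{equation*}
\partial_t w_\delta = -\theta_\delta - A^{-1}\theta_{\delta,t}\quad\text{in }\Gamma_T.
\end{equation*}

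For (i), we pair the above identity with $\partial_t w_\delta$ in $L^2(\Gamma_T)$; using self-adjointness of $A^{-1}$ and a time integration by parts gives
\begin{equation*}
\int_{\Gamma_T}|\partial_t w_\delta|^2 = -\int_{\Gamma_T}\theta_\delta\,\partial_t w_\delta - \bigl[\langle\theta_\delta,A^{-1}\partial_t w_\delta\rangle\bigr]_0^T + \int_0^T\langle\theta_\delta,A^{-1}\partial_t^2 w_\delta\rangle\,dt.
\end{equation*}
The first integral converges by strong--weak pairing, and the boundary term by weak continuity in time at the endpoints. For the interior integral, rather than invoking $\partial_t^2 w_\delta$ pointwise, we test the structure part of the coupled equation $\eqref{qq22thirdlevelsystem}_3$ with the admissible function $\psi = A^{-1}\theta_\delta$ (which has zero Dirichlet data and suitably extended $\bb{q}$), so that each contribution is controlled in a suitable duality and the $\delta\nabla^3 w_\delta$ term gives an $O(\sqrt{\delta})$ remainder that vanishes. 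This yields $\|\partial_t w_\delta\|_{L^2(\Gamma_T)}^2\to\|\partial_t w\|_{L^2(\Gamma_T)}^2$, which combined with the weak convergence of $\partial_t w_\delta$ proves (i).

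For (ii), we use (i) to pass to the limit in the identity
\begin{equation*}
\int_0^T\int_\Gamma \mathcal{F}(w_\delta)\partial_t w_\delta\,dt = \int_0^T\int_\Gamma(\Delta w_\delta)^3\Delta\partial_t w_\delta\,dt = \Pi(w_\delta(T))-\Pi(w_\delta(0)),
\end{equation*}
where the middle integral is treated by substituting $\Delta\partial_t w_\delta=\theta_{\delta,t}-\Delta\theta_\delta$ and using the strong--weak dualities ($\theta_\delta$ strongly in $L^2(L^4)$ versus $(\Delta w_\delta)^3$ weakly in $L^\infty(L^{4/3})$). This gives $\lim\Pi(w_\delta(T))=\Pi(w(0))+\int_0^T\langle G,\partial_t w\rangle$. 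Monotonicity then yields, for every smooth $\phi$ with $\phi|_{\partial\Gamma}=\partial_\nu\phi|_{\partial\Gamma}=0$,
\begin{equation*}
0\leq\int_0^T\int_\Gamma\bigl((\Delta w_\delta)^3-(\Delta\phi)^3\bigr)(\Delta w_\delta-\Delta\phi)\,dt.
\end{equation*}
Expanding, passing $\delta\to 0$ via the above convergences, and then setting $\phi=w+\varepsilon\psi$ and letting $\varepsilon\to 0^{\pm}$ identifies $G=\Delta((\Delta w)^3)$ in $[W^{2,4}_0(\Gamma)]'$ for a.e.\ $t$.

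The main obstacle is the interior pairing $\int_0^T\langle\theta_\delta, A^{-1}\partial_t^2 w_\delta\rangle\,dt$ in the proof of (i): one must identify a duality framework in which each contribution to $A^{-1}\partial_t^2 w_\delta$ (coming from the bilaplacian, the cubic nonlinearity, the thermal coupling, the fluid force, and the $\delta$-regularization) is uniformly controlled and compatible with the strong $L^2(\Gamma_T)$-convergence of $\theta_\delta$, so that weak-weak ambiguity does not arise.
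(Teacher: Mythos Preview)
Your overall strategy---use the heat equation to manufacture compactness for $\partial_t w_\delta$, then run a Minty monotonicity argument for the cubic nonlinearity---is exactly the paper's strategy. The execution, however, differs in a way that leaves genuine gaps.

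For (i), the paper does not attempt to prove norm convergence of $\|\partial_t w_\delta\|_{L^2}$. Instead it subtracts the limiting coupled momentum equation $\eqref{qq22almostthere}$ from the approximate one $\eqref{qq22momentumeps}$ and tests this \emph{difference} with $(\bb{q},\psi)=(R[\Delta^{-1}\theta_\delta^\Delta],\,\Delta^{-1}\theta_\delta^\Delta)$, where $\theta_\delta^\Delta=\theta-\theta_\delta$ and $R$ is a simple lift to the fluid domain. The heat-equation difference gives $\partial_t\Delta^{-1}\theta_\delta^\Delta=\theta_\delta^\Delta+\partial_t w_\delta^\Delta$, so the $\partial_t w\,\partial_t\psi$ term produces $\|\partial_t w_\delta^\Delta\|_{L^2(\Gamma_T)}^2$ directly. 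All remaining structure and fluid terms are then products of a weakly convergent factor with the strongly vanishing $\Delta^{-1}\theta_\delta^\Delta$ (or $\theta_\delta^\Delta$), so they go to zero without any need to control $A^{-1}\partial_t^2 w_\delta$. Your ``interior pairing'' obstacle, which you yourself flag, simply does not arise in this difference formulation. Note also that you should be testing the $\delta$-level equation $\eqref{qq22momentumeps}$, not $\eqref{qq22thirdlevelsystem}_3$, which lives at the $(\Delta t,k)$ level.

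For (ii), your plan to pass to the limit in $\int_0^T\langle\mathcal{F}(w_\delta),\partial_t w_\delta\rangle$ by substituting $\Delta\partial_t w_\delta=\theta_{\delta,t}-\Delta\theta_\delta$ runs into a duality mismatch: $(\Delta w_\delta)^3$ is only bounded in $L^\infty(0,T;L^{4/3}(\Gamma))$ uniformly in $\delta$, while $\Delta\theta_\delta\in L^2(0,T;H^{-1}(\Gamma))$, and these do not pair; integrating the $\theta_{\delta,t}$ piece by parts in time reintroduces $\Delta\partial_t w_\delta$ and is circular. The paper again avoids this by testing the \emph{difference} of the momentum equations with $(\bb{q},\psi)=(R[w_\delta^\Delta],\,w_\delta^\Delta)$, which directly yields
\[
\|\Delta w_\delta^\Delta\|_{L^2(\Gamma_T)}^2+\delta\|\nabla^3 w_\delta^\Delta\|_{L^2(\Gamma_T)}^2+\int_0^T\langle G-\mathcal{F}(w_\delta),\,w-w_\delta\rangle = \text{(vanishing terms)},
\]
where the right side tends to zero thanks to $w_\delta^\Delta\to 0$ in $L^2(0,T;W^{2^-,4}(\Gamma))$ and, crucially, $\partial_t w_\delta^\Delta\to 0$ in $L^2(\Gamma_T)$ from part (i) (needed for the fluid term $\int J r\bb{U}\cdot R[\partial_t w_\delta^\Delta]$). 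This gives the inverse-type inequality $\int_0^T\limsup_{\delta\to0}\langle G-\mathcal{F}(w_\delta),w-w_\delta\rangle\le 0$, and maximal monotonicity of $\mathcal{F}$ finishes the identification $G=\Delta(\Delta w)^3$.
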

\begin{proof}
We define the extension operator $R :L^2(\Gamma)\to L^2(\Omega)$ as
\begin{eqnarray*}
R[f] := (1+z) f \bb{e}_3,
\end{eqnarray*}
and $R_\delta: L^2(\Gamma)\to L^2(\Omega_\delta)$ as
\begin{eqnarray*}
R_\delta[f]:= \begin{cases}
(1+z) f \bb{e}_3, \quad \text{in } \Omega, \\
0, \quad \text{in }\Omega_\delta\setminus\Omega.
\end{cases}
\end{eqnarray*}
Denote by $f_\delta^\Delta:= f - f_\delta$ the difference between a limiting function and the function itself, for example $\theta_\delta^\Delta :=\theta - \theta_\delta$ or $(J_\delta r_\delta \bb{U}_\delta )_\delta^\Delta : = Jr\bb{U} - J_\delta r_\delta \bb{U}_\delta$.
Taking the difference of the heat equation $\eqref{qq22thermalweak}$ and $\eqref{qq22heat3}$ satisfied by $\theta$ and $\theta_\delta$, respectively, and applying $\Delta^{-1}$, one obtains
\begin{eqnarray*}
\partial_t \Delta^{-1}\theta_\delta^\Delta =\theta_\delta^\Delta + \partial_t w_\delta^\Delta\in L^2(\Gamma_T).
\end{eqnarray*}
We now take the difference of the equation $\eqref{qq22almostthere}$ with $(\bb{q},\psi) = (R[\Delta^{-1}\theta_\delta^\Delta], \Delta^{-1}\theta_\delta^\Delta)$ and the equation
$\eqref{qq22momentumeps}$ with $(\bb{q},\psi) =(R_\delta[\Delta^{-1}\theta_\delta^\Delta], \Delta^{-1}\theta_\delta^\Delta)$ to obtain
\begin{align}
&||\partial_t w_\delta^\Delta||_{L^2(\Gamma_T)}^2  \nonumber\\
&= \int_{Q_{T}}(-J{r} \bb{U})_\delta^\Delta \cdot \big(R[\partial_t w_\delta^\Delta] +  \int_{Q_{T}}R[\theta_\delta^\Delta] \big) +\big[J({r} \bb{U}-{r} \bb{w}) \otimes \bb{U}\big]_\delta^\Delta: \nabla^{w}R[\Delta^{-1}\theta_\delta^\Delta]\nonumber \\
&\quad+ \int_{Q_{T}}\mu (J\nabla^{w} (\mathbf{u}))_\delta^\Delta :\nabla^{w}R[\Delta^{-1}\theta_\delta^\Delta] + \int_{Q_{T}}(\mu +\lambda) (J \nabla^{w}\cdot \mathbf{u})_\delta^\Delta (\nabla^{w}\cdot R[\Delta^{-1}\theta_\delta^\Delta]) \nonumber \\
&\quad-  \int_{Q_{T}}( (J{r}^\gamma)_\delta^\Delta + \delta {r}_\delta^a) (\nabla^{w}\cdot R[\Delta^{-1}\theta_\delta^\Delta])+\bint_{\Gamma_T} \Delta w_\delta^\Delta \theta_\delta^\Delta \nonumber \\
&\quad+\bint_{\Gamma_T}(G - \Delta (\Delta w_\delta)^3)\Delta^{-1}[\theta_\delta^\Delta] +\bint_{\Gamma_T} (\theta_\delta^\Delta)^2+\bint_{\Gamma_T}\delta \nabla^3 w_\delta : \nabla^3 (\Delta^{-1}\theta_\delta^\Delta ) \nonumber \\
&\quad-\int_0^T \frac{d}{dt} \int_{\Omega}(J {r} \bb{U}\cdot R[\Delta^{-1}\theta_\delta^\Delta])-\int_0^T \frac{d}{dt} \int_{\Gamma} \partial_t w \Delta^{-1}\theta_\delta^\Delta.\nonumber 
\end{align}
From the heat equation, one can infer that $\theta_\delta^\Delta \to 0$ in $L^2(0,T; L^{2}(\Gamma))$ and $\Delta^{-1}[\theta_\delta^\Delta] \to 0$ in $L^2(0,T; H^{3^-}(\Gamma))$ and in, say, $C([0,T]; L^{4}(\Gamma))$. Using the convergences from Lemma $\ref{qq22weakconvdel}$ one can obtain that almost all the terms on the right-hand side converge to zero (for the term including $G$ we can use the weak$^*$ convergence of $\Delta(\Delta w_\delta)^3$ to $G$ in \\ $L^\infty(0,T; [W^{2,4}(\Gamma)]')$). The only one that requires special attention is 
\begin{eqnarray*}
I:= \int_{Q_T} (-J{r} \bb{U})_\delta^\Delta \cdot R[\partial_t w_\delta^\Delta].
\end{eqnarray*}
Since $R[\partial_t w_\delta^\Delta]$ converges in the same space as $\bb{w}_\delta^\Delta$, one can use the same idea as the proof of Lemma $\ref{qq22weakconvthirdlevel}(viii)$ so $(i)$ follows.

Now, we aim to prove the claim $(ii)$. Following the ideas from \cite{lasiecka} (or \cite[Lemma 4.1]{trwa2} in the context of fluid-structure interaction), we will prove the following inverse type inequality
\begin{eqnarray*}
\int_0^T\limsup\limits_{\delta \to 0}\langle G - \mathcal{F}(w_\delta), w - w_\delta \rangle \leq 0.
\end{eqnarray*}
which will, by maximal monotonicity property of $\mathcal{F}$ and \cite[Proposition 1.2.6]{karmanplates}, give us the desired result. For this reason, by taking the difference of the equation $\eqref{qq22almostthere}$ with $(\bb{q},\psi) = (R[w_\delta^\Delta], w_\delta^\Delta)$ and the equation
$\eqref{qq22momentumeps}$ with $(\bb{q},\psi) =(R_\delta[w_\delta^\Delta], w_\delta^\Delta)$, one obtains
\begin{eqnarray*}
&&||\Delta w_\delta^\Delta||_{L^2(\Gamma_T)}^2+ \delta || \nabla^3 w_\delta^\Delta||_{L^2(\Gamma_T)}^2 +\langle G - \mathcal{F}(w_\delta), w - w_\delta \rangle \\
&&=- \int_{Q_{T}}\Big[ (J{r} \bb{U})_\delta^\Delta \cdot R[\partial_t w_\delta^\Delta] -\int_{Q_{T}}\big[J({r} \bb{U}-{r} \bb{w}) \otimes \bb{U}\big]_\delta^\Delta :\nabla^{w}R[ w_\delta^{\Delta}]\nonumber \\
&&\quad-\mu \int_{Q_{T}}(J\nabla^{w} (\mathbf{u}))_\delta^\Delta :\nabla^{w}R[ w_\delta^{\Delta}] -(\mu +\lambda)\int_{Q_{T}} (J \nabla^{w}\cdot \mathbf{u})_\delta^\Delta \nabla^{w}\cdot R[ w_\delta^{\Delta}] \nonumber \\
&&\quad+\int_{Q_{T}} ( (J{r}^\gamma)_\delta^\Delta + \delta {r}_\delta^a) \nabla^{w}\cdot R[ w_\delta^{\Delta}])-\bint_{\Gamma_T}(\partial_t w_\delta^\Delta)^2 - \bint_{\Gamma_T} \nabla w_\delta^{\Delta}\cdot \nabla \theta_\delta^{\Delta} \nonumber \\
&&\quad-\int_0^T \frac{d}{dt} \int_{\Omega}(J {r} \bb{U} \cdot R[ w_\delta^{\Delta}])-\int_0^T \frac{d}{dt} \int_{\Gamma} \partial_t w w_\delta^{\Delta}.\nonumber 
\end{eqnarray*}
Now, since $w_\delta^\Delta \to 0$ in $L^2(0,T; W^{2^-,4}(\Gamma))$ and $\partial_t w_\delta^\Delta \to 0$ in $L^2(\Gamma_T)$ by $(i)$ and the uniform estimates given in Lemma $\ref{qq22111}$, the right-hand converges to zero as $\delta \to 0$, so the proof is finished.
\end{proof}

\section*{Appendix B: Proof of Lemma $\ref{qq22lemkappa}$}
\sectionmark{Proof of Lemma $\ref{qq22lemkappa}$}
\addcontentsline{toc}{section}{\numberline{}{Appendix B: Proof of Lemma $\ref{qq22lemkappa}$}}
\setcounter{equation}{0}
For a $\kappa>0$, we will construct a set $\mathcal{A}_\kappa = \mathcal{A}_\kappa^1 \cup \mathcal{A}_\kappa^2$ where $\mathcal{A}_\kappa^1 \Subset (0,T)\times \Omega^{w_\varepsilon}(t)$ and $\mathcal{A}_\kappa^2 \Subset (0,T)\times (\Omega_\delta^{w_\varepsilon}(t) \setminus \Omega^{w_\varepsilon}(t))$, and prove
\begin{eqnarray}
\int_{((0,T)\times \Omega^{w_{\varepsilon}}(t)) \setminus A_\kappa^1} (\rho_\varepsilon^{\gamma} + \delta \rho_\varepsilon^{a}) \leq \frac{\kappa}{2}, \label{qq22set1} \\
\int_{((0,T)\times (\Omega_\delta^{w_{\varepsilon}}(t)\setminus \Omega^w(t)))) \setminus A_\kappa^2} (\rho_\varepsilon^{\gamma} + \delta \rho_\varepsilon^{a}) \leq \frac{\kappa}{2}, \label{qq22set2}
\end{eqnarray}
by constructing a test function that has an arbitrarily large positive divergence in a thin layer near the boundary, and a bounded $W^{1,\infty^-}$ spatial norm away from the boundary. \\

\noindent
\textit{Step 1: Proof of }$\eqref{qq22lemkappa}$. To follow the proof more easily, it is helpful to see Figure $\ref{qq22slicica}$. Now, since $\Gamma$ is a 2D Lipschitz domain, it is also $W^{1,\infty}$ (see \cite[Section 5.8, Theorem 4]{evans}) and can be represented as a union of star-shaped domain (see \cite[Proposition 2.5.4]{Lip}). Let then, for simplicity, $\Gamma$ be a star-shaped domain, and since all star-shaped domain are isomorphic to a disc, let then $\Gamma = \{ |X| < R \}$ for some $R>0$.\footnote{Even though we assumed that $\Gamma = \{ |X| < R \} $, we will only use the $W^{1,\infty}$ regularity of $\Gamma$ in the proof.}
Denote by $(r,\alpha)$ the polar coordinates in the $\{z=0\}$ plane, and let $m,M \in C_0^\infty(\Gamma)$ satisfying $m(X) = \tilde{m}(r)$ and $M(X)=\tilde{M}(r)$ for some $\tilde{m},\tilde{M}\in C^\infty[0,R]$ such that $\tilde{m}(R) = \tilde{M}(R)=0$ and $m(X)<w_\varepsilon(t,X)<M(X)$, for all $t \in [0,T],X\in\Gamma ,\varepsilon>0$ (existence of such functions is ensured by the uniform estimates of $w_\varepsilon$ in $L^\infty(0,T; H_0^2(\Gamma))\cap W^{1,\infty}(0,T; L^2(\Gamma))\hookrightarrow C^{0,\beta}(0,T; C^{0,1-2\beta}(\Gamma))$, for $0<\beta<1/2$). Also, for simplicity, we will assume that $m(X) \geq |X| -R$.

\begin{figure}[h!]

\centering\includegraphics[scale = 0.14]{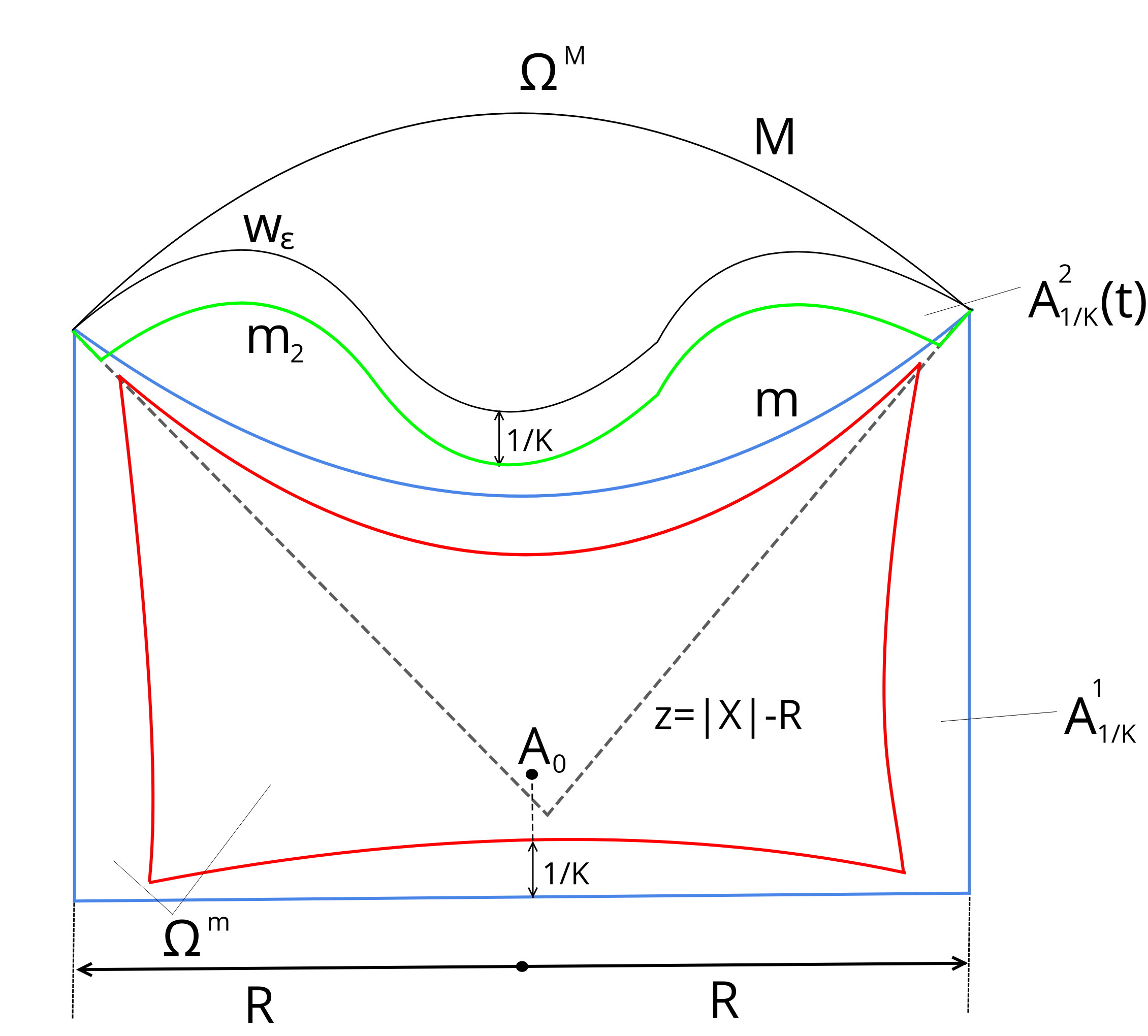}

\caption{The sets $\Omega^M, \Omega^m, A_{1/K}^1(t)$ and $A_{1/K}^2$ on a vertical section at time $t$.}
\label{qq22slicica}
\end{figure}

Denote by $\Omega^M:= \{(X,z): X\in \Gamma, -1<z<M(X)\}$ and $\Omega^m:= \{(X,z): X\in \Gamma, -1<z<m(X)\}$. Obviously, $\Omega^m$ is of $W^{1,\infty}$ and Lipschitz regularity, so we can w.l.o.g. assume that $\Omega^m$ is star-shaped around the point $A_0$ so that $\partial \Omega^m= \{ A_0 + g(\alpha,\beta)\bb{r}(\alpha,\beta),\text{ for } \alpha,\beta \in [0,\pi]\times[0,2\pi] \}$ for a $C^{0,1}\cap W^{1,\infty}$ function $g$, where $\alpha$ and $\beta$ correspond to the spherical coordinates (polar and azimuthal angles) and $\bb{r}(\alpha,\beta)$ is the radial outward unit vector in the direction $(\alpha,\beta)$. We will write $r_{A_0}(X,z)$, $\alpha_{A_0}(X,z)$ and $\beta_{A_0}(X,z)$ to denote the radial coordinate, the polar and azimuthal angles, respectively, of the point $(X,z)-A_0$. Let 
\begin{align*}
A_{1/K}^1:=\big\{ (X,z)\in \Omega^m:& g(\alpha_{A_0}(X,z),\beta_{A_0}(X,z))-1/K < r_{A_0}(X,z)\\
&<g(\alpha_{A_0}(X,z),\beta_{A_0}(X,z))\big\}
\end{align*}
be the $1/K$ layer set near the boundary $\partial\Omega^m$ and (see Figure $\ref{qq22slicica2}$)
\begin{eqnarray*}
\bb{q}_K^1:=\begin{cases}
K\big[r_{A_0}-g(\alpha_{A_0},\beta_{A_0})\big] \bb{r}(\alpha_{A_0},\beta_{A_0}), \quad &\mbox{on } A_{1/K}^1, \\
-\bb{r}(\alpha_{A_0},\beta_{A_0}), \quad &\mbox{in } \Omega^m\setminus (A_{1/K}^1\cup \{A_0\}) ,\\
0, \quad &\mbox{on } (\Omega^{w_\varepsilon}\setminus \Omega^m) \cup \{A_0\}.
\end{cases} 
\end{eqnarray*}
\begin{figure}[h!]

\centering\includegraphics[scale = 0.14]{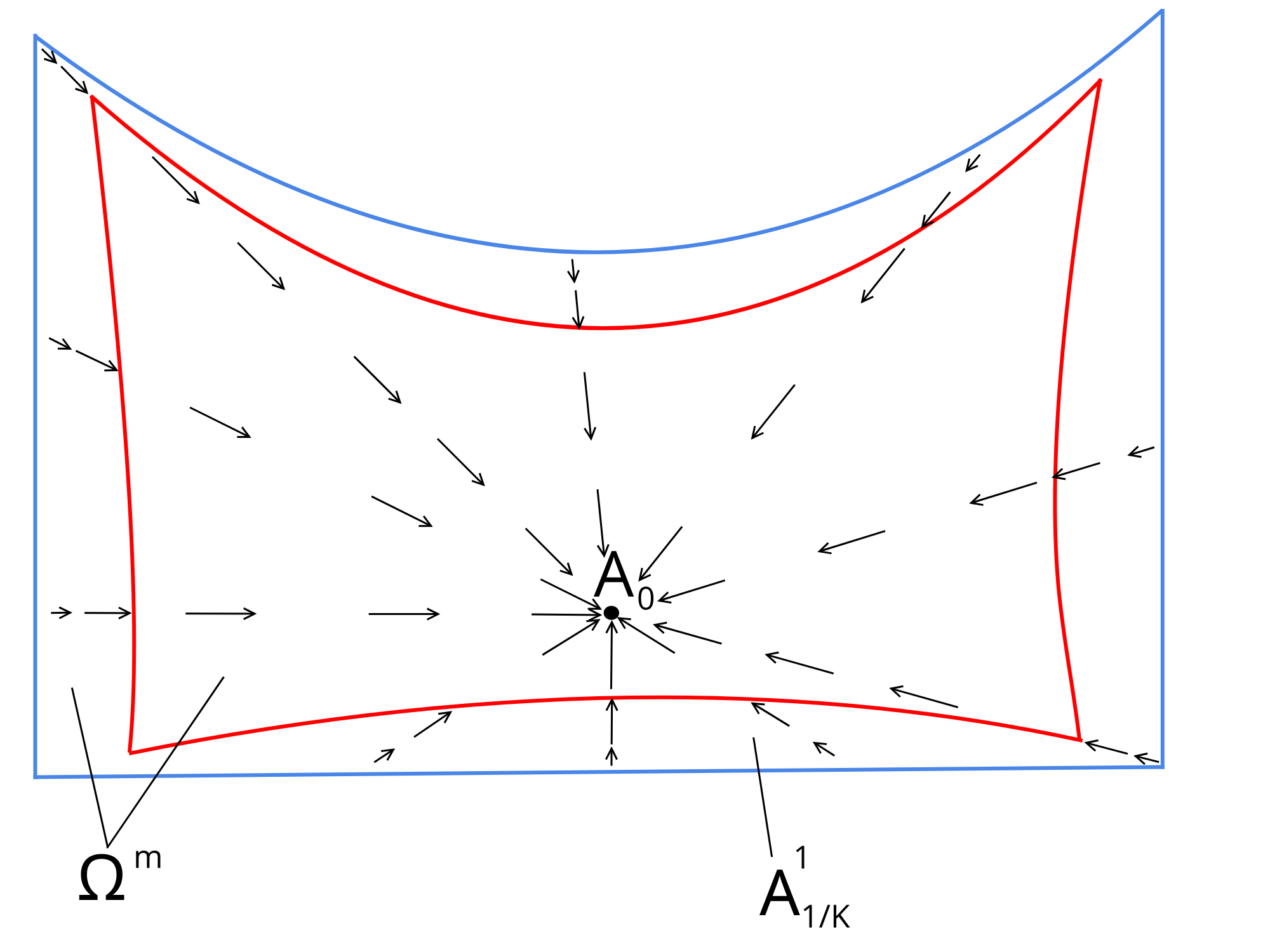}

\caption{The function $\bb{q}_K^1$ on a vertical section.}
\label{qq22slicica2}
\end{figure}

Next, we introduce 
\begin{eqnarray*}
\bb{q}_K^2(t,X,z):=\begin{cases}
\bb{q}_K^{w_\varepsilon}(t,X,z), &\mbox{for } (X,z) \in A_{1/K}^2(t), \\
\bb{q}_K^{w_\varepsilon}(t,X, m_2(X)), &\mbox{elsewhere in } \Omega^{w_\varepsilon}(t),
\end{cases}
\end{eqnarray*}
where
\begin{eqnarray*}
\bb{q}_K^{w_\varepsilon}(t,X,z):= -K (w_{\varepsilon}(t,X)-z)\bb{e}_3, 
\end{eqnarray*}
is defined on the set $A_{1/K}^2(t):= \{ (X,z): X\in \Gamma, m_2(t,X)< z<w_\varepsilon(t,X) \}$ for $t\in[0,T]$, with
\begin{eqnarray*}
m_2(t,X):= \max\{ |X|-R, w_\varepsilon(t,X)-1/K \} .
\end{eqnarray*}
Now, let $\varphi_1,\varphi_2:\Omega^M \to [0,1]$ be smooth functions such that $|\nabla \varphi_1|,|\nabla \varphi_2| \leq C$ and:
\begin{enumerate}
\item $\varphi_1= 1$ on $A_{1/K}^1$ and $\varphi_1= 0$ in a small ball around $A_0$.
\item $\varphi_2=1$ on $\cup_{t \in [0,T]}A_{1/K}^2(t)$ and $\varphi_2 = 0$ for $-1\leq z\leq c$, for some \\$-1< c \leq \min\limits_{t \in [0,T], X\in\Gamma}m_2$;
\end{enumerate}
Finally, let
\begin{eqnarray*}
\bb{q}_K:= \varphi_1\bb{q}_K^1 + \varphi_2 \bb{q}_K^2.
\end{eqnarray*}
Now, to study the properties of $\bb{q}_K$, it will be useful to introduce the sets
\begin{eqnarray*}
B_{1/K}^1(t):= \Omega^{w_\varepsilon}(t) \setminus A_{1/K}^1(t),& \quad B_{1/K}^2(t):= \Omega^{w_\varepsilon}(t) \setminus A_{1/K}^2(t)\\
A_{1/K}(t) := A_{1/K}^1 \cup A_{1/K}^2(t), & \quad B_{1/K}(t):= \Omega^{w_\varepsilon}(t) \setminus A_{1/K}(t).
\end{eqnarray*}
The following hold:
\begin{enumerate}
\item We have that $\nabla \cdot \bb{q}_K^2 = K$ on $[0,T]\times A_{1/K}^2(t)$, and for $\bb{q}_K^1$, we can calculate on $A_{1/K}^1$
\begin{eqnarray*}
\nabla \cdot \bb{q}_K^1 &=& \nabla \cdot \Big[ K\big[r_{A_0}-g(\alpha_{A_0},\beta_{A_0})\big] \bb{r}(\alpha_{A_0},\beta_{A_0})\Big] \\
&=&K \underbrace{\nabla r_{A_0} \cdot \bb{r}(\alpha_{A_0},\beta_{A_0})}_{=1} 
-K \underbrace{\nabla g(\alpha_{A_0},\beta_{A_0}) \cdot \bb{r}(\alpha_{A_0},\beta_{A_0})}_{=0}\\
&&+K \underbrace{\big[ r(\alpha_{A_0},\beta_{A_0}) -g(\alpha_{A_0},\beta_{A_0}) \big]}_{\geq -1/K \text{ and } \leq 0} \underbrace{\nabla \cdot \bb{r}(\alpha_{A_0},\beta_{A_0})}_{=1}\\
&\geq & K - 1,
\end{eqnarray*}
since $\nabla r_{A_0} = \bb{r}(\alpha_{A_0},\beta_{A_0})$ and $\nabla g \perp \bb{r}(\alpha_{A_0},\beta_{A_0})$. Now, because $|\nabla \varphi_1|,|\nabla \varphi_2| \leq C$, its easy to see that $||\varphi_1 \bb{q}_K^1||_{L^\infty(0,T; W^{1,\infty}(B_{1/K}^1(t)))} \leq C$, since we have excluded the singularity at $A_0$ with $\varphi_1$, and $||\varphi_2 \bb{q}_K^2||_{L^\infty(0,T; W^{1,\infty^-}(B_{1/K}^2(t)))} \leq C$. Thus, we can conclude the following:
\begin{eqnarray}
\nabla \cdot \bb{q}_K &\geq& K-c, \quad \text{on }[0,T]\times A_{1/K}(t), \label{qq22lowdivbound} \\
\nabla \cdot \bb{q}_K &\leq& C, \quad \text{on }[0,T]\times B_{1/K}(t), \label{qq22updivbound} \\
|| \bb{q}_K||_{L^\infty(0,T; W^{1,\infty^-}(A_{1/K}(t)))} &\leq& C(K+1), \label{qq22nablabound1}\\
|| \bb{q}_K||_{L^\infty(0,T; W^{1,\infty^-}(B_{1/K}(t)))} &\leq& C. \label{qq22nablabound2}
\end{eqnarray}
\item The time derivative has the following form
\begin{eqnarray*}
\partial_t \bb{q}_K = \begin{cases}
\varphi_2 \partial_t \bb{q}_K^2 , \quad \text{ on } [0,T]\times A_{1/K}^2(t) \\
0, \quad \text{ elsewhere in } \Omega^w(t),
\end{cases}
\end{eqnarray*}
which easily implies that
\begin{eqnarray}\label{qq22sometimebound}
||\partial_t \bb{q}_K||_{L^\infty(0,T; L^{4^-}( A_{1/K}(t) )} \leq CK ||\partial_t w_\varepsilon||_{L^\infty(0,T; L^{4^-}( \Gamma))} \leq C K
\end{eqnarray}
by Lemma $\ref{qq22111}(iv)$ and imbedding of Sobolev spaces.
\end{enumerate}

\noindent
Now, by choosing $(\bb{q},\psi)=(\bb{q}_K,0)$ in the equation $\eqref{qq22movingdomainsystem}_1$ (by the density argument), we have
\begin{eqnarray}
&&\int_{(0,T)\times A_{1/K}(t) } (\rho_\varepsilon^{\gamma} + \delta \rho_\varepsilon^{a})( \nabla \cdot \bb{q}_K) \nonumber \\
&&\leq -\int_{(0,T)\times B_{1/K}(t) } (\rho_\varepsilon^{\gamma} + \delta \rho_\varepsilon^{a})( \nabla \cdot \bb{q}_K)+ \int_{(0,T)}\frac{d}{dt} \int_{\Omega^{w_\varepsilon}} \rho_\varepsilon \bb{u}_\varepsilon \cdot\bb{q}_K 
 -\int_{Q_{T}^w} \rho_\varepsilon \bb{u}_\varepsilon \cdot \partial_t \bb{q}_K \nonumber\\
 &&\quad+\int_{Q_{T}^w}\big[\rho \bb{u} \otimes \bb{u}\big]:\nabla\bb{q}_K+\mu \int_{Q_{T}^w} \nabla \bb{u}:\nabla\bb{q}_K+(\mu +\lambda)\int_{Q_{T}^w} (\nabla\cdot \bb{u})(\nabla\cdot \bb{q}_K) 
\nonumber \\[2mm]
&&\quad +\varepsilon \bint_{Q_{T}^w} \nabla^{w^{-1}} \rho \cdot ( \bb{q}_K\cdot\nabla^{w^{-1}} \bb{u}
+ \bb{u} \cdot\nabla^{w^{-1}} \bb{q}_K)=: I_1+...+I_7. \label{qq22ajaja}
\end{eqnarray}
Let us first estimate the ``worst term''
\begin{eqnarray*}
I_4 &\leq& \max\limits_{t\in[0,T]}\mathcal{M}(A_{1/K}(t))^{\frac{1}{p}}||\rho_\varepsilon \bb{u}_\varepsilon \otimes \bb{u}_\varepsilon||_{L^1(I_K; L^{\frac{3\gamma}{\gamma+3}}(A_{1/K}(t)))} || \nabla \bb{q}_K||_{L^\infty(0,T; L^{q}(A_{1/K}(t)))}\\
&&+C||\rho_\varepsilon \bb{u}_\varepsilon \otimes \bb{u}_\varepsilon||_{L^1(I_K; L^{\frac{6\gamma}{4\gamma+3}}(B_{1/K}(t)))} || \nabla \bb{q}_K||_{L^\infty(0,T; L^{q}(B_{1/K}(t)))} \\
&\leq& CK^{-1/{p}} (K+1)+C,
\end{eqnarray*}
for $p,q \in (1,\infty)$ such that $\frac{1}{p} + \frac{\gamma+3}{3\gamma} +\frac{1}{q} = 1$, by $\eqref{qq22nablabound1},\eqref{qq22nablabound2}$ and the uniform energy estimates, where, we have used the fact that $\max\limits_{t\in[0,T]}\mathcal{M}(A_{1/K}(t)) \leq C/K$.  Now, the terms $I_1,I_2,I_4,I_5$ can be estimated in a similar way based on $\eqref{qq22nablabound1}$ and $\eqref{qq22nablabound2}$, where for $I_1$ we use $\eqref{qq22updivbound}$ and for $I_3$ we use $\eqref{qq22sometimebound}$. The $\varepsilon$ term can be estimated as
\begin{align*}
I_6 &\leq\max\limits_{t\in[0,T]}\mathcal{M}(A_{1/K}(t))^{\frac{1}{18}} \varepsilon||\nabla r_\varepsilon||_{L^3(0,T; L^{\frac{9}{4}}(A_{1/K}(t)))}\times \\
&\quad\quad\Big[|| \bb{U} ||_{L^2(0,T; L^{2}(A_{1/K}(t)))} || \nabla \bb{q}_K||_{L^\infty(0,T; L^{q}(A_{1/K}(t)))}\\
&\quad\quad+ ||\nabla \bb{U} ||_{L^2(0,T; L^{2}(A_{1/K}(t)))} || \bb{q}_K||_{L^\infty(0,T; L^{q}(A_{1/K}(t)))} \Big]\\
&\quad\quad+C\varepsilon||\nabla r_\varepsilon||_{L^3(0,T; L^{\frac{9}{4}}(B_{1/K}(t))))}\Big[|| \bb{U} ||_{L^2(0,T; L^{2}(B_{1/K}(t)))} || \nabla \bb{q}_K||_{L^\infty(0,T; L^{q}(B_{1/K}(t)))}\\
&\quad\quad+ ||\nabla \bb{U} ||_{L^2(0,T; L^{2}(B_{1/K}(t)))} || \bb{q}_K||_{L^\infty(0,T; L^{q}(B_{1/K}(t)))} \Big]\\
& \leq CK^{-\frac{1}{18}}(K+1)+C,
\end{align*}
by $\eqref{qq22nablabound1}$ and $\eqref{qq22nablabound2}$. Combining the previous estimates and $\eqref{qq22lowdivbound}$, from $\eqref{qq22ajaja}$ we have
\begin{eqnarray}
\int_{(0,T)\times (A_{1/K}(t) )} (\rho_\varepsilon^{\gamma} + \delta \rho_\varepsilon^{a}) &<& \frac{1}{K-c}\int_{(0,T)\times A_{1/K}(t) } (\rho_\varepsilon^{\gamma} + \delta \rho_\varepsilon^{a})\chi_{\Omega^{w_\varepsilon}(t)} (\nabla \cdot \bb{q}_K) \nonumber\\
&\leq&\frac{C}{K-c} \frac{K+1}{(CK)^{1/q}}+\frac{C}{K-c} \leq \frac{\kappa}{4}, \label{qq22tadaa}
\end{eqnarray}
for some (large) $q\in(1,\infty)$ and $K>0$ large enough such that 
\begin{eqnarray*}
\frac{C}{K-c} \frac{K+1}{(CK)^{1/q}}+\frac{C}{K-c} \leq \frac{\kappa}{4}.
\end{eqnarray*}
Since $|| \rho_\varepsilon^{\gamma} + \delta \rho_\varepsilon^{a}||_{L^\infty(0,T;L^1(\Omega^{w_\varepsilon}(t)))} \leq C$, we have
\begin{eqnarray*}
|| \rho_\varepsilon^{\gamma} + \delta \rho_\varepsilon^{a}||_{L^1( (0,s) \cup (T-s, T);L^1(\Omega^{w_\varepsilon}(t)) )} \leq 2Cs,
\end{eqnarray*}
so for $s<\kappa /(8C)$, the inequality $\eqref{qq22set1}$ holds for $\mathcal{A}_\kappa^1 = (s,T-s)\times B_{1/K}(t)$, by $\eqref{qq22tadaa}$.\\

\textit{Step 2: Proof of }$\eqref{qq22set2}$. For this part, we can decompose the set $\Omega_\delta^{w_\varepsilon}\setminus \Omega^{w_\varepsilon}$ into a union of intersecting star-shaped domains and then construct a test function on each of those sub-domains in the same way as that of $\bb{q}_K^1$. Then, we can sum up these functions by means of partition of unity and use the same ideas as in Step 1 to obtain the inequality $\eqref{qq22set2}$.

\vspace{.1in}
\noindent{\bf Acknowledgments:} This research was partially supported by National Natural Science Foundation of China (NNSFC) under Grant No. 11631008. The author S.T. would like to express his graditude to Prof. \v{S}\'{a}rka Ne\v{c}asov\'{a} for valuable comments and discussions during his visit at the Institute of Mathematics of the Czech Academy of Sciences, which improved the quality and clarity of this paper.


\begin{thebibliography}{99}



\bibitem{adams}
\newblock R.A. Adams, John J.F. Fournier,
\newblock {\em Sobolev Spaces}, Second edition,
\newblock Pure and Applied Mathematics, Vol. 140,
\newblock Elsevier (Singapore) Ltd., 2003.


\bibitem{amann}
\newblock H. Amann, 
\newblock {\em Linear and Quasilinear Parabolic Problems}, Volume I,
\newblock  Birkh\"{a}user, Basel, 1995.

\bibitem{linint1}
\newblock G. Avalos, P. G. Geredeli, J. T. Webster,
\newblock A linearized viscous, compressible flow-plate interaction with non-dissipative coupling,
\newblock {\em  J. Math. Anal. Appl.}, 477(2019), 334-356.




\bibitem{linint2}
\newblock G. Avalos, P. G. Geredeli, J. T. Webster,
\newblock Semigroup well-posedness of a linearized compressible fluid with an elastic boundary,
\newblock {\em Discrete Contin. Dyn. Syst. Ser. B}, 23(2018), 1267-1295.



\bibitem{strongzero}
\newblock H. Beir\~{a}o da Veiga,
\newblock On the existence of strong solutions to a coupled fluid-structure evolution problem,
\newblock {\em J. Math. Fluid Mech.}, 6(2004), 21-52.

\bibitem{compressible}
\newblock D. Breit, S. Schwarzacher,
\newblock Compressible fluids interacting with a linear-elastic shell, 
\newblock {\em Arch. Ration. Mech. Anal.}, 228(2017), 495–562.



\bibitem{Lip}
\newblock L. Carbone, R. De Arcangelis,
\newblock {\em Unbounded Functionals in the Calculus of Variations: Representation, Relaxation, and Homogenization},
\newblock Chapman \& Hall, CRC Monographs and Surveys in Pure and Applied Mathematics, 2001.


\bibitem{contact}
\newblock J.-J. Casanova, C. Grandmont, M. Hillairet,
\newblock On an existence theory for a fluid-beam problem encompassing possible contacts,
\newblock (2019), https://arxiv.org/abs/1912.06396.



\bibitem{time}
\newblock A. Chambolle, B. Desjardins, M.J. Esteban, C. Grandmont,
\newblock Existence of weak solutions for the unsteady interaction of a viscous fluid with an elastic plate,
\newblock {\em J. Math. Fluid Mech.}, 7(2005), 368-404.

\bibitem{igor}
\newblock I. Chueshov,
\newblock Dynamics of a nonlinear elastic plate interacting with a linearized compressible fluid,
\newblock {\em Nonlinear Anal.}, 95(2013), 650-665.

\bibitem{igor2}
\newblock I. Chueshov,
\newblock Interaction of an elastic plate with a linearized inviscid incompressible fluid,
\newblock {\em Comm. Pure Appl. Anal.}, 13(2014), 1759-1778.

\bibitem{berger}
\newblock I. Chueshov, I.Lasiecka,
\newblock {\em Long-time Behavior of Second Order Evolution Equations with Nonlinear Damping}, 
Memoirs of AMS, vol 195, AMS, Providence, RI, 2008, No. 912.

\bibitem{karmanplates}
\newblock I. Chueshov, I. Lasiecka, 
\newblock {\em Von Karman Evolution Equations}, Springer, New York, 2010.


\bibitem{platesproofs}
\newblock I. Chueshov, S. Kolbasin, 
\newblock Long-time dynamics in plate models with strong nonlinear damping,
{\em Comm. Pure App. Anal.}, 11(2012), 659-674.






\bibitem{vitali}
\newblock N. Dunford, J. T. Schwartz, 
\newblock {\em Linear Operators}, Volume II,
\newblock Interscience Publishers, New York, London, 1963






\bibitem{evans}
\newblock L.C. Evans,
\newblock {\em Partial Differential Equations}: Second Edition, 
Graduate Studies in Mathematics vol. 19, AMS, Providence, Rhode Island, 2010.


\bibitem{feireisl1}
\newblock E. Feireisl, 
\newblock On compactness of solutions to the compressible isentropic Navier-Stokes equations when the density is not square integrable,
\newblock {\em Comment.Math.Univ.Carolin.}, 42(2001), 83–98.

\bibitem{feireisl2}
\newblock E. Feireisl,
\newblock On the motion of rigid bodies in a viscous fluid,
\newblock {\em Appl. Math.}, 47(2001), 463–484.

\bibitem{pokorny}
\newblock E. Feireisl, T. Karper, M. Pokorn\'{y}, 
\newblock {\em Mathematical Theory of Compressible Viscous Fluids: Analysis and Numerics}, Birkh\"{a}user-Verlag, Basel, 2017.

\bibitem{global}
\newblock E. Feireisl, A. Novotný, H. Petzeltová,
\newblock On the existence of globally defined weak solutions to the Navier–Stokes equations of compressible isentropic fluids,
\newblock {\em J. Math. Fluid. Mech.}, 3(2001), 358–392.



\bibitem{3dmesh}
\newblock M. Gali\'{c}, B. Muha, S. \v{C}ani\'{c}
\newblock Analysis of a 3D nonlinear, moving boundary problem describing fluid-mesh-shell interaction,
\newblock (2019), https://arxiv.org/abs/1911.09927.




\bibitem{grandmont3}
\newblock C. Grandmont,
\newblock Existence of weak solutions for the unsteady interaction of a viscous fluid with an elastic plate,
\newblock {\it SIAM J. Math. Anal.}, 40(2007), 716-737.

\bibitem{grandmont1}
\newblock C. Grandmont, M. Hillairet,
\newblock Existence of global strong solutions to a beam-fluid interaction system,
\newblock {\em Arch. Ration. Mech. Anal.}, 220(2016), 1283-1333.

\bibitem{grandmont2}
\newblock C. Grandmont, M. Hillairet, J. Lequeurre,
\newblock Existence of local strong solutions to fluid-beam and fluid-rod interaction systems,
\newblock {\em Ann. Inst. H. Poincar\'{e} Anal. Non Lin\'{e}aire}, 36(2019), 1105-1149.


\bibitem{thefirstpaper}
\newblock D. Hasanyan, N. Hovakimyan, A.J. Sasane, V. Stepanyan, 
\newblock Analysis of nonlinear thermoelastic plate equations,
\newblock {\em Proceedings of the 43rd IEEE Conference on Decision and Control}, 2(2004), 1514-1519.





\bibitem{kukucka}
\newblock P. Kuku\v{c}ka, On the existence of finite energy weak solutions to the Navier–Stokes equations in irregular
domains,
\newblock {\em Math. Meth. Appl. Sci.} 32(2009), 1428–1451.

\bibitem{lasiecka}
\newblock I. Lasiecka, S. Maad, A. Sasane, 
\newblock Existence and exponential decay of solutions to a quasilinear thermoelastic plate system,
{\em Nonlin. Diff. Eq. App.}, 15(2008), 689-715.




\bibitem{ruzicka} 
\newblock D. Lengeler, M. R\r{u}\v{z}i\v{c}ka, 
\newblock Weak solutions for an incompressible newtonian fluid interacting with a Koiter type shell,
\newblock {\em Arch. Ration. Mech. Anal.}, 211(2014), 205–255.

\bibitem{strong}
\newblock J. Lequeurre,
\newblock Existence of strong solutions to a fluid-structure system, 
\newblock {\em SIAM J. Math. Anal.}, 43(2010), 389-410. 

\bibitem{comstrong}
\newblock D. Maity, A. Roy, T. Takahashi,
\newblock Existence of strong solutions for a system of interaction between a compressible viscous fluid and a wave equation,
\newblock (2020), https://hal-icp.archives-ouvertes.fr/INRIA/hal-02908420v1.

\bibitem{NSFFSI}
\newblock D. Maity, T. Takahashi,
\newblock Existence and uniqueness of strong solutions for the system of interaction between a compressible Navier-Stokes-Fourier fluid and a damped plate equation,
\newblock (2020), https://arxiv.org/abs/2006.00488.

\bibitem{sourav}
\newblock S. Mitra,
\newblock Local existence of strong solutions for a fluid-structure interaction model, 
\newblock (2018), https://arxiv.org/pdf/1808.06716.




\bibitem{Boris}
\newblock B. Muha,
\newblock A note on the trace Theorem for domains which are locally subgraph of H\"{o}lder continuous function,
\newblock {\em Networks Hete. Media}, 9(2014), 191-196.

\bibitem{muhasch}
\newblock B. Muha, S. Schwarzacher,
\newblock Existence and regularity for weak solutions for a fluid interacting with a non-linear shell in 3D,
\newblock (2019), https://arxiv.org/abs/1906.01962.



\bibitem{BorSunNavierSlip}
\newblock B. Muha, S. {\v C}ani{\'c},
\newblock Existence of a weak solution to a fluid-elastic structure interaction problem with the Navier slip boundary condition,
\newblock {\em J. Diff. Equ.}, 260(2016), 8550-8589.



\bibitem{BorSun}
\newblock B. Muha, S. {\v C}ani{\'c},
\newblock Existence of a weak solution to a nonlinear
  fluid-structure interaction problem modeling the flow of an
  incompressible, viscous fluid in a cylinder with deformable
  walls,
\newblock {\em Arch. Ration. Mech. Anal.}, 207(2013),  919-968.

  


\bibitem{BorSunNonLinear}
\newblock B. Muha, S. {\v C}ani{\'c},
\newblock Fluid-structure interaction between an incompressible, viscous 3D
  fluid and an elastic shell with nonlinear Koiter membrane energy,
\newblock {\em Interf. Free Boundaries}, 17(2015), 465-495.


\bibitem{novotnystraskraba}
\newblock A. Novotny, I. Straskraba,
\newblock {\em Introduction to the Mathematical Theory of Compressible Flow}, \newblock Oxford Lecture Series in Mathematics and its Applications, 
\newblock Oxford, University Press, Oxford, 2004.


\bibitem{schshe}
\newblock S. Schwarzacher, B. She,
\newblock On numerical approximations to fluid-structure interactions involving compressible fluids,
\newblock (2019), https://arxiv.org/abs/2002.04636.




\bibitem{WSinc}
\newblock S. Schwarzacher, M. Sroczinski,
\newblock Weak-strong uniqueness for an elastic plate interacting with the Navier Stokes equation,
\newblock (2020), https://arxiv.org/abs/2003.04049.

\bibitem{trwa}
\newblock S. Trifunovi\'c, Y.-G. Wang, 
\newblock Existence of a weak solution to the fluid-structure interaction problem in 3D,
\newblock {\em J. Diff. Equ.} 268 (2020), 1495-1531.


\bibitem{trwa2}
\newblock S. Trifunovi\'c, Y.-G. Wang, 
\newblock Weak solution to the incompressible viscous fluid and a thermoelastic plate interaction problem in 3D,
\newblock accepted for publication in \textit{Acta Math. Sci. Ser. B (Engl. Ed.)}.













\end{thebibliography}
\end{document}